\theoremstyle{plain}
\newcommand*{\bbone}{\text{\usefont{U}{bbold}{m}{n}1}}
\declaretheorem[name=Theorem, numberwithin=section]{theorem}
\declaretheorem[name=Lemma, sibling=theorem]{lemma}
\declaretheorem[name=Proposition, sibling=theorem]{prop}
\declaretheorem[name=Corollary, sibling=theorem]{corollary}
\declaretheorem[name=Remark, sibling=theorem]{remark}
\declaretheorem[name=Claim, numbered=no]{claim*}
\Crefname{section}{Section}{Sections}
\Crefname{subsection}{Section}{Sections}
\renewcommand{\Im}{\operatorname{Im}}
\renewcommand{\Re}{\operatorname{Re}}
\newcommand{\Z}{\mathbb{Z}}
\newcommand{\Q}{\mathbb{Q}}
\newcommand{\R}{\mathbb{R}}
\newcommand{\C}{\mathbb{C}}
\newcommand{\sumtwo}{\operatorname*{\sum\sum}}
\newcommand{\sumthree}{\operatorname*{\sum\sum\sum}}
\renewcommand{\pmod}[1]{\ (\mathrm{mod}\ #1)}
\patchcmd{\section}{\scshape}{\bfseries}{}{}
\renewcommand{\@secnumfont}{\bfseries}
\numberwithin{theorem}{section}
\numberwithin{equation}{section}
\newcommand{\pfrac}[2]{\left(\frac{#1}{#2}\right)}
\newcommand\cube{\begin{tikzpicture}[scale=2.3]
    \coordinate (A1) at (0, 0);
    \coordinate (A2) at (0, 0.1);
    \coordinate (A3) at (0.1, 0.1);
    \coordinate (A4) at (0.1, 0);
    \coordinate (B1) at (0.03, 0.03);
    \coordinate (B2) at (0.03, 0.13);
    \coordinate (B3) at (0.13, 0.13);
    \coordinate (B4) at (0.13, 0.03);

    \draw (A1) -- (A2);
    \draw (A2) -- (A3);
    \draw (A3) -- (A4);
    \draw (A4) -- (A1);
    \draw[densely dotted] (A1) -- (B1);
    \draw[densely dotted] (B1) -- (B2);
    \draw (A2) -- (B2);
    \draw (B2) -- (B3);
    \draw (A3) -- (B3);
    \draw (A4) -- (B4);
    \draw (B4) -- (B3);
    \draw[densely dotted] (B1) -- (B4);
\end{tikzpicture}}
\newcommand\tinycube{\begin{tikzpicture}[scale=1.3]
    \coordinate (A1) at (0, 0);
    \coordinate (A2) at (0, 0.1);
    \coordinate (A3) at (0.1, 0.1);
    \coordinate (A4) at (0.1, 0);
    \coordinate (B1) at (0.03, 0.03);
    \coordinate (B2) at (0.03, 0.13);
    \coordinate (B3) at (0.13, 0.13);
    \coordinate (B4) at (0.13, 0.03);

    \draw (A1) -- (A2);
    \draw (A2) -- (A3);
    \draw (A3) -- (A4);
    \draw (A4) -- (A1);
    \draw[densely dotted] (A1) -- (B1);
    \draw[densely dotted] (B1) -- (B2);
    \draw (A2) -- (B2);
    \draw (B2) -- (B3);
    \draw (A3) -- (B3);
    \draw (A4) -- (B4);
    \draw (B4) -- (B3);
    \draw[densely dotted] (B1) -- (B4);
\end{tikzpicture}}
\begin{document}

\author{Chantal David}
\address{Department of Mathematics and Statistics, Concordia University, 1455 de Maisonneuve West, Montr\'{e}al, Qu\'{e}bec, Canada H3G 1M8}
\email{chantal.david@concordia.ca}

\author{Alexandre de Faveri}
\address{EPFL SB MATH, Station 10, 1015 Lausanne, Switzerland}
\email{alexandre.defaveri@epfl.ch}

\author{Alexander Dunn}
\address{School of Mathematics, Georgia Institute of Technology, Atlanta, USA}
\email{adunn61@gatech.edu}

\author{Joshua Stucky}
\address{School of Mathematics, Georgia Institute of Technology, Atlanta, USA}
\email{jstucky3@gatech.edu}

\subjclass[2020]{11F30, 11L05, 11M06, 11M20, 11R16, 11R42}
\keywords{Hecke $L$-functions, mollified moments, non-vanishing, cubic large sieve, cubic theta function.}

\title{Non-vanishing for cubic Hecke $L$-functions}


\begin{abstract}
Let $\omega$ be a primitive cubic root of unity. 
We study the non-vanishing problem for the family of Hecke $L$-functions associated to primitive cubic characters defined over the Eisenstein quadratic number field $\mathbb{Q}(\omega)$. We prove unconditionally that a positive proportion of Hecke $L$-functions associated to the cubic residue symbols $\chi_q$ with $q \in \mathbb{Z}[\omega]$ squarefree and $q \equiv 1 \pmod{9}$ do not vanish at the central point.  
    
Our proof goes through the method of first and second mollified moments. The principal new contribution of this paper is the asymptotic evaluation of the mollified second moment with power saving error term. No asymptotic formula for the mollified second moment of a cubic family was known (even over function fields) prior to the writing of this paper. Our new approach makes crucial use of Patterson's evaluation of the Fourier coefficients of the cubic metaplectic theta function, Heath-Brown's cubic large sieve, and a Lindel\"{o}f-on-average upper bound for the second moment of cubic Dirichlet series that we establish.
    
The significance of our result is that the (unitary) family considered does \emph{not} satisfy a perfectly orthogonal large sieve bound. This is quite unlike other families of Dirichlet $L$-functions in the literature for which unconditional results are known: the symplectic family of quadratic characters and the unitary family of all Dirichlet characters $\chi \pmod{q}$. Consequently, our proof has fundamentally different features from the corresponding works of Soundararajan and of Iwaniec and Sarnak.
\end{abstract}


\maketitle
\tableofcontents


\section{Introduction}

\subsection{Statement of non-vanishing result}

A famous (open) folklore conjecture of Chowla predicts that $L(1/2,\chi) \neq 0$ for all 
primitive Dirichlet characters $\chi \pmod q$. The conjecture appears to have first been made in the case
when $\chi$ is a primitive quadratic Dirichlet character (over $\mathbb{Q}$) \cite{Chow}. The quadratic 
case has been extensively studied.
\"{O}zl\"{u}k and Snyder \cite{OzSny} 
 showed under the GRH that $L\big(1/2,\big(\frac{d}{\cdot} \big) \big) \neq 0$ for at least $15/16$ of the fundamental 
 discriminants $|d| \leq X$. Their proof made use of the
 one-level density for low-lying zeros of the family.
 The conjectures of Katz and Sarnak \cite{KaSa}
 imply that $L \big(1/2, \big( \frac{d}{\cdot} \big) \big ) \neq 0$ for almost 
 all fundamental discriminants $d$.
  In a breakthrough
paper, Soundararajan \cite{Sou} proved unconditionally 
 that  $L\big(1/2,\big(\frac{8d}{\cdot} \big) \big) \neq 0$ for at least $7/8$ of the fundamental 
 discriminants $8d$ with $d$ odd and $0<8d \leq X$. The method of \cite{Sou}
 made use of both the first and the second \emph{mollified moments} -- it is well known that the first two un-mollified moments are insufficient to
 obtain a positive proportion of non-vanishing, see \cite[Conjecture~1.5.3]{CFKRS}
 and \cite{GH,Jut,VT}. In the same paper, Soundararajan also established an asymptotic formula 
for the third moment of quadratic Dirichlet $L$-functions. Independent to \cite{Sou},
Diaconu, Goldfeld, and Hoffstein \cite{DGH} used multiple Dirichlet series methods
to establish an asymptotic formula for the third moment, and also conjectured
the presence of a second order main term. Their conjecture was later established by Diaconu and Whitehead \cite{DW}.
Following the ideas of the breakthrough work of Li \cite{XLi} on the second moment for 
twists of modular $L$-functions,
the fourth author and Shen \cite{SS} have unconditionally established an asymptotic formula 
for the fourth moment of primitive quadratic Dirichlet $L$-functions. This improved upon an earlier conditional result (under GRH) of Shen \cite{Shen}. 

Let $\mathbb{Q}(\omega)$ be the Eisenstein quadratic number field,
where $\omega$ is identified with $e^{2\pi i/3} \in \mathbb{C}$.
This number field has ring of integers $\mathbb{Z}[\omega]$,
discriminant $-3$,
finite unit group $\langle - \omega \rangle $,
and class number $1$.
Let 
$N(x):=N_{\mathbb{Q}(\omega)/\mathbb{Q}}(x)=|x|^2$
denote the norm form on $\mathbb{Q}(\omega)/\mathbb{Q}$. 
 The subject of this paper is the non-vanishing problem for cubic Hecke $L$-functions (over $\mathbb{Q}(\omega)$) at the central point.
When compared to the quadratic case, the non-vanishing results for cubic and higher order characters (in various different families)
up until this paper have been more restrictive because of the presence of Gauss sums 
(as the values of Gauss sums are equidistributed on the unit circle when their order is strictly greater than two).
Previous results on cubic and higher order characters (over number fields) have fallen into one of two categories: 
\begin{itemize}
\item  Unconditional and prove that the $L$-value at the central point does not vanish for a density zero (infinite) subset of the family \cite{BY,BGL,Luo}. 
\item  Conditional on GRH and prove that the $L$-value at the central point does not vanish for a positive proportion of the family \cite{DG,GY,GZ2}.
\end{itemize}
In this paper we prove unconditionally that a positive proportion of Hecke $L$-functions $L(1/2,\chi_q)$ associated to 
the cubic residue symbols $\chi_q$ with $q \in \mathbb{Z}[\omega]$ squarefree and $q \equiv 1 \pmod{9}$
do not vanish.

\begin{theorem} \label{thm:main}
    For at least $14 \%$ of $q \in \mathbb{Z}[\omega]$ squarefree with $q \equiv 1 \pmod{9}$, we have $L(1/2,\chi_q) \neq 0$. More precisely, for any fixed $\varepsilon>0$ and sufficiently large $X>0$,
    \begin{equation*}
        \sum_{\substack{q \in \mathbb{Z}[\omega] \\ q \equiv 1 \pmod{9} \\ N(q) \leq X  \\ L(1/2,\chi_q) \neq 0}} \mu^2(q)  \geq \Big(\frac{1}{7} - \varepsilon\Big) \sum_{\substack{q \in \mathbb{Z}[\omega] \\ q \equiv 1 \pmod{9} \\ N(q) \leq X}} \mu^2(q).
    \end{equation*}
\end{theorem}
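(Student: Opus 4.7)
The plan is to apply the method of mollified first and second moments, pioneered by Soundararajan for quadratic families and by Iwaniec--Sarnak for the unitary family of all Dirichlet characters modulo $q$. Introduce the Dirichlet polynomial mollifier
\begin{equation*}
M(\chi_q) = \sum_{\substack{n \in \mathbb{Z}[\omega],\ n \equiv 1 \pmod{9} \\ N(n) \leq X^{\theta}}} \frac{\mu(n)\, P\!\left(\tfrac{\log(X^{\theta}/N(n))}{\theta \log X}\right) \chi_q(n)}{\sqrt{N(n)}}
\end{equation*}
of length $X^{\theta}$, with $\theta>0$ a parameter and $P$ a real polynomial satisfying $P(0)=0$, $P(1)=1$. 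Let $M_1(X)$ and $M_2(X)$ denote the smoothed first and second mollified moments of $L(1/2,\chi_q) M(\chi_q)$ over the family. By Cauchy--Schwarz, $|M_1(X)|^2/M_2(X)$ is a lower bound for the number of $q$ in the family with $L(1/2,\chi_q) \neq 0$, so it suffices to prove asymptotics $M_1 \sim c_1(P,\theta)X$ and $M_2 \sim c_2(P,\theta)X$ with power-saving error, and then to choose $(P,\theta)$ so that $c_1(P,\theta)^2/(c_0 c_2(P,\theta)) \geq 1/7$, where $c_0 X$ is the family count.

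For $M_1$, substitute an approximate functional equation for $L(1/2,\chi_q)$ to obtain two Dirichlet sums each of length $O(\sqrt{X})$. Swapping the order of summation and invoking cubic reciprocity in $\mathbb{Z}[\omega]$ --- which is particularly clean under the congruence $q \equiv 1 \pmod{9}$, since such $q$ are primary --- the inner sum of $\left(\frac{q}{nm}\right)_3$ over squarefree primary $q$ produces a main term when $nm$ is a perfect cube, plus a residual contribution originating from Patterson's pole of the cubic theta function at $s=2/3$ which is power-saving after summation over $n,m$ and controlled via Heath-Brown's cubic large sieve. Collecting main terms yields $M_1 \sim c_1(P,\theta) X$.

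The main obstacle is the asymptotic evaluation of $M_2$. Expanding both $L$-factors with approximate functional equations and using the cubic identity $\overline{\chi_q(m)} = \chi_q(m^2)$, the second moment unfolds into a sixfold sum with character factor $\chi_q(n_1 n_2^2 m_1 m_2^2)$. The diagonal, on which this product is a perfect cube, produces the expected main term of order $X$ with the appropriate logarithmic factors. The off-diagonal is the real novelty: unlike the quadratic case where Gauss sums take values in $\{\pm 1\}$, cubic Gauss sums fluctuate pseudorandomly on the unit circle, so Heath-Brown's cubic large sieve is not saturated and a direct application fails. To extract the off-diagonal main terms and bound the remaining error by $X^{1-\delta}$, I would combine the three ingredients highlighted in the abstract: (i) Patterson's evaluation of the Fourier coefficients of the Kubota cubic metaplectic theta function, which encodes the averaged cubic Gauss sums; (ii) Heath-Brown's cubic large sieve over $\mathbb{Z}[\omega]$; and (iii) a Lindel\"of-on-average upper bound $\sum_{N(q)\leq X} \mu^2(q)|\mathcal{D}(1/2,\chi_q)|^2 \ll X^{1+\varepsilon}$ for certain auxiliary cubic Dirichlet series $\mathcal{D}$.

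Finally, with both mollified moments asymptotically evaluated, the proportion $1/7 - \varepsilon$ follows from a calculus-of-variations optimization of the ratio $c_1(P,\theta)^2/(c_0 c_2(P,\theta))$ over real polynomials $P$ and over the mollifier length $\theta$, pushed as large as the off-diagonal analysis permits. The hardest step is clearly the off-diagonal analysis in $M_2$: the absence of a saturated cubic large sieve forces a delicate coupling between Patterson's theta-function inputs and the Lindel\"of-on-average estimate, for which there is no analogue in either Soundararajan's quadratic setting or the Iwaniec--Sarnak treatment of all Dirichlet characters modulo $q$.
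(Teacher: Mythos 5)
Your outline follows the same strategy as the paper — mollified first and second moments, Cauchy--Schwarz, and the three inputs (Patterson's theta coefficients, Heath-Brown's cubic large sieve, a Lindel\"of-on-average second moment for cubic Dirichlet series) — but as written it has two genuine gaps, both at steps where the quadratic/unitary templates you cite actually break down.

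First, you never address the sieve to squarefree moduli. Writing $\mu^2(q)=M_Y(q)+R_Y(q)$ and controlling the tail $R_Y$ is where Soundararajan uses Cauchy--Schwarz against the fourth moment of quadratic $L$-functions, which rests on the perfectly orthogonal quadratic large sieve. That route is closed here: the cubic large sieve is not perfectly orthogonal, so no optimal fourth moment bound is available. The paper's replacement is to take the sup norm over the mollifier and bound the resulting sums by a twisted second moment over the \emph{full} family $\mathcal{F}_3$ of primitive cubic characters (not just the thin squarefree subfamily), via the cubic large sieve with decaying coefficients. Without some substitute for this step your $M_2(X)$ is only an asymptotic for the $M_Y$-part of the sum, not for the squarefree-restricted moment you need.

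Second, the constant $1/7$ is not something you can extract from a calculus-of-variations optimization alone: it equals $\theta/(1+\theta)$ at the maximal admissible mollifier length $\theta=1/6$, and determining that $\theta$ can be pushed to $1/6-\delta$ (but no further by this method) \emph{is} the off-diagonal analysis. Saying one will ``combine the three ingredients'' does not identify the mechanism. Concretely, after Poisson summation in $q$ the nonzero frequencies produce bilinear sums of cubic Gauss sums $\widetilde{g}_3(k\ell,n_1)\overline{\widetilde{g}_3(k\ell,n_2)}$; each $n_i$-sum splits, via the analytic continuation of $\psi(a,s)$, into a polar term (governed by $\tau_3$, hence again a cubic Gauss sum in the remaining variables) and a contour integral on $\Re(s)=1/2+\varepsilon$. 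One must then treat the four resulting products separately, with a case split between balanced and unbalanced $N_1,N_2$ and two different ways of handling the polar contribution (re-evaluating the short sum versus grouping variables and applying the cubic large sieve in the $\alpha$-aspect). The final bound $O(X^{5/6+\varepsilon})$ for the error, and hence the threshold $M\ll X^{1/6-\delta}$, emerges only from optimizing over these cases. Your sketch asserts the conclusion of this analysis without supplying it, so the claimed proportion $1/7-\varepsilon$ is not yet justified.
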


In order to enable further discussion, we introduce 
some notation. 
The unique ramified prime in $\mathbb{Z}[\omega]$ is $\lambda:=1+2 \omega$.
For $a \in \Z[\omega]$ and $\pi \equiv 1 \pmod{3}$ prime, the cubic Jacobi symbol
is defined by 
\begin{equation} \label{jacobidefine}
\Big ( \frac{a}{\pi} \Big )_{3} \equiv a^{\frac{N(\pi) - 1}{3}} \pmod{\pi}
\end{equation}
and the condition it take values in $\{0,1,\omega,\omega^2 \}$.
The cubic symbol is clearly multiplicative in $a$ and can be extended multiplicatively to all $b \equiv 1 \pmod{3}$ by setting
\begin{equation} \label{multdefine}
\Big ( \frac{a}{b} \Big )_{3} := \prod_{i} \Big ( \frac{a}{\pi_i} \Big )
\end{equation}
for any $b = \prod_{i} \pi_i$ with $\pi_i \equiv 1 \pmod{3}$ primes.

For $q \in \mathbb{Z}[\omega]$ with $q \equiv 1 \pmod{3}$, the cubic Dirichlet character 
\begin{equation} \label{chiqdef}
\chi_q(\alpha):=\Big( \frac{\alpha}{q} \Big)_3, \quad \alpha \in \mathbb{Z}[\omega],
\end{equation}
on $\mathbb{Z}[\omega]/q \mathbb{Z}[\omega]$
is a (cubic) Hecke character if $\chi_q(\omega)=1$ (i.e.\ trivial on units
of $\mathbb{Z}[\omega]$).  A given cubic Dirichlet character $\chi$ (as in \eqref{chiqdef}) is primitive 
if and only if $\chi=\chi_q$ for some $q \in \mathcal{C}_3$, 
where 
\begin{align} \label{C3def}
\mathcal{C}_3:= \big \{ q_1 q_2^2 : q_1, q_2 \in \mathbb{Z}[\omega], &\ q_1,q_2 \equiv 1 \pmod{3}, \nonumber \\ 
&\ \mu^2(q_1 q_2)=1, \text{ and } N(q_1 q_2^2) \equiv 1  \pmod{9} \big \}.
\end{align}
Each $\chi=\chi_{q_1 q^2_2}$ with $q_1 q_2^2 \in \mathcal{C}_3$ has conductor $\mathfrak{c_{\chi}}:=\text{cond}(\chi)=q_1 q_2 \mathbb{Z}[\omega]$. 
It is natural and technically convenient (cf.\ \cref{supprem}) to restrict to a family of Hecke characters 
$\chi=\chi_{q_1 q^2_2} $ such that $q_1 q^2_2 \in \mathcal{C}_3$ belongs to a fixed arithmetic progression modulo $9$, namely
\begin{align}  \label{F3def}
\mathcal{F}_3:= \big \{ 1 \neq q_1 q^2_2 \in \mathcal{C}_3 : &\ q_1,q_2 \equiv 1 \pmod{3}, \nonumber  \\ 
&\ \mu^2(q_1 q_2)=1, \text{ and } q_1 q_2^2 \equiv 1   \pmod{9} \big \},
\end{align}
and to a subfamily
\begin{equation} \label{F3primedef}
\mathcal{F}^{\prime}_3:= \big \{ q \in \mathcal{F}_3 : \mu^2(q)=1 \text{ and } q \equiv 1 \pmod{9} \big \} \subset \mathcal{F}_3.
\end{equation}
\begin{remark} \label{supprem}
Note that $\chi_q(\omega)=\chi_q(\lambda)=1$ for each $q \in \mathcal{F}_3$ by \eqref{cubesupp}.
\end{remark}
For $X \geq 1/2$, let
\begin{align*} 
\mathcal{F}_3(X)&:= \big \{ q_1 q^2_2 \in \mathcal{F}_3: N(\text{cond}(\chi_{q_1 q^2_2})) \leq X \big \}, \\
\mathcal{F}^{\prime}_3(X)&:= \big \{ q \in \mathcal{F}^{\prime}_3: N(\text{cond}(\chi_{q})) \leq X \big \}.
\end{align*}

For primitive cubic characters over $\mathbb{Q}$, the relevant Gauss sums are not defined 
over the ground field. For primitive cubic characters of $\mathbb{Q}(\omega)$, the full family $\mathcal{F}_3$ has 
``too many characters", as $|\mathcal{F}_3(X)| \sim C_1  X \log X$ for some $C_1>0$. 
Hence, the thin sub-family $\mathcal{F}^{\prime}_3 \subset \mathcal{F}_3$ of linear density
(i.e.\ $|\mathcal{F}^{\prime}_3(X)| \sim C_2  X$ for some $C_2>0$)
has received considerable attention in the literature, and it is the subject of
\cref{thm:main}.
Luo \cite{Luo} established an asymptotic formula for the first moment, as well as
an upper bound for the second moment, and proved that
$L(1/2,\chi_q) \neq 0$ for $q$ belonging to a density zero (infinite) subset of $\mathcal{F}_3^{\prime}$.

Friedberg, Hoffstein, and Liemann \cite{FHL} used multiple Dirichlet series to compute the first moment
(with certain arithmetic weights) of Hecke $L$-functions 
attached to the family of $n$-th order characters 
over a number field 
containing $\mathbb{Q}(\zeta_n)$.
Diaconu \cite{Dia} used the Rankin--Selberg convolution of metaplectic Eisenstein series
on the $n$-fold cover of $\operatorname{GL}_2$ to construct multiple Dirichlet series whose
Dirichlet coefficients are the absolute value squared of twists of Hecke $L$-functions.
Diaconu used this construction
to unconditionally establish the second moment (with certain arithmetic weights) 
of Hecke $L$-functions
attached to the family of $n$-th order characters 
over a number field containing $\mathbb{Q}(\zeta_n)$. We point out that these moments 
(similar to \cite{FHL}) are \emph{not} sieved down to 
squarefree ideals, unlike our \cref{secondmomentcor}.
Interestingly, Diaconu conjectured \cite[Conjecture~4.5]{Dia} the existence of  a second order main term
$X^{1/2+1/n} Q(\log X)$ in the asymptotics for the second moment
over squarefree ideals, where $Q(x)$ is a linear polynomial depending on $n$ and the number field.

These results were further developed into density zero (infinite) non-vanishing results
by Blomer, Goldmakher and Louvel \cite{BGL}, where they proved 
a large sieve inequality for the thin family of $n$-th order Hecke characters.
Using one-level density, the first author and G\"{u}lo\u{g}lu \cite{DG} proved under GRH that
$L(1/2, \chi_q) \neq 0$ for a positive proportion (namely 2/13) of $q \in \mathcal{F}^{\prime}_3$.

The full family $\mathcal{F}_3$ has been studied by G\"{u}lo\u{g}lu and Yesilyurt in \cite{GY}.
They proved an unconditional asymptotic formula for the first mollified moment, upper bounds 
for all mollified higher moments under GRH, and consequently established a
conditional positive proportion (around $\exp(-e^{101})$) of non-vanishing for Hecke $L$-functions in the family $\mathcal{F}_3$. 
In a follow-up work,
G\"{u}lo\u{g}lu \cite{Gu} proved unconditionally that 
$L(1/2,\chi_q) \neq 0$ for $q$ belonging to a density zero (infinite) subset of $\mathcal{F}_3$.

Baier and Young \cite{BY} computed the first moment of Dirichlet $L$-functions attached to
primitive cubic characters over $\mathbb{Q}$, and as a consequence showed non-vanishing of this family for a density zero (infinite) subset. Under GRH, Gao and Zhao \cite{GZ2} computed the order of magnitude of all mollified $k$-th moments for the same family of cubic characters over $\Q$, and (conditionally) obtained a positive proportion of non-vanishing.

Much more is known unconditionally about moments and non-vanishing for $L$-functions over function fields, since the corresponding Riemann Hypothesis was proved by Deligne \cite{De74, De80}.
Florea computed the first four moments of quadratic Dirichlet $L$-functions over function fields
\cite{Flo1,Flo2,Flo3}, exhibiting a secondary term below square-root cancellation for the first moment. Using the one-level density, Bui and Florea \cite{BF} proved
that $94 \%$ of quadratic Dirichlet $L$-functions over function fields do not vanish at the central point.
The first author together with Florea and Lal\'{i}n \cite{DFL} proved that a 
positive proportion of cubic Dirichlet $L$-functions over function fields do not
vanish at the central point in the non-Kummer case (when the ground field does not contain a third root of unity).
Ellenberg, Li, and Shusterman \cite{ELS} used algebraic geometry techniques to show that 
Dirichlet $L$-functions associated with $\ell$-th order twists 
do not vanish for a density zero (infinite) subset of the family (this extended and improved an earlier paper of 
the first author, Florea, and Lal\'{i}n \cite{DFL2} that concerned the cubic case). 
Subsequent to the writing of this paper,
the first author together with Florea and Lal\'{i}n proved that a positive proportion of 
Dirichlet $L$-functions over function fields attached to $\ell$-th order twists do not vanish at the central point
\cite{DFL25}. Also, subsequent to the writing of this paper, Goel and Ray \cite{GR} asymptotically computed the second moment
of cubic Dirichlet $L$-functions over the rational function field $\mathbb{F}_q(T)$, where $q$ is an odd power of a prime
satisfying $q \equiv 2 \pmod{3}$. In a recent preprint that uses a novel approach from algebraic topology, Bergstr\"{o}m, Diaconu, Petersen, 
and Westerland \cite{BDPW} have established unconditional asymptotic formulae for all moments of 
quadratic Dirichlet $L$-functions over function fields at the central point. This paper makes striking progress toward a conjecture 
of Conrey, Farmer, Keating, Rubinstein, and Snaith \cite{CFKRS}. We also point out that the regime for the asymptotics in
\cite{BDPW} is when the size of the ground field 
is sufficiently large in terms of the exponent in the moment. 

Upper bounds for all moments of $\zeta(1/2+it)$ were established under GRH 
in a seminal paper of Soundararajan \cite{Sou2}, and later refined by Harper \cite{Har}.
Lester and Radziwi{\l\l} further developed these ideas to provide 
upper bounds on all mollified moments for $L$-functions attached to quadratic twists of modular forms.
We would like to highlight that a substantial part of the recent literature on mollified moments and non-vanishing 
results for $L$-functions associated 
with cubic characters, in both the number field case \cite{GY,GZ2} and the function field case \cite{DFL},
are owed to the circle of ideas in \cite{Sou2,Har,LR}. In this paper we adopt a new line of attack,
see \cref{hlsketch} for more details.

In a different direction,
 Balasubramanian and Murty \cite{BalMu} proved that 
 $L(1/2,\chi) \neq 0 $ for a small proportion of primitive Dirichlet characters $\chi \pmod{q}$ (a unitary family),
 with $q \in \mathbb{Z}$ a large prime modulus.
 Iwaniec and Sarnak \cite{IS} improved this proportion to $1/3$ using the method of mollified moments.
 The proportion of non-vanishing for this family was later improved by Khan, Mili\'{c}evi\'{c}, and Ngo \cite{KMN}. 
Heath-Brown \cite{HB3} was the first to prove an asymptotic for the fourth moment (valid for a density one set of integers $q$),
later extended by Soundararajan \cite{Sou3} to all integers. 
 In an important work, Young \cite{You} in 2011 further sharpened the fourth moment asymptotic to have an error term
 that was a power saving over the main term for a prime modulus $q$.
 The power saving error term established by Young was later improved by Blomer et al \cite{BFKMM}.
 The asymptotic for this family was later extended to a general modulus
 $q$ by Wu \cite{Wu}.

\subsection{Statement of results on moments}
We now outline our new results on moments.
We follow the notation and set-up in \cite[\S 1]{Sou}, suitably adapted, and with different normalizations.
 
For $q \in \mathbb{Z}[\omega]$ with $q \equiv 1 \pmod{9}$,
consider the mollifier 
\begin{equation} \label{mollifier}
\mathcal{M}(q):=\sum_{\substack{0 \neq \mathfrak{b} \unlhd \Z[\omega] \\ N(\mathfrak{b}) \leq M}} \lambda(\mathfrak{b}) \sqrt{N(\mathfrak{b})} \Big( \frac{\mathfrak{b}}{q} \Big)_3,
\end{equation}
for a $\C$-valued sequence $\boldsymbol{\lambda}:=(\lambda(\mathfrak{b}))_{0 \neq \mathfrak{b} \unlhd \mathbb{Z}[\omega]}$
to be chosen later, and supported only on squarefree ideals coprime with $3$. We also assume that $\lambda(\mathfrak{b}) \ll_{\varepsilon} N(\mathfrak{b})^{-1+\varepsilon}$.
The notation $\big (\frac{\mathfrak{b}}{q} \big)_3$ is well-defined since $q \equiv 1 \pmod{9}$, see \cref{supprem}.

Let $\boldsymbol{\beta}$ be any $\mathbb{C}$-valued sequence and $f$ denote a Schwartz function compactly supported in the interval $(1,2)$.  
Define 
\begin{equation}
\mathcal{S}(\boldsymbol{\beta};f) = \mathcal{S}_X(\boldsymbol{\beta};f):=\sum_{\substack{q \in \mathbb{Z}[\omega] \\ q \equiv 1 \pmod{9} }} 
\mu^2(q) \beta_q f \Big( \frac{N(q)}{X} \Big).
\end{equation}
Let $Y>1$ be a parameter to be chosen later, and write
\begin{equation} \label{mudecomp}
\mu^2(q)=M_Y(q)+R_Y(q),
\end{equation}
where 
\begin{equation} \label{MYRYdef}
M_Y(q):=\sum_{\substack{ \mathfrak{l}^2 \mid q \mathbb{Z}[\omega] \\ N(\mathfrak{l}) \leq Y  }} \mu(\mathfrak{l}) \quad \text{and} 
\quad R_Y(q):=\sum_{\substack{ \mathfrak{l}^2 \mid q \mathbb{Z}[\omega] \\ N(\mathfrak{l}) > Y  }} \mu(\mathfrak{l}).
\end{equation}
Define 
\begin{align} \label{SMdef}
\mathcal{S}_M(\boldsymbol{\beta};f)=\mathcal{S}_{M,X,Y}(\boldsymbol{\beta};f)
:=\sum_{\substack{q \in \mathbb{Z}[\omega] \\ q \equiv 1 \pmod{9}}} M_Y(q) \beta_q f \Big( \frac{N(q)}{X} \Big),
\end{align}
and 
\begin{equation} \label{SRdef}
\mathcal{S}_R(\boldsymbol{\beta};f)=\mathcal{S}_{R,X,Y}(\boldsymbol{\beta};f)
:=\sum_{\substack{q \in \mathbb{Z}[\omega] \\ q \equiv 1 \pmod{9}}}  |R_Y(q) \beta_q| f \Big( \frac{N(q)}{X} \Big).
\end{equation}
Hence, if $f$ is non-negative, then
\begin{equation} \label{nonnegproperty}
\mathcal{S}(\boldsymbol{\beta};f)=\mathcal{S}_{M}(\boldsymbol{\beta};f)+
O(\mathcal{S}_R(\boldsymbol{\beta};f)).
\end{equation}

In this notation, we seek to evaluate the mollified moments 
\begin{equation*}
\mathcal{S}(L(1/2, \chi_q) \mathcal{M}(q); F) \qquad \text{and} \qquad \mathcal{S}(|L(1/2,\chi_q) \mathcal{M}(q)|^2; F),
\end{equation*}
where $F$ is a non-negative Schwartz function compactly supported on $(1,2)$ and satisfying $0 \leq F(t) \leq 1$ for all $t \in \R$. In view of the 
approximate functional equations in \cref{afecenter}, we need asymptotic formulae for 
\begin{equation} \label{SMstate}
\mathcal{S}_M(\mathcal{M}(q)[A_1(q)+\widetilde{g}_3(q) \cdot \overline{A_1(q)}]; F) \quad \text{and} \quad \mathcal{S}_M( |\mathcal{M}(q)|^2 A_2(q); F),
\end{equation}
and (since $|\widetilde{g}_3(q)| \leq 1$) good estimates for 
\begin{equation*}
\mathcal{S}_R(|\mathcal{M}(q)A_1(q)|; F) \quad \text{and} \quad \mathcal{S}_R( |\mathcal{M}(q)^2 A_2(q)|; F),
\end{equation*}
where $A_1(q)$ and $A_2(q)$ are given in \eqref{A1def} and \eqref{A2def} respectively, and $\widetilde{g}_3(q)$
is the normalized cubic Gauss sum (see \eqref{generalgauss} and \eqref{normalized}).

\begin{prop} \label{SRestimate1}
    Let $\varepsilon > 0$ and $1 \leq Y \leq X^{100}$. Assume that $1 \leq M \leq X^{100}$ in \eqref{mollifier}. Then 
    \begin{align*}
        \mathcal{S}_R(|\mathcal{M}(q) A_1(q)|; F) \ll_{\varepsilon} X^\varepsilon \Big( \frac{X}{Y^{5/6}} + X^{13/18} + \frac{X^{5/6}M^{1/3}}{Y^{1/2}} + X^{2/3} M^{1/3} + X^{7/12} M^{1/2} \Big).
    \end{align*}
\end{prop}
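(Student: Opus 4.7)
The plan is to expand $|R_Y(q)|$ as a sum over the large prime-square divisors of $q$ and swap the order of summation, which reduces the estimation to bilinear forms in which the cubic character $\chi_r$ averages over a family. Using $|R_Y(q)| \leq \#\{\mathfrak{l} \text{ prime}: \mathfrak{l}^2\mid q\Z[\omega],\, N(\mathfrak{l}) > Y\}$ and parametrising $q = \ell^2 r$ with $\ell$ the primary generator of $\mathfrak{l}$, the multiplicativity of the cubic symbol gives $\chi_{\ell^2 r}(\mathfrak{c}) = \overline{\chi_\ell(\mathfrak{c})}\,\chi_r(\mathfrak{c})$ for $\mathfrak{c}$ coprime to $\ell$. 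Thus both $\mathcal{M}(\ell^2 r)$ and $A_1(\ell^2 r)$ become Dirichlet polynomials in the variable $\chi_r$, with $r$ ranging in an arithmetic progression modulo $9$ of size $R := X/N(\ell)^2$, and
\[
\mathcal{S}_R(|\mathcal{M}A_1|; F) \leq \sum_{\substack{\mathfrak{l}\text{ prime}\\ N(\mathfrak{l}) > Y}} \sum_r |\mathcal{M}(\ell^2 r)\,A_1(\ell^2 r)|\,F\!\left(\frac{N(\ell^2 r)}{X}\right).
\]

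For each prime $\mathfrak{l}$ I would apply Cauchy--Schwarz on the $r$-sum, reducing matters to the two second moments $\sum_r |\mathcal{M}(\ell^2 r)|^2$ and $\sum_r |A_1(\ell^2 r)|^2$. The first is controlled by Heath-Brown's cubic large sieve, yielding an $(R + M + (RM)^{2/3})X^\varepsilon$ bound (using the trivial input $\sum_{\mathfrak{b}}|\lambda(\mathfrak{b})|^2 N(\mathfrak{b}) \ll X^\varepsilon$ from the hypothesis $\lambda(\mathfrak{b})\ll N(\mathfrak{b})^{-1+\varepsilon}$). The second is essentially a second moment of cubic Hecke $L$-functions over the shifted family $r \mapsto \chi_{\ell^2 r}$; the Lindel\"of-on-average bound the authors announce in the abstract gives $\sum_r |A_1(\ell^2 r)|^2 \ll R \cdot X^\varepsilon$ in its range of validity, while Heath-Brown's cubic large sieve applied to the approximate-functional-equation polynomial of length $\asymp X^{1/2}$ provides the backup bound $(R + X^{1/2} + (RX^{1/2})^{2/3})X^\varepsilon$ in the regime where $R$ is too small for the Lindel\"of estimate to be effective.

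To finish I would sum over primary primes $\ell$ with $N(\ell) \in (Y, X^{1/2}]$, using the prime-counting density $\sim L/\log L$ per dyadic block $[L, 2L]$ in $\Z[\omega]$ and splitting dyadically according to which terms of the above second moment bounds dominate. Each of the resulting cross products from the expansion of the Cauchy--Schwarz factors will, after the prime summation and optimisation in the threshold for switching between the Lindel\"of-on-average bound and the cubic large sieve for $A_1$, collapse into one of the five terms of the proposition: the $Y$-dependent contributions $X/Y^{5/6}$ and $X^{5/6}M^{1/3}/Y^{1/2}$ come from the Lindel\"of regime paired with the $R$ and $(RM)^{2/3}$ contributions of the large sieve for $\mathcal{M}$, while the $Y$-independent floor terms $X^{13/18}$, $X^{2/3}M^{1/3}$, and $X^{7/12}M^{1/2}$ arise from the supplementary large-sieve bound for $A_1$ when $N(\ell)$ is large enough that $R$ drops below the Lindel\"of range. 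The main technical obstacle is this final dyadic bookkeeping over the various sub-ranges of $N(\ell)$, together with verifying that the Lindel\"of-on-average second moment bound transfers with uniformity in $\ell$ to the shifted family $\chi_{\ell^2 r}$ (so that the local Euler factors at $\ell$ contribute only an $X^\varepsilon$ loss).
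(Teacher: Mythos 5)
Your overall skeleton — decompose $q=\ell^2 m$ with $N(\ell)>Y$, apply Cauchy--Schwarz in the family variable to separate $|\mathcal{M}|^2$ from $|A_1|^2$, control the mollifier second moment by Heath-Brown's cubic large sieve and the $L$-function second moment by the approximate functional equation plus the large sieve, then sum dyadically over $\ell$ — is exactly the paper's strategy. But two of your reductions fail. First, the inequality $|R_Y(q)|\leq \#\{\mathfrak{l}\text{ prime}:\mathfrak{l}^2\mid q\Z[\omega],\ N(\mathfrak{l})>Y\}$ is false: by \eqref{MYRYdef}, $R_Y(q)$ is a sum of $\mu(\mathfrak{l})$ over \emph{all} squarefree $\mathfrak{l}$ with $\mathfrak{l}^2\mid q\Z[\omega]$ and $N(\mathfrak{l})>Y$, and one can have $R_Y(q)\neq 0$ with no prime square divisor of norm $>Y$ (take $q$ divisible by $\mathfrak{p}^2\mathfrak{q}^2$ with $N(\mathfrak{p}),N(\mathfrak{q})\leq Y<N(\mathfrak{p}\mathfrak{q})$). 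The correct reduction only gives $q=\ell^2 m$ with $m$ squarefree, $N(\ell)>Y$ and $\ell$ arbitrary; then $\chi_{\ell^2 m}$ is genuinely imprimitive, and one must set $d=(m,\ell)$, factor $\ell/d=ab^2c^3$, and identify the underlying primitive character $\chi_{eb}\overline{\chi_a}$ together with the finite Euler factor at primes dividing $cd$. This is not cosmetic: the large sieve and second moment inputs (\cref{cubic_Dirichlet_prop}, \cref{secondmoment}) are stated for the primitive family $\mathcal{F}_3(Q_1,Q_2)$, the quantity $Q^\star=\min\{Q_1,Q_2\}$ enters the bounds, and the five exponents in the proposition emerge from a five-fold dyadic optimization over $a,b,c,d,e$ with the case split $A\lessgtr BE$ — not from your prime-versus-composite picture.

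Second, the bound $\sum_r|A_1(\ell^2 r)|^2\ll R\,X^{\varepsilon}$ that you attribute to the authors' Lindel\"of-on-average result is not available. The Lindel\"of-on-average statement in the paper (\cref{second_moment_lindelof_lemma}) concerns the second moment, in the twist aspect, of the cubic Dirichlet series $\psi(a,s)$ built from Gauss sums; it is used in Sections 6--8 for the \emph{main-term} analysis and says nothing about $L(1/2,\chi_q)$. The actual input here is \cref{secondmoment}, whose bound $Q^\star(Q_1Q_2)^{1/2}+(Q^\star)^{1/3}Q_1Q_2$ exceeds the family size whenever both conductor factors are large — precisely the failure of perfect orthogonality of the cubic large sieve that the paper emphasizes. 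So your "Lindel\"of regime" does not exist, and your backup route (AFE of length $\asymp(X/N(\ell))^{1/2}$ plus the large sieve in $r$ alone for fixed $\ell$) gives weaker bounds than applying the large sieve to the full two-parameter primitive family as the paper does. The proposal would need both of these points repaired before the claimed five-term bound could be recovered.
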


\begin{prop} \label{SRestimate2}
    Let $\varepsilon > 0$ and $1 \leq Y \leq X^{100}$. Assume that $1\leq M \leq X^{100}$ in \eqref{mollifier}. Then 
    \begin{align*}
        \mathcal{S}_R(|\mathcal{M}(q)^2 A_2(q)|; F) \ll_{\varepsilon} X^{\varepsilon} \Big( X \Big(\frac{M}{Y}\Big)^{2/3} + X^{5/6} M \Big).
    \end{align*}
\end{prop}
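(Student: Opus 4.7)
\emph{Proof plan.} Since $R_Y(q)$ is supported on those $q$ possessing a square divisor $\mathfrak{l}^2$ with $N(\mathfrak{l}) > Y$, the trivial estimate $|R_Y(q)| \leq \#\{\mathfrak{l} : \mathfrak{l}^2 \mid q,\, N(\mathfrak{l}) > Y\}$ together with the compact support of $F$ reduces the proposition to controlling, for each dyadic $L \in (Y, \sqrt{2X}\,]$, the quantity
\[
T_L := \sum_{\substack{\mathfrak{l} \\ L < N(\mathfrak{l}) \leq 2L}} \sum_{\substack{q' \\ \mathfrak{l}^2 q' \equiv 1 \pmod{9} \\ N(q') \asymp X/L^2}} |\mathcal{M}(\mathfrak{l}^2 q')|^2 |A_2(\mathfrak{l}^2 q')|,
\]
after the substitution $q = \mathfrak{l}^2 q'$. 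The factorisation $\chi_{\mathfrak{l}^2 q'} = \chi_\mathfrak{l}^{\,2}\chi_{q'}$ makes the $\mathfrak{l}$-dependence of both $\mathcal{M}$ and $A_2$ enter only as unit-modulus coefficient twists, so a Cauchy--Schwarz inequality in the joint $(\mathfrak{l},q')$ variable gives
\[
T_L \leq \bigg( \sum_{\mathfrak{l}, q'} |\mathcal{M}(\mathfrak{l}^2 q')|^4 \bigg)^{1/2} \bigg( \sum_{\mathfrak{l}, q'} |A_2(\mathfrak{l}^2 q')|^2 \bigg)^{1/2}.
\]

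I would bound the first factor via Heath-Brown's cubic large sieve. Expanding $\mathcal{M}(\mathfrak{l}^2 q')^2 = \sum_{N(\mathfrak{a}) \leq M^2} c_\mathfrak{a}(\mathfrak{l})\,\chi_{q'}(\mathfrak{a})$, the hypothesis $\lambda(\mathfrak{b}) \ll N(\mathfrak{b})^{-1+\varepsilon}$ yields $\sum_\mathfrak{a} |c_\mathfrak{a}(\mathfrak{l})|^2 \ll X^\varepsilon$ uniformly in $\mathfrak{l}$, so the large sieve applied to the characters $\chi_{q'}$ of norm $\leq X/L^2$, summed over the $\asymp L$ admissible $\mathfrak{l}$, produces
\[
\sum_{\mathfrak{l}, q'} |\mathcal{M}(\mathfrak{l}^2 q')|^4 \ll_\varepsilon X^\varepsilon \bigg(\frac{X}{L} + L M^2 + \frac{X^{2/3} M^{4/3}}{L^{1/3}}\bigg).
\]
For the second factor I would invoke the Lindel\"of-on-average upper bound for the second moment of cubic Dirichlet series developed earlier in the paper. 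Since each $\chi_{\mathfrak{l}^2 q'}$ is induced from a primitive cubic character whose conductor has norm $\asymp L\cdot N(q') \leq X/L$ (and the omitted Euler factors at primes dividing $\mathfrak{l}$ contribute only an $O(1)$ multiplicative factor), this input yields
\[
\sum_{\mathfrak{l}, q'} |A_2(\mathfrak{l}^2 q')|^2 \ll_\varepsilon \frac{X^{1+\varepsilon}}{L}.
\]

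Combining the two estimates gives $T_L \ll_\varepsilon X^\varepsilon (X/L + X^{1/2} M + X^{5/6} M^{2/3}/L^{2/3})$, and dyadic summation over $L \in (Y, \sqrt{2X}\,]$ yields
\[
\mathcal{S}_R(|\mathcal{M}(q)^2 A_2(q)|; F) \ll_\varepsilon X^\varepsilon \bigg(\frac{X}{Y} + X^{1/2} M + \frac{X^{5/6} M^{2/3}}{Y^{2/3}}\bigg).
\]
The elementary inequalities $X/Y \leq X(M/Y)^{2/3}$ (which is equivalent to $M^2 Y \geq 1$), $X^{5/6} M^{2/3}/Y^{2/3} \leq X(M/Y)^{2/3}$, and $X^{1/2} M \leq X^{5/6} M$ absorb each summand into the claimed bound. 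The principal obstacle in this plan is obtaining the Lindel\"of-on-average second moment bound for $A_2$: a direct application of Heath-Brown's cubic large sieve to $A_2$ (of length $\asymp X$) would produce a $(QN)^{2/3}$ term of size $\asymp X^{4/3}/L$ that is too large, and the stronger unconditional bound, which ultimately rests on Patterson's formula for the Fourier coefficients of the cubic metaplectic theta function, is the crucial input that makes the estimate work.
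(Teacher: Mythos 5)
Your plan founders at the second Cauchy--Schwarz factor. The claimed estimate $\sum_{\mathfrak{l},q'}|A_2(\mathfrak{l}^2q')|^2 \ll X^{1+\varepsilon}/L$ is, via $|L(1/2,\chi_q)|^2 = 2A_2(q)$, a Lindel\"of-on-average bound for the \emph{fourth} moment of cubic Hecke $L$-functions over a family of size $\asymp X/L$. No such bound is known unconditionally, and the paper explicitly identifies this as the reason the Soundararajan-style argument (Cauchy--Schwarz plus an optimal fourth moment, which is exactly your plan) breaks down in the cubic case: by the Dunn--Radziwi{\l\l} result the cubic large sieve is not perfectly orthogonal, so the $(QN)^{2/3}$ term with $N \asymp Q \asymp X/L$ produces $\asymp (X/L)^{4/3}$, as you yourself note. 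Your proposed rescue --- that ``the stronger unconditional bound, which ultimately rests on Patterson's formula'' supplies the missing input --- conflates two different objects: the Patterson-based Lindel\"of-on-average result in the paper (\cref{second_moment_lindelof_lemma}) controls $\sum_m |\psi(hm,\sigma+it)|^2$ for the Dirichlet series of cubic \emph{Gauss sums} in the twist aspect, and is used in evaluating the main term $\mathcal{S}_M$; it says nothing about the fourth moment of the $L$-functions themselves, and no result in the paper does. (A secondary issue: even granting a fourth moment bound for non-principal characters, the cubes $\mathfrak{l}^2 q' = \tinycube$ give principal characters with $A_2 \gg X^{1/2}$, contributing $\gg X^{4/3}$ to your second factor unless excluded separately.)

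The paper's actual proof avoids Cauchy--Schwarz altogether. It writes $|\mathcal{M}(q)|^2|A_2(q)|$ as a twisted second moment $|L(1/2+s,\chi)\,\mathcal{R}(\cdot)|^2$ with the mollifier $\mathcal{R}$ of length $M$ kept attached to the $L$-function, reduces $\chi_{\ell^2 m}$ to a primitive character $\chi_{eba^2}$ via the factorization $m=de$, $\ell=d\ell'$, $\ell'=ab^2c^3$, and then applies the cubic large sieve (\cref{secondmoment}, built on \cref{cubic_Dirichlet_prop}) to the single combined Dirichlet polynomial of length $\approx Q^{1/2}M$ rather than length $Q$. The off-diagonal term then reads $\min(EB,A)^{1/3}\,ABE\,M^{2/3} \ll X M^{2/3}/(AB^2C^3D)^{2/3} \ll XM^{2/3}/Y^{2/3}$, which is exactly the saving your route cannot access because you have severed the short mollifier from the long $A_2$ before invoking the sieve.
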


The analogous result in \cite{Sou} to our \cref{SRestimate2} 
makes use of the Cauchy--Schwarz inequality, and an optimal 
bound for the fourth moment of quadratic Dirichlet $L$-functions.
The latter bound is owed to a perfectly orthogonal large sieve
bound for primitive quadratic characters due to Heath-Brown \cite{HB2}.
The third author and Radziwi{\l\l} \cite{DR} proved (under GRH)
that the cubic large sieve is \emph{not} perfectly orthogonal,
so an optimal fourth moment bound is not available in the cubic case.
Instead, we opt to take the supremum norm over the mollifier,
and use an upper bound for the second moment of Hecke $L$-functions indexed by $\mathcal{F}_3$ 
(not the thin starting family $\mathcal{F}^{\prime}_3$).
These arguments appear in
Sections \ref{second_moment_section} and \ref{SRproofsection}.

We now turn our attention to the quantities in \eqref{SMstate}.
In order to compute them, consider
\begin{align} \label{SMexpand}
& \mathcal{S}_M(\mathcal{M}(q) [A_1(q)+\widetilde{g}_3(q) \cdot \overline{A_1(q)}];F) \nonumber \\
& =\sum_{\substack{ 0 \neq  \mathfrak{b} \unlhd \mathbb{Z}[\omega] \\ N(\mathfrak{b}) \leq M  \\ 
(\mathfrak{b},3)=1}} \lambda(\mathfrak{b}) \sqrt{N(\mathfrak{b} )}
\mathcal{S}_M \Big(\Big( \frac{\mathfrak{b}}{q} \Big)_3 \Big[A_1(q)+\widetilde{g}_3(q) \cdot \overline{A_1(q)}\Big]; F \Big),
\end{align}
and
\begin{align} \label{SMexpand2}
& \mathcal{S}_M(|\mathcal{M}(q)|^2 A_2(q);F) \nonumber \\
&=\mathop{\sum \sum}_{\substack{ 0 \neq  \mathfrak{b}_1,\mathfrak{b}_2 \unlhd \mathbb{Z}[\omega] \\ N(\mathfrak{b}_1), 
N(\mathfrak{b}_2) \leq M  \\ (\mathfrak{b}_1 \mathfrak{b}_2,3)=1}} \lambda(\mathfrak{b}_1) \overline{ \lambda(\mathfrak{b}_2)} \sqrt{N(\mathfrak{b}_1 \mathfrak{b}_2 )}
\mathcal{S}_M \Big( \Big( \frac{\mathfrak{b}_1}{q} \Big)_3 \overline{\Big( \frac{\mathfrak{b}_2}{q} \Big)_3} A_2(q); F \Big).
\end{align}
Thus it suffices to compute respectively
\begin{equation} \label{general}
\mathcal{S}_M \Big( \Big( \frac{\mathfrak{b}}{q} \Big)_3 \Big[A_1(q)+\widetilde{g}_3(q) \cdot \overline{A_1(q)}\Big] ; F \Big),
\quad \text{ for } 0 \neq \mathfrak{b} \unlhd \mathbb{Z}[\omega],
\end{equation}
and
\begin{equation} \label{general2}
\mathcal{S}_M \Big( \Big( \frac{\mathfrak{b}_1}{q} \Big)_3 \overline{\Big( \frac{\mathfrak{b}_2}{q} \Big)_3} A_2(q); F \Big),
\quad \text{ for } 0 \neq \mathfrak{b}_1,\mathfrak{b}_2 \unlhd \mathbb{Z}[\omega],
\end{equation}
where $\mathfrak{b}$, $\mathfrak{b}_1$, and $\mathfrak{b}_2$ are squarefree and coprime with $3$.

In order to state our main asymptotic formulas, denote
\begin{equation*}
    \check{F}(w) := \int_0^\infty F(t) t^w dt.
\end{equation*}
Our work leads to the following result on the first moment.

\begin{prop} \label{SMestimate1}
    Let $0 \neq \mathfrak{b} \unlhd \Z[\omega]$ be squarefree and coprime with $3$. Assume that $1\leq Y \leq X^{100}$ and $N(\mathfrak{b}) Y^2 \leq X^{1/2-\nu}$ for some fixed $\nu>0$. Then
    \begin{align}
        \mathcal{S}_M \Big( \Big( \frac{\mathfrak{b}}{q} \Big)_3 \Big[A_1(q) + \widetilde{g}_3(q) \cdot \overline{A_1(q)}\Big]; F \Big) = &\ C X \check{F}(0) \frac{r(\mathfrak{b})}{N(\mathfrak{b})} + O \Big( \frac{X}{Y N(\mathfrak{b})}\Big) \nonumber \\
        &\ + O_{F, \nu, \varepsilon} \Big( X^\varepsilon \Big(X^{3/4} N(\mathfrak{b})^{1/2} + \frac{X^{5/6}}{N(\mathfrak{b})^{1/2}} + \frac{X^{11/12}}{N(\mathfrak{b})^{2/3}} \Big) \Big)\nonumber
    \end{align}
    for any $\varepsilon>0$, where 
    \begin{align}
        C := \frac{\pi}{36(\sqrt{3}-1) \cdot \zeta_{\Q(\omega)}(2)} \prod_{\substack{\mathfrak{p} \text{ prime} \\ (\mathfrak{p}, 3) = 1 \\ q\; :=\; N(\mathfrak{p})}} \Big( 1 + \frac{q}{(q+1)(q^{3/2}-1)} \Big) \label{Cdef}
    \end{align}
    and $r$ is the multiplicative function given, for $\mathfrak{p}$ prime, $k\geq 1$, and $q := N(\mathfrak{p})$, by 
    \begin{equation}
        r(\mathfrak{p}^k) := \frac{q^{5/2}}{q^{5/2} + q^{3/2} - 1} = 1 + O\Big(\frac{1}{N(\mathfrak{p})}\Big). \label{r_prime_def}
    \end{equation}
\end{prop}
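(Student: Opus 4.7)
The plan is to unfold the approximate functional equation and reduce the quantity to smooth character sums over $q \equiv 1 \pmod 9$. Writing schematically
\[ A_1(q) = \sum_{\mathfrak{n}} \frac{\chi_q(\mathfrak{n})}{\sqrt{N(\mathfrak{n})}}\, V\!\left(\frac{N(\mathfrak{n})}{\sqrt{X}}\right) \]
for a suitable smooth cutoff $V$ (and analogously for the dual piece), I substitute $M_Y(q) = \sum_{\mathfrak{l}^2 \mid q\Z[\omega],\, N(\mathfrak{l}) \leq Y} \mu(\mathfrak{l})$, write $q = \ell^2 q'$ where $\ell$ is a generator of $\mathfrak{l}$, and interchange summations. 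The innermost sum then runs over $q'$ in a fixed arithmetic progression modulo $9\ell^2$ with coprimality to $\mathfrak{l}\mathfrak{n}\mathfrak{b}$, weighted by $\chi_{q'\ell^2}(\mathfrak{b}\mathfrak{n})\, f(N(\ell^2 q')/X)$. The whole expression splits into a ``main'' sum stemming from $A_1(q)$ and a ``dual'' sum twisted by the cubic Gauss-sum factor $\widetilde{g}_3(q)$.

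For the main sum I apply cubic reciprocity to swap $\chi_{q'}(\mathfrak{b}\mathfrak{n})$ for $\chi_{\mathfrak{b}\mathfrak{n}}(q')$; because $q'$ lies in a determined residue class modulo $9$ (cf.\ \cref{supprem}), the root-of-unity correction is under control. The resulting inner sum is a Hecke character sum against a smooth weight in a fixed progression, which I evaluate by Mellin inversion and contour shift. The dominant contribution arises precisely when $\mathfrak{b}\mathfrak{n}$ is the cube of an integral ideal, in which case the underlying Dirichlet series has a pole at $s=1$. Collecting residues over $\mathfrak{n}$ produces the main term $CX \check F(0)\, r(\mathfrak{b})/N(\mathfrak{b})$: the local factor $r(\mathfrak{p}^k)$ in \eqref{r_prime_def} comes from the Euler product encoding which $\mathfrak{n}$ complete $\mathfrak{b}$ to a cube, and the global constant $C$ in \eqref{Cdef} from the corresponding residue. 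Non-principal characters, treated via Poisson summation in $\Z[\omega]$, contribute the off-diagonal error $X^{3/4} N(\mathfrak{b})^{1/2}$, while the discarded tail $N(\mathfrak{l}) > Y$ accounts for $X/(Y N(\mathfrak{b}))$.

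The principal difficulty lies in the dual sum from $\widetilde{g}_3(q)\,\overline{A_1(q)}$. After cubic reciprocity and a further Mellin/Poisson step, the inner $q'$-sum couples $\overline{\chi_{\mathfrak{b}\mathfrak{n}}(q')}$ with the normalised cubic Gauss sum of $q'$, so the outer bilinear form is recast as a sum over Fourier coefficients of the cubic metaplectic theta function on $\operatorname{GL}_2$. I would then invoke Patterson's asymptotic evaluation of these coefficients to isolate an arithmetic main contribution (which should vanish thanks to the sign configuration forced by $q \equiv 1 \pmod 9$) from a genuine error, and apply Heath-Brown's cubic large sieve to bound the resulting bilinear form in $(\ell, \mathfrak{n})$. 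The two error terms $X^{5/6}/N(\mathfrak{b})^{1/2}$ and $X^{11/12}/N(\mathfrak{b})^{2/3}$ correspond respectively to the ``Patterson main'' and ``Patterson error'' contributions, and the hypothesis $N(\mathfrak{b})Y^2 \leq X^{1/2-\nu}$ is precisely the range in which the large sieve beats the trivial bound. The crux is this step: since the cubic large sieve is not perfectly orthogonal (cf.\ \cite{DR}), a naive application yields no power saving, and one is forced to exploit Patterson's precise formula rather than a mere upper bound on the theta coefficients.
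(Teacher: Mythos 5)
Your overall architecture matches the paper's: split into the sum coming from $A_1(q)$ and the dual sum coming from $\widetilde{g}_3(q)\overline{A_1(q)}$, apply Poisson summation to the former with the zero frequency (equivalently, $\mathfrak{b}\mathfrak{n}$ a cube) producing the main term, and handle the dual sum via Patterson's theory of cubic Gauss sums together with the cubic large sieve. However, several of your attributions are wrong in ways that matter. First, the non-zero Poisson frequencies in the main sum do \emph{not} contribute the error $X^{3/4}N(\mathfrak{b})^{1/2}$: under the hypothesis $N(\mathfrak{b})Y^2 \leq X^{1/2-\nu}$ the dual frequency range $N(k) \ll N(\mathfrak{b})RL^2/X^{1-\varepsilon} \ll X^{-\nu+2\varepsilon}$ is empty, so these terms are $O(X^{-500})$. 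That is the entire role of the hypothesis; it has nothing to do with the large sieve range in the dual sum, contrary to your final claim. The term $X^{3/4}N(\mathfrak{b})^{1/2}$ actually arises from the dual sum, as the contribution of the continuous (contour) part of the Dirichlet series $\psi(r,s)$ of cubic Gauss sums, bounded on average via the Lindel\"of-on-average second moment (\cref{second_moment_lindelof_lemma}).

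Second, and more seriously, your treatment of the dual sum contains an internal contradiction and misses the key iteration. You assert that the ``Patterson main'' contribution ``should vanish thanks to the sign configuration forced by $q\equiv 1\pmod 9$,'' yet two sentences later you attribute the error term $X^{5/6}/N(\mathfrak{b})^{1/2}$ to precisely that contribution. In fact the polar term does not vanish: the residue of $\psi(\eta b^2 n, s)$ at $s=4/3$ is supported on $n$ with $b^2n$ of the form $\alpha\gamma^3\delta^3$ and is itself proportional to $\overline{\widetilde{g}_3(\eta,\alpha)}N(\alpha)^{-1/6}$, so one must run the Perron/Patterson machinery a \emph{second} time on the resulting sum over the cube-free parts. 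The secondary polar term then gives $X^{11/12}/N(\mathfrak{b})^{2/3}$ and the secondary integral term gives $X^{5/6}/N(\mathfrak{b})^{1/2}$ (again via \cref{second_moment_lindelof_lemma}, i.e.\ Cauchy--Schwarz plus the cubic large sieve applied to $\psi$, not to a bilinear form in $(\ell,\mathfrak{n})$). Note also that the sieve weight $M_Y(q)$ plays no role in the dual sum, since $\widetilde{g}_3(\ell^2 m)=0$ unless $\ell=1$; the $\ell$-average you propose to exploit there is not available. Without the second iteration and with the erroneous vanishing claim, your sketch does not produce the stated error terms.
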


The proof of \cref{SMestimate1} is contained in \cref{first_moment_asymptotics_section},
and the result is used in \cref{mollifier_section} to obtain the first mollified moment. 

Furthermore, we have the following result for the second moment.

\begin{prop} \label{SMestimate2}
    Let $0 \neq \mathfrak{b}_1, \mathfrak{b}_2 \unlhd \Z[\omega]$ be squarefree and coprime with $3$. Assume that $1\leq Y, N(\mathfrak{b}_1\mathfrak{b}_2) \leq X^{100}$ and $\varepsilon>0$. Then denoting $\mathfrak{b} = (\mathfrak{b}_1, \mathfrak{b}_2)$ and $\mathfrak{b}_1 \mathfrak{b}_2 = \mathfrak{a}\mathfrak{b}^2$, we have
    \begin{align*}
        \mathcal{S}_M \Big( \Big( \frac{\mathfrak{b}_1}{q} \Big)_3 \overline{\Big( \frac{\mathfrak{b}_2}{q} \Big)_3} A_2(q); F \Big) = D \check{F}(0) X \frac{h(\mathfrak{a}) g(\mathfrak{b})}{\sqrt{N(\mathfrak{a})}} \Big[ \log{\Big(\frac{X}{N(\mathfrak{a})}\Big)} + \mathcal{O}(\mathfrak{b}_1, \mathfrak{b}_2) \Big] & \\
        +\ O_\varepsilon\Big(\frac{X^{1+\varepsilon}}{Y} + \frac{X^{5/6+\varepsilon}}{N(\mathfrak{a})^{1/3}}\Big) + \mathcal{R}(\mathfrak{b}_1, \mathfrak{b}_2)&.
    \end{align*}
    Here
    \begin{align}
        D :=  &\ \frac{\pi^2}{648(2-\sqrt{3}) \cdot \zeta_{\Q(\omega)}(2)}  \prod_{\substack{\mathfrak{p} \text{ prime} \\ (\mathfrak{p}, 3) = 1 \\ q\; :=\; N(\mathfrak{p})}} \Big( 1 - \frac{1}{q(q+1)} + \frac{2q}{(q+1) (q^{3/2}-1)} \Big), \label{Ddef}
    \end{align}
    and the multiplicative functions $g$ and $h$ are defined, for $\mathfrak{p}$ prime, $k\geq 1$, and $q := N(\mathfrak{p})$, by
    \begin{align}
        g(\mathfrak{p}^k) := 1 - \frac{(q^{3/2} - 1)(q - 1)}{q^{7/2} + q^{5/2} + q^{2} - q^{3/2} - q + 1} = 1 + O\Big(\frac{1}{N(\mathfrak{p})}\Big) \label{g_prime_def}
    \end{align}
    and
    \begin{align}
        h(\mathfrak{p}^k) := 1 + \frac{(q^2 - q^{3/2} + 1)(q - 1)}{q^{7/2} + q^{5/2} + q^{2} - q^{3/2} - q + 1} = 1 + O\Big(\frac{1}{\sqrt{N(\mathfrak{p})}}\Big). \label{h_prime_def}
    \end{align}
    Moreover
    \begin{equation*}
        \mathcal{O}(b_1, b_2) := C_0 + \sum_{\substack{\mathfrak{p} \emph{ prime} \\  \mathfrak{p} \mid \mathfrak{b}}} D_{1}(\mathfrak{p}) \frac{\log{N(\mathfrak{p})}}{N(\mathfrak{p})} + \sum_{\substack{\mathfrak{p} \emph{ prime} \\ \mathfrak{p} \mid \mathfrak{a}}} D_{2}(\mathfrak{p}) \frac{\log{N(\mathfrak{p})}}{\sqrt{N(\mathfrak{p})}},
    \end{equation*}
    where $C_0 = C_1 + C_2 \frac{\check{F}'}{\check{F}}(0)$ for some absolute constants $C_1$ and $C_2$, and $D_i(\mathfrak{p}) \ll 1$ for $i \in\{1, 2\}$. Furthermore, the error term $\mathcal{R}(\mathfrak{b}_1, \mathfrak{b}_2)$ satisfies
    \begin{align}\label{R_average_bound}
        \mathop{\sum\sum}_{\substack{0 \neq \mathfrak{b}_1, \mathfrak{b}_2 \unlhd \Z[\omega] \\ N(\mathfrak{b}_1) \sim B_1,\ N(\mathfrak{b}_2) \sim B_2 \\ (\mathfrak{b}_1 \mathfrak{b}_2, 3)=1}} \mu^2(\mathfrak{b}_1)&\mu^2(\mathfrak{b}_2)|\mathcal{R}(\mathfrak{b}_1, \mathfrak{b}_2)| \nonumber\\
        &\ll_{F,\varepsilon} X^{1/2+\varepsilon} (B_1B_2)^{1/2} (X^{1/6} Y^{1/2}B^{3/2} + Y B^2 + X^{1/3}B ) 
    \end{align}
    for any $\frac{1}{2}\leq B_1, B_2 \leq X^{100}$ and $B:= \max\{B_1, B_2\}$.
\end{prop}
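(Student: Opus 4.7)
The strategy is to combine the approximate functional equation for $|L(1/2,\chi_q)|^2$ (from \cref{afecenter}) with the mollifier symbols and the $M_Y$ sieve, swap orders of summation, and analyze the resulting character sum over $q$ via cubic reciprocity and a Mellin-transform approach. First I would write $A_2(q) = \sum_{\mathfrak{n}_1,\mathfrak{n}_2} \chi_q(\mathfrak{n}_1) \overline{\chi_q(\mathfrak{n}_2)} V(N(\mathfrak{n}_1 \mathfrak{n}_2)/X) / \sqrt{N(\mathfrak{n}_1 \mathfrak{n}_2)}$ for a smooth weight $V$ coming from the AFE, and use $\overline{\chi_q(\alpha)} = \chi_q(\alpha^2)$ together with $\cubic{\mathfrak{b}_1}{q} \overline{\cubic{\mathfrak{b}_2}{q}} = \chi_q(\mathfrak{b}_1 \mathfrak{b}_2^2)$ to collapse the product into a single $\chi_q(\mathfrak{d})$ with $\mathfrak{d} := \mathfrak{b}_1 \mathfrak{b}_2^2 \mathfrak{n}_1 \mathfrak{n}_2^2$. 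Expanding $M_Y(q)$ and writing $q = \ell^2 m$ reduces the problem to the inner sum $\sum_m \chi_m(\mathfrak{d}) f(N(\ell^2 m)/X)$ with $m$ in a fixed class mod $9$; cubic reciprocity over $\Z[\omega]$ flips this to $\chi_{\mathfrak{d}}(m)$ (up to explicit unit and $\lambda$-factors handled via \eqref{cubesupp}), and a Mellin transform produces a contour integral of the cubic Dirichlet series $\Psi_{\mathfrak{d}}(s) := \sum_{m \equiv 1 \, (9)} \chi_{\mathfrak{d}}(m) N(m)^{-s}$.

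I would then split the analysis based on whether the cube-free part of $\mathfrak{d}$ is trivial. When $\mathfrak{d}$ is a perfect cube, $\Psi_{\mathfrak{d}}$ has a simple pole at $s=1$ and shifting the contour past it yields the main term: with $\mathfrak{b} = (\mathfrak{b}_1, \mathfrak{b}_2)$ and $\mathfrak{b}_1 \mathfrak{b}_2 = \mathfrak{a} \mathfrak{b}^2$, the cube condition forces a factorization of $\mathfrak{n}_1 \mathfrak{n}_2^2$ through $\mathfrak{a}$, and the remaining sum over the cube parameter has a double pole at $s=1$ producing the factor $\log(X/N(\mathfrak{a}))$, the explicit Euler product $D$, and the multiplicative functions $g, h$ from local computations at primes dividing $\mathfrak{a}\mathfrak{b}$; the constant $\mathcal{O}(\mathfrak{b}_1, \mathfrak{b}_2)$ then collects the Laurent coefficients at those primes together with $\check{F}'/\check{F}(0)$. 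When $\mathfrak{d}$ is not a cube, $\Psi_{\mathfrak{d}}$ is holomorphic at $s=1$ but continues meromorphically with a pole at $s=4/3$ whose residue is proportional (by Patterson's formula) to the Fourier coefficient at $\mathfrak{d}$ of the cubic metaplectic theta function, i.e.\ to a cubic Gauss sum of $\mathfrak{d}$. Shifting the contour through $s = 4/3$ picks up this residue, and summing over $\mathfrak{n}_1, \mathfrak{n}_2, \ell$ yields the off-diagonal contribution, which is controlled by Heath-Brown's cubic large sieve and, together with the $\ell$-truncation from $M_Y$, produces the errors $X^{1+\varepsilon}/Y$ and $X^{5/6+\varepsilon}/N(\mathfrak{a})^{1/3}$.

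The remaining contour integral, shifted to $\Re(s) = 1 + \eta$, is absorbed into $\mathcal{R}(\mathfrak{b}_1, \mathfrak{b}_2)$. Its averaged bound \eqref{R_average_bound} I would prove by a dyadic decomposition in $N(\mathfrak{n}_1), N(\mathfrak{n}_2)$, then applying the cubic large sieve in tandem with the Lindel\"of-on-average upper bound for the second moment of cubic Dirichlet series that is the main output of \cref{second_moment_section}; the three terms $X^{1/6} Y^{1/2} B^{3/2}$, $YB^2$, and $X^{1/3}B$ correspond respectively to the Patterson residue contribution, the combined sieve-truncation from $Y$, and a cross term between the diagonal and off-diagonal regimes.

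The main obstacle I anticipate is the off-diagonal analysis. Because the cubic large sieve is \emph{not} perfectly orthogonal (see the GRH-conditional result of the third author and Radziwi{\l\l} cited in the introduction), one cannot simply Cauchy--Schwarz against an optimal fourth moment as Soundararajan does in the quadratic case. Instead the Patterson residue must be extracted explicitly and balanced against the square-root-on-average cancellation provided by the Lindel\"of-on-average second moment, all while maintaining uniformity in $\mathfrak{b}_1, \mathfrak{b}_2$ and preserving the mod-$9$ and squarefree conditions on $q$ through the reciprocity flip (which forces careful tracking of units and the ramified prime $\lambda$). Organizing this balance so that each of the three terms in \eqref{R_average_bound} emerges with sharp exponents is the technical heart of the proof.
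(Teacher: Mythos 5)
Your high-level plan shares some vocabulary with the paper's argument (Patterson's theta coefficients, the pole at $s=4/3$, the cubic large sieve, the Lindel\"of-on-average second moment), but the mechanism by which the metaplectic objects enter is wrong, and this is a genuine gap rather than a cosmetic difference. The Dirichlet series $\Psi_{\mathfrak{d}}(s) = \sum_m \chi_{\mathfrak{d}}(m) N(m)^{-s}$ that your Mellin transform produces is an ordinary Hecke $L$-function: when $\mathfrak{d}$ is not a cube it is entire (apart from the principal-character pole at $s=1$ in the cube case), and it has \emph{no} pole at $s=4/3$. The series with a pole at $s=4/3$ is the Gauss-sum Dirichlet series $\psi(r,s) = \sum_c g_3(r,c) N(c)^{-s}$, i.e.\ the series in the \emph{modulus} of the Gauss sum, and Gauss sums only appear in this problem after one applies Poisson summation to the sum over $q$ (equivalently over $m$) modulo the product of the twist variables --- that is the step your proposal omits entirely. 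In the paper, Poisson summation (\cref{radialpois}) converts $\sum_m \chi_{d r_1}(m)\overline{\chi_{d r_2}(m)} F(\cdot)$ into a dual sum over frequencies $k$ weighted by $g_3(k,m_1)\overline{g_3(k,m_2)}$; the main term is the $k=0$ frequency, and only for $k\neq 0$ does one then sum Gauss sums over their moduli $n_1, n_2$ and encounter the pole of $\psi$ at $4/3$ via \cref{truncated_Gauss_sums_lemma}. Without Poisson summation your contour shift through $s=4/3$ picks up nothing, and there is no route from $\Psi_{\mathfrak{d}}$ to $\tau_3(\mathfrak{d})$.

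A second structural problem is your collapse of $\mathfrak{n}_1\mathfrak{n}_2^2$ (together with $\mathfrak{b}_1\mathfrak{b}_2^2$) into a single variable $\mathfrak{d}$. The proof of the error bound \eqref{R_average_bound} depends essentially on keeping $n_1$ and $n_2$ separate: the longer of the two sums is evaluated via the Patterson pole, while the shorter one is either evaluated the same way (balanced regime) or clumped with $\ell$ and fed into the cubic large sieve against the complementary squarefree variable (unbalanced regime). This dichotomy, and the resulting three terms in \eqref{R_average_bound}, cannot be reconstructed once the bilinear structure is destroyed. Finally, a smaller mis-attribution: the errors $X^{1+\varepsilon}/Y$ and $X^{5/6+\varepsilon}/N(\mathfrak{a})^{1/3}$ do not come from the off-diagonal analysis --- they arise in the \emph{main-term} ($k=0$) evaluation, respectively from truncating the $\ell$-sum at $Y$ and from shifting the Mellin contour to $\Re(w)=-1/6+\varepsilon$ past the double pole at $w=0$.
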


The proof of \cref{SMestimate2} is contained in \cref{second_main_sum_section}, and the result is used in \cref{mollifier_section} to obtain the second mollified moment. 

We also immediately obtain an asymptotic for the smoothed un-mollified second moment with error term $O_{F, \varepsilon}(X^{5/6+\varepsilon})$.

\begin{corollary} \label{secondmomentcor}
    Let $\varepsilon > 0$ and let $F$ be a Schwartz function with compact support on $(1,2)$ satisfying $0 \leq F(t) \leq 1$. Then
    \begin{equation*}
        \sum_{\substack{ q \in \mathbb{Z}[\omega] \\ q \equiv 1 \pmod{9} }} \mu^2(q) |L(1/2,\chi_q)|^2 F \Big( \frac{N(q)}{X} \Big) = 2 D \check{F}(0) X \Big (\log X + C_1 + C_2 \frac{\check{F^{\prime}}}{\check{F}}(0) \Big) + O_{F, \varepsilon}(X^{5/6+\varepsilon})
    \end{equation*}
    as $X \rightarrow \infty$, where $D$ is the constant given in \eqref{Ddef}, and $C_1$ and $C_2$ are the absolute constants in \cref{SMestimate2}.
\end{corollary}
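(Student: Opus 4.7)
The plan is to deduce the corollary from \cref{SMestimate2} by taking the trivial choice of mollifier $\mathcal{M}(q) \equiv 1$ (i.e.\ $\boldsymbol{\lambda}$ supported only on the unit ideal, with value $1$), so that $\mathfrak{b}_1 = \mathfrak{b}_2 = \mathbb{Z}[\omega]$, and by carefully choosing the parameter $Y$ in the decomposition \eqref{mudecomp}.

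First, I would invoke the approximate functional equation from \cref{afecenter} to write $|L(1/2, \chi_q)|^2 = 2 A_2(q) + O(X^{-100})$, which (with the trivial mollifier) reduces the problem to evaluating $\mathcal{S}(A_2(q); F)$. Splitting $\mu^2(q) = M_Y(q) + R_Y(q)$ as in \eqref{nonnegproperty} gives
\begin{equation*}
    \mathcal{S}(|L(1/2,\chi_q)|^2; F) = 2\, \mathcal{S}_M(A_2(q); F) + O\bigl(\mathcal{S}_R(|A_2(q)|; F)\bigr) + O(X^{-99}).
\end{equation*}
Applying \cref{SMestimate2} with $\mathfrak{b}_1 = \mathfrak{b}_2 = \mathbb{Z}[\omega]$ yields $\mathfrak{a} = \mathfrak{b} = \mathbb{Z}[\omega]$, so that $N(\mathfrak{a}) = 1$, $g(\mathbb{Z}[\omega]) = h(\mathbb{Z}[\omega]) = 1$, and both prime sums in $\mathcal{O}(\mathbb{Z}[\omega], \mathbb{Z}[\omega])$ are empty, leaving precisely $C_0 = C_1 + C_2 \check{F}'/\check{F}(0)$. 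This gives the desired main term $D\check{F}(0) X (\log X + C_0)$, multiplied by $2$ from the AFE.

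Next I would bound the three error sources. The error term in \cref{SMestimate2} contributes $X^{1+\varepsilon}/Y + X^{5/6+\varepsilon}$. For the individual remainder $\mathcal{R}(\mathbb{Z}[\omega], \mathbb{Z}[\omega])$, I would apply the averaged bound \eqref{R_average_bound} with $B_1 = B_2 = 1/2$ (so that only the trivial pair appears on the left-hand side), extracting
\begin{equation*}
    |\mathcal{R}(\mathbb{Z}[\omega], \mathbb{Z}[\omega])| \ll_{F,\varepsilon} X^{1/2+\varepsilon}\bigl(X^{1/6} Y^{1/2} + Y + X^{1/3}\bigr).
\end{equation*}
Finally, \cref{SRestimate2} with $M = 1$ gives $\mathcal{S}_R(|A_2(q)|; F) \ll_\varepsilon X^\varepsilon (X/Y^{2/3} + X^{5/6})$.

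Choosing $Y = X^{1/3}$ balances the dominant errors: $X^{1+\varepsilon}/Y = X^{2/3+\varepsilon}$, $X^{1+\varepsilon}/Y^{2/3} = X^{7/9+\varepsilon}$, and $X^{1/2+\varepsilon}(X^{1/6}Y^{1/2} + Y + X^{1/3}) = X^{5/6+\varepsilon}$, all of which are $\leq X^{5/6+\varepsilon}$. Summing yields the claimed asymptotic. The deduction is essentially a routine specialization of \cref{SMestimate2}; the only subtle point is using \eqref{R_average_bound} to extract a single-term bound on $\mathcal{R}$, but this works cleanly because a dyadic window of width $\sim 1/2$ about the trivial ideal isolates that term.
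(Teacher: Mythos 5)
Your proposal is correct and follows essentially the same route as the paper: invoke \cref{afecenter} to get $|L(1/2,\chi_q)|^2 = 2A_2(q)$ (an exact identity, so no $O(X^{-100})$ is needed), take the trivial mollifier, apply \cref{SRestimate2} with $M=1$ and \cref{SMestimate2} with $\mathfrak{b}_1=\mathfrak{b}_2=1$, $B_1=B_2=1/2$, and choose $Y=X^{1/3}$. Your arithmetic on the error terms and the observation that the dyadic window $B_1=B_2=1/2$ in \eqref{R_average_bound} isolates the unit ideal both match the paper's argument exactly.
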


\begin{proof}
By \cref{afecenter}, $|L(1/2, \chi_q)|^2 = 2 A_2(q)$. Using \eqref{nonnegproperty} with $\beta_q = 2 A_2(q)$, the result follows directly from taking $M=1$, $\lambda(1)=1$ (cf.\ \eqref{mollifier}), and $Y=X^{1/3}$ in \cref{SRestimate2}, and then taking $\mathfrak{b}_1=\mathfrak{b}_2=1$, $B_1 = B_2 = B = \frac{1}{2}$, and $Y=X^{1/3}$ in \cref{SMestimate2}.
\end{proof}

Similarly, we also obtain an asymptotic for the smoothed un-mollified first moment.

\begin{corollary} \label{firstmomentcor}
    Let $\varepsilon > 0$ and let $F$ be a Schwartz function with compact support on $(1,2)$ satisfying $0 \leq F(t) \leq 1$. Then
    \begin{equation*}
        \sum_{\substack{ q \in \mathbb{Z}[\omega] \\ q \equiv 1 \pmod{9} }} \mu^2(q) L(1/2,\chi_q) F \Big( \frac{N(q)}{X} \Big) = C \check{F}(0) X+O_{F, \varepsilon}(X^{11/12+\varepsilon})
    \end{equation*}
    as $X \rightarrow \infty$, where $C$ is the constant given in \eqref{Cdef}.
    \end{corollary}

\begin{proof}
By \cref{afecenter}, $L(1/2, \chi_q) = A_1(q)+ \widetilde{g}_3(q) \cdot \overline{A_1(q)}$. Using \eqref{nonnegproperty} with 
$\beta_q =A_1(q)+ \widetilde{g}_3(q) \cdot \overline{A_1(q)}$, the result follows directly from taking $M=1$, $\lambda(1)=1$ (cf.\ \eqref{mollifier}), and $Y=X^{1/4-1/100}$ (say) in \cref{SRestimate1}, and then taking $\mathfrak{b}=1$ and $Y=X^{1/4-1/100}$ in \cref{SMestimate1}.
\end{proof}
      
With more effort we could make our work effective in the test function (similar to \cite{Sou}), and consequently 
obtain a version of \cref{secondmomentcor} with sharp cut-offs at the expense of having a worse power saving.
We refrain from this additional work.

Subsequent to the writing of this paper, Hamdar \cite{Ham} established an asymptotic for the first moment (with error term $O_\varepsilon(X^{4/5+\varepsilon})$) that captures a second order main term of size $X^{5/6}$, using an unbalanced approximate functional equation. 
An improvement in the error term for our first moment does not improve our proportion of non-vanishing, so we 
refrain from the additional work of using an unbalanced approximate functional equation.

We highlight that our asymptotic in \cref{secondmomentcor} barely misses out on capturing the second order main term $X^{5/6} Q(\log X)$ conjectured by Diaconu \cite[Conjecture~4.5]{Dia}. 
We speculate that if we were to include an additional short integral in the $t$-aspect, analogous to the work of 
Conrey, Iwaniec, and Soundararajan \cite{CIS2} for the sixth moment of Dirichlet $L$-functions, we could capture the $X^{5/6} Q(\log X)$ term. 
This is because the unbalanced ranges in our problem (see \cref{unbalancedsection}) would not be present, and these are the bottleneck
in our argument that prevent going beyond the $O_{F,\varepsilon}(X^{5/6+\varepsilon})$ error term. We plan to return to these types of problems in a future paper.

We point out that it is a very challenging problem to obtain the second moment with (unconditional) power saving error term
over the full cubic family (see \eqref{C3def} and \eqref{F3def}). Heuristically, this is because this moment behaves like
the fourth moment over the thin family $\mathcal{F}^{\prime}_3$ (see \eqref{F3primedef})

\begin{remark} \label{mol2}
    We now consider the maximal mollifier length $M$ for which we can prove an asymptotic formula for the second mollified moment $\mathcal{S}(|\mathcal{M}(q)^2 A_2(q)|; F)$. Applying \cref{SMestimate2} to the right side of \eqref{SMexpand2}, and recalling that $\lambda(\mathfrak{b}) \ll N(\mathfrak{b})^{-1+\varepsilon}$, we see that contribution from the error terms (we ignore the main terms) in \cref{SMestimate2} to $\mathcal{S}_M \big( |\mathcal{M}(q)|^2 A_2(q); F \big)$ is
    \begin{equation} \label{SM2error}
        \ll_{F, \varepsilon} X^{\varepsilon} \Big( \frac{MX}{Y}+ X^{1/2} (X^{1/6} Y^{1/2}M^{3/2} + Y M^2 + X^{1/3}M ) \Big).
    \end{equation} 
    \cref{SRestimate2} also gives 
    \begin{equation} \label{SRerror2}
        \mathcal{S}_R(|\mathcal{M}(q)^2 A_2(q)|; F) \ll_{F, \varepsilon} X^{\varepsilon} \Big( X \Big(\frac{M}{Y}\Big)^{2/3} + X^{5/6} M \Big).
    \end{equation}
    To obtain an asymptotic for $\mathcal{S}(|\mathcal{M}(q)^2 A_2(q)|; F)$, observe from \eqref{SRerror2} that we must have $Y \gg MX^{\delta_1}$ and $M \ll X^{1/6-\delta_2}$, for small fixed $\delta_1,\delta_2>0$. Taking $Y=MX^{\delta}$ and $M = X^{1/6-\delta}$ for some small fixed $\delta>0$ ensures that \eqref{SM2error} and \eqref{SRerror2} are genuine error terms, and one indeed has an asymptotic formula. The mollifier length $X^{1/6-\delta}$ is shorter than Soundararajan's $X^{1/2-\delta}$ in the quadratic case \emph{\cite{Sou}}.
\end{remark}

\begin{remark} \label{mol1}
Arguing analogously to \cref{mol2}, instead with \cref{SMestimate1} applied to the right side 
of \eqref{SMexpand}, and also considering \cref{SRestimate1}, one can check that the largest allowable mollifier length for the first moment 
$\mathcal{S}_M(\mathcal{M}(q) [A_1(q)+\widetilde{g}_3(q) \cdot \overline{A_1(q)}];F)$ is $M \ll X^{1/4-\delta}$ for $\delta>0$ small and fixed, and 
$Y = X^{\varepsilon}$. 

Thus we are ultimately limited to $M \ll X^{1/6-\delta}$ from the second moment in \cref{mol2}, and this is reflected in 
\cref{mollifier_section}.
\end{remark}

 
\section{High level sketch} \label{hlsketch}

As ``proof of concept'', we give a heuristic sketch of the argument giving the error term $O_{\varepsilon}(X^{5/6+\varepsilon})$ for the smoothed un-mollified second moment in \cref{secondmomentcor}. We focus on the core part of the proof: the error term in the asymptotic formula for $\mathcal{S}_M(A_2(q);F)$. For simplicity, in this sketch we assume coprimality of all relevant variables, suppress smooth functions, ignore units and powers of the ramified prime $\lambda$ in $\mathbb{Z}[\omega]$, and ignore congruence conditions with fixed modulus.


\subsection{Evaluation of $\mathcal{S}_M(A_2(q);F)$}
We first use the approximate functional equation for $|L(1/2,\chi_q)|^2$, and then remove the squarefree condition on $q$. After
applying Poisson summation on the sum over $q$ and removing the main term (corresponding to the frequency $k=0$), it suffices to estimate
\begin{align} \label{start}
    \frac{X}{N_1 N_2} \frac{1}{L^2}  \mathop{ {\sum \sum}}_{\substack{ \ell,k \in \mathbb{Z}[\omega] \\ k \neq 0  \\ \ell \equiv 1 \pmod{3} \\ N(\ell) \sim L \\ N(k) \ll Z}}  \mu(\ell) \Big( \sum_{\substack{n_1 \in \mathbb{Z}[\omega]  \\ n_1 \equiv 1 \pmod{3} \\ N(n_1) \sim N_1 }}  \widetilde{g}_3(k\ell , n_1) \Big) \Big( \sum_{\substack{ n_2 \in \mathbb{Z}[\omega] \\ n_2 \equiv 1 \pmod{3} \\ N(n_2) \sim N_2 }}  \overline{\widetilde{g}_3(k\ell , n_2)} \Big),
\end{align}
where $\widetilde{g}_3(\mu,c)$ denotes the normalized cubic Gauss sum (see \eqref{generalgauss} and \eqref{normalized}), which typically has absolute value $1$, and $L, N_1, N_2$ run over powers of two and satisfy 
\begin{equation} \label{LNZ}
    1 \ll L \ll Y, \quad 1 \ll N_1 N_2 \ll X, \quad \text{and} \quad Z:=N_1 N_2 L^2/X.
\end{equation}
We have two different approaches depending on whether $N_1$ and $N_2$ are \emph{balanced} or \emph{unbalanced}. Without loss of generality (by symmetry) we may assume that $N_1 \geq N_2$.
\newline 

In both of the approaches, we first need to understand a sum of cubic Gauss sums. After performing Perron summation and a contour shift to the critical line, we pass over (at most) a simple pole at $s= \frac{5}{6}$. We obtain
\begin{align}
    \sum_{\substack{n \in \mathbb{Z}[\omega]  \\ n \equiv 1 \pmod{3} \\ N(n) \sim N }}  \widetilde{g}_3(k\ell , n) = \Upsilon \frac{\tau_3(k \ell)}{N(k \ell)^{1/6}} N^{5/6} + \int_{\mathcal{C}_{\varepsilon}} \widetilde{\psi}(k \ell,s)  N^{s} \frac{ds}{s} =: \mathcal{P}+\mathcal{I}, \label{asymptotic}
\end{align}
where $\Upsilon$ is an absolute constant, $\tau_3(u) \in \mathbb{C}$ (for $u \in \mathbb{Z}[\omega]$) denote the Fourier coefficients of Patterson's cubic theta function \cite{Pat1}, $\mathcal{C}_{\varepsilon}$ denotes the line segment $\Re(s) = \frac{1}{2} + \varepsilon$ and $|\Im(s)| \leq X^\varepsilon$, and $\widetilde{\psi}(a,s)$ is the Dirichlet series 
\begin{equation*}
    \widetilde{\psi}(a,s):=\sum_{\substack{c \in \mathbb{Z}[\omega] \\ c \equiv 1 \pmod{3} }} \frac{\widetilde{g}_3(a,c)}{N(c)^s}, \quad \Re(s)>1, \quad 0 \neq  a \in \mathbb{Z}[\omega].
\end{equation*}
The salient point is that $\widetilde{\psi}(a,s)$ has a meromorphic continuation to all $\mathbb{C}$, satisfies a functional equation, and also satisfies a $\operatorname{GL}_1$ convexity bound in the $a$-aspect, i.e.\ of the form $\widetilde{\psi}(a, \frac{1}{2}+\varepsilon + it) \ll N(a)^{1/4+\varepsilon} (1+|t|)^{100}$. Patterson \cite{Pat1} used a tour de force Hecke converse argument to show that the Fourier coefficient $\tau_3(\mu)$ is essentially the cubic Gauss sum $\overline{\widetilde{g}_3(\mu)}$ (see \eqref{generalgauss} and \eqref{normalized}). Nothing is lost by assuming that $k \equiv 1 \pmod 3$ and $(k,\ell)=1$ in this heuristic. Since $\mu^2(\ell)=1$ and $(k,\ell)=1$, Patterson's result \cite[Proposition~8.1]{Pat1} tells us that $\tau_3(k \ell)=0$ unless $k=c d^3$, where $\mu^2(c)=1$ and $c, d \equiv 1 \pmod{3}$. In this case we have that
\begin{equation} \label{tau3eval}
    \tau_3( \ell c d^3)= 3^3 \cdot  \overline{\widetilde{g}_3(\ell c)} N(d)^{1/2}.
\end{equation}
Before continuing with our sketch for the second moment, we highlight that \eqref{asymptotic} and \eqref{tau3eval} are also important ingredients in our proof of the first moment asymptotic in \cref{SMestimate1}.

We use \eqref{asymptotic} and \eqref{tau3eval} to evaluate the sum over $n_1$ in \eqref{start}, since it is longer than that over $n_2$. The contribution to \eqref{start} from the polar term $\mathcal{P}_1$ given in \eqref{asymptotic} is equal (up to an absolute constant factor) to
\begin{align} \label{main}
    & \frac{X}{N_1^{1/6}N_2} \frac{1}{L^2} \mathop{ {\sum \sum \sum}}_{\substack{ d,\ell,c \in \mathbb{Z}[\omega] \\ d,\ell,c \equiv 1 \pmod{3} \\ N(\ell) \sim L \\ N(c d^3) \ll Z}} \mu(\ell) \frac{\overline{\widetilde{g}_3(\ell c)}}{N(\ell c)^{1/6}} \sum_{\substack{ n_2 \in \mathbb{Z}[\omega] \\ n_2 \equiv 1 \pmod{3} \\ N(n_2) \sim N_2 }}  \overline{\widetilde{g}_3(cd^3\ell , n_2)}.
\end{align}
To handle the contribution to \eqref{start} from the integral term $\mathcal{I}_1$ given in \eqref{asymptotic}, we also evaluate the sum over $n_2$ using \eqref{asymptotic} and \eqref{tau3eval} to obtain a cross term $|\mathcal{I}_1||\mathcal{P}_2|$
that is
\begin{align} \label{cross}
    & \ll \frac{X}{N_1^{1/2} N_2^{1/6}} \frac{1}{L^2} \int_{\mathcal{C}_{\varepsilon}}  \mathop{\sum \sum \sum}_{\substack{ d,\ell,c \in \mathbb{Z}[\omega] \\ d,\ell,c \equiv 1 \pmod{3} \\ N(\ell) \sim L \\ N(c d^3) \ll Z}}  \frac{\mu^2(\ell) \mu^2(c)  }{N(\ell c)^{1/6}}  \big|\widetilde{\psi}(c \ell d^3,s)\big| |ds|,
\end{align}
and a pure integral term $|\mathcal{I}_1||\mathcal{I}_2|$
that is
\begin{align} \label{diagerror}
    & \ll \frac{X}{(N_1 N_2)^{1/2}} \frac{1}{L^2} \int_{\mathcal{C}_{\varepsilon}} \int_{\mathcal{C}_{\varepsilon}} \mathop{ {\sum \sum}}_{\substack{ \ell,k \in \mathbb{Z}[\omega] \\ k \neq 0  \\ \ell \equiv 1 \pmod{3} \\ N(\ell) \sim L \\ N(k) \ll Z}} \mu^2(\ell) \big|\widetilde{\psi}(k \ell ,s_1)\big| \big|\widetilde{\psi}(k \ell,s_2)\big|  |ds_1 |  |ds_2 |. 
\end{align}
In order to estimate \eqref{start}, it suffices to estimate \eqref{main}, \eqref{cross}, and \eqref{diagerror}. Our treatment of \eqref{cross} and \eqref{diagerror} will be the same for all sizes of $N_1$ and $N_2$, however our treatment of \eqref{main} will depend on the relative sizes of $N_1$ and $N_2$.

Let us first dispense with \eqref{cross}. We have $\widetilde{\psi}(c \ell d^3,s) \approx \widetilde{\psi}(c \ell,s)$. Then clump together $c \ell$ as one variable at the expense of $X^{\varepsilon}$ coming from the divisor function, and extend the summation over that variable to all Eisenstein integers with norm $\ll LZ/N(d)^3$ by positivity. We use Cauchy--Schwarz and a Lindel\"{o}f-on-average bound (in the $a$-aspect) for the second moment of the cubic Dirichlet series $\widetilde{\psi}(a, s)$. The bound hinges on the cubic large sieve and a $\operatorname{GL}_1$ (in the $a$-aspect) approximate functional equation for $\widetilde{\psi}(a,s)$. See \cref{Gauss_sums_section}, and in particular \cref{second_moment_lindelof_lemma}, for details. 

We obtain that \eqref{cross} is
\begin{align} \label{t2}
    \ll X^{1/6+\varepsilon} (N_1 N_2)^{1/3}  L^{1/2}  N_2^{1/3} \ll  X^{1/2+\varepsilon} L^{1/2} N_2^{1/3},
\end{align}
for all $N_1$ and $N_2$ satisfying $1 \ll N_1 N_2 \ll X$ and $N_1 \geq N_2$. The treatment of \eqref{diagerror} follows from a similar argument using Cauchy--Schwartz over $\ell$ and $k$. We deduce that \eqref{diagerror} is
\begin{equation} \label{t3}
    \ll X^{\varepsilon} (N_1 N_2)^{1/2} L  \ll X^{1/2+\varepsilon} L
\end{equation}
for all ranges of $N_1$ and $N_2$ specified above.


\subsubsection{The polar contribution \eqref{main} in the unbalanced regime: $N_1$ large and $N_2$ small}  \label{unbalancedsection}

If $N_2$ is small, there is not much use in evaluating the short $n_2$ sum in \eqref{main} using \eqref{asymptotic}, as the integral on the critical line it too large. Instead we seek cancellation from the $c$ and $\ell$ sums. In this sketch it is safe to assume the coprimality conditions $(cd, \ell n_2) = (\ell,n_2)=1$, in which case \cref{cubic-GS-lemma1}(\ref{changevar}) and (\ref{twistmult}) give
\begin{equation*} 
    \overline{\widetilde{g}_3(\ell c)} \cdot \overline{\widetilde{g}_3(cd^3 \ell,n_2 )} = \chi_c(\ell n_2) \cdot \overline{\widetilde{g}_3(c)} \cdot \overline{\widetilde{g}_3(\ell n_2) }.
\end{equation*}
Substituting this into \eqref{main} and applying Cauchy--Schwarz in $c$, we conclude that \eqref{main} is
\begin{align*}  
    & \ll \frac{X^{2/3} N_1^{1/6}}{N_2^{2/3} L^{3/2}} \sum_{\substack{ d \in \mathbb{Z}[\omega] \\ d \equiv 1 \pmod{3} \\ N(d^3) \ll Z}} \frac{1}{N(d)} \Bigg(\sum_{\substack{c \in \mathbb{Z}[\omega] \\ c \equiv 1 \pmod{3} \\ N(c) \ll Z/N(d^3)}} \mu^2(c) \Bigg| \sum_{\substack{\ell, n_2 \in \mathbb{Z}[\omega] \\ \ell, n_2\equiv 1 \pmod{3} \\ N(\ell) \sim L \\ N(n_2) \sim N_2}} \chi_c(\ell n_2) \overline{\widetilde{g}_3(\ell n_2)} \Bigg|^2 \Bigg)^{1/2}.
\end{align*}
The key point is that we may now clump the variables $\ell$ and $n_2$ together. Applying the cubic large sieve, we see that the display above, and hence \eqref{main}, is
\begin{align} 
    & \ll \frac{X^{2/3} N_1^{1/6}}{N_2^{2/3} L^{3/2}} \sum_{\substack{ d \in \mathbb{Z}[\omega] \\ d \equiv 1 \pmod{3} \\ N(d^3) \ll Z}} \frac{1}{N(d)}  \left( \frac{Z}{N(d)^3} + LN_2 + \left(\frac{LN_2Z}{N(d^3)}\right)^{2/3}\right)^{1/2} (LN_2)^{1/2} \nonumber \\
    & \ll X^{\varepsilon} \left(X^{1/6} (N_1 N_2)^{1/2} \left(\frac{N_1}{N_2}\right)^{1/6} + \frac{X^{2/3}(N_1N_2)^{1/6}N_2^{1/6}}{L^{1/2}} + X^{1/3} (N_1N_2)^{1/2}\right) \nonumber \\
    & \ll X^{5/6+\varepsilon} + \frac{X^{5/6+\varepsilon}N_2^{1/6}}{L^{1/2}}, \label{t1}
\end{align}
where the last two displays follow from \eqref{LNZ}. Observe that the bound \eqref{t1} for \eqref{main} performs well when $N_2$ is small. We call this the unbalanced regime because $N_1 \geq N_2$ and $1 \ll N_1 N_2 \ll X$, so $N_1$ could potentially be very large.


\subsubsection{The polar contribution \eqref{main} in the balanced regime: $N_1$ and $N_2$ of moderate size}

If $N_2$ is not too small, we also evaluate the sum over $n_2$ in \eqref{main} using \eqref{asymptotic}. We see that \eqref{main} is majorized by the sum of 
\begin{align*} 
    \frac{X}{(N_1 N_2)^{1/6}} \frac{1}{L^2} \mathop{ {\sum \sum \sum}}_{\substack{ d,\ell,c \in \mathbb{Z}[\omega] \\ d,\ell,c \equiv 1 \pmod{3} \\ N(\ell) \sim L \\ N(c d^3) \ll Z}} \frac{\mu^2(\ell) \mu^2(c)  }{N(\ell c)^{1/3}} \ll X^{1/3} (N_1 N_2)^{1/2} \ll X^{5/6+\varepsilon},
\end{align*}
and
\begin{align*} 
    & \frac{X}{N_1^{1/6} N_2^{1/2}} \frac{1}{L^2} \int_{\mathcal{C}_{\varepsilon}}  \mathop{\sum \sum \sum}_{\substack{ d,\ell,c \in \mathbb{Z}[\omega] \\ d,\ell,c \equiv 1 \pmod{3} \\ N(\ell) \sim L \\ N(c d^3) \ll Z}}  \frac{\mu^2(\ell) \mu^2(c)  }{N(\ell c)^{1/6}}  |\widetilde{\psi}(c \ell d^3,s)| |ds| \ll  X^{1/2+\varepsilon} L^{1/2} N_1^{1/3} \ll \frac{X^{5/6+\varepsilon} L^{1/2}}{N_2^{1/3}},
\end{align*}
where the inequalities follow from the same argument that established \eqref{t2} above (with $N_1$ and $N_2$ interchanged) and from $N_1 N_2 \ll X$. Thus \eqref{main} is 
\begin{equation} \label{t1alternative}
    \ll X^{5/6+\varepsilon}+\frac{X^{5/6+\varepsilon} L^{1/2}}{N_2^{1/3}}.
\end{equation}
Observe that the bound \eqref{t1alternative} performs well for $N_2$ is large. We call this the balanced regime because $N_1 \geq N_2$ and $1 \ll N_1 N_2 \ll X$, so both are forced to be of moderate size.


\subsubsection{Endgame for $\mathcal{S}_M(A_2(q);F)$ and the error term in \cref{secondmomentcor}}

Combining \eqref{t2}, \eqref{t3}, and the minimum of \eqref{t1} and \eqref{t1alternative}, we deduce that \eqref{start} is
\begin{equation} \label{totalbound} 
    \ll  X^{\varepsilon} \Big( X^{5/6} + X^{1/2}L + X^{1/2} L^{1/2} N_2^{1/3} + \min \Big(\frac{X^{5/6}N_2^{1/6}}{L^{1/2}}, \frac{X^{5/6}L^{1/2}}{N_2^{1/3}} \Big) \Big).
\end{equation}
When $L \geq N_2^{1/2}$ we use the first term in the minimum, and when $L \leq N_2^{1/2}$ we use the second term in the minimum. Combining this with $L \ll Y$ and $N_2 \ll X^{1/2}$ (as $N_1 \geq N_2$ and $N_1 N_2 \ll X$), we see that \eqref{totalbound}, and hence \eqref{start}, is 
\begin{equation*} 
    \ll  X^{\varepsilon} ( X^{5/6} + X^{1/2}Y + X^{2/3} Y^{1/2} ),
\end{equation*}
for all possibilities for $L,N_1,N_2$ satisfying \eqref{LNZ} and $N_1 \geq N_2$. Choosing $Y=X^{1/3}$ shows that
\begin{equation} \label{smbound2}
    \mathcal{S}_M(A_2(q);F) \ll X^{5/6+\varepsilon},
\end{equation}
while \cref{SRestimate2} with $Y=X^{1/3}$ yields the estimate 
\begin{equation} \label{srbound2}
    \mathcal{S}_R(A_2(q);F) \ll  X^{5/6+\varepsilon}.
\end{equation}
Combining \eqref{smbound2} and \eqref{srbound2} yields an overall error term of $O_{\varepsilon}(X^{5/6+\varepsilon})$ for \cref{secondmomentcor}.


\subsection{Conventions} \label{conventions}
For $n \in \mathbb{N}$ and $N>0$, we use $n \sim N$ to mean $N<n \leq 2N$,
and $n \asymp N$ to mean that there exist constants $c_1,c_2>0$ such that 
$c_1 N \leq n \leq c_2 N$.

Dependence of implied constants on parameters will be indicated in statements of results,
but suppressed throughout the body of the paper (i.e.\ in proofs).
Implied constants in the body of the paper 
are allowed to depend on $\varepsilon>0$ (which is possibly different in each instance) and on the implicit constants in $\asymp$ or $\ll$ notation.

Every ideal $0 \neq \mathfrak{n} \unlhd \mathbb{Z}[\omega]$ can be written
as $\mathfrak{n}=\lambda^k c \mathbb{Z}[\omega]$ with $k \in \mathbb{Z}_{\geq 0}$ and $c \equiv 1 \pmod{3}$.
We pass between ideals and their generators freely in this paper.

Given $0 \neq \mathfrak{d},\mathfrak{n} \unlhd \mathbb{Z}[\omega]$, the notation
$\mathfrak{d} \mid \mathfrak{n}$ means there exists $\mathfrak{a} \unlhd \mathbb{Z}[\omega]$
such that $\mathfrak{n}=\mathfrak{a} \mathfrak{d}$. Similarly, given $0 \neq d, n \in \Z[\omega]$, the notation $d \mid n$ means $(d) \mid (n)$. For $a, b \equiv 1 \pmod{3}$, the notation $a \mid b^{\infty}$ means that if $\pi \equiv 1 \pmod{3}$ is prime and $\pi \mid a$, then $\pi \mid b$.


\subsection*{Acknowledgements}
The second author thanks Kannan Soundararajan, and the third author thanks Maksym Radziwi{\l\l} for discussions and encouragement.
The authors also thank the referees for their meticulous comments on this paper.


\section{Preliminaries}
\subsection{Eisenstein quadratic field and the cubic symbol}
Recall that $\mathbb{Q}(\omega)$ is the Eisenstein quadratic field
and $\mathbb{Z}[\omega]$ is its ring of integers.
It is well known that any non-zero element of $\mathbb{Z}[\omega]$ can be uniquely 
written as $\zeta \lambda^{k} c$ with $\zeta \in \langle -\omega \rangle$ a unit (i.e.\ $\zeta^6=1$), 
$\lambda:=\sqrt{-3} = 1 + 2 \omega$ the unique ramified prime in $\mathbb{Z}[\omega]$, $k
\in \mathbb{Z}_{\geq 0}$, and $c \in \mathbb{Z}[\omega]$ with $c \equiv 1 \pmod{3}$. 
If $p \equiv 1 \pmod{3}$ is a rational prime, then 
$p=\pi \overline{\pi}$ in $\mathbb{Z}[\omega]$
with $N(\pi)=p$ and $\pi$ a prime in $\mathbb{Z}[\omega]$.
If $p \equiv 2 \pmod{3}$ is a rational prime, then $p=\pi$ is inert in
$\mathbb{Z}[\omega]$, and $N(\pi)=p^2$. Thus we have $N(\pi) \equiv 1 \pmod{3}$
for all primes $\pi$ with $(\pi) \neq (\lambda)$. 
Let $\check{e}(z):=e^{2 \pi i \text{Tr}_{\mathbb{C}/\mathbb{R}}(z)}
=e^{2 \pi i(z+\overline{z})}$ for $z \in \mathbb{C}$.
The dual of $\mathbb{Z}[\omega]$ is
$\mathbb{Z}[\omega]^{*}:=
\{z \in \mathbb{C}: \check{e}(zz^{\prime})=1 \text{ for all } z^{\prime} \in \mathbb{Z}[\omega] \} =\lambda^{-1} \mathbb{Z}[\omega]$.
Recall that the cubic Jacobi symbol is defined in \eqref{jacobidefine} (and the sentence following it),
and can be multiplicatively extended by \eqref{multdefine}.
The cubic symbol obeys cubic reciprocity: given $a,b \equiv 1 \pmod{3}$ 
we have 
\begin{equation} \label{cuberep}
\Big( \frac{a}{b} \Big)_3=\Big( \frac{b}{a} \Big)_3.
\end{equation}
There are also supplementary laws \eqref{cubesupp} for units and the ramified prime.
Given 
\begin{equation*}
d \equiv 1 + \alpha_2 \lambda^2 + \alpha_3 \lambda^3 \pmod{9} \quad \text{with} \quad \alpha_2, \alpha_3 \in \{-1,0,1\},
\end{equation*}
then
\begin{equation} \label{cubesupp}
\Big ( \frac{\omega}{d} \Big )_{3} = \omega^{\alpha_2} \quad \text{and} \quad \quad \Big ( \frac{\lambda}{d} \Big )_{3}
 = \omega^{-\alpha_3}.
\end{equation}
We follow the standard convention for an empty product,
\begin{equation*}
\Big( \frac{a}{1} \Big)_3=1 \quad \text{for all} \quad a \in \mathbb{Z}[\omega].
\end{equation*}

Let
\begin{equation}
d(\mathfrak{n}):=\sum_{\mathfrak{d} \mid \mathfrak{n}} 1, \quad 0 \neq \mathfrak{n} \unlhd \mathbb{Z}[\omega],
\end{equation}
be the divisor function on ideals.
For a given $\varepsilon>0$,
\begin{equation} \label{divis}
d(\mathfrak{n}) \ll_{\varepsilon} N(\mathfrak{n})^{\varepsilon} \quad \text{for all} \quad 0 \neq \mathfrak{n} \unlhd \mathbb{Z}[\omega].
\end{equation}
For $0 \neq n \in \mathbb{Z}[\omega]$, we define $d(n):=d(\mathfrak{n})$ for $\mathfrak{n} = (n)$.
Let $\mu(n)$ denote the M{\"o}bius function on $\mathbb{Z}[\omega]$,
and for $n \equiv 1 \pmod{3}$ let
\begin{equation*}
	\mathrm{rad}(n) = \prod_{\substack{\pi \text{ prime},\ \pi \mid n \\ \pi \equiv 1 \pmod{3}}} \pi.
\end{equation*}


\subsection{Cubic Hecke characters}

For $q \in \mathbb{Z}[\omega]$ with $q \equiv 1 \pmod{3}$, recall the cubic Dirichlet character 
given in \eqref{chiqdef}, and that it is 
a (cubic) Hecke character if $\chi_q(\omega)=1$ (i.e.\ trivial on units
of $\mathbb{Z}[\omega]$). 
Writing $q = ab^2c^3d^3$ for $a, b, c, d \in \Z[\omega]$ satisfying $a, b, c, d \equiv 1 \pmod{3}$, $\mu^2(abc) = 1$, and $d \mid (abc)^\infty$, note that in fact $\chi_q = \chi_a \overline{\chi_b} \mathbf{1}_c$, where $\mathbf{1}_c$ denotes the trivial character modulo $c \Z[\omega]$. Thus the modulus of $\chi_q$ is $abc \Z[\omega] = \mathrm{rad}(q) \Z[\omega]$, and $\chi_q$ is primitive exactly when $c=1$, or equivalently when $\chi_q$ is a product of characters of 
distinct prime conductors 
(i.e.\ either $\chi_{\pi}$ or $\overline{\chi_{\pi}}=\chi^2_{\pi}=\chi_{\pi^2}$).
We conclude that a given Dirichlet character is a primitive 
cubic Hecke character provided that $\chi=\chi_q$ for $q \in \mathcal{C}_3$, 
where $\mathcal{C}_3$ is given in \eqref{C3def}.
Each $\chi=\chi_{q_1 q^2_2}$ with $q_1 q_2^2 \in \mathcal{C}_3$ has conductor $\mathfrak{c_{\chi}}:=\text{cond}(\chi)=q_1 q_2 \mathbb{Z}[\omega]$. 
Sometimes we may
abuse terminology and refer to
$N(\mathfrak{c}_{\chi})=N(q_1 q_2)$ as the ``conductor" when referencing the lengths of various sums occurring in the Fourier analysis. 

\subsection{Cubic Gauss sums and variants}

Recall that $\check{e}(z):=e^{2 \pi i \text{Tr}_{\mathbb{C}/\mathbb{R}}(z)}
=e^{2 \pi i(z+\overline{z})}$ for $z \in \mathbb{C}$.
For $\mu \in \mathbb{Z}[\omega]$
and $c \in \mathbb{Z}[\omega]$ with
with $c \equiv 1 \pmod{3}$, the cubic Gauss sum (with shift $\mu$) is defined by
\begin{equation} \label{generalgauss}
g_3(\mu,c):=\sum_{d \pmod{c}} \chi_c(d) \check{e} \Big( \frac{\mu d}{c} \Big).
\end{equation}
We write $g_3(c):=g_3(1,c)$ for short. For $0 \neq b \in \Z[\omega]$ also denote 
\begin{equation*}
    \varphi(b) := \sum_{\substack{a\pmod{b} \\ (a, b)=1}} 1 = N(b) \prod_{\substack{\mathfrak{p} \text{ prime} \\ \mathfrak{p} \mid (b)}} \Big(1-\frac{1}{N(\mathfrak{p})}\Big),
\end{equation*}
where the product is over distinct prime ideals (i.e.\ over $\mathfrak{p} = (\pi)$ for primes $\pi = \lambda$ or $\pi \equiv 1\pmod{3}$). The following lemma records standard properties for cubic Gauss sums.
\begin{lemma} \label{cubic-GS-lemma1} 
Let $c, c_1, c_2 \equiv 1 \pmod 3$ and $\mu, \nu \in \mathbb{Z}[\omega]$.
\begin{enumerate}[(a)]
\item \label{changevar}  If $(\nu, c)=1$, then
\begin{equation*}
g_3(\mu \nu, c) = \overline{\chi_{c}(\nu)} g_3(\mu,c).
\end{equation*}
\item \label{twistmult}  If $(c_1, c_2)=1$, then 
\begin{equation*}
g_3(\mu, c_1 c_2) = \overline{\chi_{c_2}(c_1)} g_3(\mu,c_1) g_3(\mu, c_2) = g_3(\mu, c_1) g_3(\mu c_1, c_2).
\end{equation*}
\end{enumerate}
\end{lemma}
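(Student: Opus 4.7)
The plan is to establish both parts by direct manipulation of the defining sum \eqref{generalgauss}, using cubic reciprocity \eqref{cuberep} in one place and the fact that $\chi_c$ takes values in the cube roots of unity.

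For part (\ref{changevar}), I would perform the substitution $d \mapsto \nu^{-1} d$ in \eqref{generalgauss}, which is a bijection on residues modulo $c$ since $(\nu, c) = 1$. The additive character becomes $\check{e}(\mu \nu \cdot \nu^{-1} d / c) = \check{e}(\mu d /c)$, while the multiplicative character contributes a factor $\chi_c(\nu^{-1})$ that pulls out of the sum. Since $\chi_c(\nu)^3 = 1$ whenever $\chi_c(\nu) \neq 0$, one has $\chi_c(\nu^{-1}) = \chi_c(\nu)^{-1} = \overline{\chi_c(\nu)}$, giving the claimed identity. (If $(\nu, c) > 1$ one can verify both sides vanish, but this case is excluded by hypothesis.)

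For the first equality in part (\ref{twistmult}), I would apply the Chinese Remainder Theorem to parametrize residues $d \pmod{c_1 c_2}$ as $d = c_2 e_1 + c_1 e_2$ with $e_i$ ranging over residues mod $c_i$. The additive character then factors as $\check{e}(\mu e_1/c_1)\, \check{e}(\mu e_2/c_2)$, and the multiplicativity of the cubic symbol yields $\chi_{c_1 c_2}(d) = \chi_{c_1}(c_2 e_1)\, \chi_{c_2}(c_1 e_2)$. Separating variables produces $\chi_{c_1}(c_2)\, \chi_{c_2}(c_1)\, g_3(\mu, c_1)\, g_3(\mu, c_2)$. Invoking cubic reciprocity \eqref{cuberep} gives $\chi_{c_1}(c_2) = \chi_{c_2}(c_1)$, and together with $\chi_{c_2}(c_1)^3 = 1$ this collapses the prefactor to $\chi_{c_2}(c_1)^2 = \overline{\chi_{c_2}(c_1)}$, yielding the first identity.

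The second equality in part (\ref{twistmult}) follows at once from the first combined with part (\ref{changevar}): taking $\nu = c_1$ (coprime to $c_2$ by assumption) and modulus $c_2$, part (\ref{changevar}) gives $g_3(\mu c_1, c_2) = \overline{\chi_{c_2}(c_1)}\, g_3(\mu, c_2)$, and multiplying by $g_3(\mu, c_1)$ reproduces the middle expression. No substantive obstacle arises in the proof; the only point requiring care is tracking the complex conjugation that emerges from raising the cubic character to the second power, which is what distinguishes this cubic twist multiplicativity from its quadratic counterpart.
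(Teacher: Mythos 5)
Your proof is correct. The paper states this lemma without proof, recording it as a collection of standard properties, so there is nothing to compare against; your argument (the substitution $d \mapsto \nu^{-1}d$ for part (\ref{changevar}), the Chinese Remainder Theorem plus cubic reciprocity \eqref{cuberep} and $\chi_{c_2}(c_1)^2 = \overline{\chi_{c_2}(c_1)}$ for part (\ref{twistmult})) is exactly the standard derivation one would find in the cited references.
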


In order to compute $g_3(\mu,c)$ for general parameters $\mu,c$,
it suffices to 
compute $g_3(\pi^{k},\pi^{\ell})$ for $\pi \equiv 1 \pmod{3}$ prime
and $k,\ell \in \mathbb{Z}_{\geq 0}$ by \cref{cubic-GS-lemma1}.

\begin{lemma} \label{localcomp}
Let $k,\ell \in \mathbb{Z}_{\geq 0}$ and 
$\pi \in \mathbb{Z}[\omega]$ be prime and satisfy $\pi \equiv 1 \pmod{3}$.
Then we have
\begin{equation*}
g_3(\pi^{k},\pi^{\ell})=\begin{cases}
1 & \text{if } \ell=0, \\
\varphi(\pi^{\ell}) & \text{if } 1 \leq \ell \leq k, \quad \ell \equiv 0 \pmod{3} \\ 
-N(\pi)^{k} & \text{if } \ell=k+1, \quad \ell \equiv 0 \pmod{3} \\
N(\pi)^{k} g_3(\pi) & \text{if } \ell=k+1, \quad \ell \equiv 1 \pmod{3} \\
N(\pi)^{k} \overline{g_3(\pi)} & \text{if } \ell=k+1, \quad \ell \equiv 2 \pmod{3} \\
0 & \emph{otherwise}
\end{cases}.
\end{equation*}
\end{lemma}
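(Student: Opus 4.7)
The plan is to evaluate the defining sum
\begin{equation*}
g_3(\pi^k,\pi^\ell) = \sum_{d \pmod{\pi^\ell}} \chi_\pi(d)^\ell\, \check{e}\!\left(\tfrac{d}{\pi^{\ell-k}}\right)
\end{equation*}
case by case according to the relative sizes of $k$ and $\ell$, using that $\chi_{\pi^\ell}(d) = \chi_\pi(d)^\ell$ by the multiplicative extension \eqref{multdefine} and that $\chi_\pi$ depends only on $d \pmod \pi$. The case $\ell=0$ is trivial (only $d=0$, contributing $1$).

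For $1\le\ell\le k$, the exponent $\pi^{k-\ell}d$ lies in $\Z[\omega]$, so $\check{e}(\cdot)=1$ and the sum collapses to the character sum $\sum_{d \pmod{\pi^\ell}} \chi_\pi(d)^\ell$. Since $\chi_\pi^\ell$ depends only on $d \pmod \pi$, this factors as $N(\pi)^{\ell-1}\sum_{d \pmod \pi,\ (d,\pi)=1}\chi_\pi(d)^\ell$. The inner sum equals $\varphi(\pi)=N(\pi)-1$ when $3\mid\ell$ (yielding $\varphi(\pi^\ell)$), and vanishes otherwise because $\chi_\pi^\ell$ is then a non-trivial character on the cyclic group $(\Z[\omega]/\pi)^\times$, which has order $N(\pi)-1 \equiv 0 \pmod 3$. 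For $\ell=k+1$, both $\chi_\pi(d)^{k+1}$ and $\check{e}(d/\pi)$ depend only on $d \pmod\pi$, so the sum factors as $N(\pi)^k\sum_{d\pmod\pi}\chi_\pi(d)^{k+1}\check{e}(d/\pi)$. When $\ell\equiv 1,2\pmod 3$ the inner sum is $g_3(\pi)$ or $\overline{g_3(\pi)}$, respectively; for the latter one uses $\chi_\pi(-1)=1$ (since $\chi_\pi(-1)$ is a cube root of unity whose square is $1$) to rewrite $\sum_d\overline{\chi_\pi(d)}\check{e}(d/\pi)$ as $\overline{g_3(\pi)}$ via the change of variables $d\mapsto -d$. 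When $\ell\equiv 0\pmod 3$, the character is trivial on units and the inner sum equals $\sum_{d\pmod\pi,\ (d,\pi)=1}\check{e}(d/\pi)=-1$ (full additive character sum vanishes minus the $d=0$ contribution).

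The main obstacle is the vanishing in the remaining range $\ell\ge k+2$, which I would handle by a multiplicative shift / averaging argument. For any $u \pmod \pi$, the substitution $d \mapsto d(1+\pi^{\ell-k-1}u)$ is well-defined because $\ell-k-1\ge 1$ ensures $1+\pi^{\ell-k-1}u$ is a unit modulo $\pi^\ell$, and it permutes $(\Z[\omega]/\pi^\ell)^\times$. It preserves $\chi_\pi(d)^\ell$ (the multiplier is $\equiv 1\pmod\pi$) and transforms $\check{e}(d/\pi^{\ell-k})$ into $\check{e}(d/\pi^{\ell-k})\check{e}(du/\pi)$. Hence
\begin{equation*}
g_3(\pi^k,\pi^\ell) = \sum_{d \pmod{\pi^\ell}} \chi_\pi(d)^\ell \check{e}\!\left(\tfrac{d}{\pi^{\ell-k}}\right) \check{e}\!\left(\tfrac{du}{\pi}\right)
\end{equation*}
for every $u \pmod \pi$. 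Averaging over $u$ and using $\sum_{u\pmod\pi}\check{e}(du/\pi) = N(\pi) \mathbf{1}_{\pi \mid d}$ kills every term in the sum (which was already restricted to $(d,\pi)=1$), giving $g_3(\pi^k,\pi^\ell)=0$ and completing all cases.
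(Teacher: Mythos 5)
Your proof is correct and complete. The paper itself does not carry out this computation: it disposes of the lemma in one line by citing an external reference (Proskurin, property (h)), so your argument supplies the standard local calculation that the paper omits. All the essential points check out: the reduction $\chi_{\pi^\ell}=\chi_\pi^\ell$ with $\chi_\pi$ depending only on $d\pmod{\pi}$; the triviality of $\check{e}$ on $\mathbb{Z}[\omega]$ (since $\mathrm{Tr}(z)\in\mathbb{Z}$ there), which handles $1\le\ell\le k$; the orthogonality of the non-principal character $\chi_\pi^\ell$ on the cyclic group of order $N(\pi)-1\equiv 0\pmod 3$ when $3\nmid\ell$; the factorization through residues mod $\pi$ in the case $\ell=k+1$, including the observation $\chi_\pi(-1)=1$ (a cube root of unity that squares to $1$) needed to identify the conjugate Gauss sum, and the evaluation $\sum_{(d,\pi)=1}\check{e}(d/\pi)=-1$, which uses that $\check{e}(\cdot/\pi)$ is a non-trivial additive character because $\pi\nmid\lambda$; and finally the twisting substitution $d\mapsto d(1+\pi^{\ell-k-1}u)$ followed by averaging over $u$, which is the standard device for the vanishing when $\ell\ge k+2$ and is applied correctly here (the sum is already supported on $(d,\pi)=1$ for $\ell\ge 1$, so the indicator $\mathbf{1}_{\pi\mid d}$ annihilates everything). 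No gaps.
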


\begin{proof}
A specialization of \cite[property (h), pg.~7]{Pro} yields the result.

\end{proof}

For $\pi \equiv 1 \pmod{3}$ prime we have the formula for the cube \cite[pp. 443--445]{H},
\begin{equation} \label{cuberel}
g_3(\pi)^3=-\pi^2 \overline{\pi}.
\end{equation}
Note that \cref{cubic-GS-lemma1}, \cref{localcomp}, and \eqref{cuberel}
imply that
\begin{equation} \label{sqrootcancel}
|g_3(c)|=\mu^2(c) N(c)^{1/2}
\end{equation}
for $c \equiv 1 \pmod{3}$.
We denote the normalized cubic Gauss sum (with shift $\mu \in \mathbb{Z}[\omega]$) by 
\begin{equation} \label{normalized}
\widetilde{g}_3(\mu,c):=N(c)^{-1/2} g_3(\mu,c).
\end{equation}

We need to consider slightly more general exponential sums that are the finite Fourier transforms of 
cubic Hecke characters (not necessarily primitive).
Let $c=c_1 c^2_2 \in \mathbb{Z}[\omega]$ where $c_1,c_2 \in \mathbb{Z}[\omega]$, $c_1,c_2 \equiv 1 \pmod{3}$, and $\mu^2(c_1)=1$. For $\mu \in \mathbb{Z}[\omega]$, let 

\begin{equation} \label{hdef}
\widetilde{h}_3(\mu,\chi_c):=\frac{1}{N(c_1 c_2)^{1/2}} \sum_{\substack{ x \pmod{c_1 c_2} \\ (x,c_1 c_2)=1}} \chi_c( x) \check{e} \Big( \frac{\mu x}{c_1 c_2} \Big). 
\end{equation}

\begin{lemma} \label{h3fourier}
Let $c=c_1 c^2_2 \in \mathbb{Z}[\omega]$ such that $c_1,c_2 \in \mathbb{Z}[\omega]$, $c_1,c_2 \equiv 1 \pmod{3}$, and $\mu^2(c_1)=1$. 
Let $\mu,\nu \in \mathbb{Z}[\omega]$, and $\widetilde{h}_3(\mu,\chi_c)$ be as in \eqref{hdef}.
\begin{enumerate}[(a)]
\item \label{mult} If $(c_1,c_2)=1$, then
\begin{equation*} 
\widetilde{h}_3(\mu,\chi_c)=\widetilde{g}_3(\mu,c_1) \: \overline{\widetilde{g}_3(\mu,c_2)}.
\end{equation*}
\item \label{h3prop} If $(\nu,c)=1$, then 
\begin{equation*}
\widetilde{h}_3(\mu \nu,\chi_c)=\overline{\chi_c(\nu)} \: \widetilde{h}_3(\mu,\chi_c).
\end{equation*}
\end{enumerate}
\end{lemma}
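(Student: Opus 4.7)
The plan is to handle part (b) first as a direct change of variables, then attack part (a) via the Chinese Remainder Theorem. For (b), since $(\nu, c)=1$ and $c = c_1 c_2^2$, we have $(\nu, c_1 c_2)=1$, so $\nu$ is invertible modulo $c_1 c_2$. Substituting $x \mapsto \nu^{-1} x$ in \eqref{hdef} converts $\check{e}(\mu \nu x / c_1 c_2)$ into $\check{e}(\mu x / c_1 c_2)$ and extracts a factor $\chi_c(\nu^{-1})$. Since $\chi_c(x) = \chi_{c_1}(x) \overline{\chi_{c_2}(x)}$ depends only on $x \pmod{c_1 c_2}$ (because $c_i \mid c_1 c_2$ for $i=1,2$) and takes values in the cube roots of unity on units modulo $c_1 c_2$, we get $\chi_c(\nu^{-1}) = \overline{\chi_c(\nu)}$, which finishes (b).

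For (a), I would use CRT to parametrise $x \pmod{c_1 c_2}$ by pairs $(x_1, x_2) \in \Z[\omega]/c_1 \times \Z[\omega]/c_2$, so that the character splits as $\chi_c(x) = \chi_{c_1}(x_1) \overline{\chi_{c_2}(x_2)}$. For the additive character, I would invoke Bezout to find $A, B \in \Z[\omega]$ with $A c_2 + B c_1 = 1$, hence $\tfrac{1}{c_1 c_2} = \tfrac{A}{c_1} + \tfrac{B}{c_2}$ with $A \equiv c_2^{-1} \pmod{c_1}$ and $B \equiv c_1^{-1} \pmod{c_2}$. This lets me write $\check{e}(\mu x / c_1 c_2) = \check{e}(\mu x_1 \overline{c_2}/c_1) \, \check{e}(\mu x_2 \overline{c_1}/c_2)$, so the whole sum factors into independent $x_1$ and $x_2$ sums.

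In each factor I would clear the inverse by substituting $u_1 = x_1 \overline{c_2}$ and $u_2 = x_2 \overline{c_1}$, producing $\chi_{c_1}(c_2)\, g_3(\mu, c_1)$ on the first side and $\overline{\chi_{c_2}(c_1)} \sum_u \overline{\chi_{c_2}(u)}\, \check{e}(\mu u / c_2)$ on the second. The remaining inner sum equals $\overline{g_3(\mu, c_2)}$: this follows from taking the complex conjugate of $g_3(\mu, c_2)$, substituting $d \mapsto -d$, and using that $\chi_{c_2}(-1) = 1$ since $-1 = (-1)^3$ is a cube in $\Z[\omega]$. The crucial cancellation $\chi_{c_1}(c_2) \overline{\chi_{c_2}(c_1)} = 1$ then follows from cubic reciprocity \eqref{cuberep}, which gives $\chi_{c_1}(c_2) = \chi_{c_2}(c_1)$, and a cube root of unity times its conjugate is $1$. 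Dividing through by $N(c_1 c_2)^{1/2} = N(c_1)^{1/2} N(c_2)^{1/2}$ and using \eqref{normalised} yields (a).

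The main obstacle is not conceptual but notational: one must carefully track the multiplicative factors extracted by each change of variables so that the cubic reciprocity cancellation is correctly exhibited, and verify that $\chi_{c_2}(-1) = 1$ to identify the conjugate Gauss sum properly.
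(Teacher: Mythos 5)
Your proof is correct and follows essentially the same route as the paper's (very terse) argument: the Chinese Remainder Theorem to split the sum, the factorization $\chi_c=\chi_{c_1}\overline{\chi_{c_2}}$, and cubic reciprocity \eqref{cuberep} to cancel $\chi_{c_1}(c_2)\overline{\chi_{c_2}(c_1)}$; your identification of $\sum_d\overline{\chi_{c_2}(d)}\,\check{e}(\mu d/c_2)$ with $\overline{g_3(\mu,c_2)}$ via $d\mapsto -d$ and $\chi_{c_2}(-1)=1$ is also correct. The only (immaterial) difference is that you prove (b) by a direct substitution $x\mapsto\nu^{-1}x$, whereas the paper deduces it from (a) together with \cref{cubic-GS-lemma1}(\ref{changevar}).
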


\begin{proof}
The claim (\ref{mult}) follows from a short computation with \eqref{hdef} using that
 $\chi_c=\chi_{c_1}  \overline{\chi_{c_2}}$, the Chinese remainder theorem to write  
$x=c_2 x_1+c_1 x_2$ with 
$x_i $ running modulo $c_i$ for $ i \in \{1,2\}$, since $(c_1,c_2)=1$, 
and cubic reciprocity $\big( \frac{c_2}{c_1} \big)_3 \overline{\big( \frac{c_1}{c_2} \big)_3}=1$.
The claim (\ref{h3prop}) follows immediately from (\ref{mult}) and \cref{cubic-GS-lemma1}(\ref{changevar}).

\end{proof}


\subsection{Hecke $L$-functions over $\mathbb{Q}(\omega)$}

Let $\mathfrak{m} \unlhd \Z[\omega]$ and let $\psi \pmod{\mathfrak{m}}$ be a Hecke character of $\Q(\omega)$ of trivial infinite type. 
The Hecke $L$-function attached to $\psi$ is given by 
\begin{equation} \label{Lspsi}
L(s,\psi):=\sum_{0 \neq \mathfrak{n} \unlhd \mathbb{Z}[\omega]} \frac{\psi(\mathfrak{n})}{N(\mathfrak{n})^{s}}, \qquad \Re(s)>1.
\end{equation}
Note that we put $\psi(\mathfrak{n})=0$ whenever $\mathfrak{n}$ and $\mathfrak{m}$ are not coprime. Let $\mathfrak{c}_{\psi} \unlhd \mathbb{Z}[\omega]$ denote the conductor of $\psi$. The completed Hecke $L$-function of $\psi$ is defined by 
\begin{equation} \label{completed}
\Lambda(s,\psi):=(|d_{\mathbb{Q}(\omega)}| N(\mathfrak{c}_\psi))^{s/2} (2 \pi)^{-s} \Gamma(s) L(s,\psi), \qquad s \in \mathbb{C},
\end{equation}
where $d_{\mathbb{Q}(\omega)}=-3$ is the discriminant of $\mathbb{Q}(\omega)$. 

\begin{prop} \emph{\cite[VII Cor.~8.6]{Neu}}  \label{funceq}
The completed $L$-function $\Lambda(s,\psi)$ is entire, provided that $\psi$ is primitive and 
$\mathfrak{c}_{\psi}=m \mathbb{Z}[\omega] \neq \mathbb{Z}[\omega]$. Furthermore, it satisfies the functional equation 
\begin{equation*}
\Lambda(s,\psi)=\frac{W(\psi)}{N(\mathfrak{c}_{\psi})^{1/2}} \Lambda(1-s,\overline{\psi}),
\end{equation*}
where
\begin{equation} \label{Wpsi}
W(\psi):=\sum_{\substack{ x \pmod{\mathfrak{c}_{\psi}} \\ (x,\mathfrak{c}_{\psi})=1 }} \psi(x) \check{e} \Big(\frac{x}{\lambda m}  \Big).
\end{equation}
\end{prop}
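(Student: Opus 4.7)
The plan is to follow Hecke's classical approach via theta series and Mellin transforms, specialised to the Eisenstein field $\mathbb{Q}(\omega)$; this is precisely the route taken in the cited Neukirch reference. First, I would express $\Lambda(s,\psi)$ as the Mellin transform of a theta series. Since $\mathbb{Z}[\omega]$ has class number one and the unit group is finite of order $6$, I would write $L(s,\psi)=\tfrac16\sum_{0\neq\alpha\in\mathbb{Z}[\omega]}\psi(\alpha)N(\alpha)^{-s}$ (after extending $\psi$ to elements via trivial infinite type), unfold the identity $(2\pi)^{-s}\Gamma(s)N(\alpha)^{-s}=\int_0^\infty e^{-2\pi y N(\alpha)}y^{s-1}\,dy$, and absorb the factor $(|d_{\mathbb{Q}(\omega)}|N(\mathfrak{c}_\psi))^{s/2}$ via the change of variables $y\mapsto y/\sqrt{|d_{\mathbb{Q}(\omega)}|N(\mathfrak{c}_\psi)}$. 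This yields $\Lambda(s,\psi)=\int_0^\infty \Theta_\psi(y)y^{s-1}\,dy$ for a theta series $\Theta_\psi$ built from $\psi$ and the lattice $\mathbb{Z}[\omega]\subset\mathbb{C}$.

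Second, I would establish a theta transformation of the shape
\begin{equation*}
\Theta_\psi(1/y)=\frac{W(\psi)}{N(\mathfrak{c}_\psi)^{1/2}}\cdot y\cdot \Theta_{\overline{\psi}}(y).
\end{equation*}
Since $\psi$ is primitive modulo $\mathfrak{c}_\psi=m\mathbb{Z}[\omega]$, I would split the sum defining $\Theta_\psi$ into residue classes $\alpha\equiv x\pmod{m}$ with $(x,m)=1$, and apply Poisson summation on the lattice $m\mathbb{Z}[\omega]\subset\mathbb{C}$. The dual lattice is $(m\lambda)^{-1}\mathbb{Z}[\omega]$ with respect to the pairing $\check{e}(\cdot)$, which is exactly why the shift $x/(\lambda m)$ appears in the definition \eqref{Wpsi} of $W(\psi)$. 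Collecting the inner sum over $x$ identifies the Gauss sum $W(\psi)$, while the outer sum reassembles into $\Theta_{\overline{\psi}}(y)$.

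Third, I would split the Mellin integral at $y=1$. The tail $\int_1^\infty\Theta_\psi(y)y^{s-1}\,dy$ is entire in $s$ because $\Theta_\psi$ decays rapidly at infinity, and the constant term vanishes thanks to $\psi$ being non-trivial (so there is no residual $\zeta$-like pole). For $\int_0^1\Theta_\psi(y)y^{s-1}\,dy$, the substitution $y\mapsto 1/y$ combined with the theta transformation converts it into an entire integral on $[1,\infty)$ against $\Theta_{\overline{\psi}}(y)$, with spectral parameter $1-s$. Rearranging gives simultaneously (i) the entirety of $\Lambda(s,\psi)$ and (ii) the claimed functional equation $\Lambda(s,\psi)=W(\psi)N(\mathfrak{c}_\psi)^{-1/2}\Lambda(1-s,\overline{\psi})$.

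The main obstacle is bookkeeping rather than conceptual: one must track the units of $\mathbb{Z}[\omega]$ (a factor of $6$), the ramified prime $\lambda$ (which enters through the different $\mathfrak{d}_{\mathbb{Q}(\omega)/\mathbb{Q}}=\lambda\mathbb{Z}[\omega]$ and dictates the precise shift $x/(\lambda m)$ in \eqref{Wpsi}), and the normalizing constants in the Mellin/Poisson steps, so that the root number comes out exactly as $W(\psi)/N(\mathfrak{c}_\psi)^{1/2}$ with the stated definition of $W(\psi)$ rather than a constant multiple thereof.
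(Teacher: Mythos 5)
Your outline is the standard Hecke theta-series argument, which is exactly the proof of the result the paper cites (\cite[VII Cor.~8.6]{Neu}): the paper itself offers no proof beyond that citation, and Neukirch's proof proceeds precisely via the lattice theta function, Poisson summation against the dual lattice $(\lambda m)^{-1}\mathbb{Z}[\omega]$ (whence the shift $x/(\lambda m)$ in $W(\psi)$), and the split of the Mellin integral at $y=1$. The proposal is correct, including the bookkeeping points you flag (the factor $6$ from the units, the different $\lambda\mathbb{Z}[\omega]$, and primitivity guaranteeing $|W(\psi)|=N(\mathfrak{c}_\psi)^{1/2}$ and the vanishing constant term).
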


\begin{remark} \label{rootnumber}
Suppose that $q=q_1 q_2^2 \in \mathcal{F}_3$ and let $\psi=\chi_q$. Then $\mathfrak{c}_{\chi_q}=q_1 q_2 \mathbb{Z}[\omega]$,
and
\begin{equation*}
\frac{W(\chi_q)}{N(q_1 q_2)^{1/2}}=\chi_{q}(\lambda) \widetilde{h}_3(1,\chi_q)=\widetilde{g}_3(q_1) \overline{\widetilde{g}_3(q_2)},
\end{equation*}
where the last equality follows from \cref{h3fourier}(\ref{mult}) and \cref{supprem}.
\end{remark}

For $d \in \mathbb{Z}[\omega]$ with $d \equiv 1 \pmod{3}$, let 
\begin{equation} \label{A1def}
A_1(d):= \sum_{0 \neq \mathfrak{n} \unlhd \Z[\omega]} \frac{\chi_d(\mathfrak{n})}{ N( \mathfrak{n})^{1/2}} 
\Phi_1 \Big( \frac{N(\mathfrak{n})}{\sqrt{3 N(d)}} \Big),
\end{equation}
\begin{equation} \label{A2def} 
A_2(d):= \sumtwo_{0 \neq \mathfrak{n}_1, \mathfrak{n}_2 \unlhd \Z[\omega]} \frac{\chi_d(\mathfrak{n}_1) \overline{\chi_d( \mathfrak{n}_2)}}{ N( \mathfrak{n}_1 \mathfrak{n}_2)^{1/2}} 
\Phi_2 \Big( \frac{N(\mathfrak{n}_1 \mathfrak{n}_2)}{3 N(d)} \Big),
\end{equation}
and
\begin{equation} \label{def-Phi}
\Phi_j(y) = \frac{1}{2 \pi i} \int_{2-i \infty}^{2+i \infty} (2 \pi)^{-jw} y^{-w} \frac{\Gamma(1/2+w)^j}{\Gamma(1/2)^j}  \frac{dw}{w}, \quad j=1,2.
\end{equation}
We have the bound
\begin{equation}\label{Phi_bound}
y^k \Phi^{(k)}_j(y) \ll_{A, k} (1+y)^{-A}
\end{equation}
for all $k, A \in \mathbb{Z}_{\geq 0}$. 
The lemma below records the approximate functional equations
for cubic Hecke $L$-functions in our thin family $\mathcal{F}^{\prime}_3$ at the central point. It follows in a straightforward manner from
\cite[Theorem~5.3]{IwK}, \cref{funceq},
and \cref{rootnumber}.

\begin{lemma} \label{afecenter}
Let $\mathcal{F}^{\prime}_3$ be as in \eqref{F3primedef}, and $q \in \mathcal{F}^{\prime}_3$.
Then
\begin{equation*}
L(1/2, \chi_q) =A_1(q)+ \widetilde{g}_3(q) \cdot \overline{A_1(q)},
\end{equation*}
and
\begin{equation*}
|L(1/2, \chi_q)|^2 = 2 A_2(q),
\end{equation*}
where $A_1(\cdot)$ and $A_2(\cdot)$ are given in \eqref{A1def} and \eqref{A2def} respectively. 
\end{lemma}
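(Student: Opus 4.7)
The plan is to run the standard contour-shift derivation of the approximate functional equation from \cite[Theorem~5.3]{IwK} applied to the completed Hecke $L$-function $\Lambda(s, \chi_q)$, exploiting that for $q \in \mathcal{F}'_3$ we have $q_2 = 1$, $\mathfrak{c}_{\chi_q} = q \Z[\omega]$, and $|d_{\Q(\omega)}| N(\mathfrak{c}_{\chi_q}) = 3 N(q)$. By \cref{rootnumber} the normalized root number in \cref{funceq} equals $W(\chi_q)/N(q)^{1/2} = \widetilde{g}_3(q)$, and by \eqref{sqrootcancel} one has $|\widetilde{g}_3(q)| = 1$ since $\mu^2(q)=1$. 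Moreover, since $\overline{\chi_q} = \chi_{q^2}$ corresponds to $q_1 = 1$, $q_2 = q$ in the decomposition $q_1 q_2^2 = q^2 \in \mathcal{F}_3$, a second application of \cref{rootnumber} gives $W(\overline{\chi_q})/N(q)^{1/2} = \overline{\widetilde{g}_3(q)}$.

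For the first identity, I would start from $I := \frac{1}{2 \pi i} \int_{(2)} \Lambda(1/2+s, \chi_q) \frac{ds}{s}$ and shift the contour to $\Re(s) = -2$. This picks up only the simple pole at $s=0$ with residue $\Lambda(1/2, \chi_q)$, since $\Lambda(\cdot, \chi_q)$ is entire by \cref{funceq}. On the shifted contour, the functional equation gives $\Lambda(1/2+s, \chi_q) = \widetilde{g}_3(q) \Lambda(1/2-s, \overline{\chi_q})$; substituting $s \mapsto -s$ (with the corresponding sign from contour reversal) rewrites the shifted integral as $-\widetilde{g}_3(q) \cdot J$, where $J := \frac{1}{2\pi i} \int_{(2)} \Lambda(1/2+u, \overline{\chi_q}) \frac{du}{u}$. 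Substituting $\Lambda(s,\psi) = (3N(q))^{s/2} (2\pi)^{-s} \Gamma(s) L(s,\psi)$ into both $I$ and $J$, expanding the $L$-series, and recognizing the inner Mellin integral as $\Phi_1(y)$ from \eqref{def-Phi} with $y = N(\mathfrak{n})/\sqrt{3N(q)}$, one obtains $I = (3N(q))^{1/4}(2\pi)^{-1/2}\Gamma(1/2) A_1(q)$ and $J = (3N(q))^{1/4}(2\pi)^{-1/2}\Gamma(1/2) \overline{A_1(q)}$ (using that $\Phi_1$ is real-valued on $(0,\infty)$). Comparing with the prefactor in $\Lambda(1/2, \chi_q) = (3N(q))^{1/4}(2\pi)^{-1/2}\Gamma(1/2) L(1/2, \chi_q)$ yields $L(1/2, \chi_q) = A_1(q) + \widetilde{g}_3(q) \overline{A_1(q)}$.

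For the second identity, I would repeat the procedure for $K := \frac{1}{2\pi i}\int_{(2)} \Lambda(1/2+s, \chi_q) \Lambda(1/2+s, \overline{\chi_q}) \frac{ds}{s}$. Multiplying the two functional equations yields $\Lambda(1/2+s, \chi_q) \Lambda(1/2+s, \overline{\chi_q}) = \widetilde{g}_3(q) \overline{\widetilde{g}_3(q)} \Lambda(1/2-s, \overline{\chi_q}) \Lambda(1/2-s, \chi_q) = \Lambda(1/2-s, \chi_q) \Lambda(1/2-s, \overline{\chi_q})$, the last equality from $|\widetilde{g}_3(q)| = 1$. Hence the shifted-contour integral equals $-K$ after $s \mapsto -s$, so the contour shift gives $2K = |\Lambda(1/2, \chi_q)|^2$. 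Expanding the double Dirichlet series and identifying the resulting Mellin integral of $\big((2\pi)^{-s} \Gamma(1/2+s)/\Gamma(1/2)\big)^2/s$ with $\Phi_2$ from \eqref{def-Phi} gives $K = (3N(q))^{1/2}(2\pi)^{-1}\Gamma(1/2)^2 A_2(q)$. Combining with $|\Lambda(1/2, \chi_q)|^2 = (3N(q))^{1/2}(2\pi)^{-1}\Gamma(1/2)^2 |L(1/2, \chi_q)|^2$ yields $|L(1/2, \chi_q)|^2 = 2 A_2(q)$.

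The only real subtlety is careful bookkeeping: matching the conductor factor $|d_{\Q(\omega)}| N(\mathfrak{c}_{\chi_q}) = 3 N(q)$ with the arguments $\sqrt{3N(q)}$ and $3N(q)$ appearing in \eqref{A1def} and \eqref{A2def}, tracking the sign from reversing the contour orientation after the $s \mapsto -s$ substitution, and identifying the root numbers via \cref{rootnumber} and \eqref{sqrootcancel}. Everything else is standard Mellin inversion and requires no further input beyond \cite[Theorem~5.3]{IwK}.
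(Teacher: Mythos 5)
Your derivation is correct and is exactly the standard contour-shift argument that the paper invokes by citing \cite[Theorem~5.3]{IwK} together with \cref{funceq} and \cref{rootnumber}; in particular you correctly identify the conductor factor $3N(q)$, the root numbers $\widetilde{g}_3(q)$ and $\overline{\widetilde{g}_3(q)}$ for $\chi_q$ and $\overline{\chi_q}=\chi_{q^2}$, and the realness of $\Phi_1$ needed to recognize $J$ as $\overline{A_1(q)}$. No gaps.
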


We will need also a slightly more general approximate functional equation 
at more general points $s \in \mathbb{C}$ in order to derive a second moment estimate
in \cref{second_moment_section}.

\begin{lemma} \emph{\cite[Theorem~5.3]{IwK}} \label{afe}
Let  $\mathcal{F}_3$ be as in \eqref{F3def}, $Y>0$, and $G$ be any even function
that is holomorphic and bounded in $|\Re(u)|<4$, and that also satisfies $G(0)=1$.
Then for any $q=q_1 q_2^2 \in \mathcal{F}_3$ and $s \in \mathbb{C}$ with $0 \leq \Re(s) \leq 1$ we have that
\begin{align*}
L(s,\chi_q)&=\sum_{0 \neq \mathfrak{n} \unlhd \Z[\omega]} \frac{\chi_q(\mathfrak{n})}{N(\mathfrak{n})^s} V_s \Big( \frac{N(\mathfrak{n})}{Y \sqrt{3 N (q_1 q_2)}}   \Big) \\
&+ (3 N(q_1 q_2))^{1/2-s} (2 \pi)^{2s-1} \frac{\Gamma(1-s)}{\Gamma(s)} \widetilde{g}_3(q_1) \overline{\widetilde{g}_3(q_2)}
\sum_{0 \neq \mathfrak{n} \unlhd \Z[\omega]} \frac{\overline{\chi_q(\mathfrak{n})}}{N(\mathfrak{n})^{1-s}}
V_{1-s} \Big( \frac{Y N(\mathfrak{n})}{\sqrt{3 N (q_1 q_2)}} \Big),
\end{align*}
where 
\begin{equation} \label{Vsdef}
V_s(y):=\frac{1}{2 \pi i} \int_{2-i \infty}^{2+i \infty} (2 \pi)^{-w} y^{-w}  G(w) \frac{\Gamma(s+w)}{\Gamma(s)} \frac{dw}{w}.
\end{equation} 
\end{lemma}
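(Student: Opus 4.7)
The argument is the standard Iwaniec--Kowalski contour-shift derivation, adapted to our setting where the root number is given explicitly in \cref{rootnumber}. The plan is to compute a suitable Mellin-type contour integral in two ways.

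Start by considering
\begin{equation*}
I := \frac{1}{2\pi i} \int_{(2)} \Lambda(s+w, \chi_q) \frac{G(w)}{w} Y^w\, dw,
\end{equation*}
where the contour is the vertical line $\Re(w)=2$. Since $q \neq 1$, the completed $L$-function $\Lambda(s+w,\chi_q)$ is entire by \cref{funceq}; together with the decay of the Gamma factor in vertical strips and the assumed boundedness of $G$ for $|\Re(u)|<4$, the integral converges absolutely, and we may shift contours within $-3 < \Re(w) < 3$. First, inserting the definition \eqref{completed} of $\Lambda$ and expanding $L(s+w,\chi_q)$ as an absolutely convergent Dirichlet series \eqref{Lspsi}, we interchange sum and integral and recognise $V_s$ as defined in \eqref{Vsdef} to obtain
\begin{equation*}
I = (3 N(q_1 q_2))^{s/2} (2\pi)^{-s} \Gamma(s) \sum_{0 \neq \mathfrak{n}\unlhd \Z[\omega]} \frac{\chi_q(\mathfrak{n})}{N(\mathfrak{n})^s} V_s\!\left(\frac{N(\mathfrak{n})}{Y\sqrt{3N(q_1 q_2)}}\right).
\end{equation*}

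Next, I will shift the contour from $\Re(w)=2$ to $\Re(w)=-2$. The only pole crossed is the simple pole at $w=0$ (from $1/w$), whose residue is $\Lambda(s,\chi_q) G(0) = \Lambda(s,\chi_q)$ since $G(0)=1$. Thus $I = \Lambda(s,\chi_q) + J$, where $J$ denotes the integral on $\Re(w)=-2$. In $J$ I substitute $w \mapsto -w$, use the evenness $G(-w)=G(w)$ to rewrite
\begin{equation*}
J = -\frac{1}{2\pi i}\int_{(2)} \Lambda(s-w, \chi_q)\frac{G(w)}{w} Y^{-w}\, dw,
\end{equation*}
and then invoke the functional equation of \cref{funceq}, namely $\Lambda(s-w,\chi_q) = \frac{W(\chi_q)}{N(\mathfrak{c}_{\chi_q})^{1/2}}\Lambda(1-s+w, \overline{\chi_q})$. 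Expanding $\Lambda(1-s+w,\overline{\chi_q})$ via \eqref{completed} and $L(1-s+w,\overline{\chi_q})$ as a Dirichlet series, interchanging sum and integral again, and recognising $V_{1-s}$ from \eqref{Vsdef} gives
\begin{equation*}
-J = \frac{W(\chi_q)}{N(q_1 q_2)^{1/2}} (3N(q_1 q_2))^{(1-s)/2}(2\pi)^{s-1}\Gamma(1-s) \sum_{0 \neq \mathfrak{n}\unlhd \Z[\omega]} \frac{\overline{\chi_q(\mathfrak{n})}}{N(\mathfrak{n})^{1-s}} V_{1-s}\!\left(\frac{Y N(\mathfrak{n})}{\sqrt{3N(q_1 q_2)}}\right).
\end{equation*}

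Combining the two expressions for $I$, dividing through by $(3N(q_1 q_2))^{s/2}(2\pi)^{-s}\Gamma(s)$ to pass from $\Lambda(s,\chi_q)$ to $L(s,\chi_q)$, and finally invoking \cref{rootnumber} to replace $W(\chi_q)/N(q_1 q_2)^{1/2}$ by $\widetilde{g}_3(q_1)\overline{\widetilde{g}_3(q_2)}$ produces precisely the claimed identity. I do not expect a genuine obstacle here: the only delicate steps are justifying the interchange of sum and contour (handled by absolute convergence on $\Re(w)=\pm 2$ from the factorial decay of $\Gamma$ plus the boundedness of $G$ in $|\Re(u)|<4$) and bookkeeping the sign and variable flip in $J$; the evenness of $G$ is what makes the functional-equation-side integral reassemble into the very same kernel $V_{1-s}$.
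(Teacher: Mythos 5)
Your argument is correct and is precisely the standard contour-shift derivation of \cite[Theorem~5.3]{IwK} that the paper invokes by citation (combined with \cref{funceq} and \cref{rootnumber} to identify the root number as $\widetilde{g}_3(q_1)\overline{\widetilde{g}_3(q_2)}$); the bookkeeping of the powers of $3N(q_1q_2)$, $2\pi$, and $Y$ all checks out. The only point worth making explicit is that justifying the contour shift also uses polynomial boundedness of $L(s+w,\chi_q)$ in the strip (a standard convexity fact), not just the Gamma decay and boundedness of $G$.
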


We give the asymptotic properties $V_s(y)$ in the degree $2$ setting of \eqref{completed}.

\begin{lemma} \emph{\cite[Proposition~5.4]{IwK}} \label{decaylem}
Given  $A \in \mathbb{Z}_{>0}$,
let $G(u):=(\cos ( \frac{\pi u}{4 A}))^{-8A}$ for $u \in \mathbb{C}$ in \cref{afe}.
Suppose that $\Re(s) \geq 3 \alpha>0$, and that $b \in \mathbb{Z}_{\geq 0}$.
Then 
\begin{align*}
y^{b} V^{(b)}_s(y) \ll_{A,\alpha,b} \Big(1+\frac{y}{1+|s|} \Big)^{-A} \qquad \text{and} \qquad y^{b} V^{(b)}_s(y) =\delta_b +O_{A,\alpha,b} \Big( \Big(\frac{y}{1+|s|} \Big)^{\alpha} \Big)
\end{align*}
for all $y>0$, where $\delta_0=1$ and $\delta_b=0$ if $b>0$.
\end{lemma}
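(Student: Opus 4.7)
The plan is to carry out a standard contour-shift analysis of the Mellin--Barnes integral defining $V_s(y)$ in \eqref{Vsdef}. First I would differentiate $a$ times under the integral sign and multiply through by $y^a$, obtaining
\[
y^a V_s^{(a)}(y) = \frac{(-1)^a}{2\pi i} \int_{(2)} (2\pi)^{-w} y^{-w} G(w) \frac{\Gamma(s+w)}{\Gamma(s)} \cdot \frac{w(w+1)\cdots(w+a-1)}{w}\, dw,
\]
where the polynomial factor arises from iterating $\tfrac{d}{dy} y^{-w} = -w y^{-w-1}$ and is interpreted as $1$ when $a = 0$. For $a \geq 1$ the factor of $w$ cancels the $1/w$, so the integrand is holomorphic at $w = 0$, whereas for $a = 0$ there is a simple pole at $w = 0$ of residue $G(0) = 1$; in short, the residue at $w = 0$ equals $\delta_a$. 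The remaining poles come from $G$ (at $w = \pm 2A, \pm 6A, \ldots$) and from $\Gamma(s+w)$ (at $w = -s-n$, $n \in \Z_{\geq 0}$). The hypothesis $\Re(s) \geq 3\alpha$ keeps the latter strictly to the left of $\Re(w) = -\alpha$.

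The two analytic ingredients I would use are Stirling's formula, which yields
\[
\left|\frac{\Gamma(s+w)}{\Gamma(s)}\right| \ll_{\sigma,\alpha} (1 + |s| + |\Im w|)^{\sigma} e^{(\pi/2)|\Im w|}
\]
uniformly on vertical lines $\Re(w) = \sigma$ with $\Re(s+w) \geq \alpha$, together with the exponential decay $G(\sigma + it) \ll_\sigma e^{-2\pi|t|}$ on any vertical line avoiding poles of $G$ (since $\cosh(\pi t/(4A))^{8A} \asymp e^{2\pi|t|}$ as $|t| \to \infty$). These combine to give absolute convergence of the shifted integral on $\Re(w) = \sigma$ and a pointwise bound of the shape $y^{-\sigma}(1+|s|)^{\sigma}$ for any fixed $\sigma \in (-3\alpha, 2A)$.

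With these in hand, the second bound follows by shifting the contour from $\Re(w) = 2$ to $\Re(w) = -\alpha$, crossing only the pole at $w = 0$ with residue $\delta_a$; the shifted integral is $O(y^\alpha(1+|s|)^{-\alpha}) = O((y/(1+|s|))^{\alpha})$, as required. For the first bound I would split on $y$: when $y \leq 1+|s|$, the second bound already yields $y^a V_s^{(a)}(y) = O(1) \ll (1+y/(1+|s|))^{-A}$; when $y \geq 1+|s|$, I would instead shift to $\Re(w) = A$, which lies strictly inside the pole-free strip $0 < \Re(w) < 2A$ of $G$ and crosses no poles of $\Gamma(s+w)$. The resulting integral is bounded by $y^{-A}(1+|s|)^A \asymp (1+y/(1+|s|))^{-A}$, matching the claim.

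The main delicate point, if any, is ensuring uniformity in $\Im w$ along the shifted contours; but this is precisely why the specific choice $G(w) = \cos(\pi w/(4A))^{-8A}$ is made, since it dominates the $e^{(\pi/2)|\Im w|}$ growth of $\Gamma(s+w)$ with room to spare. Modulo verifying the above Stirling-type estimates uniformly in the relevant parameters, the proof is routine.
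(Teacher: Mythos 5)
Your argument is correct and is precisely the proof of the cited result (Iwaniec--Kowalski, Proposition~5.4), which the paper invokes without reproducing: differentiate under the integral sign, note that the residue at $w=0$ is $\delta_a$, and shift the contour to $\Re(w)=-\alpha$ for the asymptotic expansion and to $\Re(w)=A$ (when $y\geq 1+|s|$) for the decay estimate, using the super-exponential decay $G(\sigma+it)\ll e^{-2\pi|t|}$ to absorb the $e^{\pi|\Im w|/2}$ growth of the Gamma ratio. The only caveats are cosmetic and inherited from the paper's conventions rather than from your argument (e.g.\ for $A=1$ the pole of $G$ at $w=2A$ sits on the initial contour $\Re(w)=2$, but Lemma~\ref{afe} already requires $G$ bounded in $|\Re(u)|<4$, and the leftward shift needs $\alpha<2A$, which holds since $\Re(s)\leq 1$ in all applications).
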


We also include here a straightforward but useful device.
\begin{lemma} \label{device}
Let $\mathcal{F}_3$ be as in \eqref{F3def}, $q \in \mathcal{F}_3$,
$U>0$, and $s=\sigma+it \in \mathbb{C}$ with $\sigma \in (0,1]$. Then 
\begin{align*}
L(s,\chi_q)^2 
 =\sum_{0 \neq \mathfrak{n} \unlhd \Z[\omega]} d(\mathfrak{n}) \chi_q(\mathfrak{n}) N(\mathfrak{n})^{-s} e^{-N(\mathfrak{n})/U} 
-\frac{1}{2 \pi i} \int_{\alpha-i \infty}^{\alpha+i \infty} L(w,\chi_q)^2 \Gamma(w-s) U^{w-s} dw,
 \end{align*}
valid for any $0 \leq \alpha < \sigma=\Re(s) \leq 1$.
 \end{lemma}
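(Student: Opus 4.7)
The plan is to recognize this as the standard Mellin-inversion ``smoothed approximation plus contour error'' identity. The starting point is the classical formula
\[
e^{-x} = \frac{1}{2\pi i}\int_{(c)} \Gamma(w)\,x^{-w}\,dw \qquad (c, x > 0),
\]
which lets us rewrite the Dirichlet sum on the right-hand side of the claim as a contour integral involving a shift of $L(\cdot,\chi_q)^2$. Concretely, I apply the identity with $x = N(\mathfrak{n})/U$, multiply by $d(\mathfrak{n})\chi_q(\mathfrak{n})N(\mathfrak{n})^{-s}$, and sum. Choosing $c$ sufficiently large (say $c > 1-\sigma$) so that $\Re(s+w) > 1$ on the contour, the sum converges absolutely, so Fubini gives
\[
\sum_{0\neq \mathfrak{n} \unlhd \Z[\omega]} d(\mathfrak{n})\chi_q(\mathfrak{n}) N(\mathfrak{n})^{-s} e^{-N(\mathfrak{n})/U} = \frac{1}{2\pi i}\int_{(c)} L(s+w,\chi_q)^2 \,\Gamma(w)\, U^{w}\, dw,
\]
using that $\sum_\mathfrak{n} d(\mathfrak{n})\chi_q(\mathfrak{n}) N(\mathfrak{n})^{-(s+w)} = L(s+w,\chi_q)^2$ in the region of absolute convergence.

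The second step is the change of variable $w \mapsto w-s$, which moves the contour to $\Re(w) = c+\sigma > 1$ and rewrites the right-hand side as
\[
\frac{1}{2\pi i}\int_{(c+\sigma)} L(w,\chi_q)^2\, \Gamma(w-s)\, U^{w-s}\, dw.
\]
Next I shift the contour leftward to $\Re(w)=\alpha$ with $0\le \alpha<\sigma$. Since $q\in\mathcal{F}_3$ excludes $q=1$, the character $\chi_q$ is a non-trivial primitive (or at worst non-trivial) Hecke character, so $L(w,\chi_q)$ is entire; thus the only singularity of the integrand encountered in the strip $\alpha < \Re(w) < c+\sigma$ is the simple pole of $\Gamma(w-s)$ at $w=s$, whose residue is $L(s,\chi_q)^2$. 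Note that the next pole of $\Gamma(w-s)$ is at $w=s-1$, whose real part is $\sigma-1\le 0 \le \alpha$, so no further residues are picked up; this is the reason behind the hypothesis $\alpha<\sigma\le 1$.

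The one technical point requiring care is justification of the contour shift, i.e.\ showing that the horizontal pieces vanish as $|\Im(w)|\to\infty$. This is immediate from Stirling: $\Gamma(w-s)$ decays exponentially like $e^{-\frac{\pi}{2}|\Im(w-s)|}$ on vertical lines, while $L(w,\chi_q)^2$ grows only polynomially in $|\Im(w)|$ by standard convexity bounds on the strip $\alpha\le\Re(w)\le c+\sigma$, and $U^{w-s}$ is bounded on each vertical line. These ingredients together yield absolute integrability on $(\alpha)$ as well as vanishing of the horizontal contributions, so the residue theorem gives
\[
\frac{1}{2\pi i}\int_{(c+\sigma)} = L(s,\chi_q)^2 + \frac{1}{2\pi i}\int_{(\alpha)},
\]
and rearranging produces exactly the stated identity. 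There is essentially no obstacle here; the only mild subtlety is bookkeeping the strict inequality $\alpha < \sigma$ to avoid landing on the pole $w=s$ itself, which the hypothesis already guarantees.
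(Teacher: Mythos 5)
Your proof is correct and follows essentially the same route as the paper's: Mellin inversion of $e^{-x}$ to express the sum as a vertical-line integral of $L(s+w,\chi_q)^2\Gamma(w)U^w$, a contour shift collecting the residue $L(s,\chi_q)^2$ from the simple pole of the Gamma factor, and the substitution $w\mapsto w-s$ (the paper shifts first and substitutes after, you substitute first and shift after — immaterial). Your added justification of the horizontal pieces via Stirling and polynomial growth of $L$ is fine and merely makes explicit what the paper leaves implicit.
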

 \begin{proof}
 Mellin inversion gives 
 \begin{equation} \label{mellin1}
\sum_{0 \neq \mathfrak{n} \unlhd \Z[\omega]} d(\mathfrak{n}) \chi_q(\mathfrak{n}) N(\mathfrak{n})^{-s} e^{-N(\mathfrak{n})/U}=
\frac{1}{2 \pi i} \int_{2-i \infty}^{2+ i \infty} L(s+w,\chi_q)^2 \Gamma(w) U^{w} dw.
 \end{equation}
 After moving the contour to $\Re(w)=\alpha-\sigma$, we collect the residue $L(s,\chi_q)^2$ from the simple pole at $w=0$,
and see that \eqref{mellin1} is equal to
\begin{equation*}
L(s,\chi_q)^2+ \frac{1}{2 \pi i} \int_{\alpha-\sigma-i \infty}^{\alpha-\sigma+ i \infty} L(s+w,\chi_q)^2 \Gamma(w) U^{w} dw.
\end{equation*}
The claim follows after making the change of variable $w \mapsto w-s$.

\end{proof}


\subsection{Poisson summation}

\begin{lemma}\label{radialpois}
Let $q, c\in \Z[\omega]$ with $q \equiv 1 \pmod{3}$. Let $\Psi: \Z[\omega] \rightarrow \C$ be a $q$-periodic function and $V: \R \rightarrow \C$ be a Schwartz function. Then for any $M > 0$, we have
\begin{align*}
\sum_{\substack{m \in \Z[\omega] \\ m \equiv c \pmod{9}}} \Psi(m) V \Big( \frac{N(m)}{M} \Big) = \frac{4 \pi M}{3^{9/2}  N(q)}  \sum_{k \in \Z[\omega]} \ddot{\Psi}(k) \check{e} \Big({ -  \frac{ k c q^2}{9\lambda }} \Big) \ddot{V} \Big(  \frac{k \sqrt{M}}{q} \Big)
\end{align*}
where 
\begin{align} \label{ddotpsi}
\ddot{\Psi}(k) =
\sum_{\substack{b \pmod{q}}} \Psi(9\lambda b) \check{e} \Big( {- \frac{k b}{q}} \Big)
\end{align}
and $\ddot{V}: \C \rightarrow \C$ is defined by
\begin{align} \label{Vddot}
\ddot{V}(u) = \int_{0}^\infty  t V(t^2) J_0 \Big( \frac{4 \pi t |u|}{ 9\sqrt{3}} \Big) \; dt.
\end{align}
\end{lemma}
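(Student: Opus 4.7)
The plan is a standard two-step reduction: use the structure of $\Z[\omega]/9q\Z[\omega]$ to peel off $\Psi$, then apply Poisson summation on $\Z[\omega]$ to the remaining translation-invariant sum involving $V$.

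Since $q\equiv 1\pmod 3$, one has $(9\lambda,q)=1$, so every $m\equiv c\pmod 9$ is of the form $m=c+9\lambda b+9qt$ for a unique pair $(b,t)$ with $b$ a residue modulo $q$ and $t\in\Z[\omega]$. Using $q$-periodicity of $\Psi$, the left side of the lemma becomes
\[
\sum_{b\pmod q}\Psi(c+9\lambda b)\sum_{t\in\Z[\omega]}V\Big(\tfrac{|c+9\lambda b+9qt|^2}{M}\Big).
\]
I then apply Poisson summation to the inner sum (covolume of $\Z[\omega]$ is $\sqrt{3}/2$, dual lattice $\lambda^{-1}\Z[\omega]$), indexing the dual variable as $w=k/\lambda$ with $k\in\Z[\omega]$, and substituting $u=c+9\lambda b+9qt$ in the resulting Fourier integral (Jacobian $1/(81\,N(q))$). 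The Archimedean integral $\int_{\C}V(|u|^2/M)\,\check{e}(-uk/(9\lambda q))\,du$ is then evaluated in polar coordinates via $\int_0^{2\pi}e^{-ix\cos\theta}\,d\theta=2\pi J_0(x)$ together with $|\lambda|=\sqrt{3}$, giving exactly $2\pi M\,\ddot V(k\sqrt{M}/q)$.

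What remains is to match the phase. The Poisson step leaves a factor of $\check{e}((c+9\lambda b)k/(9\lambda q))$. Pick any representative $\rho\in\Z[\omega]$ of $(9\lambda)^{-1}\pmod q$ and write $9\lambda\rho=1+qs$ with $s\in\Z[\omega]$. Shifting $b\mapsto b-c\rho\pmod q$ converts $\Psi(c+9\lambda b)$ into $\Psi(9\lambda b)$ at the cost of a phase $\check{e}(-ck\rho/q)$. Replacing $k\mapsto -k$ (harmless since $V(|u|^2/M)$ is radial, hence the Archimedean integral is invariant) produces the kernel $\check{e}(-kb/q)$ of $\ddot\Psi(k)$, and the leftover $c$-dependent phase collapses to $\check{e}(cks/(9\lambda))$. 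Matching this against the desired $\check{e}(-ckq^2/(9\lambda))$ reduces to checking $s\equiv -q^2\pmod 9$. Reducing $9\lambda\rho=1+qs$ modulo $9$ gives $s\equiv -q^{-1}\pmod 9$, so the task further reduces to the congruence $q^3\equiv 1\pmod 9$, which is immediate from $q=1+3q'$ via $q^3=1+9q'+27q'^2+27q'^3$. Finally, combining the Poisson prefactor $2/\sqrt{3}$, the Jacobian $1/(81\,N(q))$, and the Archimedean $2\pi M$ yields the constant $4\pi M/(3^{9/2}\,N(q))$.

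The only genuinely non-routine step is this final phase simplification: the clean form $\check{e}(-kcq^2/(9\lambda))$ depends on $q^3\equiv 1\pmod 9$, which is why the hypothesis $q\equiv 1\pmod 3$ (as opposed to a weaker coprimality with $3$) is the natural one for this lemma. The rest is bookkeeping.
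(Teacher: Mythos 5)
Your argument is correct and follows essentially the same route as the paper: Poisson summation over the sublattice $9q\Z[\omega]$, evaluation of the radial Fourier integral via $J_0$, and extraction of the phase $\check{e}(-kcq^2/(9\lambda))$ from the congruence structure using $q^3\equiv 1\pmod 9$. The only cosmetic difference is that the paper black-boxes the lattice Poisson step into \cite[Lemma 4.2]{DR} and computes the full finite Fourier transform $\dot\Psi(k)$ by CRT afterward, whereas you parametrize the class $m\equiv c\pmod 9$ directly and unfold by hand — the underlying computation is the same.
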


\begin{proof}
This is a slight modification of \cite[Lemma~4.3]{DR}. Apply \cite[Lemma 4.2]{DR} to obtain
\begin{align*}
\sum_{\substack{m \in \Z[\omega] \\ m \equiv c \pmod{9}}} \Psi(m) V \Big( \frac{N(m)}{M} \Big) = \frac{2}{\sqrt{3} N(9 q)} 
\sum_{k \in \Z[\omega]} \dot{\Psi}(k) \; \int_{\R^2} V \Big( \frac{x^2 + y^2}{M} \Big) \check{e} \Big( \frac{k (x+iy)}{9\lambda q} \Big) \, dx dy, \end{align*}
where 
\begin{align*}
\dot{\Psi}(k) = \sum_{\substack{t \pmod{9 q} \\ t \equiv c \pmod{9}}} \Psi(t) \check{e} \Big( {- \frac{kt}{9\lambda q}} \Big).
\end{align*}
As in the treatment of \cite[(4.6)]{DR}, we can write
\begin{align*}
\int_{\R^2} V \Big( \frac{x^2 + y^2}{M} \Big) \check{e} \Big( \frac{k (x+iy)}{9\lambda q } \Big) \, dx dy &= M \int_{0}^\infty r V(r^2) \int_0^{2 \pi} \exp \Big(  \frac{4 \pi i r \cos{\theta} |k| \sqrt{M} }{ 9\sqrt{3} |q| }\Big) \; d \theta d r \\
&= 2 \pi M \int_{0}^\infty r V(r^2) J_0 \Big( \frac{4 \pi r |k| \sqrt{M}}{9\sqrt{3}|q|} \Big) \; dr.
\end{align*}

It remains to compute $\dot{\Psi}(k)$. Applying the Chinese remainder theorem, we write $t \pmod{9 q}$ as $t = a q + 9b$, with $a \pmod{9}$ and $b \pmod{q}$. Thus $a \equiv t \overline{q} \equiv c \overline{q} \equiv cq^2\pmod{9}$, since $q \equiv 1 \pmod{3}$ implies that $q^3 \equiv 1 \pmod{9}$. We conclude that 
\begin{align*}
    \dot{\Psi}(k) &=  \Big( \sum_{\substack{a \pmod{9} \\ a \equiv c q^2 \pmod{9}}} \check{e} \Big( {-  \frac{ k a}{9\lambda }} \Big) \Big) \Big(  \sum_{\substack{b \pmod{q}}} \Psi(9b) \check{e} \Big( {- \frac{k b}{q\lambda}} \Big) \Big)\\
    &= \check{e} \Big( {-  \frac{ k c q^2}{9\lambda }} \Big) \sum_{\substack{b \pmod{q}}} \Psi(9 \lambda b) \check{e} \Big( {- \frac{k b}{q}} \Big),
\end{align*}
and the result follows.

\end{proof}

\begin{lemma} \emph{\cite[Lemma~4.4]{DR}} \label{ddotdecay}
    Let $V: \mathbb{R} \rightarrow \mathbb{C}$ be a smooth and compactly supported function. Then for any integer $A \geq 0$ and $u \in \mathbb{C}$,
    \begin{equation*}
    \ddot{V}(u) \ll_{A,V} (1+|u|)^{-A}.
    \end{equation*}
\end{lemma}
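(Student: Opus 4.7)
The plan is to reduce the proof to a standard integration-by-parts argument using the recurrence relations satisfied by Bessel functions. First, abbreviate $y := 4\pi |u|/(9\sqrt{3})$, so $y \asymp |u|$, and note that since $V$ is smooth and compactly supported, the integrand $t \mapsto tV(t^2) J_0(yt)$ is compactly supported on $[0, R]$ for some $R = R(V) > 0$, and $|J_0(yt)| \leq 1$. Thus the trivial bound $|\ddot{V}(u)| \ll_V 1$ holds uniformly, which already yields the desired estimate $(1+|u|)^{-A}$ in the regime $|u| \leq 1$.

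For the regime $|u| > 1$, I will obtain decay by integrating by parts $A$ times using the Bessel identity
\begin{equation*}
    \frac{d}{dt}\bigl[t^n J_n(yt)\bigr] = y\, t^n J_{n-1}(yt),
\end{equation*}
which follows from the standard recurrences $J_n'(x) = J_{n-1}(x) - (n/x)J_n(x)$. Rewriting $J_0(yt) = (yt)^{-1} \frac{d}{dt}[t J_1(yt)]$, so that $\ddot{V}(u) = y^{-1} \int_0^\infty V(t^2)\, \frac{d}{dt}[t J_1(yt)]\, dt$, an integration by parts (with vanishing boundary terms since $V$ has compact support and $t J_1(yt) \to 0$ as $t \to 0^+$) produces
\begin{equation*}
    \ddot{V}(u) = -\frac{2}{y} \int_0^\infty t^2 V'(t^2) J_1(yt)\, dt.
\end{equation*}
Iterating the procedure $A-1$ further times with $J_{n-1}(yt) = (yt^n)^{-1} \frac{d}{dt}[t^n J_n(yt)]$ at each step (the boundary terms again vanish since $t^n J_n(yt) = O(t^{2n})$ near $0$ and $V^{(k)}$ has compact support), I arrive at
\begin{equation*}
    \ddot{V}(u) = \frac{(-2)^A}{y^A} \int_0^\infty t^{A+1} V^{(A)}(t^2)\, J_A(yt)\, dt.
\end{equation*}

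The bound then follows directly: using $|J_A(x)| \leq 1$ on $[0,\infty)$ and the fact that $t \mapsto t^{A+1} V^{(A)}(t^2)$ is supported in $[0, \sqrt{R}]$ with sup-norm $\ll_{A,V} 1$, we get $|\ddot{V}(u)| \ll_{A,V} y^{-A} \ll |u|^{-A}$. Combining with the trivial bound for $|u| \leq 1$ yields $|\ddot{V}(u)| \ll_{A,V} (1+|u|)^{-A}$.

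There is no serious obstacle here; the only mildly delicate point is verifying at each iteration that the boundary contribution vanishes, which uses the vanishing order of $J_n(yt)$ at $t=0$ together with the compact support of $V$ (and hence of all its derivatives). Since the claim is cited as \cite[Lemma~4.4]{DR}, the proof is expected to be short and standard; the argument above is essentially a pure Hankel-transform decay estimate and involves no arithmetic input.
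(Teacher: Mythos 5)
Your argument is correct and is exactly the standard approach: the paper cites \cite[Lemma~4.4]{DR} for this statement and reproduces the identical technique in the proof of \cref{F_dot_dot_bound_lemma}, where integrating by parts $j$ times against the antiderivative identity $\frac{d}{dt}[t^nJ_n(yt)]=yt^nJ_{n-1}(yt)$ yields precisely your formula $\ddot{V}(u)=(-2/y)^A\int_0^\infty t^{A+1}V^{(A)}(t^2)J_A(yt)\,dt$. Your handling of the boundary terms (vanishing order of $t^nJ_n$ at $0$, compact support at $\infty$) and the separate trivial bound for $|u|\leq 1$ are both as expected, so there is nothing to add.
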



\subsection{Various incarnations of Heath-Brown's cubic large sieve}
\begin{theorem} \emph{\cite[Theorem~2]{HB}}  \label{cubic-large-sieve}
Let $\varepsilon>0$, $A, B \geq 1/2$, $\boldsymbol{\lambda}=(\lambda_b)$ be a $\mathbb{C}$-valued sequence
supported on $b \in \mathbb{Z}[\omega]$ with $b \equiv 1 \pmod{3}$, and 
$\chi_a(\cdot)$ denote the cubic residue symbol $\big( \frac{\cdot}{a} \big)_3$ for $a \in \mathbb{Z}[\omega]$ with $a \equiv 1 \pmod 3$.
Then
\begin{align*} 
\sum_{\substack{ a \in \mathbb{Z}[\omega] \\ a \equiv 1 \pmod{3} \\ N(a) \leq A  }} \mu^2(a) \Big | \sum_{\substack{ b \in \mathbb{Z}[\omega] \\ b \equiv 1 \pmod{3} \\ N(b) \leq B }}
\mu^2(b) \lambda_b \chi_a(b)  \Big |^2 
\ll_{\varepsilon} (AB)^\varepsilon ( A + B + (AB)^{2/3} ) \| \boldsymbol{\mu}^2 \boldsymbol{\lambda} \|_2^2.
\end{align*}
 \end{theorem}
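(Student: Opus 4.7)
The plan is to follow Heath-Brown's original strategy, centered on duality, cubic reciprocity, and a Poisson summation step whose off-diagonal contribution is controlled via bounds for averages of cubic Gauss sums.

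The first step is to invoke the duality principle for bilinear forms. Writing
\[
S := \sum_{\substack{a \equiv 1 \pmod{3} \\ N(a) \leq A}} \mu^2(a) \Big| \sum_{\substack{b \equiv 1 \pmod{3} \\ N(b) \leq B}} \mu^2(b) \lambda_b \chi_a(b) \Big|^2,
\]
duality reduces the problem to establishing the dual inequality
\[
\sum_{\substack{b \equiv 1 \pmod{3} \\ N(b) \leq B}} \mu^2(b) \Big| \sum_{\substack{a \equiv 1 \pmod{3} \\ N(a) \leq A}} \mu^2(a) \alpha_a \chi_a(b) \Big|^2 \ll_\varepsilon (AB)^\varepsilon \bigl(A + B + (AB)^{2/3}\bigr)\|\boldsymbol{\mu}^2 \boldsymbol{\alpha}\|_2^2
\]
for every complex sequence $\boldsymbol{\alpha}$. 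For this it suffices to treat either side; by symmetry of the bound in $A,B$ we may assume $B \geq A$. Next, to make the outer sum amenable to Poisson summation, we majorize $\mathbf{1}_{N(b) \leq B}$ by a smooth nonnegative function $\Phi(N(b)/B)$ of rapid decay and Schwartz Mellin transform; this costs only a harmless constant.

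Expanding the square, one reaches
\[
\sum_{a_1,a_2} \mu^2(a_1)\mu^2(a_2)\alpha_{a_1}\overline{\alpha_{a_2}} \sum_{b \equiv 1 \pmod{3}} \mu^2(b)\chi_{a_1}(b)\overline{\chi_{a_2}(b)} \Phi\!\left(\tfrac{N(b)}{B}\right).
\]
By cubic reciprocity in $\Z[\omega]$ (available precisely because $a_1,a_2,b \equiv 1 \pmod{3}$), $\chi_{a_1}(b)\overline{\chi_{a_2}(b)} = \chi_b(a_1)\overline{\chi_b(a_2)} = \chi_b(a_1 a_2^2)$ up to the squarefree constraint, and one rewrites the inner sum as a sum of $\chi_b(a_1 a_2^2)$ weighted by $\mu^2(b)\Phi(N(b)/B)$. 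After detecting squarefreeness by $\mu^2(b) = \sum_{d^2 \mid b}\mu(d)$ and pulling out $d$, the task becomes to estimate
\[
\Sigma(n) := \sum_{b \equiv 1 \pmod{3}} \chi_b(n)\,\Phi\!\left(\tfrac{N(b)}{B}\right)
\]
with $n = a_1 a_2^2$, uniformly in $n$. Splitting $n$ into its cube part and cubefree part, I would apply Poisson summation (\cref{radialpois}) on the $b$ sum: the zero frequency contributes a diagonal-type term yielding $A+B$, while the nonzero frequencies produce linear combinations of cubic Gauss sums $\widetilde{g}_3(k,c)$ twisted by additive phases.

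The decisive step, and the main obstacle, is then to bound the contribution of the nonzero frequencies: this is exactly where the $(AB)^{2/3}$ arises (and cannot be improved beyond it, as demonstrated in \cite{DR}). The cubic Gauss sums $g_3(k,c)$ do not exhibit square-root cancellation on average in the usual sense, and one cannot treat them as independent random variables. Heath-Brown's key input is a nontrivial bound for
\[
\sum_{\substack{c \equiv 1 \pmod 3 \\ N(c) \leq C}} \alpha_c\, g_3(k,c)
\]
(a second-moment estimate over the modulus), established by an iterative argument exploiting the multiplicativity relations of \cref{cubic-GS-lemma1} and \cref{localcomp}. Inserting this estimate and carrying out the remaining sum over the frequency $k$ and over the cube divisors of $n$, with Cauchy--Schwarz applied to separate the $\alpha_a$ coefficients from the Gauss-sum weights, produces the $(AB)^{2/3}$ term with a loss of $(AB)^\varepsilon$ coming from the divisor functions. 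Combining the diagonal $A+B$ with this off-diagonal $(AB)^{2/3}$ and undoing the duality yields the stated inequality.
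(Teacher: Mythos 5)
The paper does not actually prove \cref{cubic-large-sieve}: it is imported verbatim as \cite[Theorem~2]{HB}, so there is no internal proof to compare your attempt against (the only place the paper opens up the internals of \cite{HB} is the proof of \cref{HBcubicdual}, which still uses \cite[Theorem~2]{HB} and \cite[Lemma~6]{HB} as black boxes). Your sketch correctly reproduces the outer shell of Heath-Brown's argument --- duality, cubic reciprocity to symmetrize the roles of $a$ and $b$, smoothing, and Poisson summation with the zero frequency giving the diagonal $\asymp B\|\boldsymbol{\alpha}\|_2^2$ --- but it has a genuine gap at the decisive step.

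The gap is that you describe the source of $(AB)^{2/3}$ as a ``nontrivial second-moment bound for $\sum_c \alpha_c\, g_3(k,c)$'' which one simply ``inserts''. No such standalone estimate of the required strength exists; if it did, the theorem would follow in one pass. What actually happens is that for $(k,a_1a_2)=1$ one has $\widetilde{g}_3(k,a_1a_2^2)=\overline{\chi_{a_1a_2^2}(k)}\,\widetilde{g}_3(a_1a_2^2)$, so the nonzero frequencies produce \emph{another} bilinear form in cubic residue symbols, now pairing $k$ (of norm up to roughly $N(a_1a_2^2)/B$) against $a_1a_2^2$: the quantity being bounded reappears on the right-hand side at new scales. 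Heath-Brown's proof is therefore a self-referential recursion, and the real work is (i) writing down the functional inequality relating the optimal constant $\mathcal{B}(A,B)$ to $\mathcal{B}$ at the new scales; (ii) handling the fact that the new moduli $a_1a_2^2$ and the new frequencies $k$ are neither squarefree nor coprime in general --- this is why \cite{HB} must carry both a squarefree and an unrestricted version of the sieve and prove the reduction \cite[Lemma~6]{HB}, and it is where the cube part of $k$ (which contributes a non-oscillating term, cf.\ the $AB^{1/3}$ in \cref{HBcubicdual}) must be extracted; and (iii) verifying that iterating the inequality together with the duality symmetry $\mathcal{B}(A,B)=\mathcal{B}(B,A)$ and the trivial bounds converges to the fixed-point exponent $2/3$ with only $(AB)^{\varepsilon}$ loss. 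Your proposal states none of (i)--(iii): the recursive inequality is never formulated, the non-squarefree/non-coprime configurations and the cube frequencies are not addressed, and the closure of the iteration is not checked. As written it is a plan for a proof rather than a proof.
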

 
Implicit in the proof of \cite[Theorem~2]{HB} are mean square estimates
where one of the variables is not required to be squarefree nor congruent to $1$ modulo $3$. 
We extract the relevant results using the duality principle for the large sieve \cite[(7.9)--(7.11)]{IwK}.

\begin{prop}[Cubic large sieve without $\mu^2$] \label{HBcubicdual} 
Let $\varepsilon>0$, $A,B \geq 1/2$, $\boldsymbol{\lambda}=(\lambda_b)$ be
a $\mathbb{C}$-valued sequence supported on $b \in \mathbb{Z}[\omega]$, and
$\chi_a(\cdot)$ denote the cubic residue symbol $\big( \frac{\cdot}{a} \big)_3$ for $a \in \mathbb{Z}[\omega]$ with $a \equiv 1 \pmod 3$.
Then
\begin{equation} \label{dualbd}
\sum_{\substack{ a \in \mathbb{Z}[\omega] \\ a \equiv 1 \pmod{3} \\ N(a) \leq  A  }} \mu^2(a) \bigg | \sum_{\substack{b \in \mathbb{Z}[\omega] \\ N(b) \leq B }} 
\lambda_b \chi_a(b) \bigg |^2
\ll_{\varepsilon} (AB)^{\varepsilon} (AB^{1/3} + B + (AB)^{2/3} ) \| \boldsymbol{\lambda} \|_2^2. 
\end{equation}
\end{prop}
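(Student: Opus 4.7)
The plan is to adapt Heath-Brown's proof of \cref{cubic-large-sieve} to the setting where $b$ is no longer restricted to squarefree elements. The principal modification appears in the diagonal contribution: cube factors in $b$ introduce an extra factor of $B^{1/3}$ in the first term of the bound.

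First I would expand the squared modulus and interchange summations, writing the left-hand side of \eqref{dualbd} as $\sum_{b_1, b_2 \,:\, N(b_i) \leq B} \lambda_{b_1} \overline{\lambda_{b_2}} \sum_a \mu^2(a) \chi_a(b_1) \overline{\chi_a(b_2)}$. I would then split the $(b_1, b_2)$ sum according to whether $b_1 b_2^2$ is a perfect cube in $\Z[\omega]$ (the \emph{generalized diagonal} $\mathcal{D}$) or not. A pair $(b_1, b_2)$ lies in $\mathcal{D}$ if and only if $b_1 = m c^3$ and $b_2 = m d^3$ for a common cubefree $m \in \Z[\omega]$ and arbitrary $c, d \in \Z[\omega]$, since this is equivalent to requiring $v_{\mathfrak{p}}(b_1) \equiv v_{\mathfrak{p}}(b_2) \pmod 3$ at every prime $\mathfrak{p}$. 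On $\mathcal{D}$ the inner sum collapses: $\chi_a(b_1) \overline{\chi_a(b_2)} = \chi_a(b_1 b_2^2) = \mathbf{1}_{(a, b_1 b_2) = 1}$, so it is bounded trivially by $A$. Rearranging, the diagonal contribution equals $\sum_a \mu^2(a) \sum_m |S_m(a)|^2$, where $S_m(a) := \sum_{c \,:\, (a, c) = 1, \, N(m c^3) \leq B} \lambda_{m c^3}$. Applying Cauchy–Schwarz in the $c$-variable yields $|S_m(a)|^2 \leq (B/N(m))^{1/3} \sum_c |\lambda_{m c^3}|^2$, and summing over $m$ cubefree using $N(m)^{-1/3} \leq 1$ together with the identity $\sum_m \sum_c |\lambda_{m c^3}|^2 = \|\boldsymbol{\lambda}\|_2^2$ (from the unique decomposition $b = m c^3$) produces a generalized-diagonal contribution of at most $A B^{1/3} \|\boldsymbol{\lambda}\|_2^2$, accounting for the leading term of \eqref{dualbd}.

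The off-diagonal contribution, where $b_1 b_2^2$ is not a cube, forms the technical heart of the argument and would be handled by adapting Heath-Brown's cubic Gauss sum analysis via Poisson summation on $\Z[\omega]$, producing contributions of size $O_\varepsilon((AB)^\varepsilon (B + (AB)^{2/3})) \|\boldsymbol{\lambda}\|_2^2$ as in the squarefree setting of \cref{cubic-large-sieve}. The hard part will be verifying that Heath-Brown's off-diagonal estimates from \cite{HB} adapt smoothly to non-squarefree $b_1, b_2$; since the inner sum $\sum_a \mu^2(a) \chi_a(b_1 b_2^2)$ depends only on the primitive cubic character induced by $b_1 b_2^2$ (via cubic reciprocity), this adaptation should be largely formal, with any extra divisor-type factors arising from the non-squarefree structure absorbed into the $(AB)^\varepsilon$ factor. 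Combining the two contributions then yields the claimed bound.
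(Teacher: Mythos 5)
Your generalized-diagonal computation is correct and does account for the new term $AB^{1/3}$ (the paper's remark after the proposition confirms that cubes are exactly what make this term necessary). The gap is in the off-diagonal part, which you acknowledge is "the technical heart" but then dismiss as a "largely formal" adaptation of Heath-Brown's analysis. It is not formal. Heath-Brown's proof of \cite[Theorem 2]{HB} is not a single Poisson-summation step but a recursive scheme (his Lemma 6 iterated against Theorem 2) in which the roles of the two variables are repeatedly swapped via cubic reciprocity, and at every stage the argument relies on the square-root cancellation $|g_3(c)|=\mu^2(c)N(c)^{1/2}$ of the cubic Gauss sums attached to the $b$-variable; for non-squarefree moduli these Gauss sums degenerate (see \cref{localcomp}), and $\chi_a(b)$ for $b$ with square or cube factors carries principal-character/coprimality constraints that break the clean bilinear structure. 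The obvious rescue — factoring $b=\zeta\lambda^k b_1b_2^2b_3^3$ with $b_1b_2$ squarefree and applying Cauchy--Schwarz over $(b_2,b_3)$ before invoking Heath-Brown — costs a factor of order $B^{1/2}$ for $\ell^2$-normalized coefficients and yields a diagonal-type term of size $AB$ rather than $AB^{1/3}$, so it does not recover \eqref{dualbd}. As written, your off-diagonal claim is an assertion, not a proof.

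The paper's proof takes a route that avoids re-running any Gauss-sum analysis: it observes that Heath-Brown's own intermediate quantity $\Sigma_2(M,N,\boldsymbol{c})$ (his equation (22), reproduced as \eqref{B2def}) already allows the \emph{outer} variable $m$ to be completely unrestricted — no squarefree condition, no congruence condition — while the coefficient variable $n$ is squarefree. Combining his Lemma 6 with his Theorem 2 gives $\mathcal{B}_2(M,N)\ll (MN)^\varepsilon(M+M^{1/3}N+(MN)^{2/3})$, and the duality principle for bilinear forms (\cite[(7.9)--(7.11)]{IwK}) then transposes this so that the arbitrary coefficients sit on the unrestricted variable, which after cubic reciprocity and a dyadic decomposition is exactly \eqref{dualbd}. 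If you want to complete your approach you would either need to carry out the non-squarefree Gauss-sum analysis in full (substantial and delicate), or, more efficiently, replace your off-diagonal treatment by this duality argument.
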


\begin{remark}
    In general the term $AB^{1/3}$ cannot be improved, as can be seen from the contribution of the cubes for the dual problem with constant coefficients.
\end{remark}

\begin{proof}
We note that \cite[(22)]{HB} reads (with different notation)
\begin{equation} \label{B2def}
\mathcal{B}_2(M,N):=\sup  \{ \Sigma_2(M,N,\boldsymbol{c}) : \boldsymbol{c}=(c_n)_{n \in \mathbb{Z}[\omega]} \subset \mathbb{C} 
\text{ with } \| \boldsymbol{c} \|^2_2=1 \},
\end{equation}
where
\begin{equation*}  
\Sigma_2(M,N,\boldsymbol{c}):=\sum_{\substack{m \in \mathbb{Z}[\omega] \\ N(m) \sim M}} \Big | \sum_{\substack{n \in \mathbb{Z}[\omega] \\ n \equiv 1 \pmod{3} \\ N(n) \sim N  }} 
c_n \mu^2(n) \chi_m(n) \Big |^2.
\end{equation*}
Let us first prove that
\begin{equation} \label{B2bound}
\mathcal{B}_2(M,N) \ll (MN)^{\varepsilon} (M+M^{1/3}N + (MN)^{2/3} ).
\end{equation}
Indeed, combining \cite[Lemma 6]{HB} and \cite[Theorem 2]{HB} shows that there are $X, Y \gg 1$ satisfying $XY^2 \ll M$ such that 
\begin{equation*}
	\mathcal{B}_2(M, N) \ll (MN)^\varepsilon \Big(\frac{M}{XY^2}\Big)^{1/3} \min \{X f(Y, N), Y f(X, N) \},
\end{equation*}
where $f(Z, N) := Z + N + (ZN)^{2/3}$. Note that we always have $X^2Y \ll X^{3/2} M^{1/2} \ll M^2$.

If $X\geq Y$, we use the bound $Y f(X, N)$ to get
\begin{align*}
	\mathcal{B}_2(M, N) &\ll (MN)^\varepsilon M^{1/3} \Big(\frac{Y}{X}\Big)^{1/3} ( X + N + (XN)^{2/3} )\\
	&\ll (MN)^\varepsilon M^{1/3} \Big( (X^2Y)^{1/3} + \Big(\frac{Y}{X}\Big)^{1/3}N + (XY)^{1/3}N^{2/3} \Big)\\
	&\ll (MN)^\varepsilon M^{1/3} ( M^{2/3} + N + M^{1/3}N^{2/3})
\end{align*}
since $XY \ll XY^2 \ll  M$, which gives \eqref{B2bound}.

If instead $X\leq Y$, we use the bound $X f(Y, N)$ to get
\begin{align*}
	\mathcal{B}_2(M, N) &\ll (MN)^\varepsilon M^{1/3} \Big(\frac{X}{Y}\Big)^{2/3} ( Y + N + (YN)^{2/3} )\\
	&\ll (MN)^\varepsilon M^{1/3} \Big( (X^2Y)^{1/3} + \Big(\frac{X}{Y}\Big)^{2/3}N + (XN)^{2/3} \Big)\\
	&\ll (MN)^\varepsilon M^{1/3} ( M^{2/3} + N + M^{2/9}N^{2/3} )
\end{align*}
since in this case $X^3 \ll XY^2 \ll  M$, and this also gives \eqref{B2bound}.

Thus \eqref{B2def} and \eqref{B2bound} imply that 
\begin{equation} \label{Sigma2bound}
\Sigma_2(M,N,\boldsymbol{\beta}) \ll (MN)^{\varepsilon} (M+M^{1/3}N + (MN)^{2/3} ) \| \boldsymbol{\beta} \|^2_2,
\end{equation}
for any $\mathbb{C}$-valued sequence $\boldsymbol{\beta}=(\beta_n)$ supported on $n \in \mathbb{Z}[\omega]$.

Set
\begin{equation*}
 \phi(m,n):=\delta_{N(m) \sim M} \cdot \delta_{\substack{ n \equiv 1  \pmod{3} \\ N(n) \sim N }} \cdot \mu^2(n) \chi_m(n).
 \end{equation*}
Applying the 
duality principle \cite[(7.9)--(7.11)]{IwK} with the kernel $\phi(m,n)$ above and then cubic reciprocity \eqref{cuberep} yields
\begin{equation} \label{alphadual}
\sum_{\substack{n \in \mathbb{Z}[\omega] \\ n \equiv 1 \pmod{3} \\ N(n) \sim N }}  \mu^2(n) \Big | \sum_{\substack{ m \in \mathbb{Z}[\omega] \\ N(m) \sim M }} 
\alpha_m \chi_n(m) \Big |^2 \ll
(MN)^{\varepsilon} (M+M^{1/3}N + (MN)^{2/3} ) \| \boldsymbol{\alpha} \|^2_2,
\end{equation}
for any $\mathbb{C}$-valued sequence $\boldsymbol{\alpha}=(\alpha_m)$ supported on $m \in \mathbb{Z}[\omega]$.
The result follows by dyadically partitioning the variables $a$ and $b$ on the left side of \eqref{dualbd} (i.e.\ $N(a) \sim N$, $N(b) \sim M$), applying the Cauchy--Schwarz inequality to
the dyadic scales $M$, and then using \eqref{alphadual}.

\end{proof}


\section{Second moment bounds} \label{second_moment_section}

Recall the definition of $\mathcal{F}_3$ in \eqref{F3def}.
For $Q_1,Q_2 \geq 1/2$, consider 
\begin{equation} \label{FQ1Q2def}
\mathcal{F}_3(Q_1,Q_2):= \big \{q_1 q^2_2 \in \mathcal{F}_3: N(q_1) \asymp Q_1 \text{ and } N(q_2) \asymp Q_2 \big \}.
\end{equation}

\begin{prop}[Cubic large sieve with decaying coefficients] \label{cubic_Dirichlet_prop}
    Let $\mathcal{F}_3(Q_1,Q_2)$ be as in \eqref{FQ1Q2def} and $Q^{\star}:=\min \{Q_1,Q_2\}$. For $q \in \mathcal{F}_3$ let
    \begin{equation} \label{Rqdef}
        \mathcal{R}(q) := \sum_{\substack{0 \neq \mathfrak{b} \unlhd \Z[\omega] \\ N(\mathfrak{b}) \leq R}} \frac{a(\mathfrak{b}) \chi_q(\mathfrak{b})}{N(\mathfrak{b})^{1/2}},
    \end{equation}
    where $|a(\mathfrak{b})| \ll N(\mathfrak{b})^\varepsilon$. Then 
    \begin{equation} \label{R_cubic_large_sieve}
    \sum_{q \in \mathcal{F}_3(Q_1,Q_2)} |\mathcal{R}(q)|^2 \ll_{\varepsilon} (R Q_1 Q_2)^{\varepsilon} (Q_1Q_2 + Q^\star R + (Q^{\star})^{1/3} (Q_1Q_2R)^{2/3} ).
    \end{equation}
\end{prop}

\begin{proof}
    Write $\mathfrak{b}=\lambda^k c$, where $k \in \mathbb{Z}_{\geq 0}$ and $c \in \mathbb{Z}[\omega]$ satisfies $c \equiv 1 \pmod{3}$. Uniquely factorise $c=b_1 b_2^2$ with $b_1,b_2 \equiv 1 \pmod{3}$ and $b_1$ squarefree. We extend the definition of $a(\cdot)$ in an obvious way to elements of $\Z[\omega]$ (setting it to be zero for elements of norm exceeding $R$). Then dyadically partition the summation variables $N(b_1) \sim B_1$ and $N(b_2) \sim B_2$, obtaining
    \begin{align} \label{R_dyadic}
        \mathcal{R}(q) = \sum_{\substack{B_1, B_2 \text{ dyadic} \\ B_1,B_2\gg 1 \\ B_1 B_2^2 \ll R}} \ \sum_{k \in \mathbb{Z}_{\geq 0} } \mathop{\sum \sum}_{\substack{ b_1,b_2 \in \mathbb{Z}[\omega] \\ b_1, b_2 \equiv 1 \pmod{3} \\ N(b_1) \sim B_1,\ N(b_2) \sim B_2}}  \mu^2(b_1) \frac{a(\lambda^k b_1 b_2^2) \chi_q(\lambda^k b_1 b_2^2)}{N(\lambda^k b_1 b_2^2)^{1/2}}.
    \end{align}

    After substitution of \eqref{R_dyadic} into the left side of \eqref{R_cubic_large_sieve}, apply the Cauchy--Schwarz inequality on the sums over $B_1$, $B_2$, $k$, and $b_2$, to conclude that 
    \begin{align} \label{R_intermed}
        \sum_{q \in \mathcal{F}_3(Q_1,Q_2)} |\mathcal{R}(q)|^2 \ll &\ R^{\varepsilon} \sum_{\substack{B_1,B_2 \text{ dyadic} \\ B_1,B_2\gg 1 \\ B_1 B_2^2 \ll R}} \ \sum_{k \in \mathbb{Z}_{\geq 0}} \frac{1}{3^{k/2}} \sum_{\substack{ b_2 \equiv 1 \pmod{3} \\ N(b_2) \sim B_2}} \frac{1}{B_2} \nonumber\\
        & \times \sum_{q \in \mathcal{F}_3(Q_1,Q_2)} \Big | \sum_{\substack{ b_1 \equiv 1 \pmod{3} \\ N(b_1) \sim B_1 }} \frac{\mu^2(b_1) a(\lambda^k b_1b_2^2) \chi_q(b_1)}{N(b_1)^{1/2}} \Big |^2 .
    \end{align}
    We discard the conditions $(q_1,q_2)=1$ and $1\neq q_1q^2_2 \equiv 1 \pmod{9}$ (coming from \eqref{F3def}) by positivity. Thus \eqref{R_intermed} is 
    \begin{align} \label{R_intermed2}
        & \ll R^{\varepsilon} \sum_{\substack{B_1,B_2 \text{ dyadic} \\ B_1,B_2\gg 1 \\ B_1 B_2^2 \ll R}} \ \sum_{k \in \mathbb{Z}_{\geq 0}} \frac{1}{3^{k/2}} \sum_{\substack{ b_2 \equiv 1 \pmod{3} \\ N(b_2) \sim B_2}} \frac{1}{B_2} \nonumber\\
        & \times \mathop{\sum \sum}_{\substack{ q_1,q_2 \equiv 1 \pmod{3} \\  N(q_1) \sim Q_1,\ N(q_2) \sim Q_2}} \mu^{2}(q_1) \mu^{2}(q_2) \Big | \sum_{\substack{ b_1 \equiv 1 \pmod{3} \\ N(b_1) \sim B_1 }} \frac{\mu^2(b_1) a(\lambda^k b_1b_2^2) \chi_{q_1}(b_1) \overline{\chi_{q_2}(b_1)}}{N(b_1)^{1/2}} \Big|^2.
    \end{align}
    Given $i=1$ or $i=2$, we fix $q_i$ and then apply the cubic large sieve (\cref{cubic-large-sieve}). Thus \eqref{R_intermed2} is 
    \begin{align*}
        & \ll (R Q_1 Q_2)^{\varepsilon}  \sum_{\substack{B_1,B_2 \text{ dyadic} \\ B_1,B_2\gg 1 \\ B_1 B_2^2 \ll R}} \min \Big\{ Q_2(Q_1+B_1+(Q_1 B_1)^{2/3}), Q_1(Q_2+B_1+(Q_2 B_1)^{2/3}) \Big\}  \\
        & \ll (R Q_1 Q_2)^{\varepsilon} \Big(Q_1Q_2 + \min \Big\{ Q_2R + Q_2 Q_1^{2/3} R^{2/3}, Q_1 R + Q_1 Q_2^{2/3} R^{2/3} \Big\} \Big)  \\
        & \ll (R Q_1 Q_2)^{\varepsilon} (Q_1Q_2 + Q^\star R + (Q^{\star})^{1/3} (Q_1Q_2R)^{2/3} ),
    \end{align*}
    which establishes \eqref{R_cubic_large_sieve}.
    \end{proof}

\begin{prop}[Bound for twisted second moment]  \label{secondmoment}
    Let $\mathcal{F}_3(Q_1,Q_2)$ be as in \eqref{FQ1Q2def} and $Q^{\star}:=\min \{Q_1,Q_2\} \geq \frac{1}{2}$. Consider $h \in \mathcal{F}_3$ with $H := N(\mathfrak{c}_{\chi_h})$, and for $q \in \mathcal{F}_3$, let $\mathcal{R}(q)$ be as in \eqref{Rqdef} such that $|a(\mathfrak{b})| \ll N(\mathfrak{b})^\varepsilon$ and $\frac{1}{2} \leq R \leq (Q_1Q_2H)^{100}$. Then for any $s=\sigma+it$ with $t\in \R$ and $\sigma \in [1/2,1]$, we have
    \begin{align} 
        \sum_{\substack{q \in \mathcal{F}_3(Q_1,Q_2) \\ (q, h) = 1}} &|L(s,\chi_{qh}) \cdot \mathcal{R}(qh)|^2 \nonumber\\
        &\qquad \ll_{\varepsilon,\sigma} (Q_1 Q_2 H T)^\varepsilon (Q^\star (Q_1Q_2H)^{1/2} TR + (Q^\star H)^{1/3} Q_1 Q_2 (T R)^{2/3}), \label{second}
    \end{align}
    where for ease of notation we denote $T:=|t|+1$.
\end{prop}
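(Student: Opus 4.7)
My strategy is to apply the approximate functional equation \cref{afe} to $L(s, \chi_{qh})$, combine the resulting Dirichlet polynomials with $\mathcal{R}(qh)$ into single polynomials of effective length $T R \sqrt{Q_1 Q_2 H}$, and invoke \cref{cubic_Dirichlet_prop}.

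First, I would apply \cref{afe} with $Y = 1$ and $G(u) = (\cos(\pi u/(4A)))^{-8A}$ for a large fixed $A$, writing $L(s, \chi_{qh}) = \Sigma_1(qh) + F \cdot \Sigma_2(qh)$, where $\Sigma_1$ and $\Sigma_2$ are the smooth Dirichlet sums in $\chi_{qh}$ and $\overline{\chi_{qh}}$ respectively, and $F$ denotes the functional equation factor. Stirling's formula combined with \eqref{sqrootcancel} gives $|F| \ll T^{1-2\sigma}(Q_1 Q_2 H)^{1/2-\sigma}$ uniformly on $\mathcal{F}_3(Q_1, Q_2)$, and by \cref{decaylem} both smooth weights decay rapidly beyond $N(\mathfrak{n}) \leq N_0 := T\sqrt{Q_1 Q_2 H} \cdot (Q_1 Q_2 H T)^\varepsilon$, with tails contributing a negligible amount.

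Next, I would combine with $\mathcal{R}(qh)$. The product $\Sigma_1 \mathcal{R}$ presents no difficulty: substituting $\mathfrak{m} = \mathfrak{n}\mathfrak{b}$ gives a Dirichlet polynomial $\sum_\mathfrak{m} \alpha_1(\mathfrak{m}) \chi_{qh}(\mathfrak{m})/\sqrt{N(\mathfrak{m})}$ of length $N_0 R$ with $|\alpha_1(\mathfrak{m})| \ll N(\mathfrak{m})^\varepsilon$ via the divisor bound \eqref{divis}. The product $\Sigma_2 \mathcal{R}$ is the delicate one: the naive identity $\overline{\chi_{qh}(\mathfrak{n})}\chi_{qh}(\mathfrak{b}) = \chi_{qh}(\mathfrak{n}^2\mathfrak{b})$ would force a Dirichlet polynomial of length $N_0^2 R$, which is far too long. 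The key observation is
\begin{equation*}
|\Sigma_2(qh)\mathcal{R}(qh)|^2 = |\Sigma_2(qh)|^2|\mathcal{R}(qh)|^2 = |\Sigma_2(qh)\overline{\mathcal{R}(qh)}|^2,
\end{equation*}
which allows me to work instead with $\Sigma_2 \overline{\mathcal{R}} = \sum_\mathfrak{m} \alpha_2(\mathfrak{m}) \overline{\chi_{qh}(\mathfrak{m})}/\sqrt{N(\mathfrak{m})}$, a Dirichlet polynomial of length $N_0 R$ with $|\alpha_2(\mathfrak{m})| \ll N_0^{\sigma - 1/2} N(\mathfrak{m})^\varepsilon$. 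The spurious growth factor $N_0^{\sigma - 1/2}$ is exactly compensated by $|F|$, since
\begin{equation*}
|F|^2 N_0^{2\sigma - 1} \ll T^{1-2\sigma}(Q_1 Q_2 H)^{1/2 - \sigma} \ll 1 \qquad \text{for } \sigma \in [1/2, 1].
\end{equation*}

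Finally, since $(q,h) = 1$ gives $\chi_{qh} = \chi_q \chi_h$, I would absorb $\chi_h(\mathfrak{m})$ and $\overline{\chi_h(\mathfrak{m})}$ into the coefficients (preserving the $N(\mathfrak{m})^\varepsilon$ bound, after factoring out $N_0^{\sigma - 1/2}$ from $\alpha_2$), drop the coprimality restriction by positivity, and apply \cref{cubic_Dirichlet_prop} together with its immediate analogue for $\overline{\chi_q}$ (obtained by complex-conjugating \cref{cubic-large-sieve}), with $R$ there replaced by $N_0 R$. Substituting $N_0 \approx T\sqrt{Q_1Q_2H}$ and absorbing the leading $Q_1 Q_2$ term via $(Q^\star H)^{1/3}(TR)^{2/3} \gg 1$ produces \eqref{second}. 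The main obstacle is precisely the $\Sigma_2$ contribution described above: without both the absolute-value identity $|\Sigma_2 \mathcal{R}|^2 = |\Sigma_2 \overline{\mathcal{R}}|^2$ and the precise cancellation between $|F|$ and $N_0^{\sigma - 1/2}$, the second Dirichlet polynomial is stuck at the wasteful length $N_0^2 R$, yielding a bound that falls well short of \eqref{second}.
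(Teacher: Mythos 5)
Your treatment of the critical line $\sigma=\tfrac12$ is essentially the paper's: apply \cref{afe} with $Y=1$, use that the root number has modulus one, exploit $|\Sigma_2\mathcal{R}|^2=|\overline{\Sigma_2}\,\mathcal{R}|^2$ so that both pieces become cubic Dirichlet polynomials in $\chi_{qh}$ of length $\ll T\sqrt{Q_1Q_2H}\,R$, and feed them into \cref{cubic_Dirichlet_prop}; the closing arithmetic (absorbing $Q_1Q_2$ because $(Q^\star H)^{1/3}(TR)^{2/3}\gg 1$) also matches. One step you gloss over: the weight $V_{1/2\pm it}\big(N(\mathfrak{n})/\sqrt{3N(q_1q_2h_1h_2)}\big)$ depends on $q$, so one cannot simply ``truncate the tails'' and invoke the large sieve; the variables must first be separated. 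The paper does this by opening $V$ via its Mellin representation \eqref{Vsdef}, truncating the $w$-integral, and applying Cauchy--Schwarz in $w$. This is routine, but it is a necessary intermediate step before \cref{cubic_Dirichlet_prop} applies.

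Where you genuinely diverge is the range $\tfrac12<\sigma\le 1$. The paper does not rerun the approximate functional equation there; it reduces to the established $\sigma=\tfrac12$ case via \cref{device} (with $U=1$, $\alpha=\tfrac12$), writing $L(s,\chi_{qh})^2$ as a bounded Dirichlet series plus a $\Gamma$-damped integral of $L(w,\chi_{qh})^2$ over $\Re(w)=\tfrac12$. Your alternative — carrying the AFE at general $s$ and verifying that $|F|^2N_0^{2\sigma-1}\le 1$ cancels the coefficient growth $N(\mathfrak{n})^{\sigma-1/2}$ in the dual sum — has correct exponent bookkeeping and works for any fixed $\sigma\in[\tfrac12,1)$, but it degenerates at the endpoint $\sigma=1$: there $\Gamma(1-s)$ blows up as $t\to 0$, and \cref{decaylem} (which requires $\Re(1-s)\ge 3\alpha>0$) no longer furnishes the decay of $V_{1-s}$, so $|F|$ and the effective length of the dual sum cannot be controlled separately — only their product is well behaved. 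Since the proposition is asserted for all $\sigma\in[\tfrac12,1]$, you would need either a separate argument near $s=1$ or the paper's contour-shift reduction, which sidesteps the issue entirely and is shorter.
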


\begin{proof} 
We adapt the argument appearing in \cite[pg.~1149--1150]{BGL}.
Set \begin{equation*}
\mathcal{T}^\mathcal{R}_2(Q_1,Q_2,h,s):=\sum_{\substack{q \in \mathcal{F}_3(Q_1,Q_2) \\ (q, h) = 1}} |L(s,\chi_{qh}) \cdot \mathcal{R}(qh)|^2 .
\end{equation*}
We first consider the critical line, i.e.\ $\sigma=1/2$. Writing $h = h_1h_2^2$ with $\mu^2(h_1h_2)=1$, so $N(h_1h_2) = H$, \cref{afe} (with $Y=1$)
and \cref{rootnumber} imply that
\begin{align} 
&\mathcal{T}^\mathcal{R}_2(Q_1,Q_2,h,1/2+it) \nonumber \\
&\qquad \ll \sum_{\pm} \sum_{\substack{q \in \mathcal{F}_3(Q_1,Q_2) \\ (q, h) = 1}} \Big | \sum_{0 \neq \mathfrak{n} \unlhd \Z[\omega]} \frac{\chi_{hq}(\mathfrak{n})}{N(\mathfrak{n})^{1/2 \pm it }}
V_{1/2 \pm it} \Big(  \frac{N(\mathfrak{n})}{ \sqrt{3N(q_1 q_2h_1h_2)} } \Big ) \Big |^2 \cdot |\mathcal{R}(qh)|^2. \label{AFEabsbound}
\end{align}

Using the decay of $V_{1/2 \pm it}$ as in \cref{decaylem}, since $R \leq (Q_1Q_2H)^{100}$ the ideals with $N(\mathfrak{n}) \gg Z$ in \eqref{AFEabsbound} can be removed at the cost of an error term $O_A(Z^{-A})$ for any $A$, where 
\begin{equation} \label{Zdef}
Z:=(Q_1 Q_2 H T)^{\varepsilon} (Q_1 Q_2 H)^{1/2} T,
\end{equation}
where $T:=|t|+1$.
We then open each $V_{1/2 \pm it}(\cdot)$ using the integral representation \eqref{Vsdef},
separate variables, move the contour to $\Re(w)=\varepsilon$,
and truncate the $w$-integral to $[\varepsilon-i Z^{\varepsilon},\varepsilon+i Z^{\varepsilon}]$
up to negligible error $O_A(Z^{-A})$ for all sufficiently large $Z>0$. We apply the Cauchy--Schwarz inequality on the $w$-integral to conclude that the right side of \eqref{AFEabsbound} is 
\begin{align} \label{intermed}
\ll &\ Z^{-1000}+ Z^{\varepsilon}
\sum_{\pm} \int_{\varepsilon-i Z^{\varepsilon}}^{\varepsilon+i Z^{\varepsilon}} \sum_{\substack{q \in \mathcal{F}_3(Q_1,Q_2) \\ (q, h) = 1}} 
\Big | \sum_{\substack{0 \neq \mathfrak{n} \unlhd \Z[\omega] \\ N(\mathfrak{n}) \ll Z }} \frac{\chi_{qh}(\mathfrak{n})}{N(\mathfrak{n})^{1/2 \pm it+w}} \Big |^2\cdot |\mathcal{R}(qh)|^2\ dw.
\end{align}
Applying \cref{cubic_Dirichlet_prop}, where the cubic Dirichlet polynomial inside absolute values has length $\ll ZR$, we conclude that
\begin{align*}
    & \mathcal{T}^\mathcal{R}_2(Q_1, Q_2,h, 1/2+it) \\
     &\ll (ZR)^\varepsilon ( Q_1Q_2 + Q^\star ZR + (Q^\star)^{1/3}(Q_1 Q_2 Z R)^{2/3} ) \\
    &\ll (Q_1 Q_2 H T)^\varepsilon ( Q_1Q_2 + Q^\star (Q_1Q_2H)^{1/2} TR + (Q^\star H)^{1/3} Q_1 Q_2 (T R)^{2/3} ),
\end{align*}
which establishes \eqref{second} when $\sigma=1/2$.

Now consider \eqref{second} in the case $1/2< \sigma \leq 1$.  This follows from \cref{device} (with $\alpha=1/2$ and $U=1$),  
the estimate 
 \begin{equation} \label{gammabd}
\Gamma(x+iy) \ll_x e^{-|y|},
 \end{equation}
and the established case of \eqref{second} when $\sigma=1/2$.

\end{proof}

 
\section{Proof of Propositions \ref{SRestimate1} and \ref{SRestimate2}} \label{SRproofsection}

We start by relating $A_j(q)$ back to the absolute value squared of a cubic Dirichlet polynomial of the correct length via $|L(1/2, \chi_q)|^j$, which while straightforward requires some technicalities since we must cover any $q \equiv 1 \pmod 9$. 

Recalling \eqref{SRdef}, for $j\in \{1, 2\}$ we have
\begin{equation}\label{SRexp_new}
    \mathcal{S}_{R}(|\mathcal{M}(q)^j A_j(q)|; F) =\sum_{\substack{ q \in \Z[\omega] \\ q \equiv 1 \pmod{9}}} |R_Y(q)| |\mathcal{M}(q)|^j |A_j(q)| F \Big( \frac{N(q)}{X} \Big).
\end{equation}

Note that $R_Y(q) \ll N(q)^{\varepsilon}$. By \eqref{mollifier}, \eqref{A1def}, and \eqref{A2def} we also have the trivial bounds $\mathcal{M}(q) \ll M^{1/2+\varepsilon}$ and $A_j(q) \ll N(q)^{j/4+\varepsilon}$, so the contribution of the terms where $q = a^3$ is a cube to \eqref{SRexp_new} is
\begin{equation*}
    \ll M^{\frac{j}{2}+\varepsilon} \sum_{\substack{a \in \Z[\omega] \\ a \equiv 1 \pmod{3}}} F\Big(\frac{N(a)^3}{X}\Big) N(a)^{\frac{3j}{4}+\varepsilon} \ll M^{\frac{j}{2}+\varepsilon} X^{\frac{j}{4} + \frac{1}{3}+\varepsilon}.
\end{equation*}

Also recalling \eqref{MYRYdef}, we observe that $R_Y(q)=0$ unless $q \mathbb{Z}[\omega]=\mathfrak{l}^2 \mathfrak{m}$, where $\mathfrak{m}$ is squarefree, $N(\mathfrak{l})>Y$, and $(\mathfrak{l} \mathfrak{m},\lambda)=1$ (note that $\mathfrak{l}$ here is not to be confused with the $\mathfrak{l}$ in \eqref{MYRYdef}). At the level of elements of $\mathbb{Z}[\omega]$, this is equivalent to $q=\ell^2 m$, where $N(\ell)>Y$, $m$ is squarefree, $\ell, m \equiv 1 \pmod{3}$, and $\ell^2 m \equiv 1 \pmod{9}$. Thus 
\begin{equation} \label{SRexpansion_new}
    \mathcal{S}_{R}(|\mathcal{M}(q)^j A_j(q)|; F) \ll M^{\frac{j}{2}} X^{\frac{3j+4}{12}+\varepsilon} + X^{\varepsilon} \sum_{\substack{ Y<N(\ell) \leq \sqrt{2X} \\ \ell \equiv 1 \pmod{3}  } } \sum_{\substack{ N(m) \sim  X/N(\ell)^2 \\  \ell^2 m \equiv 1 \pmod{9} \\ \ell^2 m \neq\tinycube}} \mu^2(m) |\mathcal{M}(\ell^2 m )^j A_j(\ell^2 m)|.
\end{equation}

Let $d:= (m, \ell)$.  Thus $m = d e$ and $\ell = d \ell'$ with $e, \ell^{\prime} \in \mathbb{Z}[\omega]$, $d, e, \ell^{\prime} \equiv 1 \pmod{3}$, and $(e,\ell^{\prime})=1$. Furthermore, since $m$ is squarefree, we have $d,e$ both squarefree, and $(e,d \ell^{\prime} )=1$. We further uniquely factorize $\ell' = ab^2c^3$ with $a,b,c \in \mathbb{Z}[\omega]$ such that $a,b,c \equiv 1 \pmod{3}$ and $\mu^2(ab)=1$. The condition $(e,d \ell^{\prime} )=1$ and the factorization above guarantees that $(e,abcd)=1$. Thus 
\begin{equation} \label{chareqn_new} 
    \chi_{\ell^2 m}=\chi_{e a^2 b^4 c^6 d^3}=\chi_{e b a^2} \textbf{1}_{cd}=\chi_{e b} \overline{\chi_{a}} \textbf{1}_{cd},
\end{equation}
where $\textbf{1}_{r}$ denotes the principal character to modulus $r$. Since $\mu^2(eab)=1$, the character $\chi_{e b a^2}=\chi_{e b} \overline{\chi_{a}}$ is a primitive cubic Dirichlet character to modulus $e ab$. Moreover, the condition $\ell^2 m \equiv 1 \pmod{9}$ is equivalent to $e b a^2  (b c^2 d)^3 \equiv 1 \pmod{9}$, which in turn implies that $e b a^2 \equiv 1 \pmod{9}$, since $b c^2 d \equiv 1 \pmod{3}$ and therefore $(b c^2 d)^3 \equiv 1 \pmod{9}$.
Finally, the condition $\ell^2m \neq\cube$ is equivalent to $eba^2 \neq 1$.

Putting all this together we have $e b a^2 \in \mathcal{F}_3$. Using \eqref{A1def}, \eqref{A2def}, and \eqref{chareqn_new},
\begin{equation} \label{A1comp_new}
    A_1(\ell^2 m) = \sum_{0 \neq \mathfrak{n} \unlhd \Z[\omega]} \frac{\chi_{e ba^2}\boldsymbol{1}_{cd}(\mathfrak{n})}{ N( \mathfrak{n})^{1/2}} \Phi_1 \Big( \frac{N(\mathfrak{n})}{\sqrt{3 N(e a^2 b^4 c^6 d^3)}} \Big)
\end{equation}
and
\begin{equation}  \label{A2comp_new}
    A_2(\ell^2 m) = \sum_{0 \neq \mathfrak{n}_1,\mathfrak{n}_2 \unlhd \Z[\omega]} \frac{\chi_{e ba^2} \boldsymbol{1}_{cd} (\mathfrak{n}_1) \overline{\chi_{e ba^2} \boldsymbol{1}_{cd}(\mathfrak{n}_2)}}{N(\mathfrak{n}_1 \mathfrak{n}_2)^{1/2}} \Phi_2 \Big( \frac{N(\mathfrak{n}_1 \mathfrak{n}_2)}{3 N(e a^2 b^4 c^6 d^3)} \Big).
\end{equation}

Opening $\Phi_j$ using \eqref{def-Phi} and interchanging the absolutely convergent summations and integration yields 
\begin{align} \label{A1interchange_new}
    A_1(\ell^2 m)&=\frac{1}{2 \pi i} \int_{2-i \infty}^{2 +i \infty} (2 \pi)^{-s} (3N(e a^2 b^4 c^6 d^3))^{s/2} \frac{\Gamma(1/2+s)}{\Gamma(1/2)} L(1/2+s,\chi_{eb a^2} \mathbf{1}_{cd} )\frac{ds}{s}
\end{align}
and
\begin{align} \label{A2interchange_new}
    A_2(\ell^2 m)=\frac{1}{2 \pi i} \int_{2-i \infty}^{2 +i \infty} & (2 \pi)^{-2s} (3N(e a^2 b^4 c^6 d^3))^s \frac{\Gamma(1/2+s)^2}{\Gamma(1/2)^2} \nonumber \\
    \times &\ L(1/2+s,\chi_{eb a^2} \mathbf{1}_{cd} ) \overline{L(1/2+\overline{s},\chi_{eb a^2} \mathbf{1}_{cd} ) } \frac{ds}{s}.
\end{align}
Then 
\begin{equation} \label{eulerprod_new}
    L(1/2+s,\chi_{e b a^2} \mathbf{1}_{cd} )=\mathcal{E}(s,e b a^2,cd) L(1/2+s,\chi_{eb a^2} )
\end{equation}
for
\begin{equation*}
    \mathcal{E}(s,e ba^2,cd):=\prod_{\substack{\pi \text{ prime} \\ \pi \equiv 1 \pmod{3} \\ \pi \mid cd,\ \pi \nmid e ab} } \big(1- \chi_{eba^2}(\pi) N(\pi)^{-1/2-s} \big). 
\end{equation*}

Observe that $\chi_{eb a^2}$ is a non-principal character since $eba^2 \neq 1$, so the right side of \eqref{eulerprod_new} is holomorphic for all $s \in \mathbb{C}$. We apply \eqref{eulerprod_new} to \eqref{A1interchange_new} and \eqref{A2interchange_new}, and then shift both contours to $\Re(s)=\varepsilon$. The triangle inequality yields 
\begin{align}\label{A_1_integral_bound}
    |A_1(\ell^2 m)|& \ll X^{\varepsilon} \int_{\varepsilon-i \infty}^{\varepsilon+i \infty}  |\Gamma(1/2+s)| |L(1/2+s,\chi_{eb a^2} )| |\mathcal{E}(s,e b a^2,cd)| \frac{|ds|}{|s|}
\end{align}
and
\begin{align} \label{A_2_integral_bound}
    |A_2(\ell^2 m)|& \ll X^{\varepsilon} \int_{\varepsilon-i \infty}^{\varepsilon+i \infty}  |\Gamma(1/2+s)|^2 |L(1/2+s,\chi_{eb a^2} )| |L(1/2+\overline{s},\chi_{eb a^2} ) | \nonumber \\
    & \qquad \qquad \qquad \qquad \times |\mathcal{E}(s,e b a^2,cd)|  |\mathcal{E}(\overline{s},e ba^2,cd)|   \frac{|ds|}{|s|}.
\end{align}
Note the uniform trivial bound
\begin{equation*}
    |\mathcal{E}(s,e b a^2,cd)| \ll N(cd)^{\varepsilon} \ll X^{\varepsilon} \quad \text{for} \quad \Re(s)=\varepsilon.
\end{equation*}
Using this combined with Stirling's bound and a trivial bound for the $L$-function, we may truncate the integrals at $X^\varepsilon$ in \eqref{A_1_integral_bound} and \eqref{A_2_integral_bound} to obtain
\begin{align*}
    |A_j(\ell^2m)| \ll X^{\varepsilon} \int_{\varepsilon-i X^\varepsilon}^{\varepsilon+i X^\varepsilon}  |L(1/2+s,\chi_{e b a^2} )|^j |ds| + O_B(X^{-B}),
\end{align*}
for all $X>0$ sufficiently large.

Thus
\begin{align}
    &\sum_{\substack{ Y<N(\ell) \leq \sqrt{2X} \\ \ell \equiv 1 \pmod{3}  } } \sum_{\substack{ N(m) \sim  X/N(\ell)^2 \\  \ell^2 m \equiv 1 \pmod{9} \\ \ell^2 m \neq\tinycube}} \mu^2(m) |\mathcal{M}(\ell^2 m )^j A_j(\ell^2 m)| \ll_B X^{-B} \nonumber \\
    & + X^\varepsilon \int_{\varepsilon-i X^\varepsilon}^{\varepsilon+i X^\varepsilon} \sum_{\substack{ a,b,c,d,e \in \mathbb{Z}[\omega] \\ a,b,c,d,e \equiv 1 \pmod{3} \\ eba^2 \equiv 1 \pmod{9} \\ eba^2 \neq 1,\ (c,e)=1 \\ N(a^2 b^4 c^6 d^3 e) \sim X \\ Y <N(ab^2 c^3 d) \leq \sqrt{2X}}} \mu^2(abe) \mu^2(de)  |\mathcal{M}(a^2 b^4 c^6 d^3 e) L(1/2+s,\chi_{eb a^2} )|^j |ds|. \label{ell_m_second_mom}
\end{align}
Insertion of a dyadic partition in the variables $a,b,c,d,e$ shows that \eqref{ell_m_second_mom} is $O_B(X^{-B})$ plus a term
\begin{align} \label{boundintermed_new}
    \ll X^\varepsilon \sum_{\substack{ A,B,C,D,E \text{ dyadic} \\ A^2 B^4 C^6 D^3 E \asymp X \\ Y\ll AB^2C^3D \ll \sqrt{X}}} \mathop{ \sum \sum}_{\substack{ c,d \in \mathbb{Z}[\omega] \\ c,d \equiv 1 \pmod{3} \\ N(c) \sim C,\ N(d) \sim D }} \int_{\varepsilon-i X^\varepsilon}^{\varepsilon+i X^\varepsilon} \sum_{ q \in \mathcal{F}_3(EB,A) }   |\widetilde{\mathcal{M}}(q) L(1/2+s,\chi_q )|^j |ds|,
\end{align}
where $\widetilde{\mathcal{M}}(q) = \widetilde{\mathcal{M}}_{c, d}(q) := \mathcal{M}(q (cd)^3)$. Indeed, when $q=q_1 q_2^2 \in \mathcal{F}_3$ with $q_1=eb$ and $q_2=a$ we have $\mathcal{M}(q (cd)^3) = \mathcal{M}(a^2 b^4 c^6 d^3 e)$ by \eqref{mollifier}. The contribution from the divisor bound incurred from clumping $e b$ as one variable was absorbed into $X^{\varepsilon}$.
 
\begin{proof}[Proof of \cref{SRestimate1}]
    Applying the Cauchy--Schwarz inequality, we see that the sum over $q$ in \eqref{boundintermed_new} for $j=1$ is 
    \begin{align} \label{qcauchy}
        \ll \Big(\sum_{ q \in \mathcal{F}_3(EB,A) }   |\widetilde{\mathcal{M}}(q)|^2  \Big)^{1/2} \Big( \sum_{ q \in \mathcal{F}_3(EB,A) }   |L(1/2+s,\chi_q )|^2 \Big)^{1/2}. 
    \end{align}
    
Consider the first term in the display above. We have
    \begin{align} 
        \sum_{ q \in \mathcal{F}_3(EB,A) }   |\widetilde{\mathcal{M}}(q)|^2 =  \sum_{ q \in \mathcal{F}_3(EB,A) }  \Big | \sum_{\substack{0 \neq \mathfrak{n} \unlhd \Z[\omega] \\ N(\mathfrak{n}) \leq M}} \lambda(\mathfrak{n}) \chi_{c^3 d^3} (\mathfrak{n}) \sqrt{N(\mathfrak{n})} \chi_{q}(\mathfrak{n})  \Big |^2. \nonumber 
    \end{align}
Applying \cref{cubic_Dirichlet_prop}, since $\lambda(\mathfrak{n}) \chi_{c^3 d^3} (\mathfrak{n}) N(\mathfrak{n}) \ll_\varepsilon N(\mathfrak{n})^\varepsilon$ by hypothesis, we obtain
    \begin{align}
         \sum_{ q \in \mathcal{F}_3(EB,A) }   |\widetilde{\mathcal{M}}(q)|^2 \ll X^\varepsilon \Big(ABE + \min(EB, A) M + \min(EB, A)^{1/3}(ABE M)^{2/3}\Big). \label{mollif_second_mom_bound1}
    \end{align}

 Similarly, by \cref{secondmoment} we have
    \begin{align}
        \sum_{ q \in \mathcal{F}_3(EB,A) }   |L(1/2+s,\chi_q )|^2 \ll X^\varepsilon \Big(\min(EB, A)(ABE)^{1/2} + \min(EB, A)^{1/3} ABE\Big). \label{L_second_mom_bound1}
    \end{align}
    Inserting \eqref{mollif_second_mom_bound1} and \eqref{L_second_mom_bound1} into \eqref{qcauchy}, and denoting $L := AB^2C^3D$, we conclude from \eqref{ell_m_second_mom} and \eqref{boundintermed_new} (for $j=1$) that  
    \begin{align}
        &\sum_{\substack{ Y<N(\ell) \leq \sqrt{2X} \\ \ell \equiv 1 \pmod{3}  } } \sum_{\substack{ N(m) \sim  X/N(\ell)^2 \\  \ell^2 m \equiv 1 \pmod{9} \\ \ell^2 m \neq\tinycube}} \mu^2(m) |\mathcal{M}(\ell^2 m ) A_1(\ell^2 m)| \label{A1_first_mom} \\
        & \ll X^\varepsilon \sum_{\substack{A,B,C,D,E \text{ dyadic} \\ A^2 B^4 C^6 D^3 E \asymp X \\ Y \ll L:= AB^2C^3D \ll \sqrt{X} \\ A \leq BE}} CD A^{\frac{2}{3}} (BE)^{\frac{1}{2}} \Big((ABE)^{\frac{1}{2}} + (AM)^{\frac{1}{2}} + A^{\frac{1}{2}} (BEM)^{\frac{1}{3}} \Big) \nonumber \\
        &  + X^\varepsilon \sum_{\substack{A,B,C,D,E \text{ dyadic} \\ A^2 B^4 C^6 D^3 E \asymp X \\ Y \ll L:= AB^2C^3D \ll \sqrt{X} \\ A \geq BE}} CD \Big( A^{\frac{1}{4}} (BE)^{\frac{3}{4}} + A^{\frac{1}{2}} (BE)^{\frac{2}{3}} \Big) \Big((ABE)^{\frac{1}{2}} + (BEM)^{\frac{1}{2}} + (BE)^{\frac{1}{2}} (AM)^{\frac{1}{3}} \Big). \nonumber
    \end{align}
    Observe that $ABE \asymp \frac{X}{LBC^3D^2}$ and $A = \frac{L}{B^2C^3D}$, so 
    \begin{equation*}
        A \leq BE \implies \frac{L}{B^2C^3D} = A \leq A^{2/3}(BE)^{1/3} \asymp \frac{X^{1/3}}{BC^2D} \implies L \ll X^{1/3}BC,
    \end{equation*}
    and conversely $A \geq BE \implies L \gg X^{1/3}BC$. Thus \eqref{A1_first_mom} is 
    \begin{align}
        & \ll X^\varepsilon \sum_{\substack{A,B,C,D,E \text{ dyadic} \\ A^2 B^4 C^6 D^3 E \asymp X \\ Y \ll L := AB^2C^3D \ll \sqrt{X} \\ A \leq BE}} \Big( \frac{X}{L^{\frac{5}{6}}} \frac{1}{B^{\frac{4}{3}}C^{\frac{5}{2}}D^{\frac{7}{6}}} + \frac{X^{\frac{1}{2}} M^{\frac{1}{2}} L^{\frac{1}{6}}}{B^{\frac{11}{6}}C^{\frac{5}{2}}D^{\frac{2}{3}}} + \frac{X^{\frac{5}{6}}M^{\frac{1}{3}}}{L^{\frac{1}{2}}}\frac{1}{B^{\frac{3}{2}}C^{\frac{5}{2}}D} \Big) \nonumber \\
        &  + X^\varepsilon \sum_{\substack{A,B,C,D,E \text{ dyadic} \\ A^2 B^4 C^6 D^3 E \asymp X \\ Y \ll L:= AB^2C^3D \ll \sqrt{X} \\ A \geq BE}} CD \Big( \Big(\frac{L}{B^2C^3D}\Big)^\frac{1}{4} \Big(\frac{X}{L^2}\Big)^{\frac{3}{4}} \Big(\frac{B}{D}\Big)^{\frac{3}{4}} + \Big(\frac{L}{B^2C^3D}\Big)^\frac{1}{2} \Big(\frac{X}{L^2}\Big)^{\frac{2}{3}} \Big(\frac{B}{D}\Big)^{\frac{2}{3}} \Big) \nonumber \\
        & \qquad \qquad \times \Big( \Big(\frac{X}{LBC^3D^2}\Big)^\frac{1}{2} + \Big(\frac{X}{L^2}\Big)^{\frac{1}{2}} M^\frac{1}{2} \Big(\frac{B}{D}\Big)^{\frac{1}{2}} +  \Big(\frac{X}{L^2}\Big)^{\frac{1}{2}} M^\frac{1}{3} \Big(\frac{B}{D}\Big)^{\frac{1}{2}} \Big(\frac{L}{B^2C^3D}\Big)^\frac{1}{3} \Big).   \nonumber 
    \end{align}
    Using $Y \ll L \ll X^{1/3}BC$, since all of the ranges are $\gg 1$ the first sum is 
    \begin{equation*}
        \ll X^\varepsilon \Big( \frac{X}{Y^{5/6}} + X^{5/9} M^{1/2} + \frac{X^{5/6}M^{1/3}}{Y^{1/2}} \Big).
    \end{equation*}
    For the second sum, we use $B \ll \frac{L}{C X^{1/3}}$ (and therefore $L \gg X^{1/3}$) to see that it is
    \begin{align*}
        \ll X^\varepsilon & \sup_{X^{1/3} \ll L \ll X^{1/2}} \Big( L^\frac{1}{4} \Big(\frac{X}{L^2}\Big)^{\frac{3}{4}} \Big(\frac{L}{X^\frac{1}{3}}\Big)^{\frac{1}{4}} + L^\frac{1}{2} \Big(\frac{X}{L^2}\Big)^{\frac{2}{3}} \Big) \nonumber \\
        & \qquad \times \Big( \Big(\frac{X}{L}\Big)^\frac{1}{2} + \Big(\frac{X}{L^2}\Big)^{\frac{1}{2}} M^\frac{1}{2} \Big(\frac{L}{X^\frac{1}{3}}\Big)^{\frac{1}{2}} +  \Big(\frac{X}{L^2}\Big)^{\frac{1}{2}} M^\frac{1}{3} L^\frac{1}{3} \Big) \nonumber \\
        \ll X^\varepsilon & ( X^{13/18} + X^{5/9} M^{1/2} + X^{2/3} M^{1/3} ). \nonumber 
    \end{align*}

    Combining these two bounds with \eqref{SRexpansion_new}, we obtain the desired estimate
    \begin{align*}
        \mathcal{S}_{R}(|\mathcal{M}(q) A_1(q)|; F) \ll X^\varepsilon \Big( \frac{X}{Y^{5/6}} + X^{13/18} + \frac{X^{5/6}M^{1/3}}{Y^{1/2}} + X^{2/3} M^{1/3} + X^{7/12} M^{1/2} \Big).
    \end{align*}
  
\end{proof}

\begin{proof}[Proof of \cref{SRestimate2}]
    Applying \cref{secondmoment} (with $R = M$ and $h=1$) we see that \eqref{boundintermed_new} for $j=2$ is 
    \begin{align} \label{boundintermed2}
        & \ll  X^{\varepsilon} \sum_{\substack{ A,B,C,D,E \text{ dyadic} \\ A^2 B^4 C^6 D^3 E \asymp X \\ Y\ll AB^2C^3D \ll \sqrt{X}}} CD \Big(\min(EB, A) (ABE)^{1/2}M + \min(EB, A)^{1/3} ABE M^{2/3}\Big) \nonumber\\
        &\ll X^{\varepsilon} \sum_{\substack{ A,B,C,D,E \text{ dyadic} \\ A^2 B^4 C^6 D^3 E \asymp X \\ Y\ll AB^2C^3D \ll \sqrt{X}}} CD \Big(A^{3/2}(BE)^{1/2} M + A^{1/3} \frac{X}{AB^3 C^6 D^{3}} M^{2/3} \Big) \nonumber \\
        & \ll X^{\varepsilon} \sum_{\substack{ A,B,C,D,E \text{ dyadic} \\ A^2 B^4 C^6 D^3 E \asymp X \\ Y\ll AB^2C^3D \ll \sqrt{X}}} \Big( (A^2B^4C^6D^3E)^{3/4} M + \frac{XM^{2/3}}{(AB^2C^3D)^{2/3}} \Big) \nonumber\\
        &\ll X^{\varepsilon} \Big( X^{3/4} M + \frac{XM^{2/3}}{Y^{2/3}} \Big).\nonumber
    \end{align}
    
    Therefore \eqref{SRexpansion_new} gives
    \begin{align*}
        \mathcal{S}_{R}(|\mathcal{M}(q)^2 A_2(q)|; F) &\ll X^\varepsilon \Big(X^{5/6}M + X^{3/4}M + X\Big(\frac{M}{Y}\Big)^{2/3}\Big) \\
        &\ll X^\varepsilon \Big(X^{5/6}M + X\Big(\frac{M}{Y}\Big)^{2/3}\Big),
    \end{align*}
    as desired.
    
\end{proof}


\section{Twisted sums of cubic Gauss sums} \label{Gauss_sums_section}

In this section our goal is to compute certain sums of twisted cubic Gauss sums $g_3(r, c)$. In our upcoming application it is crucial to deal with $r$ which can be a multiple of $\lambda$ and not necessarily squarefree. While there is a vast literature on the subject, we could not find a reference in this generality. This section works out such a general result using (by now) standard tools, ultimately relying on the foundational work of Patterson \cite{Pat1}. We will frequently use \cref{cubic-GS-lemma1} and \cref{localcomp} without further reference. 

For any $r, \alpha \in \Z[\omega]$ with $\alpha\equiv 1 \pmod{3}$ and squarefree, denote
\begin{equation} \label{psizetadef}
    \psi_\alpha(r, s) := \sum_{\substack{c \in \Z[\omega] \\ c\equiv 1 \pmod{3} \\ (c, \alpha)=1}} \frac{g_3(r, c)}{N(c)^s} \qquad  \text{and} \qquad \zeta_{\lambda}(s) := \sum_{\substack{c\in\Z[\omega] \\ c\equiv 1 \pmod{3}}} \frac{1}{N(c)^s},
\end{equation}
which converge absolutely if $\Re(s) > \frac{3}{2}$ and $\Re(s) > 1$, respectively. Also let 
\begin{equation} \label{psirsdef}
    \psi(r, s) := \psi_1(r, s) \qquad \text{and} \qquad \Delta_\alpha(s) := \prod_{\substack{\pi \text{ prime} \\ \pi\equiv 1 \pmod{3} \\ \pi \mid \alpha}} (1 - N(\pi)^{2 - 3s} ).
\end{equation}
It is convenient to express $\psi_\alpha$ in terms of the simpler $\psi$. This can be achieved via the following relations, which generalize \cite[Lemma 3]{HBP}.

\begin{lemma}[Removal of coprimality conditions] \label{psi_coprimality_lemma}
    For any $s\in \C$ with $\Re(s) > \frac{3}{2}$ and $\alpha, \beta, r \in \Z[\omega]$ satisfying $\alpha, \beta \equiv 1 \pmod{3}$, $\mu^2(\alpha)=1$, and $(\alpha, \beta r)$ = 1, we have:
    \begin{itemize}
        \item[(i)]  
        \begin{align*}
            \psi_{\alpha \beta}( \alpha^2 r, s) \Delta_\alpha(s) = \psi_{\beta} (\alpha^2 r, s);
        \end{align*}

        \item[(ii)]  
        \begin{align*}
            \psi_{\alpha \beta}( \alpha r, s) \Delta_\alpha(s) = \sum_{\substack{d \in \Z[\omega] \\ d \equiv 1 \pmod{3} \\ d \mid \alpha}} \mu(d) N(d)^{1 - 2s} \overline{g_3(\alpha r / d, d)} \psi_{\beta}(\alpha r / d, s);
        \end{align*}

        \item[(iii)]
        \begin{align*}
            \psi_{\alpha \beta}(r, s) \Delta_\alpha(s) = \sum_{\substack{d \in \Z[\omega] \\ d \equiv 1 \pmod{3} \\ d \mid \alpha}} \mu(d) N(d)^{-s} g_3(r, d) \psi_\beta(rd, s);
        \end{align*}
    \end{itemize}
  where $\psi_\alpha(r, s)$ is given in \eqref{psizetadef} and \eqref{psirsdef}. 
\end{lemma}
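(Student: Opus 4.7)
The three identities are instances of Möbius inversion that eliminate the coprimality condition $(c,\alpha) = 1$ from the definition of $\psi_{\alpha\beta}(\cdot,s)$. Since $\alpha$ is squarefree, I will prove them in parallel by induction on the number of prime factors of $\alpha$.

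For the base case $\alpha = \pi$ a single prime with $\pi \nmid \beta r$, start from the direct Möbius split
\begin{equation*}
\psi_\beta(r\pi^j, s) = \psi_{\pi\beta}(r\pi^j, s) + \sum_{\substack{c \equiv 1 \pmod 3 \\ (c,\beta) = 1,\; \pi \mid c}} \frac{g_3(r\pi^j, c)}{N(c)^s}
\end{equation*}
for $j \in \{0, 1, 2\}$. Writing $c = \pi^k c'$ with $(\pi, c') = 1$ and $k \geq 1$, factor $g_3(r\pi^j, \pi^k c') = g_3(r\pi^j, \pi^k)\,\overline{\chi_{c'}(\pi^k)}\,g_3(r\pi^j, c')$ via \cref{cubic-GS-lemma1}. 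By \cref{localcomp}, $g_3(r\pi^j, \pi^k)$ vanishes except at $k = j+1$, and the surviving character power $\overline{\chi_{c'}(\pi^{j+1})}$ collapses via $\chi_{c'}(\pi)^3 = 1$ and cubic reciprocity to $\overline{\chi_{c'}(\pi)}$, $1$, or $\chi_{c'}(\pi)$ as $j = 0, 1, 2$. Each of these is absorbed back into the sum over $c'$ using \cref{cubic-GS-lemma1}(a), producing the three relations
\begin{align*}
\psi_\beta(r,s) &= \psi_{\pi\beta}(r,s) + A\,\psi_{\pi\beta}(r\pi,s), \\
\psi_\beta(r\pi,s) &= \psi_{\pi\beta}(r\pi,s) + B\,\psi_{\pi\beta}(r,s), \\
\psi_\beta(r\pi^2,s) &= (1 - N(\pi)^{2-3s})\,\psi_{\pi\beta}(r\pi^2,s),
\end{align*}
with $A = g_3(r,\pi)/N(\pi)^s$ and $B = g_3(r\pi,\pi^2)/N(\pi)^{2s}$. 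The third relation is identity (i) at $\alpha = \pi$. Evaluating $g_3(r,\pi) = \overline{\chi_\pi(r)} g_3(\pi)$ and $g_3(r\pi,\pi^2) = \chi_\pi(r) N(\pi) \overline{g_3(\pi)}$ directly from \cref{localcomp}, and using $|g_3(\pi)|^2 = N(\pi)$ from \eqref{sqrootcancel}, I obtain $AB = N(\pi)^{2-3s}$; inverting the first two relations then yields identities (iii) and (ii) at $\alpha = \pi$.

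For the inductive step, factor $\alpha = \alpha_1 \pi$ with $\pi \nmid \alpha_1 \beta r$, and apply the base case (with $\beta \mapsto \alpha_1 \beta$) to reduce $\Delta_\pi(s)\,\psi_{\alpha_1\pi\beta}(\cdot,s)$ to expressions involving $\psi_{\alpha_1\beta}(\cdot,s)$. Multiplying by $\Delta_{\alpha_1}(s)$ and invoking the inductive hypothesis—together with $\Delta_\alpha(s) = \Delta_\pi(s)\Delta_{\alpha_1}(s)$—expresses everything in terms of $\psi_\beta(\cdot,s)$. The two resulting Möbius sums over $d_1 \mid \alpha_1$ are merged into a single sum over $d \mid \alpha$ by splitting according to whether $\pi \mid d$. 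For identity (iii) the merging is immediate from $g_3(r,\pi)\,g_3(r\pi,d_1) = g_3(r,\pi d_1)$ (i.e.\ \cref{cubic-GS-lemma1}(b) with $(\pi,d_1) = 1$) together with $\mu(\pi d_1) = -\mu(d_1)$. Identities (i) and (ii) require the additional input of cubic reciprocity $\chi_\pi(d_1) = \chi_{d_1}(\pi)$, which ensures that the unwanted character factors arising from splitting $g_3(\alpha r/d_1,\pi d_1)$ via \cref{cubic-GS-lemma1}(a)--(b) cancel, leaving the clean factorization needed.

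The main obstacle is the base case: one must carefully track which power of $\chi_{c'}(\pi)$ survives as $j$ varies, and verify that in each case it recombines with $g_3(r\pi^j, c')$ to produce a Gauss sum whose first argument lies in the same orbit $\{r, r\pi, r\pi^2\}$--this is what allows the inner sum to close back into a $\psi_{\pi\beta}$-type object so that the $2 \times 2$ linear system has the stated structure. Once the cube relation \eqref{cuberel} delivers the key identity $AB = N(\pi)^{2-3s}$, the remainder of the argument is pure bookkeeping and the induction runs smoothly.
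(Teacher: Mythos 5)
Your proposal is correct and follows essentially the same route as the paper: the same three prime-level transformation relations obtained from \cref{localcomp} and \cref{cubic-GS-lemma1}, the same key cancellation $AB = N(\pi)^{2-3s}$ coming from $|g_3(r,\pi)|^2 = N(\pi)$, and the same induction that reassembles divisors $d\mid\alpha$ from $d_1\mid\alpha_1$ and $\pi d_1$. (One cosmetic slip: for $j=1$ the surviving power is $\overline{\chi_{c'}(\pi^2)}=\chi_{c'}(\pi)$ and for $j=2$ it is $\overline{\chi_{c'}(\pi^3)}=1$, i.e.\ your middle two labels are swapped, but your displayed relations and coefficients are the correct ones.)
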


\begin{proof}
    Fix $r$ and $\beta$ as above. The proof is by induction on the number of prime factors of $\alpha$, with the base case $\alpha=1$ being trivial for each item. 
    
    Observe that if $\gamma, \pi \equiv 1 \pmod{3}$ and $\pi$ is a prime with $\pi \nmid \gamma$, then for any $\rho \in \Z[\omega]$ we have
    \begin{align*}
        \psi_{\gamma} (\rho, s) = \sum_{\ell=0}^\infty \sum_{\substack{c \in \Z[\omega] \\ c\equiv 1 \pmod{3} \\ (c, \pi\gamma) = 1}} \frac{g_3(\rho, c \pi^\ell)}{N(c\pi^\ell)^s}.
    \end{align*}
    Write $\rho = \delta \pi^k$ for $\delta \in \Z[\omega]$ with $(\delta, \pi)=1$. Using \cref{localcomp}, observe that if $k=0$ then only $\ell \in\{ 0, 1\}$ contribute, while if $k=1$ only $\ell \in\{ 0, 2\}$ contribute, and if $k = 2$ only $\ell \in\{ 0, 3\}$ contribute. Furthermore we have
    \begin{align*}
        g_3(\delta, c \pi) = g_3(\delta, \pi) g_3(\delta \pi, c), 
    \end{align*}
    \begin{align*}
        g_3(\delta \pi, c \pi^2) = g_3(\delta \pi, \pi^2) g_3(\delta \pi^3, c) = \chi_\pi(\delta) N(\pi) \overline{g_3(\pi)} g_3(\delta, c) = N(\pi) \overline{g_3(\delta, \pi)} g_3(\delta, c),
    \end{align*}
    \begin{align*}
        g_3(\delta \pi^2, c \pi^3) = g_3(\delta \pi^2, \pi^3) g_3(\delta \pi^5, c) = g_3(\pi^2, \pi^3) g_3(\delta \pi^2, c) = -N(\pi)^2 g_3(\delta \pi^2, c).
    \end{align*}
    These observations lead respectively to
    \begin{align}\label{pi_0_transform}
        \psi_{\gamma}(\delta, s) = \psi_{\gamma \pi} (\delta, s) + \frac{g_3(\delta, \pi)}{N(\pi)^s} \psi_{\gamma \pi} (\delta \pi, s),
    \end{align}
    \begin{align}\label{pi_1_transform}
        \psi_{\gamma}(\delta \pi, s) = \psi_{\gamma \pi} (\delta \pi, s) + \frac{N(\pi) \overline{g_3(\delta, \pi)}}{N(\pi)^{2s}} \psi_{\gamma \pi} (\delta, s),
    \end{align}
    \begin{align}\label{pi_2_transform}
        \psi_{\gamma}(\delta \pi^2, s) = \psi_{\gamma \pi} (\delta \pi^2, s) - \frac{N(\pi)^2}{N(\pi)^{3s}} \psi_{\gamma \pi} (\delta \pi^2, s) = \Delta_\pi(s)\psi_{\gamma \pi} (\delta \pi^2, s).
    \end{align}

    The proof of item (i) follows inductively from \eqref{pi_2_transform} for $(\gamma, \delta) = (\alpha \beta, \alpha^2 r)$. For the other items we first observe that since $|g_3(\delta, \pi)|^2 = N(\pi)$, combining \eqref{pi_0_transform} and \eqref{pi_1_transform} gives
    \begin{align}\label{remove_pi_both}
        \psi_{\gamma\pi} (\delta \pi, s) \Delta_\pi(s) = \psi_\gamma(\delta\pi, s) - \frac{N(\pi) \overline{g_3(\delta, \pi)}}{N(\pi)^{2s}} \psi_\gamma(\delta, s)
    \end{align}
    and
    \begin{align}\label{remove_pi_single}
        \psi_{\gamma\pi} (\delta, s) \Delta_\pi(s) = \psi_\gamma(\delta, s) - \frac{g_3(\delta, \pi)}{N(\pi)^{s}} \psi_\gamma(\delta \pi, s).
    \end{align}

    Then the proof of items (ii) and (iii) follows from \eqref{remove_pi_both} and \eqref{remove_pi_single} for $(\gamma, \delta) = (\alpha \beta, \alpha r)$ and $(\alpha \beta, r)$, respectively, using the induction hypothesis and assembling the divisors of $\pi \alpha$ from those of the form $d$ and those of the form $d\pi$, for $d \mid \alpha$. 
    
\end{proof}

\begin{corollary}[Removal of coprimality for cube-free twists]\label{general_coprimality_cor}
     For any $s\in \C$ with $\Re(s) > \frac{3}{2}$ and $a, b, c, r \in \Z[\omega]\setminus\{0\}$ satisfying $a, b, c \equiv 1 \pmod{3}$, $\mu^2(abc)=1$, and $(abc, r) = 1$, we have
     \begin{equation*}
         \psi_{abc}(ab^2 r, s) = \Delta_{abc}(s)^{-1} \mathop{\sum \sum}_{\substack{d, e \in \Z[\omega] \\ d, e\equiv 1\pmod{3} \\ d\mid a,\ e\mid c}} \mu(de) \frac{N(d)}{N(d^2e)^{s}} \overline{g_3\Big(\frac{ab^2r}{d}, d\Big)} g_3\Big(\frac{ab^2r}{d}, e\Big) \psi\Big(\frac{ab^2er}{d}, s\Big). 
     \end{equation*}
\end{corollary}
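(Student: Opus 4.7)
The plan is to apply the three items of \cref{psi_coprimality_lemma} sequentially, one for each type of prime factor of the modulus $abc$. The key observation is that since $\mu^2(abc)=1$, the ideals $(a)$, $(b)$, $(c)$ are pairwise coprime, and the twist $ab^2r$ interacts with primes of $abc$ in exactly three ways: primes $\pi \mid b$ appear with exponent $2$ in $ab^2r$, primes $\pi \mid a$ appear with exponent $1$, and primes $\pi \mid c$ do not appear at all (since $(abc,r)=1$). These three cases correspond precisely to items (i), (ii), and (iii) of the lemma.

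First, I would apply item (i) with $\alpha = b$, $\beta = ac$, and the role of $r$ played by $ar$. The required coprimality $(b, ac \cdot ar) = 1$ follows from $\mu^2(abc)=1$ and $(abc,r)=1$. This yields
\begin{equation*}
    \psi_{abc}(ab^2r, s) = \Delta_b(s)^{-1} \psi_{ac}(ab^2 r, s).
\end{equation*}
Next, I would apply item (ii) with $\alpha = a$, $\beta = c$, and the role of $r$ played by $b^2 r$. Again $(a, c \cdot b^2 r)=1$ holds. This produces
\begin{equation*}
    \psi_{ac}(ab^2r, s) = \Delta_a(s)^{-1} \sum_{\substack{d \in \Z[\omega] \\ d \equiv 1 \pmod 3 \\ d \mid a}} \mu(d) N(d)^{1-2s} \overline{g_3(ab^2r/d,d)} \,\psi_c(ab^2r/d, s).
\end{equation*}
Finally, for each divisor $d \mid a$, I would apply item (iii) with $\alpha = c$, $\beta = 1$, and the role of $r$ played by $ab^2 r/d$. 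The coprimality $(c, ab^2r/d)=1$ follows from $(c, abr)=1$ and $d \mid a$. This gives
\begin{equation*}
    \psi_c(ab^2 r/d, s) = \Delta_c(s)^{-1} \sum_{\substack{e \in \Z[\omega] \\ e \equiv 1 \pmod 3 \\ e \mid c}} \mu(e) N(e)^{-s} g_3(ab^2r/d, e) \,\psi(ab^2 er/d, s).
\end{equation*}

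The last step is the bookkeeping: combining the three identities and verifying that the arithmetic factors match. Since $a$, $b$, $c$ are pairwise coprime and squarefree, $\Delta_{abc}(s) = \Delta_a(s) \Delta_b(s) \Delta_c(s)$ by the Euler product defining $\Delta$. Likewise, because $d \mid a$ and $e \mid c$ with $(a,c)=1$, the divisors $d$ and $e$ are coprime, so $\mu(d)\mu(e)=\mu(de)$. Finally, the exponents combine as $N(d)^{1-2s} N(e)^{-s} = N(d)/N(d^2 e)^s$, matching the claimed expression exactly.

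I do not expect any genuine obstacle here: the proof is a direct three-step reduction using the already-established \cref{psi_coprimality_lemma}, and the only care required is to track the coprimality conditions and to confirm the multiplicativity of $\Delta$ across the three coprime factors. The conceptual point worth emphasizing is the dictionary between the three items of the lemma and the three exponents $0,1,2$ appearing in the cube-free twist $ab^2r$ modulo the squarefree modulus $abc$.
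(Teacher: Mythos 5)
Your proof is correct and follows exactly the same route as the paper: apply \cref{psi_coprimality_lemma}(i) with $\alpha=b$, then (ii) with $\alpha=a$, then (iii) with $\alpha=c$, and combine using the multiplicativity of $\Delta$ over the pairwise coprime factors. The coprimality checks and the bookkeeping of the arithmetic factors are all handled correctly.
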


\begin{proof}
    By \cref{psi_coprimality_lemma} (i) we have
    \begin{equation*}
        \psi_{abc}(ab^2r, s) = \Delta_b(s)^{-1} \psi_{ac}(ab^2r, s). 
    \end{equation*}
    Then \cref{psi_coprimality_lemma} (ii) and the identity $\Delta_{ab}(s) = \Delta_{a}(s)\Delta_{b}(s)$ (since $(a, b)=1$) give
    \begin{equation*}
        \psi_{abc}(ab^2r, s) = \Delta_{ab}(s)^{-1} \sum_{\substack{d\in\Z[\omega] \\ d\equiv 1 \pmod 3 \\ d\mid a}} \mu(d) N(d)^{1-2s} \overline{g_3(ab^2r/d, d)} \psi_c(ab^2r/d, s).
    \end{equation*}
    A final application of \cref{psi_coprimality_lemma} (iii) then implies the desired result.
    
\end{proof}

For $r \in \Z[\omega]$, define
\begin{equation}\label{tau_fourier_coeff_def}
    \tau_3(r) := 
    \begin{cases}
        \overline{g_3(\lambda^2, c)} \: |\frac{d}{c} | \: 3^{\frac{n}{2}+2}  & \qquad \text{if } r = \pm \lambda^{3n-4} cd^3 \text{ for } n\geq 2, \\
        e ({-\frac{1}{9}}) \: \overline{g_3(\omega \lambda^2, c)} \: |\frac{d}{c} | \: 3^{\frac{n}{2}+2}  & \qquad \text{if } r = \pm \omega \lambda^{3n-4} cd^3 \text{ for } n\geq 2, \\
        e(\frac{1}{9}) \: \overline{g_3(\omega^2 \lambda^2, c)} \: |\frac{d}{c} | \: 3^{\frac{n}{2}+2}  & \qquad \text{if } r = \pm \omega^2 \lambda^{3n-4} cd^3 \text{ for } n\geq 2, \\
        \overline{g_3(1, c)} \: |\frac{d}{c} | \: 3^{\frac{n+5}{2}}  & \qquad \text{if } r = \pm \lambda^{3n-3} cd^3 \text{ for } n\geq 1, \\
        0  & \qquad \text{otherwise}, \\
    \end{cases}
\end{equation}
where $c, d \in \Z[\omega]$ satisfy $c \equiv d \equiv 1 \pmod{3}$ and $\mu^2(c)=1$. We remark that extending $\tau_3(r)$ to all $r \in \lambda^{-3} \mathbb{Z}[\omega]$ as in \cite[Theorem 8.1]{Pat1} would give the Fourier coefficients of the cubic theta function.

We can now collect some basic facts on the poles and convexity bound for the function $\psi$, in a straightforward generalization of \cite[Lemma 4]{HBP}, whose proof we mostly follow. 

\begin{lemma}[Poles and convexity bound] \label{psi_convexity_lemma}
    For any $0 \neq r \in \Z[\omega]$, the function $\psi(r, s)$ has meromorphic continuation in $s$ to all of $\C$. It is holomorphic for $\Re(s) > 1$, except for at most a simple pole at $s = \frac{4}{3}$, with
    \begin{equation*}
        \mathop{\mathrm{Res}}_{s=\frac{4}{3}} \psi(r, s) = \frac{c_0 \cdot \tau_3(r)}{N(r)^{1/6}}
    \end{equation*}
    for an absolute constant $c_0$ (given in \eqref{c_0_def}). Furthermore, we have the bound
    \begin{equation*}
        \psi(r, s) \ll_{\varepsilon} N(r)^{\frac{3}{4} - \frac{\Re(s)}{2} + \varepsilon} (1+|\Im(s)|)^{3 - 2\Re(s) + \varepsilon}
    \end{equation*}
    uniformly if $\frac{3}{2} \geq \Re(s) \geq 1+\varepsilon$ and $ |s-\frac{4}{3} | \geq \varepsilon$.
\end{lemma}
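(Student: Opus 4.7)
The plan is to reduce $\psi(r,s)$ to the Patterson--Kubota Dirichlet series attached to the cubic metaplectic theta function, and then invoke Patterson's foundational work \cite{Pat1}. First I would write $r = \zeta \lambda^a r_0$ with $\zeta \in \langle -\omega\rangle$, $a \geq 0$, and $r_0 \equiv 1 \pmod 3$, and further decompose $r_0 = r_1 r_2^2 r_3^3$ with $r_1, r_2 \equiv 1 \pmod 3$, $\mu^2(r_1 r_2)=1$, and $r_3 \equiv 1 \pmod 3$ supported on primes dividing $r_1 r_2$. An argument analogous to \cref{general_coprimality_cor} (which handled only the cube-free case, with $(abc,r)=1$), based on the multiplicative properties in \cref{cubic-GS-lemma1}, then expresses $\psi(r,s)$ as a finite linear combination of series $\psi(r',s)$ with $r' = \lambda^{a'} r_1 r_2^2$ essentially squarefree away from $\lambda$, modulated by a benign Euler product supported at primes dividing $r$.

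For such $r'$ the Dirichlet series $\psi(r',s)$ is essentially a Fourier coefficient of a cubic metaplectic Eisenstein series on the $3$-fold cover of $\operatorname{GL}_2$ over $\mathbb{Q}(\omega)$. Patterson's results then provide meromorphic continuation to all of $\mathbb{C}$ and a functional equation relating $s$ to $2-s$, with appropriate gamma factors and a factor of essentially $N(r')^{1-s}$. In the region $\Re(s)>1$ the only pole is a simple one at $s=4/3$, whose residue Patterson \cite[Prop.~8.1]{Pat1} identifies, up to normalisation, with the $r'$-th Fourier coefficient of the cubic theta function. Unfolding the reductions above then yields the claimed formula $c_0 \tau_3(r)/N(r)^{1/6}$; the four cases in \eqref{tau_fourier_coeff_def} reflect the dependence on $a \bmod 3$ and on the unit $\zeta$ (producing the curious constants $e(\pm 1/9)$ and the shifted Gauss sums $g_3(\omega^j\lambda^2,c)$), while the factor $|d/c|$ comes from the cube component $r_3^3$.

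The convexity bound follows from Phragmén--Lindel\"of applied to the entire function $(s-\tfrac{4}{3})\psi(r,s)$ in the strip $\tfrac{1}{2}-\varepsilon \leq \Re(s) \leq \tfrac{3}{2}+\varepsilon$. On the line $\Re(s)=\tfrac{3}{2}+\varepsilon$ the series converges absolutely and $\psi(r,s)\ll_\varepsilon 1$ uniformly in $r$, using \eqref{sqrootcancel} and the vanishing of $g_3(r,c)$ outside essentially squarefree $c$. On the line $\Re(s)=\tfrac{1}{2}-\varepsilon$, the functional equation combined with Stirling's estimate \eqref{gammabd} on the gamma ratios yields a bound of the shape $\psi(r,s)\ll_\varepsilon N(r)^{1/2+\varepsilon}(1+|\Im s|)^{2+\varepsilon}$. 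Linear interpolation between these two endpoints produces precisely the claimed estimate in the range $|s-\tfrac{4}{3}|\geq \varepsilon$.

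The main obstacle will be the first step: executing the reduction from arbitrary $r$ to Patterson's standard case while cleanly tracking units, powers of $\lambda$, and local factors at primes dividing $r$. In particular, matching the specific shifted Gauss sums $g_3(\omega^j\lambda^2,c)$ and the roots of unity $e(\pm 1/9)$ appearing in \eqref{tau_fourier_coeff_def} requires careful application of the supplementary laws \eqref{cubesupp} and of \cref{cubic-GS-lemma1}, especially when $\lambda$ divides $r$ to higher powers; this bookkeeping is exactly what produces the four cases in the definition of $\tau_3(r)$.
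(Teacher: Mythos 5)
Your strategy for the convexity bound (absolute convergence for $\Re(s)>\tfrac32$ uniformly in $r$, the functional equation plus Stirling on the reflected line, and Phragm\'en--Lindel\"of interpolation between the two) is the same as the paper's, and your endpoint exponents do interpolate to the stated bound. However, there is a genuine gap in the Phragm\'en--Lindel\"of step as written: you apply it to $(s-\tfrac43)\psi(r,s)$ in the strip $\tfrac12-\varepsilon\le\Re(s)\le\tfrac32+\varepsilon$, but $\psi(r,s)$ is not known to be holomorphic there. The object with clean analytic continuation is the completed series $F(r,s)=G(s)\psi(r,s)\zeta_\lambda(3s-2)$, so the continuation of $\psi$ is $F(r,s)/(G(s)\zeta_\lambda(3s-2))$ and may acquire poles at the zeros of $\zeta_\lambda(3s-2)$, which lie in $\tfrac23<\Re(s)<1$ (one must also check $s=\tfrac23$, where $G$ has a pole); this is exactly why the lemma asserts holomorphy only for $\Re(s)>1$. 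The paper therefore applies Phragm\'en--Lindel\"of to $s^{-1}(s-\tfrac43)\psi(r,s)\zeta_\lambda(3s-2)$, which \emph{is} holomorphic in the strip, and divides out $\zeta_\lambda(3s-2)$ only at the end, using $\zeta_\lambda(3s-2)\gg_\varepsilon 1$ for $\Re(s)\ge 1+\varepsilon$. You need to carry the $\zeta_\lambda(3s-2)$ factor through in the same way.

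On the first part, the reduction of a general twist $r$ to a ``standard'' squarefree case, which you flag as the main obstacle, is unnecessary. Patterson's Theorem~6.1 (continuation and functional equation) and Theorem~9.1 (residue) apply to arbitrary $0\ne r$, and the four cases in \eqref{tau_fourier_coeff_def} are precisely Patterson's explicit Fourier coefficients of the cubic theta function from \cite[Lemma~8.1]{Pat1} restricted to $r\in\Z[\omega]$; the paper simply quotes them together with the corrected value of $c_0$ from \cite{Pat5,Pat6}. Re-deriving $\tau_3$ by unfolding identities of the type in \cref{psi_coprimality_lemma} would amount to reproving Patterson's Lemma~8.1, and note that those identities as stated insert or remove coprimality conditions on the modulus rather than reduce prime powers in the twist, so the bookkeeping would be considerably more delicate than your sketch suggests.
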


\begin{remark}
Note that the convexity bound for $\psi(r, s)$ established in \cite[Lemma 4]{HBP} is only valid for $r \in \Z[\omega]$ such that 
$r \equiv 1 \pmod{3}$. Additional technicalities arise when this congruence condition is relaxed. 
\end{remark}

\begin{proof}
    Let 
    \begin{equation*}
        G(s) := (2\pi)^{-2s} \Gamma\Big(s-\frac{1}{3}\Big) \Gamma\Big(s-\frac{2}{3}\Big)
    \end{equation*}
    and 
    \begin{equation*}
        F(r, s) := G(s) \psi(r, s) \zeta_{\lambda}(3s-2).
    \end{equation*}
    
    The meromorphic continuation and pole structure of $\psi(r, s)$ follows from the corresponding result for $F(r, s)$ given in \cite[Theorem 6.1]{Pat1}, and the fact that
    $G(s) \zeta_{\lambda}(3s-2)$ is both holomorphic and zero-free in the half-plane $\Re(s)>1$. Note that
    \begin{equation}\label{zeta_lambda_identity}
    \zeta_{\lambda}(s) = (1-3^{-s}) \cdot \zeta_{\Q(\omega)}(s),
    \end{equation}
    where $\zeta_{\Q(\omega)}(s)$ denotes the Dedekind zeta function of $\Q(\omega)$.
    
     The evaluation of $\mathop{\mathrm{Res}}_{s=\frac{4}{3}} \psi(r, s)$ is derived from \cite[Theorem 9.1]{Pat1}, which combined with \cite{Pat5} (see also \cite{Pat6}) in fact gives
    \begin{equation}\label{c_0_def}
        c_0 = \Big(2 \cdot 3^{13/2} \cdot \pi \cdot G(4/3)\cdot \zeta_{\lambda}(2)\Big)^{-1} = \frac{(2\pi)^{5/3}}{8\cdot 3^{9/2} \cdot \Gamma(2/3) \cdot \zeta_{\Q(\omega)}(2)}.
    \end{equation}

    We are left with proving the desired (convexity) bound for $\psi(r, s)$. Fix $2>\sigma_0 > \frac{3}{2}$ and observe that if $\Re(s)\geq \sigma_0$ then by absolute convergence and multiplicativity of $|g_3(r, c)|$ in $c$ we have the uniform bounds $\zeta_{\lambda}(3s-2) \ll 1$ and
    \begin{align*}
        |\psi(r, s)| \leq \prod_{\substack{\pi \text{ prime} \\ \pi\equiv 1 \pmod{3}}} \sum_{k=0}^\infty \frac{|g_3(\pi^{\nu_\pi(r)}, \pi^k)|}{N(\pi)^{k\sigma_0}} \leq \prod_{\substack{\pi \text{ prime} \\ \pi\equiv 1 \pmod{3}}} \Big(1 + \frac{N(\pi)^{1/2}}{N(\pi)^{\sigma_0}} + \sum_{k=2}^\infty \frac{N(\pi)^k}{N(\pi)^{k\sigma_0}} \Big) \ll_{\sigma_0} 1.
    \end{align*}
    
    By \cite[Theorem 6.1]{Pat1} there is a functional equation 
    \begin{align}\label{F_functional_eq}
        F(r, s) =  N(r)^{1-s} \Big[A(s)\cdot F_\infty(r, 2-s) + B(s) \cdot F(r, 2-s) \Big],
    \end{align}
    where 
    \begin{align*}
        A(s) := \frac{3^{8-9s} (1-3^{3-3s})}{1-3^{3s-4}}, \qquad  \qquad B(s) := \frac{2\cdot 3^{8-9s}}{1-3^{3s-4}},
    \end{align*}
    and
    \begin{equation}\label{F_infty_def}
        F_\infty(r, s) := \sum_{\nu^6 = 1} \sum_{b=2}^{\nu_{\lambda}(r) + 5} \Gamma(r, \nu \lambda^b) 3^{-bs} F(\nu \lambda^b r, s)
    \end{equation}
    for certain coefficients given in \cite[Proposition 5.1]{Pat1} which satisfy $|\Gamma(r, \nu \lambda^b)| \leq 3^b$. Together with Stirling's formula, this shows that if $\Re(s) = 2 - \sigma_0 < \frac{1}{2}$ then
    \begin{equation*}
        \psi(r, s) \zeta_{\lambda}(3s-2) \ll_{\sigma_0} N(r)^{\sigma_0 - 1} \Big|\frac{G(2-s)}{G(s)}\Big| \ll N(r)^{\sigma_0 - 1} (1+|\Im(s)|)^{4(\sigma_0 - 1)}.
    \end{equation*}

    We can now apply the Phragm{\'e}n-Lindel{\"o}f principle to the function
    \begin{equation*}
        s^{-1} (s-4/3 ) \psi(r, s) \zeta_{\lambda}(3s-2),
    \end{equation*}
    which is holomorphic for $2-\sigma_0 \leq \Re(s) \leq \sigma_0$, due once again to \cite[Proposition 6.1]{Pat1}, where it is clear from the proof that the pole of $F(r, s)$ at $s=\frac{2}{3}$ is at most simple, so can only come from $G(s)$. In conclusion,
    \begin{equation}\label{Z_convexity_implicit}
        \psi(r, s) \zeta_{\lambda}(3s-2) \ll_{\sigma_0} \frac{|s|}{|s-4/3 |} N(r)^{\frac{\sigma_0 - \Re(s)}{2}} (1+|\Im(s)|)^{2(\sigma_0 - \Re(s))}
    \end{equation}
    for $2-\sigma_0 \leq \Re(s) \leq \sigma_0$. Taking $\sigma_0 = \frac{3}{2}+\frac{\varepsilon}{2}$ and observing that $\zeta_{\lambda}(3s-2) \gg_\varepsilon 1$ for $\Re(s) \geq 1+\varepsilon$ gives the desired result.
    
\end{proof}

The convexity bound of \cref{psi_convexity_lemma} can be significantly refined when averaging over $r$. One can obtain a Lindel{\"o}f-type bound on average for the second moment in the twist aspect. Before proving that, we give a technical manipulation which is useful for bounding sums of twisted cubic Gauss sums.

\begin{lemma}\label{technical_lemma_split_gauss_sums}
    Let $H :\R \to \R$ be a Schwartz function and $k \in \Z[\omega]\setminus\{0\}$. For any $t\in \R$, $\sigma \geq 1$, and $C>0$ we have
    \begin{align}
        &\Big|\sum_{\substack{c \in \Z[\omega] \\ c \equiv 1 \pmod{3}}} \frac{g_3(k, c)}{N(c)^{\sigma+it}} H\Big(\frac{N(c)}{C}\Big) \Big|^2 \label{cubic_gauss_sum_technical_lemma_eq} \\
        &\ll_\varepsilon N(k)^\varepsilon \sum_{\substack{w \in \Z[\omega] \\ w \equiv 1 \pmod{3} \\ w \mid k^2}} \frac{1}{N(w)^{2\sigma-2}} 
        \Big|\sum_{\substack{n \in \Z[\omega] \\ n \equiv 1  \pmod{3}}} \mu^2(n) \frac{\overline{\chi_n(kw)} g_3(n)}{N(n)^{\sigma+it}} H\Big(\frac{N(wn)}{C}\Big) \Big|^2. &\nonumber
    \end{align}
\end{lemma}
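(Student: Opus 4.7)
The strategy is to factor each summation variable $c \equiv 1 \pmod{3}$ as $c = wn$, where $w$ gathers the prime factors of $c$ that divide $k$ and $n$ gathers the prime factors of $c$ that are coprime to $k$. Since $c \equiv 1 \pmod{3}$ has no $\lambda$-component, each prime ideal dividing $c$ admits a canonical generator $\equiv 1 \pmod{3}$, so one can arrange $w, n \equiv 1 \pmod{3}$ with $(w, n) = 1$ and $(n, k) = 1$. Applying \cref{cubic-GS-lemma1}(b) and then \cref{cubic-GS-lemma1}(a) (valid since $(kw, n) = 1$), the key identity
\[
    g_3(k, c) = g_3(k, w)\, g_3(kw, n) = g_3(k, w)\, \overline{\chi_n(kw)}\, g_3(n)
\]
produces precisely the structure of the inner $n$-sum on the right-hand side of \eqref{cubic_gauss_sum_technical_lemma_eq}.

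The next step is to read off support restrictions from \cref{localcomp}. For each prime $\pi \mid k$, the factor $g_3(\pi^{v_\pi(k)}, \pi^{v_\pi(w)})$ vanishes unless $v_\pi(w) \leq v_\pi(k) + 1$, so $w \mid k^2$ as ideals; likewise $g_3(n) \neq 0$ forces $n$ to be squarefree. Moreover $\chi_n(kw) = 0$ whenever $(n, k) > 1$, because the primes of $w$ already lie above $k$; this allows me to drop the coprimality condition $(n, k) = 1$ in the inner sum without changing its value, thereby matching the $n$-sum on the right-hand side of the lemma verbatim. Denoting that inner sum by $B_w$, I obtain the clean decomposition
\[
    \sum_{\substack{c \in \Z[\omega] \\ c \equiv 1 \pmod{3}}} \frac{g_3(k, c)}{N(c)^{\sigma + it}} H\!\left(\frac{N(c)}{C}\right) = \sum_{\substack{w \equiv 1 \pmod{3} \\ w \mid k^2}} \frac{g_3(k, w)}{N(w)^{\sigma + it}} B_w.
\]

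The final step is Cauchy--Schwarz on the outer sum over $w$, weighted to produce exactly the two factors from \eqref{cubic_gauss_sum_technical_lemma_eq}: writing the summand as $\bigl(g_3(k,w)/N(w)\bigr)\cdot\bigl(N(w)^{1-\sigma-it}\, B_w\bigr)$ and applying Cauchy--Schwarz yields
\[
    \bigg|\sum_{c} \frac{g_3(k,c)}{N(c)^{\sigma+it}} H\!\left(\frac{N(c)}{C}\right)\bigg|^2 \leq \bigg(\sum_{\substack{w \equiv 1 \pmod{3} \\ w \mid k^2}} \frac{|g_3(k,w)|^2}{N(w)^2}\bigg) \bigg(\sum_{\substack{w \equiv 1 \pmod{3} \\ w \mid k^2}} \frac{|B_w|^2}{N(w)^{2\sigma - 2}}\bigg).
\]
The trivial bound $|g_3(k, w)| \leq \varphi(w) \leq N(w)$ and the divisor bound $d(k^2) \ll_\varepsilon N(k)^\varepsilon$ reduce the first factor to $\ll_\varepsilon N(k)^\varepsilon$, while the second factor is exactly the expression on the right-hand side of \eqref{cubic_gauss_sum_technical_lemma_eq}. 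No genuine obstacle is anticipated; the only subtle points are fixing canonical $\equiv 1 \pmod{3}$ representatives in the factorization $c = wn$ and absorbing the $\lambda$-part of $k$, which contributes only a unimodular factor to $g_3(k, c)$ via \cref{cubic-GS-lemma1}(a) and hence does not affect any of the estimates.
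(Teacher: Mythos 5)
Your proposal is correct and follows essentially the same route as the paper's proof: the unique factorization $c = wn$ with $w \mid k^\infty$ (hence $w \mid k^2$ by \cref{localcomp}) and $(n,k)=1$, the twisted multiplicativity $g_3(k,wn) = g_3(k,w)\,\overline{\chi_n(kw)}\,g_3(n)$, the same weighted Cauchy--Schwarz in $w$, and the bounds $|g_3(k,w)| \leq N(w)$ together with the divisor bound $d(k^2) \ll_\varepsilon N(k)^\varepsilon$. The remarks about dropping $(n,k)=1$ via $\overline{\chi_n(k)}$ and about $g_3(n)$ forcing $n$ squarefree also match the paper.
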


\begin{proof}
Writing (uniquely) $c = wn$ for $w, n\equiv 1\pmod{3}$ with $w \mid k^\infty$ and $(n, k)=1$, observe that 
\begin{equation*}
    g_3(k, wn) = g_3(k, w) \overline{\chi_n(w)} g_3(k, n) =  g_3(k, w) \overline{\chi_n(kw)} g_3(n).
\end{equation*}
Notice that $w \mid k^\infty$ can be restricted to $w \mid k^2$, otherwise $g_3(k, w)=0$ by \cref{localcomp}.

 Thus the quantity in \eqref{cubic_gauss_sum_technical_lemma_eq} is equal to
\begin{align*}
    &\Big|\sum_{\substack{w \in \Z[\omega] \\ w \equiv 1 \pmod{3} \\ w\mid k^\infty}} \sum_{\substack{n \in \Z[\omega] \\ n \equiv 1  \pmod{3} \\ (n, k) = 1}} \frac{g_3(k, wn)}{N(wn)^{\sigma+it}} H\Big(\frac{N(wn)}{C}\Big) \Big|^2 \leq \Big(\sum_{\substack{w \in \Z[\omega] \\ w \equiv 1 \pmod{3} \\ w\mid k^2}} \frac{|g_3(k, w)|^2}{N(w)^2} \Big) \\
    &\qquad \times \Big(\sum_{\substack{w \in \Z[\omega] \\ w \equiv 1 \pmod{3} \\ w\mid k^2}} \frac{1}{N(w)^{2\sigma-2}} \Big|\sum_{\substack{n \in \Z[\omega] \\ n \equiv 1  \pmod{3}}} \frac{\overline{\chi_n(kw)} g_3(n)}{N(n)^{\sigma+it}} H\Big(\frac{N(wn)}{C}\Big) \Big|^2\Big), 
\end{align*}
where we removed the condition $(n, k)=1$ since it is enforced by $\overline{\chi_n(k)}$. Observe that $|g_3(k, w)| \leq N(w)$, so the first sum over $w$ is $\leq d(k^2) \ll N(k)^\varepsilon$. The result follows after observing that the term $g_3(n)$ forces $n$ to be squarefree.

\end{proof}

We are finally ready to prove the main estimate for this section.

\begin{lemma}[Lindel{\"o}f on average for second moment]\label{second_moment_lindelof_lemma}
    For any $0 \neq h \in \Z[\omega]$, $t\in \R$, $1 < \sigma \leq \frac{5}{4}$, and $M \geq 1$, we have the bound
    \begin{equation*}
        \sum_{\substack{0\neq m \in \Z[\omega] \\ N(m) \leq M}} |\psi(hm, \sigma + it)|^2 \ll_{\sigma, \varepsilon} N(h)^{\frac{1}{2}} M^{1+\varepsilon} (1+|t|)^{2}.
    \end{equation*}
\end{lemma}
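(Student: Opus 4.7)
The plan is to combine a $\operatorname{GL}_1$ approximate functional equation for $\psi(hm,\sigma+it)$ in the $r$-aspect with the cubic large sieve. Since the analytic conductor of $\psi(r,s)$ at $s=\sigma+it$ is of size $N(r)(1+|t|)^2$, starting from the functional equation \eqref{F_functional_eq} and unfolding via Mellin inversion against an entire weight of rapid decay, one obtains two smoothed Dirichlet polynomials in $c$ of effective length $X\asymp N(hm)^{1/2}(1+|t|)$, with a dual-term prefactor $\asymp N(hm)^{1-\sigma}$. A potential residue contribution at $s=4/3$ is avoided since $\sigma\leq 5/4<4/3$, and in any case can be absorbed using the pointwise convexity of \cref{psi_convexity_lemma}.

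To each Dirichlet polynomial I would apply \cref{technical_lemma_split_gauss_sums} with $k=hm$, bounding $|\psi(hm,\sigma+it)|^2$ (up to the analogous dual-term adjustment) by
\begin{equation*}
N(hm)^\varepsilon\sum_{\substack{w\equiv 1\pmod 3\\ w\mid(hm)^2}}\frac{1}{N(w)^{2\sigma-2}}\Big|\sum_{\substack{n\equiv 1\pmod 3}}\mu^2(n)\frac{\overline{\chi_n(hmw)}\,g_3(n)}{N(n)^{\sigma+it}}V\!\Big(\frac{N(wn)}{X}\Big)\Big|^2.
\end{equation*}
Cubic reciprocity $\overline{\chi_n(m)}=\overline{\chi_m(n)}$ (valid since $m,n\equiv 1\pmod 3$) exposes the structure $\sum_n a_n\overline{\chi_m(n)}$ with $|a_n|\ll N(n)^{1/2-\sigma}$ on squarefree $n$, which is precisely the shape to which the cubic large sieve applies. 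Crucially, the weight $N(w)^{2-2\sigma}$ is summable in $w$ because $\sigma>1$, so the decisive range is $N(w)=O(1)$.

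Summing in $m$ with $N(m)\leq M$---after a Mellin decoupling to remove the mild $m$-dependence of $X$ in $V(\cdot)$, and splitting $w=w_1w_2$ with $w_1\mid h^\infty$ and $w_2\mid m^\infty$ to handle the divisibility constraint---for each dyadic range $N(n)\sim N'$ with $N'W\asymp X$, \cref{HBcubicdual} yields
\begin{equation*}
\sum_{N(m)\leq M}\Big|\sum_{N(n)\sim N'}a_n\overline{\chi_m(n)}\Big|^2\ll_\varepsilon(MN')^\varepsilon\big(M+N'+(MN')^{2/3}\big)(N')^{2-2\sigma}.
\end{equation*}
With $N'\asymp N(h)^{1/2}M^{1/2}(1+|t|)$ at the dominant $W=O(1)$ range, and $\sigma\geq 1$, each of the three resulting terms is dominated by $N(h)^{1/2}M^{1+\varepsilon}(1+|t|)^2$, and the dual contribution is handled identically once its prefactor $N(hm)^{1-\sigma}$ is folded in. The main technical obstacle is not the large sieve itself---whose shape is essentially forced once the cubic character has been isolated---but rather the bookkeeping surrounding the constraint $w\mid(hm)^2$ and the ramified-prime/unit factors needed to reduce to the standardization $m\equiv 1\pmod 3$. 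The essential saving of $M^{1/2}$ over the pointwise convexity bound of \cref{psi_convexity_lemma} comes cleanly from the cubic large sieve.
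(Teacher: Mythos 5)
Your treatment of the main Dirichlet polynomial (applying \cref{technical_lemma_split_gauss_sums} with $k=hm$, isolating the squarefree variable $n$ via reciprocity, summing the $w$-weights using $\sigma>1$, and then invoking the cubic large sieve) coincides with the paper's treatment of its term $S_2$ almost verbatim, and the exponent count there is right. However, there is one step that fails as written, and your handling of the dual side takes a genuinely different route from the paper's.

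The gap is the polar term. The residue at $\frac43$ is \emph{not} avoided by $\sigma\leq\frac54<\frac43$: in unfolding $\sum_n a_n(hm)n^{-s}$ against a Mellin weight and shifting left, $\Re(s+w)$ passes from $\sigma+2$ down through $\frac43$, so the pole of $\psi(hm,\cdot)$ is crossed and contributes $\asymp X^{4/3-\sigma}|\tau_3(hm)|N(hm)^{-1/6}$ to each term. When nonzero this quantity is $\gg X^{4/3-\sigma}$ up to constants, so any pointwise bound (and in particular the convexity bound of \cref{psi_convexity_lemma}, which in any case excludes a neighbourhood of $\frac43$ and does not control a residue) yields a contribution $\gg M X^{8/3-2\sigma}$ to the second moment; with $X\asymp (N(h)M)^{1/2}(1+|t|)$ and $\sigma$ near $1$ this is $\asymp N(h)^{1/3}M^{4/3}(1+|t|)^{2/3}$, exceeding the target $N(h)^{1/2}M^{1+\varepsilon}(1+|t|)^2$ in the $M$-aspect. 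The saving to $M^{2/3}$ requires the support condition in \eqref{tau_fourier_coeff_def} — $\tau_3(hm)=0$ unless $hm=\pm\nu\lambda^{j}cd^3$ with $c$ squarefree — together with a gcd-sum computation in $h$; this input from Patterson's explicit evaluation is indispensable and missing from your plan. A smaller slip: the displayed large sieve bound $M+N'+(MN')^{2/3}$ is the symmetric form for two squarefree variables, whereas your outer variable $m$ is unrestricted, so the correct bound (from \eqref{B2bound}, the dual form behind \cref{HBcubicdual}) is $M+M^{1/3}N'+(MN')^{2/3}$; the extra $M^{1/3}$ still fits inside the target, so this is harmless but should be stated correctly.

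On the dual term, your plan (derive a balanced approximate functional equation and large-sieve the dual polynomial of length $\asymp N(hm)^{1/2}(1+|t|)$ with prefactor $N(hm)^{1-\sigma}$) is numerically consistent with the target bound, but the AFE itself is nontrivial to derive here: $\psi(r,w)$ converges absolutely only for $\Re(w)>\frac32$, the functional equation \eqref{F_functional_eq} involves the finite combination $F_\infty$ of twists by units and powers of $\lambda$, and the dual side carries its own pole at $\frac43$. The paper deliberately avoids deriving an AFE. It bounds the dual integral, via the functional equation, by $\big(\tfrac{N(h)M(1+|t|)^4}{X^2}\big)^{2(\sigma-1)}$ times the \emph{same} second moment at reflected arguments, then takes the supremum of $\mathcal{Z}_M(h,\sigma+iy)(1+|y|)^{-2}$ over $y$, evaluates at the extremal point, and chooses $X\asymp (N(h)M)^{1/2}(1+|\widetilde{t}|)^2$ so the dual contribution is absorbed into the left-hand side. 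This absorption argument is where the factor $(1+|t|)^2$ in the statement originates, and it replaces the entire AFE machinery you are proposing. Either route can be made to work, but yours requires writing out the AFE (including the $F_\infty$ terms and the dual polar term), and both require the $\tau_3$ support argument for the residue.
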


\begin{proof}
           We initially follow the proof of \cite[Lemma 3]{HB}. For $0 \neq r \in \Z[\omega]$ and $\Re(s) > \frac{3}{2}$, denote
    \begin{equation} \label{convolution}
        Z(r, s) := \zeta_{\lambda}(3s-2) \psi(r, s) = \sum_{\substack{c, d \in \Z[\omega] \\ c,d \equiv 1 \pmod 3}}  \frac{g_3(r, c) N(d)^2}{N(cd^3)^s} =: \sum_{n=1}^\infty a_n(r) n^{-s}, 
    \end{equation}
    which by the discussion in \cref{psi_convexity_lemma} is holomorphic apart from at most a simple pole at $s=\frac{4}{3}$. Since $Z(\lambda^3r, s) = Z(r, s)$, observe that the functional equation \eqref{F_functional_eq} allows us to write
    \begin{equation*}
        F(r, s) = N(r)^{1-s} \sum_{\eta \mid \lambda^2}  c_\eta(r, 2-s) F(\eta r, 2-s),
    \end{equation*} 
    where by \eqref{F_infty_def} the coefficients satisfy
    \begin{equation*}
        c_\eta(r, 2-s) \ll \sum_{b=0}^{\infty} 3^{b(\Re(s)-1)} \ll_{\Re(s)} 1
    \end{equation*}
    for $\frac{3}{4} \leq \Re(s) < 1$. Setting
    \begin{equation*}
        \widetilde{Z}(r, s) := \sum_{\eta \mid \lambda^2} |Z(\eta r, s)|^2, 
    \end{equation*}
    we conclude (using Stirling's formula) that if $\frac{3}{4} \leq \Re(s) < 1$ then
    \begin{align}
        \widetilde{Z}(r, s) &\ll_{\Re(s)} N(r)^{2-2\Re(s)} \Big|\frac{G(2-s)}{G(s)}\Big|^2 \widetilde{Z}(r, 2-s) \nonumber \\
        &\ll N(r)^{2-2\Re(s)} (1+|\Im(s)|)^{8-8\Re(s)} \widetilde{Z}(r, 2-s) .\label{func_eq_bound_Z_tilde}
    \end{align}
    
    For $X \geq 1$ and $s = \sigma+it$ (where we recall that $1 < \sigma \leq \frac{5}{4}$), observe that
    \begin{equation*}
        \sum_{n=1}^\infty a_n(r) n^{-s} e^{-n/X} = \frac{1}{2\pi i} \int_{2-i\infty}^{2+i\infty} Z(r, s+w) X^w \Gamma(w) dw.
    \end{equation*}
    Moving the line of integration to $\Re(w) = 2-2\sigma$, we pick up a simple pole at $w=0$ with residue $Z(r, s)$, and possibly a simple pole at $w = \frac{4}{3}-s$ with residue
    \begin{equation*}
        c_0 \cdot \zeta_{\lambda}(2)\frac{\tau_3(r)}{N(r)^{1/6}} X^{4/3-s}\Gamma(4/3-s) \ll X^{4/3-\sigma} e^{-|t|} \frac{|\tau_3(r)|}{N(r)^{1/6}}.
    \end{equation*}
    
    Taking $r = \eta hm$ for each $\eta\mid \lambda^2$ gives
    \begin{equation*}
        \widetilde{Z}(hm, s) \ll T_1 + T_2 + T_3
    \end{equation*}
    for
    \begin{align*}
        T_1 := X^{8/3-2\sigma} e^{-2|t|} \sum_{\eta\mid \lambda^2} \frac{|\tau_3(\eta hm)|^2}{N(\eta hm)^{1/3}},  \qquad T_2 := \sum_{\eta\mid \lambda^2} \Big|\sum_{n=1}^\infty a_n(\eta h m) n^{-s} e^{-n/X}\Big|^2, 
    \end{align*}
    and
    \begin{align*}
        T_3 := \sum_{\eta\mid \lambda^2}\Big|\int_{2-2\sigma -i\infty}^{2-2\sigma +i\infty} Z(\eta hm, s+w) X^w \Gamma(w) dw \Big|^2.
    \end{align*}
    Using \eqref{func_eq_bound_Z_tilde} and the decay of the Gamma function, we conclude that
    \begin{align}
        T_3 &\ll X^{4-4\sigma}\int_{2-2\sigma -i\infty}^{2-2\sigma +i\infty} \widetilde{Z}(hm, s+w) |\Gamma(w)| |dw| \nonumber \\
        &\ll_\sigma X^{4-4\sigma} N(hm)^{2\sigma-2} \int_{-\infty}^{\infty} (1+|t+y|)^{8\sigma-8} \widetilde{Z}(hm, \sigma - it-iy) e^{-|y|} dy \nonumber \\
        & \ll  \Big(\frac{N(h)M(1+|t|)^4}{X^2}\Big)^{2(\sigma-1)} \int_{-\infty}^{\infty} \widetilde{Z}(hm, \sigma - it-iy) e^{-|y|/2} dy.\label{T3_first_step}
    \end{align}

    Now we introduce the sum over $m$, so
    \begin{equation}\label{basic_Z_S_triple_bound}
        \sum_{\substack{0\neq m \in \Z[\omega] \\ N(m) \leq M}} \widetilde{Z}(hm, s) \ll S_1 + S_2 + S_2 \qquad\text{for} \qquad S_i := \sum_{\substack{0\neq m \in \Z[\omega] \\ N(m) \leq M}} T_i.
    \end{equation}
    
    Let us first treat $S_1$. For $r \in \Z[\omega]$, observe from \eqref{tau_fourier_coeff_def} that $\tau_3(r) = 0$ unless $r = \nu \lambda^\ell cd^3$ for some $\nu^6=1$, $\ell\in\Z_{\geq 0}$, and $c\equiv d \equiv 1 \pmod{3}$ with $c$ squarefree, in which case $|\tau_3(r)|^2 N(r)^{-1/3} \ll N(c)^{-1/3}$. Thus writing $h = \xi \lambda^b h_1$ for $\xi^6=1$, $b\in\Z_{\geq 0}$, and $h_1 \equiv 1\pmod{3}$, we have
    \begin{align*}
        S_1 &\ll X^{8/3-2\sigma} \sum_{\substack{0\neq m \in \Z[\omega] \\ N(m) \leq 9 M}} \frac{|\tau_3(hm)|^2}{N(hm)^{1/3}} \ll X^{8/3-2\sigma} \sum_{\ell=b}^\infty \sum_{\substack{d\in \Z[\omega]\\ d\equiv 1\pmod{3}}} \sum_{\substack{c\in \Z[\omega] \\ c\equiv 1 \pmod{3} \\ N(c) \leq \frac{9 N(h) M }{3^\ell N(d)^3} \\ h_1 \mid c d^3}} \frac{1}{N(c)^{1/3}} \\
        & \ll X^{8/3-2\sigma} \sum_{\ell=b}^\infty \sum_{\substack{d\in \Z[\omega]\\ d\equiv 1\pmod{3}}} \frac{N((h_1, d^3))}{N(h_1)}\Big(\frac{N(h_1)M}{3^{\ell-b} N(d)^3}\Big)^{2/3} \\
        &\ll \frac{X^{8/3-2\sigma} M^{2/3}}{N(h_1)^{1/3}} \sum_{\substack{d\in \Z[\omega]\\ d\equiv 1\pmod{3}}} \frac{N((h_1, d^3))}{N(d)^2}.
    \end{align*}
    Note that
    \begin{align*}
        \sum_{\substack{d\in \Z[\omega]\\ d\equiv 1\pmod{3}}} \frac{N((h_1, d^3))}{N(d)^2} &\leq \prod_{\substack{\pi \text{ prime} \\ \pi \equiv 1 \pmod 3 \\ \pi \mid h_1}}  \Big( \sum_{0 \leq 3 \ell < \nu_\pi(h_1)} {N(\pi)^\ell} + \sum_{3 \ell \geq \nu_\pi(h_1)} N(\pi)^{\nu_\pi(h_1)-2 \ell} \Big) \\
        &\leq  \prod_{\substack{\pi \text{ prime} \\ \pi \equiv 1 \pmod 3 \\ \pi \mid h_1}}  \Big(4 N(\pi)^{\frac{\nu_\pi(h_1)}{3}}\Big)  \ll_\varepsilon N(h_1)^{\frac{1}{3}+\varepsilon},
    \end{align*}
    so we obtain
    \begin{align}
        S_1 \ll_\varepsilon N(h)^\varepsilon M^{2/3} X^{8/3-2\sigma}. \label{S1_final_bound}
    \end{align}
    
    Next we consider $S_2$. From \eqref{convolution} and Cauchy--Schwarz (as $\sigma > 1$) we have
    \begin{align*}
        S_2 &\ll \sum_{\substack{0\neq m \in \Z[\omega] \\ N(m) \leq 9 M}} \Big| \sum_{\substack{c, d \in \Z[\omega] \\ c, d\equiv 1\pmod{3}}} \frac{g_3(hm, c) N(d)^2}{N(cd^3)^{s}} e^{-N(cd^3)/X}\Big|^2 \\
        &\ll_\sigma \sum_{\substack{d \in \Z[\omega] \\ d\equiv 1\pmod{3}}}\frac{1}{N(d)^\sigma} \sum_{\substack{0\neq m \in \Z[\omega] \\ N(m) \leq 9 M}} \Big| \sum_{\substack{c \in \Z[\omega] \\ c\equiv 1\pmod{3}}} \frac{g_3(hm, c) }{N(c)^{s}} e^{-N(cd^3)/X}\Big|^2
    \end{align*}
    Applying \cref{technical_lemma_split_gauss_sums} with $k=hm$ and $C=X/N(d^3)$ gives
    \begin{align*}
        S_2 \ll_{\sigma, \varepsilon} & \ (N(h)M)^\varepsilon \sum_{\substack{d, w \in \Z[\omega] \\ d, w\equiv 1\pmod{3}}}\frac{1}{N(d)^\sigma N(w)^{2\sigma-2}} \\
        &\times\sum_{\substack{0\neq m \in \Z[\omega] \\ N(m) \leq 9 M \\ w \mid (hm)^2}} \Big| \sum_{\substack{n \in \Z[\omega] \\ n\equiv 1\pmod{3}}} \mu^2(n) \frac{\overline{\chi_n(hmw)} g_3(n) }{N(n)^{\sigma + it}} e^{-N(n w d^3)/X}\Big|^2
    \end{align*}
    Writing $w = ab^2$ for $a, b\equiv 1\pmod{3}$ with $\mu^2(a) = 1$, the condition $w \mid (hm)^2$ implies $ab \mid hm$ and consequently $q \mid m$, where we denote $q := \frac{ab}{(h, ab)}$. Finally we write $m = q v$ for $v\equiv 1\pmod{3}$ and split into dyadic ranges using Cauchy--Schwarz to conclude that
    \begin{align*}
        S_2 \ll_{\sigma, \varepsilon}& \ (N(h)M)^\varepsilon \sum_{\substack{a, b, d \in \Z[\omega] \\ a, b, d\equiv 1\pmod{3}}}\frac{1}{N(d)^\sigma N(ab^2)^{2\sigma-2}} \\
        &\times \sum_{\substack{\ell \in \Z_{\geq 0} \\ L := 2^\ell}} (\log{L})^2 \sum_{\substack{0\neq v \in \Z[\omega] \\ N(v) \leq \frac{9 M}{N(q)}}} \Big| \sum_{\substack{n \in \Z[\omega] \\ n\equiv 1\pmod{3} \\ N(n) \sim L}} \mu^2(n) \frac{\overline{\chi_n(hqv a b^2)} g_3(n) }{N(n)^{\sigma + it}} e^{-N(nab^2d^3)/X}\Big|^2.
    \end{align*}
    The cubic large sieve of \cref{HBcubicdual}, or more precisely the dual version in \eqref{B2bound}, implies
    \begin{align}
        S_2 \ll_{\sigma, \varepsilon}& \ (N(h)M)^\varepsilon \sum_{\substack{a, b, d \in \Z[\omega] \\ a, b, d\equiv 1\pmod{3}}}\frac{1}{N(d)^\sigma N(ab^2)^{2\sigma-2}} \nonumber \\
        &\times \sum_{\substack{\ell \in \Z_{\geq 0} \\ L := 2^\ell}} L^\varepsilon \Big(\frac{M}{N(q)} + L\Big(\frac{M}{N(q)}\Big)^{1/3} + \Big(\frac{LM}{N(q)}\Big)^{2/3}\Big) L^{2-2\sigma} e^{-\frac{L N(ab^2d^3)}{X}}. \nonumber
    \end{align}
    Recalling that $1<\sigma \leq \frac{5}{4}$ gives
    \begin{align}
        S_2 \ll_{\sigma, \varepsilon}& \ (N(h)MX)^\varepsilon \sum_{\substack{a, b, d \in \Z[\omega] \\ a, b, d\equiv 1\pmod{3}}} \frac{1}{N(d)^\sigma N(ab)^{2\sigma-2}} \label{final_S2_large_sieve_bound} \\
        &\times \Big(\frac{M}{N(q)} + \Big(\frac{X}{N(ab)}\Big)^{3-2\sigma} \Big(\frac{M}{N(q)}\Big)^{1/3} + \Big(\frac{X}{N(ab)}\Big)^{8/3-2\sigma} \Big(\frac{M}{N(q)}\Big)^{2/3}\Big). \nonumber
    \end{align}
    The sum over $d$ converges and can now be removed. We have
    \begin{equation*}
        N(q) = \frac{N(ab)}{N((h, ab))} \geq \frac{N(a)}{N((h, a))}\frac{N(b)}{N((h, b))}.
    \end{equation*}
    Observe also that for any $\Delta, \delta > 0$ with $\Delta+\delta > 1$, a divisor-type bound gives
    \begin{align*}
        \sum_{\substack{c \in \Z[\omega] \\ c \equiv 1 \pmod{3}}} \frac{N((h, c))^\Delta}{N(c)^{\Delta+\delta}} \leq \prod_{\substack{\mathfrak{p} \text{ prime} \\ \mathfrak{p} \mid (h)}} \Big(1 - N(\mathfrak{p})^{-\delta}\Big)^{-1} \prod_{\substack{\mathfrak{p} \text{ prime} \\ \mathfrak{p} \nmid (h)}} \Big(1 - N(\mathfrak{p})^{-\Delta-\delta}\Big)^{-1} \ll_{\Delta, \delta, \varepsilon} N(h)^\varepsilon.
    \end{align*}
    Applying these two inequalities, with $(\Delta, \delta) = (1, 2 \sigma-2), (\frac13, 1)$, and $(\frac23, \frac23)$, to the sums over $a$ and $b$ in \eqref{final_S2_large_sieve_bound} yields the final estimate
    \begin{align}
        S_2 &\ll_{\sigma, \varepsilon} (N(h)MX)^\varepsilon \Big(M+M^{1/3}X^{3-2\sigma} + M^{2/3} X^{8/3-2\sigma}\Big).  \label{S2_final_bound}
    \end{align}
    
    Let us finally dispose of $S_3$. Denoting 
    \begin{equation*}
        \mathcal{Z}_M(h, w) := \sum_{\substack{0\neq m \in \Z[\omega] \\ N(m) \leq M}} \widetilde{Z}(hm, w),
    \end{equation*}
    from \eqref{T3_first_step} we obtain
    \begin{align}\label{S3_final_bound}
        S_3 \ll_\sigma \Big(\frac{N(h)M(1+|t|)^4}{X^2}\Big)^{2(\sigma-1)} \mathcal{I}_M(h, \sigma+it)
    \end{align}
    for 
    \begin{align}\label{I_M_integral_def}
        \mathcal{I}_M(h, \sigma+it) := \int_{-\infty}^{\infty} \mathcal{Z}_M(h, \sigma-it - iy) e^{-|y|/2}dy. 
    \end{align}

    Putting \eqref{S1_final_bound}, \eqref{S2_final_bound}, and \eqref{S3_final_bound} into \eqref{basic_Z_S_triple_bound}, for any $1<\sigma\leq \frac{5}{4}$ and $t\in \R$ we get
    \begin{align}
        \mathcal{Z}_M(h, \sigma+it) \leq & \ C_1(\sigma) \cdot \Big(\frac{N(h)M(1+|t|)^4}{X^2}\Big)^{2(\sigma-1)} \mathcal{I}_M(h, \sigma+it) \nonumber \\
        & + C_2(\sigma, \varepsilon) \cdot (N(h)MX)^{\varepsilon} \Big(M + M^{1/3}X^{3-2\sigma} + M^{2/3}X^{8/3 - 2\sigma}\Big), \label{final_S_M_bound_csts}
    \end{align}
    where $C_1(\sigma) \geq 1$ depends only on $\sigma$ and $C_2(\sigma, \varepsilon)$ only on $\sigma$ and $\varepsilon$.

    Observe that
    \begin{align*}
        \mathfrak{S}_M(h, \sigma) := \sup_{y \in \R} \frac{\mathcal{Z}_M(h, \sigma + iy)}{(1+|y|)^2} 
    \end{align*}
    is finite and attained at some value $\widetilde{t} = \widetilde{t}_M(h, \sigma) \in \R$, since
    since the ratio is continuous as a function of $y$ and also tends to zero as $|y|\to \infty$ by 
    \eqref{Z_convexity_implicit}.
    We have 
    \begin{equation}\label{S_M_supremum_relation}
        \frac{\mathcal{Z}_M(h, \sigma+i\widetilde{t})}{(1+|\widetilde{t}|)^2} = \mathfrak{S}_M(h, \sigma) \geq  \frac{\mathcal{Z}_M(h, \sigma+iy)}{(1+|y|)^2}
    \end{equation}
    for every $y\in\R$, which together with \eqref{I_M_integral_def} shows that there exists an absolute constant $C_3 \geq 1$ such that
    \begin{equation*}
        \mathcal{I}_M(h, \sigma+i \widetilde{t}) \leq \mathcal{Z}_M(h, \sigma+i\widetilde{t}) \int_{-\infty}^\infty \Big(\frac{1+|\widetilde{t}+y|}{1+|\widetilde{t}|}\Big)^2 e^{-|y|/2} dy \leq C_3 \cdot \mathcal{Z}_M(h, \sigma + i\widetilde{t}). 
    \end{equation*}
    
    We now take $t = \widetilde{t}_M(h, \sigma)$ and 
    \begin{equation*}
        X = [2C_1(\sigma)C_3 ]^{\frac{1}{4(\sigma-1)}} \cdot N(h)^{1/2} M^{1/2} (1+|\widetilde{t}|)^2 \geq 1
    \end{equation*}
    in \eqref{final_S_M_bound_csts} to obtain (recalling that $\sigma > 1$) the bound
    \begin{align*}
        \mathcal{Z}_M(h, \sigma+i\widetilde{t}) &\ll_{\sigma, \varepsilon} (N(h)MX)^{\varepsilon} \Big(M + M^{1/3}X^{3-2\sigma} + M^{2/3}X^{8/3 - 2\sigma}\Big)\\
        &\ll_{\sigma, \varepsilon} N(h)^{1/2} M^{1+\varepsilon} (1+|\widetilde{t}|)^2,
    \end{align*}
    where we restricted to $0<\varepsilon<\frac{\sigma-1}{100}$ in the second step. By \eqref{S_M_supremum_relation} this implies
    \begin{align*}
        \mathcal{Z}_M(h, \sigma+it) \ll_{\sigma, \varepsilon} N(h)^{1/2} M^{1+\varepsilon} (1+|t|)^2,
    \end{align*}
    for every $t\in \R$. Combining this with the bound $\zeta_{\lambda}(3s-2) \gg_\sigma 1$ for $s=\sigma+it$ and $\sigma > 1$ finishes the proof.
      
\end{proof}

\begin{lemma}[Evaluation of truncated twisted sums of Gauss sums] \label{truncated_Gauss_sums_lemma}
    Let $H :\R_{>0} \to \R_{>0}$ be a smooth and compactly supported function. Let $M >0$,  $0\leq \sigma \leq 2$, and $t \in \R$. Then for any $\eta, a, b, c \in \Z[\omega]\setminus\{0\}$ satisfying $a, b, c \equiv 1 \pmod{3}$, $\mu^2(abc)=1$, and $\eta \mid 3$, we have
    \begin{align} 
        &\sum_{\substack{m \in \Z[\omega] \\ m \equiv 1 \pmod{3}}} \frac{\overline{\chi_m(\eta ab^2c^3)} g_3(m)}{N(m)^{\sigma+it}} H\Big(\frac{N(m)}{M}\Big) = \label{twisted_Gauss_sum_display} \\
        & \qquad \qquad \qquad \bbone_{b=1}\cdot C_\eta \cdot \widetilde{H}(4/3-\sigma - it) M^{4/3-\sigma-it} \frac{\overline{\widetilde{g}_3(\eta, a)}}{N(a)^{1/6}} \frac{\Delta_{ac}(1)}{\Delta_{ac}(4/3)} + \mathcal{R}_M(\eta ab^2c^3, \sigma+it), \nonumber
    \end{align}
    where the constant $C_\eta$ depends only on $\eta$ and for any $A \in \Z_{\geq 0}$ we have
    \begin{align*}
        \mathcal{R}_M(\eta ab^2c^3, \sigma+it) \ll_{A, H, \varepsilon} M^{1-\sigma+\varepsilon} \mathop{\sum \sum}_{\substack{d, e \in \Z[\omega] \\ d, e\equiv 1\pmod{3} \\ d\mid a,\ e\mid c}} \frac{1}{N(de)^{1/2+\varepsilon}} \int_{-\infty}^\infty \frac{|\psi(\frac{\eta ab^2e}{d}, 1+\varepsilon + it+iy)|}{(1+|y|)^A} dy.
    \end{align*}
    Here, $\widetilde{H}(w)$ denotes the Mellin transform of $H$, namely $\widetilde{H}(w) = \int_{0}^\infty H(x) x^{w} \frac{dx}{x}$, and $\psi(r,s)$ is given in \eqref{psizetadef} and \eqref{psirsdef}. 
\end{lemma}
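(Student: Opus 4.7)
The strategy is to identify the sum on the left of \eqref{twisted_Gauss_sum_display} as a contour integral involving $\psi$ and extract the main term from the pole of $\psi$ at $s=\frac{4}{3}$. For $m\equiv 1\pmod{3}$ with $(m,abc)=1$, the identity $\chi_m(c^3)=1$ combined with \cref{cubic-GS-lemma1} gives $\overline{\chi_m(\eta ab^2c^3)}g_3(m) = g_3(\eta ab^2, m)$ (noting $(m,\eta)=1$ automatically since $\eta\mid 3$). Both sides vanish for $m$ outside this range, using \cref{localcomp} to handle the right side (primes $\pi\mid c$ with $\pi\mid m$, $\pi\nmid\eta ab^2$, make the local Gauss sum vanish, and similarly squarefreeness is enforced). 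Hence
\begin{equation*}
S = \sum_{\substack{m\equiv 1\pmod 3 \\ (m,abc)=1}} \frac{g_3(\eta ab^2,m)}{N(m)^s} H\Big(\frac{N(m)}{M}\Big), \qquad s=\sigma+it.
\end{equation*}

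I would then apply Mellin inversion $H(x)=\frac{1}{2\pi i}\int_{(\kappa)}\widetilde{H}(w)x^{-w}dw$ on a contour $\Re(w)=\kappa>\frac{3}{2}-\sigma$, interchange summation with integration by absolute convergence, and recognize the inner sum as $\psi_{abc}(\eta ab^2,s+w)$, giving $S = \frac{1}{2\pi i}\int_{(\kappa)}\widetilde{H}(w)M^w\psi_{abc}(\eta ab^2,s+w)dw$. Invoking \cref{general_coprimality_cor} (valid since $(abc,\eta)=1$) expands $\psi_{abc}(\eta ab^2,s+w)=\Delta_{abc}(s+w)^{-1}$ times an explicit sum over $d\mid a$, $e\mid c$ of $\psi(\eta ab^2 e/d,s+w)$ with Gauss-sum coefficients. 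Shift the contour to $\Re(w)=1+\varepsilon-\sigma$, which is permissible by the convexity bound of \cref{psi_convexity_lemma} and the superpolynomial decay of $\widetilde{H}$ in vertical strips (since $H$ is smooth and compactly supported on $\R_{>0}$). The shift crosses at most a simple pole at $w=\frac{4}{3}-s$ coming from $\psi$, contributing as candidate main term
\begin{equation*}
c_0\,\widetilde{H}\Big(\tfrac{4}{3}-s\Big)M^{4/3-s}\Delta_{abc}\Big(\tfrac{4}{3}\Big)^{-1} \mathop{\sum\sum}_{\substack{d\mid a,\ e\mid c \\ d,e\equiv 1\pmod 3}} \mu(de)\frac{N(d)\,\overline{g_3(\tfrac{\eta ab^2}{d},d)}\,g_3(\tfrac{\eta ab^2}{d},e)\,\tau_3(\tfrac{\eta ab^2e}{d})}{N(d^2e)^{4/3}\,N(\eta ab^2e/d)^{1/6}},
\end{equation*}
where $c_0$ is the residue constant from \cref{psi_convexity_lemma}.

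For $b\neq 1$ this candidate vanishes identically: since $a/d$, $b$, $e$ are pairwise coprime squarefree elements all coprime to $\lambda$, the element $\eta ab^2 e/d$ carries a nontrivial squarefree factor $b$ with multiplicity exactly $2$, so the case analysis in \eqref{tau_fourier_coeff_def} forces $\tau_3(\eta ab^2 e/d)=0$ for every admissible $d,e$. For $b=1$ one has $\Delta_{abc}(\frac{4}{3})=\Delta_{ac}(\frac{4}{3})$, and I would substitute the explicit formula \eqref{tau_fourier_coeff_def} for $\tau_3(\eta ae/d)$ (the line used is selected by $\nu_\lambda(\eta)\in\{0,1,2\}$, since $\eta ae/d$ is coprime to $\lambda$-free and squarefree) and use multiplicativity to factor the $d\mid a$, $e\mid c$ sum as an Euler product over primes $\pi\mid ac$. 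A prime-by-prime local computation, combined with cubic reciprocity rearranging $\overline{g_3(\eta a/d,d)}$ into the normalized form $\overline{\widetilde{g}_3(\eta,a)}$, collapses the inner sum to $C_\eta\,\Delta_{ac}(1)\,\overline{\widetilde{g}_3(\eta,a)}/N(a)^{1/6}$ with $C_\eta$ depending only on $\eta$.

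Finally, the remaining integral on $\Re(w)=1+\varepsilon-\sigma$ yields $\mathcal{R}_M$. On that contour, $|g_3(\eta ab^2/d,d)|=N(d)^{1/2}$ and $|g_3(\eta ab^2/d,e)|=N(e)^{1/2}$ by \cref{localcomp} (the relevant local valuations of $\eta ab^2/d$ at primes of $d$ and $e$ all vanish), and $|\Delta_{abc}(1+\varepsilon+it+iy)|^{-1}\ll 1$ uniformly in $y$. Combining $N(d)\cdot N(de)^{1/2}/N(d^2e)^{1+\varepsilon}=N(d)^{-2\varepsilon}N(de)^{-1/2-\varepsilon}\leq N(de)^{-1/2-\varepsilon}$ with the rapid decay $|\widetilde{H}(1+\varepsilon-\sigma+iy)|\ll_A(1+|y|)^{-A}$ produces the stated bound for $\mathcal{R}_M$. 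The main obstacle is the prime-by-prime local computation in the $b=1$ case: tracking the Gauss-sum factors at $\pi\mid a$ (via cubic reciprocity and \cref{localcomp}) so that they reassemble into $\overline{\widetilde{g}_3(\eta,a)}$, and verifying that the prime contributions at $\pi\mid c$ aggregate exactly into the ratio $\Delta_{ac}(1)/\Delta_{ac}(\frac{4}{3})$.
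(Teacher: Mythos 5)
Your proposal is correct and follows essentially the same route as the paper: rewrite the sum as $\psi_{abc}(\eta ab^2,\cdot)$, apply Mellin inversion, shift the contour past the simple pole at $\tfrac{4}{3}$ (where the formula \eqref{tau_fourier_coeff_def} for $\tau_3$ forces $b=1$ and the twisted-multiplicativity identities collapse the $d,e$-sum to $\Delta_{ac}(1)\,\overline{\widetilde{g}_3(\eta,a)}N(a)^{-1/6}$ times a constant depending only on $\eta$), and bound the remaining integral via \cref{general_coprimality_cor} with trivial Gauss-sum estimates and the decay of $\widetilde{H}$. One minor inaccuracy: $g_3(\eta ab^2,m)$ does \emph{not} vanish when a prime $\pi\mid c$ exactly divides $m$ (by \cref{localcomp} the local factor is a nonzero Gauss sum), but this is harmless because the character $\overline{\chi_m(\eta ab^2c^3)}$ on the left already enforces $(m,abc)=1$, which is all that is needed to identify the sum with $\psi_{abc}$.
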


\begin{proof}
    For $\Re(w) > \frac{3}{2}$ we have
    \begin{equation*}
        \sum_{\substack{m \in \Z[\omega] \\ m \equiv 1 \pmod{3}}} \frac{\overline{\chi_m(\eta ab^2c^3)} g_3(m)}{N(m)^{w}} = \sum_{\substack{m \in \Z[\omega] \\ m \equiv 1 \pmod{3} \\ (m, abc)=1}} \frac{g_3(\eta ab^2, m)}{N(m)^{w}} = \psi_{abc}(\eta ab^2, w).
    \end{equation*}
    Then \cref{general_coprimality_cor} and \cref{psi_convexity_lemma} show that $\psi_{abc}(\eta ab^2, w)$ is holomorphic for $\Re(w) > 1$, except for at most a simple pole at $w=\frac{4}{3}$.
    
    Now let $s=\sigma+it$. By Mellin inversion, the quantity in \eqref{twisted_Gauss_sum_display} is equal to
    \begin{align*}
        \frac{1}{2\pi i} \int_{2 -i\infty}^{2+i\infty} \psi_{abc}(\eta ab^2, w)  \widetilde{H}(w-s) M^{w-s} dw,
    \end{align*}
    where $\widetilde{H}$ denotes the Mellin transform of $H$, which is rapidly decaying uniformly on vertical strips. Shifting to $\Re(w) = 1+\varepsilon$, we pick up a possible simple pole at $w = \frac{4}{3}$ with residue
    \begin{equation*}
        c_0 \cdot \widetilde{H}(4/3-s) M^{4/3-s} \Delta_{abc}(4/3)^{-1} \mathop{\sum \sum}_{\substack{d, e \in \Z[\omega] \\ d, e\equiv 1\pmod{3} \\ d\mid a,\ e\mid c}} \mu(de) \frac{\overline{g_3(\frac{\eta ab^2}{d}, d)} g_3(\frac{\eta ab^2}{d}, e)}{N(d)^{5/3}N(e)^{4/3}} \frac{\tau_3(\frac{\eta ab^2e}{d})}{N(\frac{\eta ab^2e}{d})^{1/6}},  
    \end{equation*}
    where $c_0$ is the (absolute) constant given in \eqref{c_0_def}. Since $\mu(abe)^2 = 1$ and $d \mid a$, we see from \eqref{tau_fourier_coeff_def} that $\tau_3(\frac{\eta ab^2e}{d}) = 0$ unless $b=1$, in which case
    \begin{equation*}
        \tau_3 \Big(\frac{\eta ae}{d}\Big) = c_\eta \frac{\overline{g_3(\eta, ae/d)}}{N(ae/d)^{1/2}}
    \end{equation*}
    for a constant $c_\eta$ depending only on $\eta \mid 3$ (either $c_\eta = 0$ or $|c_\eta| = 3^3$). Writing $f = \frac{a}{d}$, so $\mu^2(def)=1$, we then compute
    \begin{align*}
        \overline{g_3(\eta f, d)} g_3(\eta f, e) \overline{g_3(\eta, ef)} &= \chi_{d}(\eta f) \overline{\chi_{e}(\eta f)} \chi_{ef}(\eta) \chi_{e}(f) \overline{g_3(d)} g_3(e) \overline{g_3(e)} \overline{g_3(f)} \\
        & = \chi_{df}(\eta) N(e) \overline{g_3(df)} = N(e) \overline{g_3(\eta, a)}.
    \end{align*}

    Therefore, the residue at $w = \frac{4}{3}$ is equal to
    \begin{align*}
        &\ \bbone_{b=1}\cdot \frac{c_0 c_\eta}{N(\eta)^{1/6}} \cdot \frac{\widetilde{H}(4/3-s) M^{4/3-s} \overline{g_3(\eta, a)}}{N(a)^{2/3} \Delta_{ac}(4/3)} \mathop{\sum \sum}_{\substack{d, e \in \Z[\omega] \\ d, e\equiv 1\pmod{3} \\ d\mid a,\ e\mid c}} \frac{\mu(d)}{N(d)} \frac{\mu(e)}{N(e)} \\
        = &\ \bbone_{b=1}\cdot \frac{c_0 c_\eta}{N(\eta)^{1/6}} \cdot \widetilde{H}(4/3-s) M^{4/3-s} \frac{\overline{\widetilde{g}_3(\eta, a)}}{N(a)^{1/6}} \frac{\Delta_{ac}(1)}{\Delta_{ac}(4/3)}.
    \end{align*}

    For the remaining integral over $\Re(w) = 1+\varepsilon$, we simply use \cref{general_coprimality_cor} and bound the Gauss sums trivially using \cref{cubic-GS-lemma1}(\ref{changevar})    and \eqref{sqrootcancel} 
     (the moduli $d$ and $e$ are squarefree). Since
    \begin{equation*}
        \Delta_{abc}(1+\varepsilon+iy) \gg_\varepsilon 1 \qquad  \text{and} \qquad \widetilde{H}(1-\sigma + \varepsilon + iy) \ll_{A, H, \varepsilon} (1 + |y|)^{-A}
    \end{equation*} 
    for any $A \in \Z_{\geq 0}$ (uniformly in $y \in \R$ and $0 \leq \sigma \leq 2$), we obtain the desired result.
 
\end{proof}


\section{Second moment asymptotics: proof of \texorpdfstring{\cref{SMestimate2}}{}} \label{second_main_sum_section}

In this section we use the results developed in \cref{Gauss_sums_section} to complete the crucial task of evaluating the main sum for the twisted second moment.

\begin{proof}[Proof of \cref{SMestimate2}]
Recalling \eqref{SMdef}, we have that
\begin{equation*}
\mathcal{S}_M(\chi_q(\mathfrak{b}_1) \overline{\chi_q(\mathfrak{b}_2)} A_2(q); F) = 
\sum_{\substack{ q \in \mathbb{Z}[\omega] \\ q \equiv 1 \pmod{9}}}  M_Y(q)  \chi_q(\mathfrak{b}_1) \overline{\chi_q(\mathfrak{b}_2)} A_2(q) F \Big( \frac{N(q)}{X} \Big).
\end{equation*}
Furthermore, we open $A_2(q)$ using \eqref{A2def},
and then interchange the order of summation to obtain
\begin{equation} \label{SMopen}
 \mathcal{S}_M ( \chi_q(\mathfrak{b}_1) \overline{\chi_q(\mathfrak{b}_2)} A_2(q); F) 
 = \sumtwo_{0 \neq \substack{\mathfrak{n}_1, \mathfrak{n}_2 \unlhd \Z[\omega]}} 
N(\mathfrak{n}_1 \mathfrak{n}_2)^{-1/2}  \mathcal{S}_M ( \chi_q(\mathfrak{b}_1 \mathfrak{n}_1 ) \overline{\chi_q(\mathfrak{b}_2 \mathfrak{n}_2 )}; F_{\mathfrak{n}_1,\mathfrak{n}_2 }),
\end{equation}
where
\begin{equation}\label{F_def}
F_{\mathfrak{n}_1,\mathfrak{n}_2 }(t):= F(t) \Phi_2 \Big( \frac{N(\mathfrak{n}_1 \mathfrak{n}_2)}{3 X t} \Big).
\end{equation}

Opening the right side of \eqref{SMopen} and using \eqref{MYRYdef}, we have that
\begin{align*}
& \mathcal{S}_M ( \chi_q(\mathfrak{b}_1 \mathfrak{n}_1 ) \overline{\chi_q(\mathfrak{b}_2 \mathfrak{n}_2)}; F_{\mathfrak{n}_1,\mathfrak{n}_2}) \\
&=\sum_{\substack{\ell \in \Z[\omega] \\ \ell \equiv 1 \pmod 3 \\ N(\ell) \leq Y}} \mu(\ell) 
 \sum_{\substack{ m \in \Z[\omega]  \\ \ell^2 m \equiv 1 \pmod{9}}}  \chi_{\ell^2 m}(\mathfrak{b}_1 \mathfrak{n}_1) \overline{\chi_{\ell^2 m}(\mathfrak{b}_2 \mathfrak{n}_2)} F_{\mathfrak{n}_1,\mathfrak{n}_2} \Big( \frac{N(\ell^2 m)}{X} \Big).
 \end{align*}
For $i \in \{1,2\}$ we write $\mathfrak{b}_i = b_i \mathbb{Z}[\omega]$ and $\mathfrak{n}_i=\lambda^{g_i} n_i \mathbb{Z}[\omega]$ for some
$g_i \in \mathbb{Z}_{\geq 0}$ and 
$b_i,n_i \equiv 1 \pmod{3}$, where $\mu^2(b_i)=1$ since we are assuming $\mathfrak{b}_i$ are squarefree. By \eqref{cubesupp} and cubic reciprocity,
\begin{equation*}
\chi_{\ell^2 m}(\mathfrak{b}_i \mathfrak{n}_i)=\chi_{\ell^2 m}(\lambda^{g_i} b_i n_i)=\chi_{\ell^2 m}(b_i n_i)=\chi_{b_i n_i}(\ell^2 m)=\overline{\chi_{b_i n_i}(\ell)} \chi_{b_i n_i}(m),
\end{equation*}
for $i \in \{1, 2\}$. Thus 
\begin{align*}
& \mathcal{S}_M ( \chi_q(b_1 \lambda^{g_1} n_1 ) \overline{\chi_q(b_2 \lambda^{g_2} n_2 )}; F_{\lambda^{g_1}n_1,\lambda^{g_2} n_2 }) \nonumber \\
 &=\sum_{\substack{\ell \in \Z[\omega] \\ \ell \equiv 1 \pmod 3 \\ N(\ell) \leq Y}} \mu(\ell) 
\overline{\psi_{b_1 n_1,b_2 n_2}(\ell)} 
\sum_{\substack{ m \in \Z[\omega] \\  \ell^2 m \equiv 1 \pmod{9}}} \psi_{b_1 n_1,b_2 n_2}(m)
F_{\lambda^{g_1}n_1,\lambda^{g_2} n_2 } \Big( \frac{N(\ell^2 m)}{X}  \Big), 
 \end{align*}
where for $r_1,r_2 \equiv 1 \pmod{3}$ we write $\psi_{r_1,r_2}(\cdot):=\chi_{r_1}(\cdot) \overline{\chi_{r_2}(\cdot)}$.

We aim to apply Poisson summation to the sum over $m$, but first need to control some technicalities arising from lack of coprimality. In what follows whenever a variable is defined by a gcd the corresponding (principal) ideal will always be coprime to $\lambda$, so we take the unique generator of that ideal congruent to $1\pmod{3}$. 

For $i\in \{1, 2\}$ let $d := (b_1 n_1, b_2 n_2)$ and $r_i := \frac{b_i n_i}{d}$, so $(r_1, r_2) = 1$. For fixed $b_i \equiv 1 \pmod{3}$ this gives a one-to-one correspondence between pairs $(n_1, n_2)$ and triples $(d, r_1, r_2)$ satisfying $(r_1, r_2)=1$ and $b_i \mid d r_i$, where all of the variables run over elements of $\Z[\omega]$ congruent to $1\pmod{3}$. Indeed, the inverse map is $(d, r_1, r_2) \mapsto (\frac{r_1d}{b_1}, \frac{r_2 d}{b_2} )$. Performing this change of variables and replacing in \eqref{SMopen} gives
\begin{align}
    &\mathcal{S}_M ( \chi_q(b_1) \overline{\chi_q(b_2)} A_2(q); F)  \nonumber \\
    &= N(b_1b_2)^{1/2}\sumtwo_{g_1,g_2 \in \mathbb{Z}_{\geq 0}} \frac{1}{N(\lambda^{g_1+g_2})^{1/2}} \sum_{\substack{d \in \Z[\omega] \\ d \equiv 1\pmod{3}}} \frac{1}{N(d)}
    \sumtwo_{\substack{ r_1, r_2 \in \mathbb{Z}[\omega] \\ r_1, r_2 \equiv 1 \pmod{3} \\ (r_1, r_2)=1 \\ b_i \mid d r_i }} 
    \frac{1}{N(r_1r_2)^{1/2}} \nonumber \\
    & \times  \sum_{\substack{\ell \in \Z[\omega] \\ \ell \equiv 1 \pmod 3 \\ N(\ell) \leq Y}} \mu(\ell) \overline{\psi_{d r_1, d r_2}(\ell)} \sum_{\substack{ m \in \Z[\omega] \\  \ell^2 m \equiv 1 \pmod{9}}} \psi_{d r_1, d r_2}(m)
    F_{\frac{\lambda^{g_1} d r_1}{b_1},\frac{\lambda^{g_2} d r_2}{b_2} } \Big( \frac{N(\ell^2 m)}{X}  \Big). \label{new_m_sum}
\end{align}

For $i \in \{1, 2\}$ write (uniquely) $r_i = d_i m_i$ with $d_i, m_i \equiv 1\pmod{3}$ satisfying $d_i \mid d^\infty$, $(m_i, d) = (m_1, m_2) = (d_1, d_2) = 1$, and $b_i \mid dd_im_i$. Note that
\begin{equation*}
    \psi_{d r_1, d r_2} = \chi_{d r_1} \overline{\chi_{d r_2}} = \chi_{r_1} \overline{\chi_{r_2}}\textbf{1}_d = \chi_{m_1} \overline{\chi_{m_2}} \chi_{d_1}\overline{\chi_{d_2}}\textbf{1}_{e},
\end{equation*}
where $e \equiv 1\pmod 3$ is the squarefree element of $\Z[\omega]$ given by
\begin{equation}\label{e_def}
    e := \prod_{\substack{\pi \text{ prime}\\ \pi \equiv 1 \pmod 3 \\ \pi \mid d,\ \pi \nmid d_1d_2}} \pi = \frac{\mathrm{rad}(d)}{\mathrm{rad}(d_1d_2)}.
\end{equation}
Therefore, $\psi_{d r_1, d r_2}$ is $(m_1 m_2 d_1 d_2 e)$-periodic. Applying Poisson summation (\cref{radialpois}) with period $m_1 m_2 d_1 d_2 e$ in \eqref{new_m_sum}, we see that the sum over $m$ is equal to 
\begin{align} \label{poissonapp}
 \frac{4 \pi X}{3^{9/2} N(\ell^2 m_1 m_2 d_1 d_2 e)} \sum_{k \in \mathbb{Z}[\omega]} \ddot{\psi}_{d d_1 m_1, d d_2 m_2}(k) \check{e}\Big({-\frac{k\ell (m_1 m_2 d_1 d_2 e)^2}{9 \lambda}}\Big) & \nonumber \\
 \times \ddot{F}_{\frac{\lambda^{g_1} d d_1 m_1}{b_1},\frac{\lambda^{g_2} d d_2 m_2}{b_2} }
\Big( \frac{k \sqrt{X}}{\ell^2 m_1 m_2 d_1 d_2 e} \Big ),&
\end{align}
where we used $\overline{\ell^2} \equiv \ell \pmod{9}$ as $\ell \equiv 1\pmod{3}$. Crucially, since $m_1, m_2, d_1, d_2, e$ are pairwise coprime and congruent to $1\pmod{3}$, and $9\lambda = \lambda^5$, by the Chinese remainder theorem for the pairwise coprime moduli $m_1, m_2, d_1d_2e$ and cubic reciprocity we have
\begin{align*}
    \ddot{\psi}_{d d_1 m_1, d d_2 m_2}(k):=& \sum_{a \pmod{m_1 m_2 d_1 d_2 e}} \chi_{m_1} \overline{\chi_{m_2}} \chi_{d_1}\overline{\chi_{d_2}}\textbf{1}_{e}(9 \lambda a) \check{e}\Big({-\frac{ka}{m_1m_2 d_1 d_2 e}}\Big) \\
    = &\ \chi_{m_1}(9\lambda m_2 d_1 d_2 e) g_3(-k, m_1) \overline{\chi_{m_2}(9\lambda m_1 d_1 d_2 e)} \overline{g_3(k, m_2)} \\
    &\qquad\qquad \times \chi_{d_1}\overline{\chi_{d_2}}(9\lambda m_1 m_2)\sum_{a \pmod{d_1 d_2 e}} \chi_{d_1} \overline{\chi_{d_2}} \textbf{1}_{e}(a) \check{e}\Big({-\frac{ka}{d_1 d_2 e}}\Big) \nonumber \\
    = &\ N(d_1 d_2)^{3/2} N(e) \cdot \overline{\chi_{m_1}(\lambda d_1 e^2)} g_3(k, m_1) \cdot \chi_{m_2}(\lambda d_2 e^2)\overline{g_3(k, m_2)} \cdot \mathcal{E}(d_1, d_2; k, e),\\
\end{align*}
where another application of the Chinese remainder theorem for the pairwise coprime moduli $d_1,d_2,e$ gives
\begin{align}
    \mathcal{E}(d_1, d_2; k, e) :=&\ \frac{\overline{\psi_{d_1, d_2}(\lambda)}}{N(d_1 d_2)^{3/2} N(e)} \sum_{a \pmod{d_1 d_2 e}} \chi_{d_1} \overline{\chi_{d_2}} \textbf{1}_{e}(a) \check{e}\Big(\frac{ka}{d_1 d_2 e}\Big) \nonumber \\
    =&\ \psi_{d_1, d_2}(\lambda^2 e) \frac{\widetilde{g}_3(k, d_1)}{N(d_1)} \frac{\overline{\widetilde{g}_3(k, d_2)}}{N(d_2)} \frac{\mu\big(\frac{e}{(e, k)}\big) \varphi(e)}{N(e) \varphi\big(\frac{e}{(e, k)}\big)} \label{exp_sum_definition} \\
    \ll&\ \frac{|\widetilde{g}_3(k, d_1) \widetilde{g}_3(k, d_2)|}{N(d_1 d_2)}\frac{N((e, k))}{N(e)}.\label{exp_sum_uniform_bound}
\end{align}
In the middle step we used the evaluation of the Ramanujan sum modulo $e$ (which is squarefree) given in \cite[Lemma 5.5]{DR}.

We now substitute \eqref{poissonapp} into \eqref{new_m_sum}, changing variables as described above from $(d, r_1, r_2)$ satisfying $(r_1, r_2) = 1$ and $b_i \mid d r_i$ to $(d, d_1, d_2, m_1, m_2)$ satisfying $(m_1, m_2) = (d_1, d_2) = (m_i, d) = 1$, $d_i \mid d^\infty$, $b_i \mid dd_i m_i$, and all variables equivalent to $1 \pmod 3$, which one checks is one-to-one. We also split the sums over $m_i$ into congruence classes modulo $9$, since $\chi_{m_i}(\lambda)$ is $9$-periodic by \eqref{cubesupp}. This yields
\begin{align}
    &\mathcal{S}_M ( \chi_q(b_1) \overline{\chi_q(b_2)} A_2(q); F) = \frac{4 \pi X}{3^{9/2}} N(b_1b_2)^{1/2} \sumtwo_{g_1,g_2 \in \mathbb{Z}_{\geq 0}} \frac{1}{3^{(g_1+g_2)/2}} \nonumber \\
    & \times \sum_{\substack{d \in \Z[\omega] \\ d \equiv 1\pmod{3}}} \frac{1}{N(d)} \sumtwo_{\substack{d_1, d_2 \in \Z[\omega] \\ d_1, d_2 \equiv 1 \pmod{3} \\ (d_1, d_2)=1 \\ d_1d_2 \mid d^\infty}} \sumtwo_{\substack{c_1, c_2 \pmod{9} \\ c_1, c_2 \equiv 1\pmod{3}}} \overline{\chi_{c_1}}\chi_{c_2}(\lambda) \sum_{k \in \Z[\omega]} \mathcal{E}(d_1, d_2; k, e)   \nonumber \\
    & \times \sum_{\substack{\ell \in \Z[\omega] \\ \ell \equiv 1 \pmod 3 \\ N(\ell) \leq Y}} \check{e} \Big({-\frac{k\ell(c_1c_2 d_1 d_2 e)^2}{9\lambda}}\Big) \frac{\mu(\ell) \overline{\psi_{d d_1, d d_2}(\ell)}}{N(\ell)^2}   \sum_{\substack{m_1 \in \Z[\omega] \\ m_1 \equiv c_1 \pmod{9} \\ (m_1, d) = 1 \\ b_1 \mid dd_1m_1}} \frac{\overline{\chi_{m_1}(\ell d_1 e^2)} \widetilde{g}_3(k, m_1)}{N(m_1)} \nonumber \\
    & \times \sum_{\substack{m_2 \in \Z[\omega] \\ m_2 \equiv c_2 \pmod{9} \\ (m_2, d) = 1 \\ b_2 \mid dd_2m_2 \\ (m_1, m_2)=1}} \frac{\chi_{m_2}(\ell d_2 e^2) \overline{\widetilde{g}_3(k, m_2)}}{N(m_2)}  \ddot{F}_{\frac{\lambda^{g_1} d d_1 m_1}{b_1},\frac{\lambda^{g_2} d d_2 m_2}{b_2} }
    \Big( \frac{k \sqrt{X}}{\ell^2 m_1 m_2 d_1 d_2 e} \Big). \label{expression_to_split}
\end{align}

Therefore we may write
\begin{equation*}
    \mathcal{S}_M ( \chi_q(b_1) \overline{\chi_q(b_2)} A_2(q); F) = \mathcal{M}(b_1, b_2) + \mathcal{R}(b_1, b_2),
\end{equation*}
where $\mathcal{M}(b_1, b_2)$ corresponds to the term $k=0$ in \eqref{expression_to_split}, and $\mathcal{R}(b_1, b_2)$ corresponds to the terms $0\neq k \in \Z[\omega]$ in that expression.


\subsection{The main term \texorpdfstring{$\mathcal{M}(b_1, b_2)$}{}}

From \eqref{exp_sum_definition} we have
\begin{align*}
    \mathcal{E}(d_1, d_2; 0, e)  = \frac{\psi_{d_1, d_2}(\lambda^2 e)}{N(d_1 d_2 e)} \widetilde{g}_3(0, d_1) \overline{\widetilde{g}_3(0, d_2)} \varphi(e).
\end{align*}

We have $\widetilde{g}_3(0, m) = 0$ unless $m$ is a cube, in which case $\widetilde{g}_3(0, m) = \frac{\varphi(m)}{N(m)^{1/2}}$. Therefore $\mathcal{E}(d_1, d_2; 0, e) = 0$ unless both $d_1$ and $d_2$ are cubes, in which case $\mathcal{E}(d_1, d_2; 0, e) =  \frac{\varphi(d_1 d_2 e)}{N(d_1 d_2)^{3/2} N(e)}$. Making the change of variables $d_i \mapsto d_i^3$ and $m_i \mapsto m_i^3$ for $i\in \{1, 2\}$ (observe that the definition of $e$ in \eqref{e_def} remains unchanged), we conclude that
\begin{align}
    &\mathcal{M}(b_1, b_2) = \frac{4 \pi X}{3^{9/2}} N(b_1b_2)^{1/2} \sumtwo_{g_1,g_2 \in \mathbb{Z}_{\geq 0}} \frac{1}{3^{(g_1+g_2)/2}} \sum_{\substack{d \in \Z[\omega] \\ d \equiv 1\pmod{3}}} \frac{1}{N(d)} \sumtwo_{\substack{d_1, d_2 \in \Z[\omega] \\ d_1, d_2 \equiv 1 \pmod{3} \\ (d_1, d_2)=1 \\ d_1d_2 \mid d^\infty}} \frac{\varphi(e)}{N(e)}\frac{\varphi(d_1^3 d_2^3)}{N(d_1d_2)^{9/2}} \nonumber \\
    & \times \sum_{\substack{m_1 \in \Z[\omega] \\ m_1 \equiv 1 \pmod{3} \\ (m_1, d) = 1 \\ b_1 \mid d d_1^3 m_1^3}} \sum_{\substack{m_2 \in \Z[\omega] \\ m_2 \equiv 1 \pmod{3} \\ (m_2, d) = 1 \\ b_2 \mid d d_2^3 m_2^3 \\ (m_1, m_2)=1}} \frac{\varphi(m_1^3 m_2^3)}{N(m_1 m_2)^{9/2}}  \ddot{F}_{\frac{\lambda^{g_1} d d_1^3 m_1^3}{b_1},\frac{\lambda^{g_2} d d_2^3 m_2^3}{b_2} }
    ( 0 ) \sum_{\substack{ \ell \in \Z[\omega] \\ \ell \equiv 1 \pmod 3 \\ N(\ell) \leq Y}} \frac{\mu(\ell) \mathbf{1}_{d m_1 m_2}(\ell)}{N(\ell)^2}. \label{main_term_expression}
\end{align}
We can now directly evaluate the sum over $\ell$, obtaining
\begin{equation}\label{ell_sum_eval}
    \sum_{\substack{ \ell \in \Z[\omega] \\ \ell \equiv 1 \pmod 3 \\ N(\ell) \leq Y}} \frac{\mu(\ell) \mathbf{1}_{d m_1 m_2}(\ell)}{N(\ell)^2} = \zeta_\lambda(2)^{-1} \prod_{\substack{\pi \text{ prime} \\ \pi \equiv 1 \pmod 3 \\ \pi \mid d m_1 m_2}} \Big(1 - \frac{1}{N(\pi)^2}\Big)^{-1} + O\Big(\frac{1}{Y}\Big).
\end{equation}

Recall from \eqref{F_def} that $F_{\mathfrak{n}_1, \mathfrak{n}_2}(r) = F(r) \Phi_2 \big(\frac{N(\mathfrak{n}_1 \mathfrak{n}_2)}{3Xr} \big)$, where $\Phi_2$ is defined by \eqref{def-Phi} and  $F$ has support in $(1, 2)$. Since $J_0(0)=1$, replacing in \eqref{Vddot}, we have 
\begin{align}
    \ddot{F}_{\mathfrak{n}_1, \mathfrak{n}_2}(0) & = \int_1^{\sqrt{2}}  r F(r^2) \Phi_2 \Big(\frac{N(\mathfrak{n}_1 \mathfrak{n}_2)}{3Xr^2} \Big) dr \nonumber \\
    & =  \frac{1}{2\pi i} \int_{2 -i\infty}^{2 + i\infty} \Big(\frac{4 \pi^2 N(\mathfrak{n}_1 \mathfrak{n}_2)}{3X} \Big)^{-w} \Big(\frac{\Gamma (\frac{1}{2}+w )}{\Gamma (\frac{1}{2})}\Big)^2 \check{F}(w) \frac{dw}{2w}, \label{F_dot_dot_zero}
\end{align}
for $\check{F}(w) := \int_0^\infty t^w F(t) dt = 2 \int_1^{\sqrt{2}} r^{2w+1} F(r^2) dr$. In particular, by the rapid decay of $\Phi_2$ as in \eqref{Phi_bound}, we have (using $0 \leq F(t) \leq 1$ for the uniformity of the implied constant) the coarse bound
\begin{align}
    \ddot{F}_{\mathfrak{n}_1, \mathfrak{n}_2}(0) \ll_{A} \Big(1 +  \frac{N(\mathfrak{n}_1 \mathfrak{n}_2)}{X}\Big)^{-A}. \label{F_dot_dot_zero_bound}
\end{align}
The following simple observation will be very useful.

\begin{lemma}\label{gcd_sum_lemma}
    For any $C \geq 1/2$, $1\geq \Delta \geq 0$, $\delta \in \R$, and $g \in \Z[\omega]$ with $g \equiv 1 \pmod 3$,
    \begin{equation*}
        \sum_{\substack{c\in \Z[\omega] \\ c\equiv 1 \pmod 3 \\ N(c) \sim C}} \frac{N((g, c))^\Delta}{N(c)^\delta} \ll_{\delta, \varepsilon} N(g)^\varepsilon C^{1 - \delta}.
    \end{equation*}
\end{lemma}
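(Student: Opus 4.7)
The plan is to exploit the multiplicative structure of $N((g,c))$ by parameterizing the sum according to the value of the gcd. Specifically, I will write $d = (g,c)$ (taken as the generator congruent to $1\pmod{3}$), so that $c = de$ for some $e \in \Z[\omega]$ with $e \equiv 1\pmod{3}$, and then simply drop the coprimality condition $(e, g/d) = 1$ to obtain an upper bound. This gives
\begin{equation*}
    \sum_{\substack{c\in \Z[\omega] \\ c\equiv 1 \pmod 3 \\ N(c) \sim C}} \frac{N((g, c))^\Delta}{N(c)^\delta} \leq \sum_{\substack{d \mid g \\ d \equiv 1\pmod 3}} \frac{N(d)^\Delta}{N(d)^\delta} \sum_{\substack{e \equiv 1\pmod 3 \\ N(e) \sim C/N(d)}} \frac{1}{N(e)^\delta}.
\end{equation*}

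Next I would bound the inner sum. Since the number of $e \in \Z[\omega]$ with $N(e) \sim Y$ is $\asymp Y$ for $Y \geq 1$ (and the sum is empty or a single term for $Y < 1/2$), a direct count gives $\sum_{N(e)\sim Y} N(e)^{-\delta} \ll_\delta Y^{1-\delta}$ uniformly for $Y \geq 1/2$ and $\delta \in \R$; for $N(d) > 2C$ the inner sum is empty. Substituting $Y = C/N(d)$ yields
\begin{equation*}
    \sum_{\substack{c \equiv 1\pmod 3 \\ N(c) \sim C}} \frac{N((g,c))^\Delta}{N(c)^\delta} \ll_\delta C^{1-\delta} \sum_{\substack{d \mid g \\ N(d) \leq 2C}} N(d)^{\Delta - 1}.
\end{equation*}

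Finally, the hypothesis $\Delta \leq 1$ is exactly what is needed to conclude: $N(d)^{\Delta-1} \leq 1$ for every $d \neq 0$, so the remaining sum is at most the number of divisors of $g$, which is $\ll_\varepsilon N(g)^\varepsilon$ by the standard divisor bound (the analogue of \eqref{divis}). This yields the claimed bound $\ll_{\delta,\varepsilon} N(g)^\varepsilon C^{1-\delta}$. There is no real obstacle: the entire argument is a textbook gcd-splitting followed by a trivial count and the divisor bound, and the restriction $\Delta \leq 1$ is used exactly once to keep the $d$-sum tame.
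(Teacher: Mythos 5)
Your proof is correct and follows essentially the same route as the paper: stratify by $d=(g,c)$, drop the coprimality condition by positivity, bound the inner sum over $N(c')\sim C/N(d)$ trivially by $\ll_\delta (C/N(d))^{1-\delta}$, and finish with $\Delta\leq 1$ plus the divisor bound. No meaningful differences.
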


\begin{proof}
    Stratifying by the values of $d = (g, c)$, by positivity we obtain
    \begin{align*}
        \sum_{\substack{c\in \Z[\omega] \\ c\equiv 1 \pmod 3 \\ N(c) \sim C}} \frac{N((g, c))^\Delta}{N(c)^\delta} &\leq \sum_{\substack{d\in \Z[\omega] \\ d\equiv 1 \pmod 3 \\ d\mid g,\ N(d) \leq 2C}} \frac{N(d)^\Delta}{N(d)^\delta} \sum_{\substack{c'\in \Z[\omega] \\ c'\equiv 1 \pmod 3 \\ N(c') \sim \frac{C}{N(d)}}} \frac{1}{N(c')^\delta} \\
        &\ll_\delta C^{1-\delta} \sum_{\substack{d\in \Z[\omega] \\ d\equiv 1 \pmod 3 \\ d\mid g,\ N(d) \leq 2C}} N(d)^{\Delta-1} \leq C^{1-\delta} d(g) \max\{1, (2C)^{\Delta-1}\}, 
    \end{align*}
    and the result follows by the divisor bound since $\Delta \leq 1$.
    
\end{proof}

Observe that the condition $b_i \mid d d_i^3 m_i^3$ is equivalent to $\frac{b_i}{(b_i, d)} \mid m_i$, since $\mu^2(b_i) = 1$ and $d_i \mid d^\infty$. Inserting \eqref{ell_sum_eval} into \eqref{main_term_expression}, we can use \eqref{F_dot_dot_zero_bound} to conclude that the error term arising from $O(\frac{1}{Y})$ is
\begin{align}
    &\ll \frac{X N(b_1b_2)^{1/2}}{Y} \sumthree_{\substack{d, d_1, d_2 \in \Z[\omega] \\ d, d_1, d_2 \equiv 1 \pmod{3} \\ (d_1, d_2)=1 \\ d_1 d_2 \mid d^\infty}} \sumtwo_{\substack{m_1, m_2 \in \Z[\omega] \\ m_1, m_2 \equiv 1 \pmod{3} \\ (m_1, m_2) = (m_i, d) = 1 \\ \frac{b_i}{(b_i, d)} \mid m_i}} \frac{\big(1 + \frac{N(d)^2 N(d_1 m_1 \cdot d_2 m_2)^3}{N(b_1b_2) X} \big)^{-100}}{N(d) N(d_1 m_1 \cdot d_2 m_2)^{3/2}} \nonumber \\
    &\ll \frac{X N(b_1b_2)^{1/2}}{Y} \sum_{\substack{d \in \Z[\omega] \\ d \equiv 1 \pmod{3}}} \frac{\big(1 + \frac{N(d^2)}{N(b_1b_2) X} \big)^{-100}}{N(d)} \frac{N((b_1, d)(b_2, d))^{3/2}}{N(b_1 b_2)^{3/2}} \nonumber \\
    &\ll \frac{X}{Y} \sum_{\substack{d \in \Z[\omega] \\ d \equiv 1 \pmod{3}}} \frac{N((b_1, d))+ N((b_2, d))}{N(d)} \Big(1 + \frac{N(d^2)}{N(b_1b_2) X} \Big)^{-100} \ll \frac{X^{1+\varepsilon}}{Y} \nonumber,
\end{align}
where the last step follows from \cref{gcd_sum_lemma}.

Applying \eqref{F_dot_dot_zero} for $\mathfrak{n}_i= \frac{\lambda^{g_i} d d_i^3 m_i^3}{b_i} \Z[\omega]$ then gives
\begin{align} 
    \mathcal{M}(b_1, b_2) = \frac{2 \pi X}{3^{9/2} \zeta_\lambda(2)} \frac{1}{2\pi i} \int_{2 - i \infty}^{2 + i \infty} & \Big(\frac{4 \pi^2}{3X} \Big)^{-w} \Big(\frac{\Gamma (\frac{1}{2}+w )}{\Gamma (\frac{1}{2} )} \Big)^2 \check{F}(w) \mathcal{G}_{b_1, b_2}(w) \frac{dw}{w} + O_\varepsilon\Big(\frac{X^{1+\varepsilon}}{Y}\Big), \label{after-mellin}
\end{align}
where, recalling the definition of $e$ in \eqref{e_def}, we set
\begin{align}
    & \mathcal{G}_{b_1, b_2}(w) := N(b_1 b_2)^{1/2 + w} \Bigg( \sumtwo_{g_1,g_2 \in \mathbb{Z}_{\geq 0}} \frac{1}{3^{(g_1+g_2)(1/2+w)}} \Bigg) \sum_{\substack{d \in \Z[\omega] \\ d \equiv 1\pmod{3}}} 
    \frac{1}{N(d)^{1+2w}} \nonumber \\
    & \times \prod_{\substack{\pi \text{ prime} \\ \pi \equiv 1 \pmod 3 \\ \pi \mid d}} \Big(1 - \frac{1}{N(\pi)^2}\Big)^{-1} \Bigg(\sumtwo_{\substack{d_1, d_2 \in \Z[\omega] \\ d_1, d_2 \equiv 1 \pmod{3} \\ (d_1, d_2)=1 \\ d_1d_2 \mid d^\infty}} \frac{\varphi(e)}{N(e)} \frac{\varphi(d_1^3 d_2^3)}{N(d_1d_2)^{9/2+3w}} \Bigg) \nonumber \\
    & \times \sum_{\substack{m_1 \in \Z[\omega] \\ m_1 \equiv 1 \pmod{3} \\ (m_1, d) = 1 \\ \frac{b_1}{(b_1, d)} \mid m_1}} \sum_{\substack{m_2 \in \Z[\omega] \\ m_2 \equiv 1 \pmod{3} \\ (m_2, d) = 1 \\ \frac{b_2}{(b_2, d)} \mid m_2 \\ (m_1, m_2)=1}} \frac{\varphi(m_1^3 m_2^3)}{N(m_1 m_2)^{9/2 + 3w}} \prod_{\substack{\pi \text{ prime} \\ \pi \equiv 1 \pmod 3 \\ \pi \mid m_1 m_2}} \Big(1 - \frac{1}{N(\pi)^2}\Big)^{-1}. \label{generating-series}
\end{align}
We will eventually move the line of integration to the left and collect a pole at $w=0$. The generating series $\mathcal{G}_{b_1, b_2}$ will have a simple pole at $w=0$, so we ultimately expect a double pole for the integrand in \eqref{after-mellin}.
 
Recall that $b_1, b_2$ are squarefree  (but not necessarily  coprime), which will be used often in the computations below.
For each $d$, the sums over $m_1, m_2$ in \eqref{generating-series} are empty unless $(\frac{b_1}{(b_1, d)}, \frac{b_2}{(b_2, d)})=1$. Assuming that this is the case, denote $b'_i := \frac{b_i}{(b_i, d)}$, so $(b'_1, b'_2) = (b'_i, d)=1$. The summation over $m_1, m_2$ in \eqref{generating-series} is then
\begin{align*}
    &  \sum_{\substack{m_1 \in \Z[\omega] \\ m_1 \equiv 1 \pmod{3} \\ (m_1, d) = 1 \\  b'_1 \mid  m_1}} \sum_{\substack{m_2 \in \Z[\omega] \\ m_2 \equiv 1 \pmod{3} \\ (m_2, d) = 1 \\  b'_2 \mid m_2 \\ (m_1, m_2)=1}} \frac{\varphi(m_1^3 m_2^3)}{N(m_1 m_2)^{9/2 + 3w}} \prod_{\substack{\pi \text{ prime} \\ \pi \equiv 1 \pmod 3 \\ \pi \mid m_1 m_2}} \Big(1 - \frac{1}{N(\pi)^2}\Big)^{-1} \\
    &= \prod_{\substack{\pi \text{ prime} \\ \pi \equiv 1 \pmod 3\\ \pi \nmid d b'_1 b'_2}}  \Bigg( 1 + 2 \sum_{k=1}^\infty \frac{\varphi(\pi^{3k})}{N(\pi^{3k})} \frac{\big(1 - \frac{1}{N(\pi)^2}\big)^{-1}}{N(\pi)^{k(3/2 + 3w)}} \Bigg) \prod_{\substack{\pi \text{ prime} \\ \pi \equiv 1 \pmod 3\\ \pi \mid b'_1 b'_2}}  \Bigg( \sum_{k=1}^\infty \frac{\varphi(\pi^{3k})}{N(\pi^{3k})} \frac{\big(1 - \frac{1}{N(\pi)^2}\big)^{-1}}{N(\pi)^{k(3/2 + 3w)}} \Bigg) \\
    &= \prod_{\substack{\pi \text{ prime} \\ \pi \equiv 1 \pmod 3\\ \pi \nmid d}}  E(\pi, w)  \prod_{\substack{\pi \text{ prime} \\ \pi \equiv 1 \pmod 3\\ \pi \mid b'_1 b'_2}} E(\pi, w)^{-1} F(\pi, w)\\
    &= \mathcal{E}(w) \prod_{\substack{\pi \text{ prime} \\ \pi \equiv 1 \pmod 3 \\ \pi \mid d}} E(\pi, w)^{-1}  \prod_{\substack{\pi \text{ prime} \\ \pi \equiv 1 \pmod 3\\ \pi \mid b'_1 b'_2}} E(\pi, w)^{-1}  F(\pi, w),
\end{align*}
where 
\begin{align*}
    E(\pi, w) & := 1 + 2 \sum_{k=1}^\infty \frac{\varphi(\pi^{3k})}{N(\pi^{3k})} \frac{\big(1 - \frac{1}{N(\pi)^2}\big)^{-1}}{N(\pi)^{k(3/2 + 3w)}} = 1 + 2 \frac{\big(1 + \frac{1}{N(\pi)}\big)^{-1}}{N(\pi)^{3/2 + 3w} - 1}, \\
    F(\pi, w) &:= \sum_{k=1}^\infty \frac{\varphi(\pi^{3k})}{N(\pi^{3k})} \frac{\big(1 - \frac{1}{N(\pi)^2}\big)^{-1}}{N(\pi)^{k(3/2 + 3w)}} = \frac{\big(1 + \frac{1}{N(\pi)}\big)^{-1}}{N(\pi)^{3/2 + 3w} - 1}, \\
    \mathcal{E}(w) &:= \prod_{\substack{\pi \text{ prime} \\ \pi \equiv 1 \pmod 3}} E(\pi, w) = \prod_{\substack{\pi \text{ prime} \\ \pi \equiv 1 \pmod 3}} \Bigg( 1 + 2 \frac{\big(1 + \frac{1}{N(\pi)}\big)^{-1}}{N(\pi)^{3/2 + 3w} - 1} \Bigg).
\end{align*}

Observe that $\mathcal{E}(w)$ is holomorphic for $\Re(w) >  - \frac16$. For brevity, we will often write $(E^{-1}F) (\pi, w)$ for $E(\pi, w)^{-1}  F(\pi, w)$, and similarly for other Euler factors. By \eqref{e_def} we have
\begin{align*}
    \sumtwo_{\substack{d_1, d_2 \in \Z[\omega] \\ d_1, d_2 \equiv 1 \pmod{3} \\ (d_1, d_2)=1 \\ d_1d_2 \mid d^\infty}} \frac{\varphi(e)}{N(e)} \frac{\varphi(d_1^3 d_2^3)}{N(d_1 d_2)^{9/2+3w}} &= \prod_{\substack{\pi \text{ prime} \\ \pi \equiv 1 \pmod 3 \\ \pi \mid d}} \frac{\varphi(\pi)}{N(\pi)}   \underbrace{\Big( 1 + 2  \sum_{k=1}^{\infty}  \frac{1}{N(\pi)^{k(3/2 + 3w)}} \Big)}_{=: \; G(\pi, w)} \\
    & = \frac{\varphi(d)}{N(d)} \prod_{\substack{\pi \text{ prime} \\ \pi \equiv 1 \pmod 3 \\ \pi \mid d}} G(\pi, w),
\end{align*}
where we used the identity $\frac{\varphi(\mathrm{rad}(d))}{N(\mathrm{rad}(d))} = \frac{\varphi(d)}{N(d)}$. Therefore, the sum over $d$ in \eqref{generating-series} is equal to
\begin{align}
    \mathcal{E}(w) \sum_{\substack{d \in \Z[\omega] \\ d \equiv 1\pmod{3}\\ \big(\frac{b_1}{(b_1, d)}, \frac{b_2}{(b_2, d)} \big)=1}} \frac{1}{N(d)^{1+ 2w}} \prod_{\substack{\pi \text{ prime} \\ \pi \equiv 1 \pmod 3 \\ \pi \mid d}} \frac{N(\pi) \cdot (E^{-1} G)(\pi, w)}{N(\pi) + 1} \prod_{\substack{\pi \text{ prime} \\ \pi \equiv 1 \pmod 3\\ \pi \mid \frac{b_1 b_2}{(b_1,d) (b_2, d)}}} (E^{-1}F)(\pi, w). \label{d-sum}  
\end{align}
 
Let $b := (b_1, b_2)$. Since $\big(\frac{b_1}{(b_1, d)}, \frac{b_2}{(b_2, d)} \big) = 1$ if and only if $b \mid d$, we can uniquely factorise $d = b b' d'$, for $b', d' \equiv 1 \pmod 3$ with $b' \mid b^\infty$ and $(d', b)=1$. Then \eqref{d-sum} is equal to
\begin{align}
    \mathcal{E}(w) \sumtwo_{\substack{b', d' \in \Z[\omega] \\ b', d' \equiv 1\pmod{3}\\ b' \mid b^\infty,\ (d', b) = 1}} \frac{1}{N(b b' d')^{1+2w}} \prod_{\substack{\pi \text{ prime} \\ \pi \equiv 1 \pmod 3 \\ \pi \mid b d'}} \frac{N(\pi) \cdot (E^{-1} G)(\pi, w)}{N(\pi) + 1} \prod_{\substack{\pi \text{ prime} \\ \pi \equiv 1 \pmod 3\\ \pi \mid b_1 b_2,\ \pi \nmid b d'}} (E^{-1}F)(\pi, w),\nonumber
\end{align}
where we used the fact that the $b_i$ are squarefree.
At this point the sum over $b'$ can be freely evaluated, and we obtain
\begin{equation*}
    \sum_{\substack{b' \in \Z[\omega] \\ b'\equiv 1\pmod{3}\\ b' \mid b^\infty}} \frac{1}{N(b b')^{1+2w}} = \prod_{\substack{\pi \text{ prime} \\ \pi \equiv 1 \pmod 3 \\ \pi \mid b}} \sum_{k=1}^\infty \frac{1}{N(\pi)^{k(1+2w)}} = \prod_{\substack{\pi \text{ prime} \\ \pi \equiv 1 \pmod 3 \\ \pi \mid b}} \frac{1}{N(\pi)^{1+2w} - 1}.
\end{equation*}
Thus \eqref{d-sum} is equal to
\begin{align*}
    & \mathcal{E}(w) \prod_{\substack{\pi \text{ prime} \\ \pi \equiv 1 \pmod 3 \\ \pi \mid b}}  \Big( 1 + \frac{1}{N(\pi)} \Big)^{-1} \frac{(E^{-1}G)(\pi, w)}{N(\pi)^{1+2w} - 1} \\
    &  \times  \prod_{\substack{\pi \text{ prime} \\ \pi \equiv 1 \pmod 3\\ \pi \mid b_1 b_2,\ \pi \nmid b }} (E^{-1}F)(\pi, w) \sum_{\substack{d' \in \Z[\omega] \\ d' \equiv 1\pmod{3}\\ (d', b) = 1}} \frac{1}{N(d')^{1+2 w}} \nonumber \\
    & \times \prod_{\substack{\pi \text{ prime} \\ \pi \equiv 1 \pmod 3 \\ \pi \mid d'}} \Big( 1 + \frac{1}{N(\pi)} \Big)^{-1} (E^{-1}G)(\pi, w)  \prod_{\substack{\pi \text{ prime} \\ \pi \equiv 1 \pmod 3\\ \pi \mid (b_1 b_2, d')}} (EF^{-1})(\pi, w).
\end{align*}
The sum over $d'$ in the display above equals
\begin{align*} 
    & \prod_{\substack{\pi \text{ prime} \\ \pi \equiv 1 \pmod 3 \\ \pi \nmid b_1 b_2}} \Big( 1 + \Big( 1 + \frac{1}{N(\pi)} \Big)^{-1} \frac{(E^{-1}G)(\pi, w)}{N(\pi)^{1+2w}-1} \Big) \\
    \times &\ \prod_{\substack{\pi \text{ prime} \\ \pi \equiv 1 \pmod 3 \\ \pi \mid b_1 b_2,\ \pi \nmid b}} \Big( 1 + \Big( 1 + \frac{1}{N(\pi)} \Big)^{-1} \frac{(F^{-1}G)(\pi, w)}{N(\pi)^{1+2w}-1} \Big),
\end{align*}
which we rewrite as
\begin{align*}
    &\prod_{\substack{\pi \text{ prime} \\ \pi \equiv 1 \pmod 3}} \Big( 1 + \Big( 1 + \frac{1}{N(\pi)} \Big)^{-1} \frac{(E^{-1}G)(\pi, w)}{N(\pi)^{1+2w}-1} \Big) \\
    \times &\ \prod_{\substack{\pi \text{ prime} \\ \pi \equiv 1 \pmod 3 \\ \pi \mid b_1 b_2}} \Big( 1 + \Big( 1 + \frac{1}{N(\pi)} \Big)^{-1} \frac{(E^{-1}G)(\pi, w)}{N(\pi)^{1+2w}-1} \Big)^{-1}\\
    \times &\ \prod_{\substack{\pi \text{ prime} \\ \pi \equiv 1 \pmod 3 \\ \pi \mid \frac{b_1 b_2}{b^2}}} \Big( 1 + \Big( 1 + \frac{1}{N(\pi)} \Big)^{-1} \frac{(F^{-1}G)(\pi, w)}{N(\pi)^{1+2w}-1} \Big).
\end{align*}

Inserting this into \eqref{d-sum} and \eqref{generating-series} we obtain 
\begin{align}
    \mathcal{G}_{b_1, b_2}(w) & = \Big( \frac{3^{1/2+w}}{3^{1/2+w}-1} \Big)^2 \cdot \mathcal{F}(w) \cdot \mathcal{H}_{b_1, b_2}(w) \label{G_product_expression},
\end{align}  
for
\begin{align*}
    \mathcal{F}(w) &:= \prod_{\substack{\pi \text{ prime} \\ \pi \equiv 1 \pmod 3}} E_0(\pi, w)
\end{align*}
and
\begin{align}
    \mathcal{H}_{b_1, b_2}(w) := N(b_1 b_2)^{1/2 + w} \prod_{\substack{\pi \text{ prime} \\ \pi \equiv 1 \pmod 3\\ \pi \mid b}} E_1(\pi , w) \prod_{\substack{\pi \text{ prime} \\ \pi \equiv 1 \pmod 3\\ \pi \mid \frac{b_1 b_2}{b^2}}} E_2(\pi, w) \prod_{\substack{\pi \text{ prime} \\ \pi \equiv 1 \pmod 3\\ \pi \mid {b_1 b_2}}} E_3(\pi, w), \label{H_Euler_prod_def}
\end{align}
where, assuming from now on that $\Re(w) \geq -\frac{1}{2}+\varepsilon$, 
\begin{align*} 
    E_0(\pi, w) &:= \Big( 1 + \Big( 1 + \frac{1}{N(\pi)} \Big)^{-1} \frac{(E^{-1}G)(\pi, w)}{N(\pi)^{1+2w}-1} \Big) \cdot E(\pi, w), \\
    E_1(\pi, w) &:= \Big( 1 + \frac{1}{N(\pi)} \Big)^{-1} \frac{(E^{-1}G)(\pi, w)}{N(\pi)^{1+2w}-1} = O_\varepsilon\Big(\frac{1}{N(\pi)^{1+2\Re(w)}}\Big),\\
    E_2(\pi, w) &:= \Big( 1 + \Big( 1 + \frac{1}{N(\pi)} \Big)^{-1} \frac{(F^{-1}G)(\pi, w)}{N(\pi)^{1+2w}-1} \Big) \cdot (E^{-1}F)(\pi, w) = O_\varepsilon\Big(\frac{1}{N(\pi)^{1+2\Re(w)}}\Big), \\
    E_3(\pi, w) &:= \Big( 1 + \Big( 1 + \frac{1}{N(\pi)} \Big)^{-1} \frac{(E^{-1}G)(\pi, w)}{N(\pi)^{1+2w}-1} \Big)^{-1} = 1 + O_\varepsilon\Big(\frac{1}{N(\pi)^{1+2 \Re(w)}}\Big).
\end{align*} 
We easily obtain the uniform bound
\begin{equation}
    \mathcal{H}_{b_1, b_2}(w) \ll N(b_1b_2)^\varepsilon N\Big(\frac{b_1b_2}{b^2}\Big)^{-1/2-\Re(w)} \ll X^\varepsilon N\Big(\frac{b_1b_2}{b^2}\Big)^{-1/2-\Re(w)}, \label{H_uniform_bound}
\end{equation}
and a tedious computation gives
\begin{align}
    E_0(\pi, w)  \Big(1 - \frac{1}{N(\pi)^{1+2w}}\Big) = &\ 1 - \frac{1}{N(\pi)^{1+2w}} \nonumber \\
    & \quad + \Big(1 + \frac{1}{N(\pi)}\Big)^{-1} \Big(\frac{2}{N(\pi)^{3/2+3w}-1} + \frac{1}{N(\pi)^{1+2w}} \Big) \label{P_Euler_factor}\\
    = &\ 1 + \frac{2}{N(\pi)^{3/2 + 3w} - 1} + O_\varepsilon\Big( \frac{1}{N(\pi)^{1+\varepsilon}}\Big) \nonumber \\
    = &\ \Big(1 - \frac{1}{N(\pi)^{3/2 + 3w}}\Big)^{-2}\Big(1 - \frac{1}{N(\pi)^{3 + 6w}}\Big)\Big(1 + O_\varepsilon\Big( \frac{1}{N(\pi)^{1+\varepsilon}}\Big) \Big). \nonumber
\end{align}
Thus
\begin{equation*}
    \mathcal{F}(w) := \prod_{\substack{\pi \text{ prime} \\ \pi \equiv 1 \pmod 3}} E_0(\pi, w) = \frac{\zeta_\lambda(1+2w) \zeta_\lambda(3/2+3w)^2}{\zeta_\lambda(3+6w)} \mathcal{J}(w),
\end{equation*}
where $\mathcal{J}(w)$ is holomorphic and uniformly bounded for $\Re(w) \geq -\frac{1}{2}+\varepsilon$. Inserting this into \eqref{G_product_expression}, we conclude that $\mathcal{G}_{b_1, b_2}(w)$ is meromorphic for $\Re(w) \geq -\frac{1}{3}+\varepsilon$ with a simple pole at $w=0$, a double pole at $w=-\frac{1}{6}$ (since one can check that $\mathcal{J}(-1/6)\neq 0$), and no other poles. 

We shift the line of integration in \eqref{after-mellin} to $\Re(w) = -\frac{1}{6} + \varepsilon$, and conclude -- using the convexity bound for $\zeta_{\lambda}(1+2w)$, the absolute convergence of the remaining Euler products, and \eqref{H_uniform_bound} -- that the remaining integral is
\begin{equation*}
    \ll X^{5/6+2\varepsilon} N\Big(\frac{b_1b_2}{b^2}\Big)^{-1/3} \int_{-1/6+\varepsilon-i\infty}^{-1/6+\varepsilon+i\infty} |\Gamma(1/2 + w)|^2 |\check{F}(w)| |w|^{100}  |dw| \ll X^{5/6+\varepsilon} N\Big(\frac{b_1b_2}{b^2}\Big)^{-1/3}.
\end{equation*}
We get from  \eqref{after-mellin} that
\begin{align} 
\mathcal{M}(b_1, b_2) = \frac{2 \pi X}{3^{9/2} \zeta_\lambda(2)} \frac{d}{dw} \Big[  \Big(\frac{4 \pi^2}{3X} \Big)^{-w} \Big(\frac{\Gamma (\frac{1}{2}+w )}{\Gamma (\frac{1}{2} )} \Big)^2 \check{F}(w) w \mathcal{G}_{b_1, b_2}(w)   \Big]_{w=0} & \nonumber \\ 
+\ O_\varepsilon \Big( \frac{X^{1+\varepsilon}}{Y} + X^{5/6+\varepsilon} N\Big(\frac{b_1b_2}{b^2}\Big)^{-1/3} \Big) & \nonumber
\end{align}
since there is a double pole of the integrand at $w=0$.

\begin{remark}
    We could shift all the way to $\Re(w) = -\frac{1}{3}+\varepsilon$, collecting the double pole of the integrand at $w=-\frac{1}{6}$ to show that the remaining integral is equal to
    \begin{equation*}
        X^{5/6} P_{b_1, b_2}(\log{X}) + O_{b_1, b_2, \varepsilon}(X^{2/3+\varepsilon})
    \end{equation*}
    for some explicit polynomial $P_{b_1, b_2}$ of degree $1$. This would not directly improve our final results, so we refrain from doing so for simplicity. However, observe that this matches the size of the second order main term in a previously mentioned conjecture of Diaconu \cite[Conjecture~4.5]{Dia}.
\end{remark}

It remains to compute the residue at $w = 0$, which is a double pole of the integrand in \eqref{after-mellin}. By the identity \eqref{zeta_lambda_identity} and the class number formula,
\begin{align}
    \mathop{\mathrm{Res}}_{w=0} \zeta_\lambda(1+2w) & = \frac{1}{2}\cdot \mathop{\mathrm{Res}}_{s=1} \zeta_\lambda(s) = \frac{(1-3^{-1})}{2} \cdot \mathop{\mathrm{Res}}_{s=1} \zeta_{\Q(\omega)}(s) \nonumber \\
    & = \frac{1}{3}\cdot \frac{2\pi \cdot h_{\Q(\omega)}}{w_{\Q(\omega)} \cdot \sqrt{|d_{\Q(\omega)}|}} = \frac{\pi}{9 \sqrt{3}}. \label{zeta_residue}
\end{align}
Around $w=0$ we have the series expansions
\begin{align*}
    &\Big(\frac{3X}{4 \pi^2} \Big)^{w} = 1 + w \log\Big(\frac{3X}{4 \pi^2} \Big) + O_X(w^2), \\
    &\Big(\frac{\Gamma (\frac{1}{2}+w )}{\Gamma (\frac{1}{2} )} \Big)^2 \Big( \frac{3^{1/2+w}}{3^{1/2+w}-1} \Big)^2 w \zeta_\lambda(1+2w) = \frac{\pi}{6\sqrt{3}(2-\sqrt{3})} + cw + O(w^2), \\
    & \check{F}(w) = \check{F}(0)\Big(1 + w \frac{\check{F}'}{\check{F}}(0) + O_F(w^2)\Big), \\
    & \mathcal{P}(w) := \frac{\zeta_\lambda(3/2+3w)^2}{\zeta_\lambda(3+6w)} \mathcal{J}(w) = \mathcal{P}(0) \Big( 1 + w \frac{\mathcal{P}'}{\mathcal{P}}(0) + O(w^2) \Big), \\
    & \mathcal{H}_{b_1, b_2}(w) = \mathcal{H}_{b_1, b_2}(0)\Big( 1 + w \frac{\mathcal{H}'_{b_1, b_2}}{\mathcal{H}_{b_1, b_2}}(0) + O_{b_1, b_2}(w^2) \Big),
\end{align*}
for some absolute constant $c$. Therefore the residue at $w = 0$ gives rise in \eqref{after-mellin} to a term
\begin{equation}\label{main_term_pole}
    \frac{\pi^2 X}{2^3\cdot 3^4 \cdot(2-\sqrt{3}) \cdot \zeta_{\Q(\omega)}(2)} \check{F}(0) \mathcal{P}(0) \mathcal{H}_{b_1, b_2}(0) \Big[ \log{X} + C_0  + \frac{\mathcal{H}'_{b_1, b_2}}{\mathcal{H}_{b_1, b_2}}(0) \Big],
\end{equation}
where $C_0$ is a linear combination of $1$ and $\frac{\check{F}'}{\check{F}}(0)$ with (absolute) constant coefficients (note that $\frac{\mathcal{P}'}{\mathcal{P}}(0)$ appears inside $C_0$). From \eqref{P_Euler_factor} we obtain
\begin{equation*}
    \mathcal{P}(0) = \prod_{\substack{\pi \text{ prime} \\ \pi \equiv 1 \pmod 3 \\ q\; :=\; N(\pi)}} \Big( 1 - \frac{1}{q(q+1)} + \frac{2q}{(q+1) (q^{3/2}-1)} \Big) > 0.
\end{equation*}

Now \eqref{H_Euler_prod_def} also gives
\begin{align}
    \frac{\mathcal{H}'_{b_1, b_2}}{\mathcal{H}_{b_1, b_2}}(w) = \log{N(b_1b_2)} + \sum_{\substack{\pi \text{ prime} \\ \pi \equiv 1 \pmod 3\\ \pi \mid b}} \frac{(E_1 E_3)'}{E_1 E_3}(\pi , w) + \sum_{\substack{\pi \text{ prime} \\ \pi \equiv 1 \pmod 3\\ \pi \mid \frac{b_1 b_2}{b^2}}} \frac{(E_2 E_3)'}{E_2 E_3}(\pi , w). \label{H_log_deriv}
\end{align}
Denoting $q := N(\pi)$, we can compute
\begin{align}
    (E_1 E_3)(\pi, w) = \frac{q (q^{3/2+3w} + 1)}{q^{7/2+5w} + q^{5/2+5w} - q^{3/2+3w} + q^{2+2w} - q^{1+2w} + 1} \label{D_1_eq}
\end{align}
and
\begin{align}
    (E_2 E_3)(\pi, w) = \frac{q^{2+2w} (q^{1/2+w} + 1)}{q^{7/2+5w} + q^{5/2+5w} - q^{3/2+3w} + q^{2+2w} - q^{1+2w} + 1} . \label{D_2_eq}
\end{align}
It follows that
\begin{align*}
    \frac{(E_1 E_3)'}{(E_1 E_3)}(\pi, 0) = -2 \log{q} + D_{1}(\pi) \frac{\log{q}}{q}
\end{align*}
and
\begin{align*}
    \frac{(E_2 E_3)'}{(E_2 E_3)}(\pi, 0) = -2 \log{q} + D_{2}(\pi) \frac{\log{q}}{\sqrt{q}},
\end{align*}
where $D_i(\pi) \ll 1$ for $i \in \{1, 2\}$. Thus \eqref{H_log_deriv}, \eqref{D_1_eq}, and \eqref{D_2_eq} imply
\begin{align*}
    \frac{\mathcal{H}'_{b_1, b_2}}{\mathcal{H}_{b_1, b_2}}(0) = - \log{N\Big(\frac{b_1 b_2}{b^2}\Big)} + \sum_{\substack{\pi \text{ prime} \\ \pi \equiv 1 \pmod 3\\ \pi \mid b}} D_{1}(\pi) \frac{\log{N(\pi)}}{N(\pi)} + \sum_{\substack{\pi \text{ prime} \\ \pi \equiv 1 \pmod 3\\ \pi \mid \frac{b_1 b_2}{b^2}}} D_{2}(\pi) \frac{\log{N(\pi)}}{\sqrt{N(\pi)}}.
\end{align*}

Furthermore, 
\begin{align*}
    \mathcal{H}_{b_1, b_2}(0) = N\Big(\frac{b_1 b_2}{b^2}\Big)^{-1/2} g(b) h\Big(\frac{b_1 b_2}{b^2}\Big),
\end{align*}
where $g$ and $h$ are the multiplicative functions given, for $k\geq 1$, by
\begin{align*}
    g(\pi^k) := N(\pi)\cdot (E_1 E_3)(\pi, 0) = 1 - \frac{(q^{3/2} - 1)(q - 1)}{q^{7/2} + q^{5/2} + q^{2} - q^{3/2} - q + 1} = 1 + O\Big(\frac{1}{N(\pi)}\Big)
\end{align*}
and
\begin{align*}
    h(\pi^k) := N(\pi) \cdot (E_2 E_3)(\pi, 0) = 1 + \frac{(q^2 - q^{3/2} + 1)(q - 1)}{q^{7/2} + q^{5/2} + q^{2} - q^{3/2} - q + 1} = 1 + O\Big(\frac{1}{\sqrt{N(\pi)}}\Big).
\end{align*}

Inserting this into \eqref{main_term_pole} and consolidating our work, we conclude that
\begin{align*}
    \mathcal{M}(b_1, b_2) = D \check{F}(0) X N\Big(\frac{b_1 b_2}{b^2}\Big)^{-1/2} g(b) h\Big(\frac{b_1 b_2}{b^2}\Big) \Big[ \log{\Big(\frac{X N(b^2)}{N(b_1 b_2)}\Big)} + \mathcal{O}(b_1, b_2) \Big] & \\
    +\ O_\varepsilon\Big(\frac{X^{1+\varepsilon}}{Y} + X^{5/6+\varepsilon} N\Big(\frac{b_1b_2}{b^2}\Big)^{-1/3}\Big)&
\end{align*}
for
\begin{equation*}
    D :=  \frac{\pi^2 \cdot \mathcal{P}(0)}{2^3\cdot 3^4 \cdot(2-\sqrt{3}) \cdot \zeta_{\Q(\omega)}(2)} = \frac{\pi^2 \cdot \mathcal{P}(0)}{648(2-\sqrt{3}) \cdot \zeta_{\Q(\omega)}(2)}
\end{equation*}
and
\begin{equation*}
    \mathcal{O}(b_1, b_2) := C_0 + \sum_{\substack{\pi \text{ prime} \\ \pi \equiv 1 \pmod 3\\ \pi \mid b}} D_{1}(\pi) \frac{\log{N(\pi)}}{N(\pi)} + \sum_{\substack{\pi \text{ prime} \\ \pi \equiv 1 \pmod 3\\ \pi \mid \frac{b_1 b_2}{b^2}}} D_{2}(\pi) \frac{\log{N(\pi)}}{\sqrt{N(\pi)}},
\end{equation*}
which gives the desired main term.

 
\subsection{The error term \texorpdfstring{$\mathcal{R}(b_1, b_2)$}{}: initial manipulations}

It remains to show the bound \eqref{R_average_bound} for the error terms. Note that it suffices to show that
\begin{align}
    \mathfrak{S}(B_1, B_2) &:= \mathop{\sum\sum}_{\substack{b_1, b_2 \in \Z[\omega] \\ b_1, b_2 \equiv 1 \pmod 3  \\ N(b_1) \sim B_1,\ N(b_2) \sim B_2}} \mu^2(b_1) \mu^2(b_2) c(b_1, b_2) \mathcal{R}(b_1, b_2) \nonumber \\
    &\ll_{F, \varepsilon} X^{1/2+\varepsilon} (B_1B_2)^{1/2} \Big(X^{1/6} Y^{1/2}B^{3/2} + Y B^2 + X^{1/3}B\Big) \label{B_1B_2_final_bound}
\end{align}
for $\frac{1}{2} \leq B_1, B_2 \leq X^{100}$, $B := \max\{B_1, B_2\}$, and arbitrary coefficients with $|c(b_1, b_2)| \leq 1$. Our aim for the rest of this section is to prove \eqref{B_1B_2_final_bound}.

We have almost separated the variables $m_1$ and $m_2$ in \eqref{expression_to_split}, except for the condition $(m_1, m_2) = 1$ and the Archimedean transform $\ddot{F}$. This can be remedied by standard maneuvers via M{\"o}bius inversion and Mellin inversion, respectively. First notice the following simple but important bound.

\begin{lemma}\label{F_dot_dot_bound_lemma}
    For any $\mathfrak{n}_1, \mathfrak{n}_2 \in \Z[\omega]$, $u \in \C$, and $A \in \Z_{\geq 0}$, we have the uniform bound
    \begin{align*}
    \ddot{F}_{\mathfrak{n}_1, \mathfrak{n}_2}(u) \ll_{F, A} \Big(1 + |u| + \frac{N(\mathfrak{n}_1 \mathfrak{n}_2)}{X}\Big)^{-A}.
\end{align*}
\end{lemma}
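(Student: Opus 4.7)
My plan is to prove the two factor bounds separately and then combine them. First I will exploit the support of $F$, which restricts the integral in \eqref{Vddot} to $t \in (1,\sqrt{2})$. On this range the argument $\frac{N(\mathfrak{n}_1\mathfrak{n}_2)}{3Xt^2}$ of $\Phi_2$ is comparable to $N(\mathfrak{n}_1\mathfrak{n}_2)/X$, so the decay property \eqref{Phi_bound} combined with the trivial estimate $|J_0|\leq 1$ immediately gives
\[
\ddot{F}_{\mathfrak{n}_1,\mathfrak{n}_2}(u) \ll_{F,A} \big(1+N(\mathfrak{n}_1\mathfrak{n}_2)/X\big)^{-A}
\]
for any $A\geq 0$, uniformly in $u$.

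Next I will extract decay in $|u|$. Setting $W(r) := F(r)\Phi_2\!\big(N(\mathfrak{n}_1\mathfrak{n}_2)/(3Xr)\big)$ and $\alpha := 4\pi |u|/(9\sqrt{3})$, I want to show $\int_0^\infty tW(t^2)J_0(\alpha t)\,dt \ll_{F,A}\alpha^{-A}$ with an implicit constant independent of $\mathfrak{n}_1,\mathfrak{n}_2$. The key observation is that because the exponent $A$ in \eqref{Phi_bound} may be taken arbitrarily large, the chain rule shows that every derivative $W^{(k)}$ is bounded on $(1,\sqrt{2})$ by a constant depending only on $F$ and $k$ (the derivatives of $\Phi_2$ produce extra factors of $N(\mathfrak{n}_1\mathfrak{n}_2)/X$, but each such factor is absorbed by taking a larger $A$ in the $\Phi_2$ decay estimate). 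With this uniformity in hand, I iteratively apply the Bessel identity
\[
\frac{d}{dt}\big((\alpha t)^n J_n(\alpha t)\big) = \alpha^{n+1}t^n J_{n-1}(\alpha t),
\]
integrating by parts $A$ times inside $\int_0^\infty tW(t^2)J_0(\alpha t)\,dt$; the boundary contributions vanish since $W$ is compactly supported in $(1,2)$, and the trivial bound $|J_n|\leq C_n$ on the resulting integrand gains one factor of $1/\alpha$ per step. This produces $\ddot{F}_{\mathfrak{n}_1,\mathfrak{n}_2}(u) \ll_{F,A} (1+|u|)^{-A}$ after trivially handling small $|u|$.

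Finally, applying both bounds with exponent $2A$ and using that $1+|u|+N(\mathfrak{n}_1\mathfrak{n}_2)/X \leq 2\max\!\big(1+|u|,\,1+N(\mathfrak{n}_1\mathfrak{n}_2)/X\big)$, the desired estimate follows from
\[
\ddot{F}_{\mathfrak{n}_1,\mathfrak{n}_2}(u) \ll_{F,A} \min\!\big\{(1+|u|)^{-2A},\,(1+N(\mathfrak{n}_1\mathfrak{n}_2)/X)^{-2A}\big\} \ll_A \big(1+|u|+N(\mathfrak{n}_1\mathfrak{n}_2)/X\big)^{-A}.
\]
The only delicate point is ensuring uniformity in $\mathfrak{n}_1,\mathfrak{n}_2$ of the implied constants during the Bessel integration by parts, which is the reason for the preparatory bound on $W^{(k)}$ using the arbitrariness of $A$ in \eqref{Phi_bound}.
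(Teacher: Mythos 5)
Your proof is correct and follows essentially the same route as the paper: repeated integration by parts against the Bessel function via the antiderivative identity $\frac{d}{dx}(x^nJ_n(x))=x^nJ_{n-1}(x)$, combined with the rapid decay of $\Phi_2$ and its derivatives on the support of $F$. The only (immaterial) difference is organizational — the paper obtains both decay factors in one pass by noting that $F_{\mathfrak{n}_1,\mathfrak{n}_2}^{(j)}(t)\ll_{F,j,A}(1+N(\mathfrak{n}_1\mathfrak{n}_2)/X)^{-A}$, whereas you prove the two factor bounds separately and combine them with a $\min$ argument.
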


\begin{proof}
From \eqref{Vddot} and integration by parts, antidiferentiating the Bessel function as in \cite[(4.11)]{DR}, for any $j \in \Z_{\geq 0}$ (and assuming $u\neq 0$ if $j \neq 0$) we have
\begin{equation*}
    \ddot{F}_{\mathfrak{n}_1, \mathfrak{n}_2}(u) = (-1)^j \Big(\frac{9\sqrt{3}}{2\pi}\Big)^j \frac{1}{|u|^j} \int_0^\infty F_{\mathfrak{n}_1, \mathfrak{n}_2}^{(j)} (r^2) \cdot r^{j+1} J_j\Big(\frac{4\pi r |u|}{9 \sqrt{3}}\Big) dr.
\end{equation*}
Recall from \eqref{F_def} that $F_{\mathfrak{n}_1, \mathfrak{n}_2}(t) = F(t) \Phi_2\Big(\frac{N(\mathfrak{n}_1 \mathfrak{n}_2)}{3Xt}\Big)$. Then \eqref{Phi_bound} and the fact that $F$ has support in $(1, 2)$ give 
\begin{equation*}
    F_{\mathfrak{n}_1, \mathfrak{n}_2}^{(j)}(t) \ll_{F, j, A} \Big(1 + \frac{N(\mathfrak{n}_1 \mathfrak{n}_2)}{X}\Big)^{-A}.
\end{equation*}
Putting those together for $j=0$ if $|u| \leq 1$ or $j=A$ if $|u|\geq 1$ gives the desired result.

\end{proof}

\begin{remark}\label{F_bound_remark}
    From \cref{F_dot_dot_bound_lemma} and $\log N(b_1b_2\ell) \ll \log{X}$, we have that if $N(m_1m_2) \gg \frac{X^{1+\varepsilon} N(b_1b_2)}{N(d_1 d_2 d^2)}$ or $N(k) \gg \frac{N(\ell^2 m_1 m_2 d_1 d_2 e)}{X^{1-\varepsilon}}$, then
    \begin{equation*}
        \ddot{F}_{\frac{\lambda^{g_1} d d_1 m_1}{b_1},\frac{\lambda^{g_2} d d_2 m_2}{b_2} }  \Big( \frac{k \sqrt{X}}{\ell^2 m_1 m_2 d_1 d_2 e} \Big) \ll_{F, A, \varepsilon} X^{-A} N(m_1 m_2 d_1 d_2 d k)^{-A}.
    \end{equation*}
\end{remark}

We now assume $u \neq 0$. Shifting the line of integration in \eqref{def-Phi}, by absolute convergence and Stirling's formula we also obtain
\begin{align*}
    \ddot{F}_{\mathfrak{n}_1, \mathfrak{n}_2}(u) &= \frac{1}{2\pi i} \int_{\varepsilon -i\infty}^{\varepsilon + i\infty} \Big(\frac{4 \pi^2 N(\mathfrak{n}_1 \mathfrak{n}_2)}{3X}\Big)^{-w} \Big(\frac{\Gamma (\frac{1}{2}+w )}{\Gamma (\frac{1}{2} )}\Big)^2 \int_0^\infty r^{2w} F(r^2)  r J_0\Big(\frac{4\pi r|u|}{9\sqrt{3}}\Big) dr \frac{dw}{w} \\
    & = \frac{1}{2\pi i} \int_{\varepsilon -iX^\varepsilon}^{\varepsilon + iX^\varepsilon} \Big(\frac{4 \pi^2 N(\mathfrak{n}_1 \mathfrak{n}_2)}{3X}\Big)^{-w} \Big(\frac{\Gamma (\frac{1}{2}+w)}{\Gamma (\frac{1}{2})}\Big)^2 \mathcal{I}(w, u) \frac{dw}{w} + O_{A, \varepsilon}(X^{-A})
\end{align*}
for all sufficiently large $X>0$, where $\mathcal{I}(w, u)$ is given in \eqref{Iwudef}. Note that
the truncation is justified since $\mathcal{I}(w, u) \ll 1$ uniformly for $\Re(w) = \varepsilon$. Furthermore, using integration by parts and the Mellin--Barnes integral as in \cite[(7.18)]{DR} gives
\begin{align} \label{Iwudef}
    \mathcal{I}(w, u) := &\ \int_0^\infty r^{2w} F(r^2)  r J_0\Big(\frac{4\pi r|u|}{9\sqrt{3}}\Big) dr \\
    =&\ \frac{(-1)^j}{2\pi i} \int_0^\infty \int_{-\varepsilon -i\infty}^{-\varepsilon + i\infty} G_w^{(j)} (r^2) r^{2j+1}\frac{\Gamma(-s)}{\Gamma(j+s+1)} \Big(\frac{2\pi r|u|}{9\sqrt{3}}\Big)^{2s} ds\: dr
\end{align}
for $G_w(y) := y^w F(y)$, $u \neq 0$, and $j\in \Z_{\geq 1}$. Observe that $G_w^{(j)}$ is supported in $(1, 2)$ and satisfies the uniform bound
\begin{equation*}
    G_w^{(j)}(y) \ll_{F, j} (1+|w|)^j.
\end{equation*}
Choosing $j$ sufficiently large in terms of $\varepsilon$ (but fixed), Stirling's formula implies that
\begin{equation}\label{F_transform_bulk}
    \ddot{F}_{\mathfrak{n}_1, \mathfrak{n}_2}(u) = \int_{\varepsilon -iX^\varepsilon}^{\varepsilon + iX^\varepsilon}\int_1^{\sqrt{2}} \int_{-\varepsilon -iX^{\varepsilon}}^{-\varepsilon + iX^{\varepsilon}} \mathcal{G}_2(w, r, s)  \frac{|u|^{2s}}{N(\mathfrak{n}_1 \mathfrak{n}_2)^{w}} ds\: dr\: dw + O_{F, \varepsilon}((1 + |u|^{-2\varepsilon})X^{-2000}).
\end{equation}
Note that to obtain the above display we also used that 
\begin{align*}
    \mathcal{G}_2(w, r, s) :=&\ \frac{(-1)^j}{(2\pi i)^2} \Big(\frac{4\pi^2}{3X}\Big)^{-w} \Big(\frac{\Gamma (\frac{1}{2}+w )}{\Gamma (\frac{1}{2} )}\Big)^2 \frac{G_w^{(j)}(r^2) r^{2j+1}}{w} \frac{\Gamma(-s)}{\Gamma(j+s+1)} \Big(\frac{2\pi r}{9\sqrt{3}}\Big)^{2s} \nonumber \\
    \ll_{F, j} &\ \frac{X^\varepsilon \Gamma(\frac{1}{2}+w)^2 (1+|w|)^j \Gamma(-s)}{(j+s)(j-1+s) \cdots (1+s) \Gamma(1+s)} \ll_j \frac{X^\varepsilon}{(1+ |\Im(s)|)^j}
\end{align*}
for $\Re(w) = \varepsilon$ and $\Re(s) = -\varepsilon$.
In particular, we obtain the uniform bound
\begin{align}\label{G_func_uniform_bound}
    \mathcal{G}_2(w, r, s) \ll_F X^\varepsilon.
\end{align}

It is convenient to introduce a smooth non-negative function $H$ with compact support in $(1, 3)$ which furnishes a partition of unity
\begin{equation}\label{partition_of_unity}
    1 = \sum_{h \in \Z} H\Big(\frac{x}{2^h}\Big) \qquad \text{for every } x>0. 
\end{equation}
Applying M{\"o}bius inversion to the condition $(m_1, m_2) = 1$ and adding partitions of unity to the sums over $m_1$ and $m_2$, we conclude that if $k\neq 0$ and $N(\ell) \sim L \gg 1$ then the sums over $m_1$ and $m_2$ in \eqref{expression_to_split} are equal to
\begin{align}
    &\sumtwo_{\substack{\alpha_1, \alpha_2 \in \Z \\ M_i = 2^{\alpha_i} \gg 1 \\ M_1 M_2 \ll \frac{X^{1+\varepsilon} N(b_1b_2)}{N(d_1 d_2 d^2)}}} \bbone_{N(k) \ll \frac{M_1 M_2 L^2 N(d_1 d_2 e)}{X^{1-\varepsilon}}} \sum_{\substack{f \in \Z[\omega] \\ f \equiv 1 \pmod{3} \\ (f, d)=1 \\ N(f) \ll M_1+M_2}} \mu(f) \int_{\varepsilon -iX^\varepsilon}^{\varepsilon + iX^\varepsilon}\int_1^{\sqrt{2}} \int_{-\varepsilon -iX^{\varepsilon}}^{-\varepsilon + iX^{\varepsilon}} \frac{\mathcal{G}_2(w, r, s) N(b_1b_2)^w}{N(\lambda^{g_1+g_2} d_1 d_2 d^2)^w} \nonumber \\
    &\times \Big(\frac{N(k)X}{N(\ell^2 d_1 d_2 e)}\Big)^s \Bigg(\sum_{\substack{m_1 \in \Z[\omega] \\ m_1 \equiv c_1 \pmod{9} \\ (m_1, d) = 1 \\ b_1 \mid dd_1m_1 \\ f \mid m_1}} \frac{\overline{\chi_{m_1}(\ell d_1 e^2)} \widetilde{g}_3(k, m_1)}{N(m_1)^{1+w+s}} H\Big(\frac{N(m_1)}{M_1}\Big) \Bigg) \nonumber \\
    &\times \Bigg( \sum_{\substack{m_2 \in \Z[\omega] \\ m_2 \equiv c_2 \pmod{9} \\ (m_2, d) = 1 \\ b_2 \mid dd_2m_2 \\ f \mid m_2}} \frac{\chi_{m_2}(\ell d_2 e^2) \overline{\widetilde{g}_3(k, m_2)}}{N(m_2)^{1+w+s}} H\Big(\frac{N(m_2)}{M_2}\Big) \Bigg) ds\: dr\: dw  \label{m_separation_Mellin}
\end{align}
plus the contributions from the error term in \eqref{F_transform_bulk} and the remaining ranges of $M_1, M_2$, and $N(k)$, which by \cref{F_bound_remark} contribute to \eqref{expression_to_split} a negligible error term of $O_{F, \varepsilon}(X^{-1000})$. The bound on $f$ and the condition $(f, d) = 1$ were added since otherwise the summands are zero. 

Thus
\begin{equation*}
    \mathcal{R}(b_1, b_2) = \sumtwo_{\substack{\alpha_1, \alpha_2 \in \Z \\ M_i = 2^{\alpha_i} \gg 1 \\ M_1 M_2 \ll X^{1+\varepsilon} N(b_1b_2)}} \sum_{\substack{\alpha \in \Z \\ L = 2^{\alpha} \gg 1 \\ L \leq Y}} \mathcal{R}^L_{M_1, M_2}(b_1, b_2) + O_{F, \varepsilon}(X^{-1000}),
\end{equation*}
where $\mathcal{R}^L_{M_1, M_2}(b_1, b_2)$ corresponds to the terms of $\mathcal{R}(b_1, b_2)$ with $N(m_i)$ localized at $M_i$ (using the partition of unity $H$) and $N(\ell)$ localized at $L$ (using a dyadic decomposition $N(\ell) \sim L$). Therefore
\begin{align}
    \mathfrak{S}(B_1, B_2) = \sumtwo_{\substack{\alpha_1, \alpha_2 \in \Z \\ M_i = 2^{\alpha_i} \gg 1 \\ M_1 M_2 \ll X^{1+\varepsilon} B_1 B_2}} \sum_{\substack{\alpha \in \Z \\ L = 2^{\alpha} \gg 1 \\ L \leq Y}} \mathfrak{S}_{M_1, M_2}^L(B_1, B_2) + O_{F, \varepsilon}(X^{-500}), \label{B_1_B_2_sum_decomposition}
\end{align}
where
\begin{align}\label{localized_sum_def}
    \mathfrak{S}_{M_1, M_2}^L(B_1, B_2) := \mathop{\sum\sum}_{\substack{b_1, b_2 \in \Z[\omega] \\ b_1, b_2 \equiv 1 \pmod 3  \\ N(b_1) \sim B_1,\ N(b_2) \sim B_2}} \mu^2(b_1) \mu^2(b_2) c(b_1, b_2) \mathcal{R}_{M_1, M_2}^L(b_1, b_2).
\end{align}

By the supplement to cubic reciprocity in \eqref{cubesupp}, if $m, c\equiv 1\pmod{3}$ then
\begin{equation}\label{congruence_with_chars}
    \bbone_{m\equiv c \pmod 9} = \frac{1}{9} \sum_{a, b=0}^2 \chi_c(\omega^a \lambda^b) \overline{\chi_m(\omega^a \lambda^b)} = \frac{1}{18} \sum_{\eta \mid 3} \chi_c(\eta) \overline{\chi_m(\eta)},
\end{equation}
so we can detect the conditions $m_i \equiv c_i\pmod{9}$ with linear combinations of cubic characters using \eqref{congruence_with_chars}. With $m_1$ and $m_2$ separated in \eqref{m_separation_Mellin}, we swap the integrals and the sum over $f$ with the sums over $\ell$ and $k$ in \eqref{expression_to_split}, and then put absolute values around the sum over $k$. We also introduce the sums over $b_1$ and $b_2$ as in \eqref{localized_sum_def}, obtaining an upper bound for $\mathfrak{S}_{M_1, M_2}^L(B_1, B_2)$. Recalling \eqref{G_func_uniform_bound}, and writing
\begin{equation}\label{f_i_def}
    f_i := \Big[\frac{b_i}{(b_i, dd_i)}, f \Big] = \Big[\frac{b_i}{(b_i, d)}, f \Big],
\end{equation}
which is squarefree (since so are $b_i$ and $f$), the resulting bound is
\begin{align}
    &\mathfrak{S}_{M_1, M_2}^L(B_1, B_2) \ll_F X^{1+\varepsilon} (B_1 B_2)^{1/2}  \sup_{\substack{t, t_1, t_2 \in \R \\ |t|, |t_1|, |t_2| \leq X^{\varepsilon}}}  \mathop{\sum\sum}_{\eta_1, \eta_2 \mid 3} \sum_{\substack{c \pmod 9 \\ c \equiv 1 \pmod{3}}} \mathop{\sum\sum\sum}_{\substack{d, d_1, d_2 \in \Z[\omega] \\ d, d_1, d_2 \equiv 1 \pmod{3} \\ (d_1, d_2)=1,\ d_1d_2 \mid d^\infty \\ N(d_1 d_2 d^2) \ll \frac{X^{1+\varepsilon} B_1 B_2}{M_1 M_2} \\ N(d) \gg \frac{B_i}{M_i}}} \frac{1}{N(d)} \nonumber \\
    & \times  \sum_{\substack{f \in \Z[\omega] \\ f \equiv 1 \pmod{3} \\ N(f) \ll M_1+M_2 \\ (f, d) = 1}} \mu^2(f) \sum_{\substack{0 \neq k \in \Z[\omega] \\ N(k) \ll \frac{M_1M_2 L^2 N(d_1 d_2 e)}{X^{1-\varepsilon}}}} |\mathcal{E}(d_1, d_2; k, e)| \Bigg|  \mathop{\sum\sum}_{\substack{b_1, b_2 \in \Z[\omega] \\ b_1, b_2 \equiv 1 \pmod 3  \\ N(b_1) \sim B_1\\ N(b_2) \sim B_2}} \mu^2(b_1) \mu^2(b_2) c(b_1, b_2) \nonumber \\
    & \qquad \times \sum_{\substack{\ell \in \Z[\omega] \\ \ell \equiv c \pmod 9 \\ N(\ell) \sim L \\ N(\ell) \leq Y \\ (\ell, d) = 1 }} \frac{\mu(\ell) \overline{\psi_{d_1, d_2}(\ell)}}{N(\ell)^{2-\varepsilon + it}} \Bigg( \sum_{\substack{m_1 \in \Z[\omega] \\ m_1 \equiv 1 \pmod{3} \\ (m_1, d)  = 1 \\ f_1 \mid m_1}} \frac{\overline{\chi_{m_1}(\eta_1 d_1 e^2 \ell)} \widetilde{g}_3(k, m_1)}{N(m_1)^{1+it_1}} H\Big(\frac{N(m_1)}{M_1}\Big) \Bigg) \nonumber \\ 
    &  \qquad \qquad \qquad  \times \Bigg(\sum_{\substack{m_2 \in \Z[\omega] \\ m_2 \equiv 1 \pmod{3} \\ (m_2, d)  = 1 \\ f_2 \mid m_2}} \frac{\chi_{m_2}(\eta_2 d_2 e^2 \ell) \overline{\widetilde{g}_3(k, m_2)}}{N(m_2)^{1-it_2}} H\Big(\frac{N(m_2)}{M_2}\Big) \Bigg) \Bigg| + X^{-500}. \label{R_after_abs_values}
\end{align}
We added a congruence condition on $\ell \pmod 9$ to remove the exponential phase. The upper bounds on $M_1M_2$, $d_1d_2d^2$, and $k$ follow from the ranges in \eqref{m_separation_Mellin}. The lower bound on $d$ comes from the divisibility condition $\frac{b_i}{(b_i, d)} \mid f_i \mid m_i$, so that 
\begin{equation*}
    M_i \gg N(m_i) \geq N\Big(\frac{b_i}{(b_i, d)}\Big) \geq \frac{N(b_i)}{N(d)} \gg \frac{B_i}{N(d)}. 
\end{equation*}


\subsection{The error term \texorpdfstring{$\mathcal{R}(b_1, b_2)$}{}: preparing to evaluate}

 For $i\in\{1, 2\}$, write (uniquely) $m_i = w_iv_i$ for $w_i, v_i\equiv 1\pmod{3}$ with $w_i \mid k^\infty$ and $(v_i, k)=1$, and observe that 
\begin{equation*}
    \widetilde{g}_3(k, w_iv_i) = \widetilde{g}_3(k, w_i) \overline{\chi_{v_i}(w_i)} \widetilde{g}_3(k, v_i) =  \widetilde{g}_3(k, w_i) \overline{\chi_{v_i}(kw_i)} \widetilde{g}_3(v_i).
\end{equation*}
Thus the sum over $m_i$ in \eqref{R_after_abs_values} is (up to complex conjugation) equal to
\begin{align*}
    &\sum_{\substack{w_i \in \Z[\omega] \\ w_i \equiv 1 \pmod{3} \\ (w_i, d)=1 \\ w_i\mid k^\infty}} \frac{\overline{\chi_{w_i}(\eta_i d_i e^2 \ell)}\widetilde{g}_3(k, w_i)}{N(w_i)^{1+it_i}} \sum_{\substack{v_i \in \Z[\omega] \\ v_i \equiv 1  \pmod{3} \\ (v_i, d)=1 \\ f_i \mid w_i v_i}} \frac{\overline{\chi_{v_i}(\eta_i d_i e^2 w_i k \ell)} \widetilde{g}_3(v_i)}{N(v_i)^{1+it_i}} H\Big(\frac{N(w_i v_i)}{M_i}\Big), 
\end{align*}
where we removed the condition $(v_i, k)=1$ since it is enforced by $\overline{\chi_{v_i}(k)}$.

We write
\begin{equation}\label{q_i_def}
    q_i := \frac{f_i}{(f_i, w_i)},
\end{equation} 
which is squarefree (since so is $f_i$), so the divisibility condition for the sum over $v_i$ becomes $q_i \mid v_i$. Note that we automatically have $(q_i, d) = 1$, since $b_i$ is squarefree and $(f, d)=1$, so by \eqref{f_i_def} we have $(f_i, d)=1$. 

Setting $v_i = q_in_i$ for $n_i\equiv 1\pmod{3}$ with $(n_i, d)=1$, 
observe that $v_i$ is squarefree (otherwise $g_3(v_i) = 0$), so $(q_i, n_i)=1$ and we once again have
\begin{equation*}
    \widetilde{g}_3(q_in_i) =  \widetilde{g}_3(q_i) \overline{\chi_{n_i}(q_i)} \widetilde{g}_3(n_i).
\end{equation*}
Using this identity (where we can drop the condition $(q_i, n_i) = 1$ due to the presence of $\overline{\chi_{n_i}(q_i)}$), commuting the sums over $w_1$ and $w_2$ past the sums over $b_1$, $b_2$, $\ell$, and removing them from the absolute value using the triangle inequality, we conclude that the expression in absolute values in \eqref{R_after_abs_values} is 
\begin{align}
    & \ll \mathop{\sum\sum}_{\substack{w_1, w_2 \in \Z[\omega] \\ w_1, w_2 \equiv 1 \pmod{3} \\ (w_1 w_2, d) = 1 \\ w_1 w_2 \mid k^\infty}} \frac{|\widetilde{g}_3(k, w_1)\widetilde{g}_3(k, w_2)|}{N(w_1w_2)}  \Bigg| \sum_{\substack{\ell \in \Z[\omega] \\ \ell \equiv c \pmod 9 \\ N(\ell) \sim L \\ N(\ell) \leq Y }} \frac{\mu(\ell) \overline{\psi_{d_1 e^2 w_1, d_2 e^2 w_2}(\ell)}}{N(\ell)^{2-\varepsilon + it}} \nonumber \\
    & \times \mathop{\sum\sum}_{\substack{b_1, b_2 \in \Z[\omega] \\ b_1, b_2 \equiv 1 \pmod 3  \\ N(b_1) \sim B_1\\ N(b_2) \sim B_2}} \mu^2(b_1) \mu^2(b_2) c(b_1, b_2)  \frac{\overline{\chi_{q_1}(\eta_1 d_1 e^2 w_1 k \ell)}\chi_{q_2}(\eta_2 d_2 e^2 w_2 k \ell)\widetilde{g}_3(q_1) \overline{\widetilde{g}_3(q_2)}}{N(q_1)^{1+it_1} N(q_2)^{1-it_2}} \nonumber \\
    &\qquad \qquad \times \Bigg( \sum_{\substack{n_1 \in \Z[\omega] \\ n_1 \equiv 1  \pmod{3} \\ (n_1, d)=1}} \frac{\overline{\chi_{n_1}(\eta_1 d_1 e^2 w_1 q_1 k \ell)} \widetilde{g}_3(n_1)}{N(n_1)^{1+it_1}} H\Big(\frac{N(w_1q_1n_1)}{M_1}\Big) \Bigg) \nonumber \\ 
    & \qquad \qquad \times \overline{\Bigg( \sum_{\substack{n_2 \in \Z[\omega] \\ n_2 \equiv 1  \pmod{3} \\ (n_2, d)=1}} \frac{\overline{\chi_{n_2}(\eta_2 d_2 e^2 w_2 q_2 k \ell)} \widetilde{g}_3(n_2)}{N(n_2)^{1+it_2}} H\Big(\frac{N(w_2 q_2 n_2)}{M_2}\Big) \Bigg)} \Bigg|. \label{almost_ready_for_poles_eq}
\end{align}

It is time to add the sum over $k$ to \eqref{almost_ready_for_poles_eq}, which we commute with the sums over $w_1, w_2$, picking up the condition $w_1 w_2 \mid k^\infty$. Now write $u := (k, \ell)$, so that $u \equiv 1 \pmod{3}$ and $u$ is squarefree (since so is $\ell$). Then we can change variables to $\ell = ul$, remove the $u$ sum from the absolute value via triangle inequality, commute it with the sum over $k$, and also change variables to $k = uk'$, where $0 \neq l, k' \in \Z[\omega]$, $lu \equiv c \pmod{9}$, and $(uk', l)  = 1$. For ease of notation we denote
\begin{equation}\label{Delta_i_def}
    \Delta_i := d_i e^2 w_i \equiv 1\pmod{3}.
\end{equation}

Then write $k' = \eta \lambda^{3j} \kappa \alpha \beta^2 \gamma^3 \delta^3$, where $\eta \mid 3$, $j\in \Z_{\geq 0}$, $\kappa, \alpha, \beta, \gamma, \delta \equiv 1\pmod{3}$, $\kappa \mid (u\Delta_1 \Delta_2)^\infty$, $(\alpha\beta\gamma, u\Delta_1\Delta_2) = 1$, $\mu^2(\alpha\beta\gamma) = 1$, and $\delta \mid (\alpha\beta\gamma)^\infty$. 
Observe also that $(u, \Delta_1 \Delta_2) = 1$ is enforced by $\overline{\psi_{\Delta_1, \Delta_2}(\ell)}$, so we can write (uniquely) $\kappa = \Upsilon \theta'$ for $\Upsilon \mid u^\infty$ and $\theta' \mid (\Delta_1 \Delta_2)^\infty$. The condition $w_1 w_2 \mid k^\infty$ is then equivalent to $w_1 w_2 \mid \theta'^\infty$ and $\theta' \mid (d w_1w_2)^\infty$. Since $(d, w_1w_2)=1$, we can write (uniquely) $\theta' = \theta \iota$ for $\theta^\infty = (w_1w_2)^\infty$ and $\iota \mid d^\infty$. Here and in what follows we use the shorthand $\theta^\infty = (w_1w_2)^\infty$ for $\mathrm{rad}(\theta) = \mathrm{rad}(w_1w_2)$. Then by \eqref{exp_sum_uniform_bound},
\begin{equation*}
    |\widetilde{g}_3(k, w_1)\widetilde{g}_3(k, w_2) \mathcal{E}(d_1, d_2; k, e)| \leq |\widetilde{g}_3(\theta, w_1)\widetilde{g}_3(\theta, w_2)| \frac{|\widetilde{g}_3(\iota, d_1) \widetilde{g}_3(\iota, d_2)|}{N(d_1d_2)} \frac{N((\iota, e))}{N(e)}.
\end{equation*}
Thus inserting these into \eqref{almost_ready_for_poles_eq} we see that the sums over $f$ and $k$ in \eqref{R_after_abs_values} are
\begin{align}
    & \ll X^\varepsilon \sum_{\substack{f \in \Z[\omega] \\ f \equiv 1 \pmod{3} \\ N(f) \ll M_1+M_2 \\ (f, d) = 1}} \mu^2(f) \mathop{\sum\sum}_{\substack{w_1, w_2 \in \Z[\omega] \\ w_1, w_2 \equiv 1 \pmod{3} \\ (w_1 w_2, d) = 1 \\ N(w_i) \ll M_i}} \mathop{\sum\sum\sum \sum\sum\sum\sum}_{\substack{u, \Upsilon, \theta, \iota, \alpha, \beta, \gamma \in \Z[\omega] \\ u, \Upsilon, \theta, \iota, \alpha, \beta, \gamma \equiv 1 \pmod 3 \\ \Upsilon \mid u^\infty,\ \theta^\infty = (w_1 w_2)^\infty,\ \iota \mid d^\infty \\ (u, \Delta_1\Delta_2) = (\alpha \beta \gamma, u\Delta_1\Delta_2) = 1 \\ \mu^2(u) = \mu^2(\alpha \beta \gamma) = 1 \\ N(u\Upsilon\theta\iota\alpha\beta^2\gamma^3) \ll \frac{M_1M_2 L^2 N(d_1d_2e)}{X^{1-\varepsilon}}}} \frac{|\widetilde{g}_3(\theta, w_1)\widetilde{g}_3(\theta, w_2)|}{N(w_1w_2)}
     \nonumber \\
    & \times   \frac{|\widetilde{g}_3(\iota, d_1) \widetilde{g}_3(\iota, d_2)|}{N(d_1d_2)} \frac{N((\iota, e))}{N(e)}\frac{1}{N(u)^2} \Bigg|  \sum_{\substack{l \in \Z[\omega] \\ lu \equiv c \pmod 9 \\ N(u l) \sim L \\ N(ul) \leq Y \\ (u\alpha\beta\gamma, l) = 1 }} \frac{\mu(l) \overline{\psi_{\Delta_1, \Delta_2}(l)}}{N(l)^{2-\varepsilon + it}} \mathop{\sum\sum}_{\substack{b_1, b_2 \in \Z[\omega] \\ b_1, b_2 \equiv 1 \pmod 3  \\ N(b_1) \sim B_1\\ N(b_2) \sim B_2}} \mu^2(b_1) \mu^2(b_2) \nonumber \\
    & \times c(b_1, b_2)  \frac{\overline{\chi_{q_1}(\eta_1 \Delta_1 u^2 \Upsilon\theta \iota \alpha \beta^2 \gamma^3 l)} \widetilde{g}_3(q_1) \chi_{q_2}(\eta_2 \Delta_2 u^2 \Upsilon\theta \iota \alpha \beta^2 \gamma^3 l) \overline{\widetilde{g}_3(q_2)}}{N(q_1)^{1+it_1} N(q_2)^{1-it_2}} \nonumber \\
    &   \times \Bigg( \sum_{\substack{n_1 \in \Z[\omega] \\ n_1 \equiv 1  \pmod{3}}} \frac{\overline{\chi_{n_1}(\eta_1 q_1 d^3 \Delta_1 u^2 \Upsilon\theta \iota \alpha \beta^2 \gamma^3 l)} \widetilde{g}_3(n_1)}{N(n_1)^{1+it_1}} H\Big(\frac{N(w_1q_1n_1)}{M_1}\Big) \Bigg) \nonumber \\ 
    &   \times \overline{\Bigg( \sum_{\substack{n_2 \in \Z[\omega] \\ n_2 \equiv 1  \pmod{3}}} \frac{\overline{\chi_{n_2}(\eta_2 q_2 d^3 \Delta_2 u^2 \Upsilon\theta \iota \alpha \beta^2 \gamma^3 l)} \widetilde{g}_3(n_2)}{N(n_2)^{1+it_2}} H\Big(\frac{N(w_2 q_2 n_2)}{M_2}\Big) \Bigg)} \Bigg|. \label{ready_for_poles_eq}
\end{align}
Observe that writing $\eta\eta_i = \nu_i \lambda^{3j_i}$ with $\nu_i \mid 3$ and $j_i \in \{0, 1\}$, we should have $\overline{\chi_{n_i}(\nu_i)}$ instead of $\overline{\chi_{n_i}(\eta_i)}$ above. Since we are taking a sum over $\eta_1, \eta_2 \mid 3$, we can freely replace them in this way (and remove the sum over $\eta\mid 3$) at the cost of a constant factor. The sums over $j\in \Z_{\geq 0}$ and $\delta \equiv 1 \pmod{3}$ with $\delta \mid (\alpha\beta\gamma)^\infty$ were removed since the quantity inside absolute values does not depend on $j$ or $\delta$ and there are $\ll X^\varepsilon$ options for them. There we relaxed the condition $N(u\Upsilon\theta\iota\alpha\beta^2\gamma^3) \ll \frac{M_1M_2 L^2 N(d_1d_2e)}{X^{1-\varepsilon}}$ by positivity. Finally, the conditions $(n_i, d)=1$ were removed by addition of $\overline{\chi_{n_i}(d^3)}$.


\subsection{The error term \texorpdfstring{$\mathcal{R}(b_1, b_2)$}{}: evaluating sums of Gauss sums}

The quantity in \eqref{ready_for_poles_eq}, when inserted into \eqref{R_after_abs_values}, is symmetric in $M_1$ and $M_2$ (up to a relabeling of variables, and in particular an exchange of $B_1$ and $B_2$). Our strategy will be to evaluate the longer sum, say over $n_1$ (an analogous argument covers the complementary case) using the results of \cref{Gauss_sums_section}. Depending on the length of the $n_2$ sum, we may prefer to do the same for it or to directly use the cubic large sieve to take advantage of the extra averaging over $l$ and the variables outside the absolute values (especially $\alpha$).

For $i\in\{1, 2\}$, denote
\begin{equation}\label{alpha_i_def}
    d^3 \Delta_i u^2 \Upsilon\theta \iota  = \alpha_i \beta_i^2\gamma_i^3\delta_i^3,
\end{equation}
where $\alpha_i, \beta_i, \gamma_i, \delta_i \equiv 1 \pmod{3}$ satisfy $\mu^2(\alpha_i\beta_i\gamma_i)=1$ and $\delta_i \mid (\alpha_i\beta_i\gamma_i)^\infty$. Then
\begin{equation*}
    \overline{\chi_{n_i}(\eta_i q_i d^3 \Delta_i u^2 \Upsilon\theta \iota \alpha \beta^2 \gamma^3 l)} = \overline{\chi_{n_i}(\eta_i q_i \alpha_i \alpha l(\beta_i\beta)^2(\gamma_i\gamma)^3)}
\end{equation*}
and $\mu^2(q_i \alpha_i \alpha l \beta_i\beta\gamma_i\gamma)=1$. Indeed, this follows from 

\begin{itemize}
    \item  $\mu^2(\alpha_i\beta_i\gamma_i)=1$ and  $\alpha_i \beta_i \gamma_i \mid (u d w_1 w_2)^{\infty}$;
    \item $\mu^2(\alpha \beta \gamma)=1$ and $(\alpha \beta \gamma, u d w_1 w_2) = (\alpha \beta \gamma,u \Delta_1 \Delta_2) = 1$;
    \item $\mu^2(l)=1$ and $(l,u)=1$, since $\ell=u l$ and $\mu^2(\ell)=1$;
    \item $(l,d w_1 w_2) = 1$, due to the presence of the character $\overline{\psi_{\Delta_1, \Delta_2}(l)}$;
    \item $(l,\alpha \beta \gamma) = 1$, since $(l,k^{\prime})=1$ and $\alpha\beta\gamma \mid k'$;
    \item $\mu^2(q_i) = 1$ and $(q_i, d) = 1$, due to the discussion following \eqref{q_i_def};
    \item $(q_i, u w_1 w_2 \alpha \beta \gamma l) = 1$, due to the presence of $\chi_{q_i}(\eta_i \Delta_i u^2 \Upsilon\theta \iota \alpha \beta^2 \gamma^3 l)$ in \eqref{ready_for_poles_eq} and $\theta^\infty = (w_1w_2)^\infty$.
\end{itemize}

We may therefore apply \cref{truncated_Gauss_sums_lemma} (keeping in mind that the cubic Gauss sum here is normalized, whereas there it is not) to obtain
\begin{align}\label{decomposition_Gauss_sums_eq}
    \sum_{\substack{n_i \in \Z[\omega] \\ n_i \equiv 1  \pmod{3}}} \frac{\overline{\chi_{n_i}(\eta_i q_i \alpha_i \alpha l(\beta_i\beta)^2(\gamma_i\gamma)^3)} \widetilde{g}_3(n_i)}{N(n_i)^{1+it_i}} H\Big(\frac{N(w_i q_i n_i)}{M_i}\Big) = \mathcal{P}_i + \mathcal{I}_i
\end{align}
for
\begin{align}\label{P_i_def}
    \mathcal{P}_i := \bbone_{\beta\beta_i=1}\cdot C_{\eta_i} \cdot \widetilde{H}(-1/6 - it_i) \Big(\frac{M_i}{N(w_iq_i)}\Big)^{-1/6-it_i} \frac{\overline{\widetilde{g}_3(\eta_i, q_i \alpha_i\alpha l)}}{N(q_i \alpha_i\alpha l)^{1/6}} \frac{\Delta_{q_i \alpha_i\alpha l\gamma_i \gamma}(1)}{\Delta_{q_i \alpha_i\alpha l \gamma_i \gamma}(4/3)}
\end{align}
and
\begin{align*}
    \mathcal{I}_i &\ll_{A, \varepsilon} \Big(\frac{M_i}{N(w_iq_i)}\Big)^{-1/2+\varepsilon} \mathop{\sum\sum\sum\sum}_{\substack{\xi_i, \xi'_i, \rho_i, \rho'_i \in \Z[\omega] \\ \xi_i, \xi'_i, \rho_i, \rho'_i \equiv 1\pmod{3} \\ \xi_i\mid \alpha_i \beta_i^2,\ \xi'_i \mid \gamma_i \\ \rho_i \mid \alpha l \beta^2\gamma,\ \rho'_i \mid q_i}} \int_{-X^\varepsilon}^{X^\varepsilon}\frac{|\psi(\eta_i \xi_i \xi'_i \rho_i \rho'_i, 1+\varepsilon + it_i+iy) |}{{N(\xi'_i)^{1/2}}} dy + X^{-A},
\end{align*}
where we used the convexity bound from \cref{psi_convexity_lemma} to bound the tails of the integral, and for each $\widetilde{d} \mid q_i \alpha_i \alpha l$ and $\widetilde{e} \mid \gamma_i \gamma $ coming from \cref{truncated_Gauss_sums_lemma} we uniquely factorised $\frac{q_i \alpha_i \alpha l (\beta_i \beta)^2 \widetilde{e}}{\widetilde{d}} = \xi_i \xi'_i \rho_i \rho'_i$ (using the coprimality induced by $\mu^2(q_i \alpha_i \alpha l \beta_i\beta\gamma_i\gamma)=1$) with the variables satisfying the divisibility conditions indexing the sum above. 

From \eqref{ready_for_poles_eq}, $w_i$ only contributes if $\widetilde{g}_3(\theta, w_i) \neq 0$ for some $\theta$ with $\theta^\infty = (w_1w_2)^\infty$. Since $w_i \mid \theta^\infty$, by \cref{localcomp} we see that necessarily $w_i$ is squarefull, so we can write (uniquely) $w_i = h_i^2 k_i^3 o_i$ with $\mu^2(h_ik_i)=1$ and $o_i\mid k_i^\infty$. Furthermore, since $\widetilde{g}_3(\theta, w_i) \neq 0$, \cref{localcomp} shows that $\nu_\pi(\theta) \geq \nu_\pi(w_i) - 1$ for each prime $\pi$. Therefore 
\begin{equation}\label{theta_divisibility}
    h_ik_i^2o_i \mid \theta.
\end{equation}

We will show that $\xi_i \mid \alpha_i \beta_i^2$ implies $\xi_i \mid \frac{d_i e^2 u^2 \theta \iota}{h_i (e, \iota)^3}$. Indeed, $\alpha_i \beta_i^2$ is the cube-free part (i.e.\ the quotient by the cube divisor congruent to $1 \pmod{3}$ of largest norm) of $\alpha_i \beta_i^2\gamma_i^3\delta_i^3 = d^3 \Delta_i u^2\Upsilon\theta \iota  = d^3 d_i e^2 w_i u^2 \Upsilon\theta \iota$. Note that $d^3 d_i e^2 \iota \mid d^\infty$, $u^2 \Upsilon \mid u^\infty$, and $w_i\theta \mid \theta^\infty$, so they are pairwise coprime. The cube-free part of $d^3 d_i e^2 \iota$ divides $\frac{d_i e^2 \iota}{(e, \iota)^3}$, and that of $u^2 \Upsilon$ divides $u^2$. We now need to consider the cube-free part of $w_i \theta$. By \eqref{theta_divisibility}, for each prime $\pi \mid h_i$ we have $\nu_{\pi}(w_i \theta) = \nu_{\pi}(h_i^2 \theta) \equiv \nu_\pi(\frac{\theta}{h_i}) \pmod{3}$, while for $\pi \mid k_i$ we have $\nu_\pi(\frac{\theta}{h_i}) = \nu_\pi(\theta) \geq 2$, and for $\pi \nmid h_i k_i$ we have $\nu_\pi(w_i \theta) = \nu_\pi(\frac{\theta}{h_i})$. Thus the cube-free part of $w_i\theta$ divides $\frac{\theta}{h_i}$. We conclude that $\xi_i\mid \frac{d_i e^2 u^2 \theta \iota}{h_i (e, \iota)^3}$, as desired. A simpler argument, using that $\xi'_i$ is squarefree and $\xi'_i \mid d^3 d_i e^2 w_i u^2 \Upsilon\theta \iota$, gives the coarse restriction $\xi'_i \mid d u \theta$. Our bound is then
\begin{align}\label{I_i_integral_bound}
    \mathcal{I}_i &\ll_{A, \varepsilon} \Big(\frac{M_i}{N(w_iq_i)}\Big)^{-1/2+\varepsilon} \mathop{\sum\sum\sum\sum}_{\substack{\xi_i, \xi'_i, \rho_i, \rho'_i \in \Z[\omega] \\ \xi_i, \xi'_i, \rho_i, \rho'_i \equiv 1\pmod{3} \\ \xi_i\mid \frac{d_i e^2 u^2 \theta \iota}{h_i (e, \iota)^3},\ \xi'_i \mid d u \theta \\ \rho_i \mid \alpha l \beta^2\gamma,\ \rho'_i \mid q_i}} \int_{-X^\varepsilon}^{X^\varepsilon}\frac{|\psi(\eta_i \xi_i \xi'_i \rho_i \rho'_i, 1+\varepsilon + it_i+iy) |}{N(\xi'_i)^{1/2}} dy + X^{-A}.
\end{align}

We use \eqref{decomposition_Gauss_sums_eq} for $i=1$, and deal with the contributions of $\mathcal{P}_1$ and $\mathcal{I}_1$ to \eqref{ready_for_poles_eq} separately. For the contribution of $\mathcal{I}_1$, we also decompose the sum over $n_2$ using \eqref{decomposition_Gauss_sums_eq}. Then apply the triangle inequality to the sums over $l$, $b_1$, $b_2$, obtaining sums (over all the variables) of $|\mathcal{I}_1||\mathcal{P}_2| + |\mathcal{I}_1||\mathcal{I}_2|$. Plugging this back into \eqref{ready_for_poles_eq}, the contribution of $\mathcal{I}_1$ to that display is
\begin{align}
    & \ll \frac{X^\varepsilon}{L^2} \mathop{\sum\sum}_{\substack{w_1, w_2 \in \Z[\omega] \\ w_1, w_2 \equiv 1 \pmod{3} \\ (w_1 w_2, d) = 1 \\ N(w_i) \ll M_i}} \mathop{\sum\sum\sum\sum}_{\substack{u, \Upsilon, \theta, \iota \in \Z[\omega] \\ u, \Upsilon, \theta, \iota \equiv 1 \pmod 3 \\ \theta^\infty = (w_1 w_2)^\infty,\ \iota \mid d^\infty,\ \Upsilon \mid u^\infty \\ \mu^2(u) = (u, \Delta_1 \Delta_2)=1}} \frac{|\widetilde{g}_3(\theta, w_1)\widetilde{g}_3(\theta, w_2)|}{N(w_1 w_2)}\frac{|\widetilde{g}_3(\iota, d_1) \widetilde{g}_3(\iota, d_2)|}{ N(d_1 d_2)}\frac{N((\iota, e))}{N(e)} \nonumber \\
    & \times \mathop{\sum\sum\sum\sum}_{\substack{\alpha, \beta, \gamma, l \in \Z[\omega] \\ \alpha, \beta, \gamma, l \equiv 1\pmod 3 \\ \mu^2(\alpha \beta \gamma l) = (\alpha \beta \gamma l, u \Delta_1 \Delta_2) = 1 \\ N(u \Upsilon\theta \iota\alpha\beta^2\gamma^3) \ll \frac{M_1M_2 L^2 N(d_1d_2e)}{X^{1-\varepsilon}} \\ N(ul) \sim L }} \sum_{\substack{f \in \Z[\omega] \\ f \equiv 1 \pmod {3} \\ N(f) \ll M_1+M_2 \\ (f, d) = 1 \\ \mu^2(f) = 1}} \mathop{\sum\sum}_{\substack{b_1, b_2 \in \Z[\omega] \\ b_1, b_2 \equiv 1 \pmod 3  \\ N(b_1) \sim B_1\\ N(b_2) \sim B_2 \\ \mu^2(b_1) = \mu^2(b_2) = 1}} \frac{\mathbf{1}_{d u \theta \alpha \beta \gamma l}(q_1q_2) \cdot |\mathcal{I}_1|\cdot (|\mathcal{P}_2| + |\mathcal{I}_2| )}{N(q_1q_2)},\label{l_outside_bound} 
\end{align}
where the indexing conditions on $l$ come from $(l, u\alpha\beta\gamma)=1$ and the presence of $\mu(l) \overline{\psi_{\Delta_1, \Delta_2}(l)}$ before our application of the triangle inequality in \eqref{ready_for_poles_eq}. Indexing conditions will often be dropped (by positivity) without comment in what follows.

We estimate \eqref{l_outside_bound} -- that is, the contribution from $\mathcal{I}_1$ -- in Sections \ref{cross_term_sec} and \ref{pure_int_sec}. We then explain and estimate (in two different ways) the contribution from $\mathcal{P}_1$ in Sections \ref{P1_cont_secA} and \ref{P1_cont_secB}. Those estimates are ultimately combined and optimized in \cref{opt_sec}.


\subsubsection{The cross term \texorpdfstring{$|\mathcal{I}_1||\mathcal{P}_2|$}{}}
\label{cross_term_sec}

Our starting point is the expression in \eqref{l_outside_bound}. Observe from \eqref{P_i_def}, discarding $N(\alpha_i)^{-1/6}$, that
\begin{align*}
    \mathcal{P}_i \ll \bbone_{\beta\beta_i=1}\cdot X^\varepsilon \cdot \Big(\frac{M_i}{N(w_i q_i)}\Big)^{-1/6} \frac{1}{N(q_i\alpha l)^{1/6}},
\end{align*}
Furthermore \eqref{alpha_i_def} and \eqref{Delta_i_def} combined with the condition $\beta_i = 1$ imply
\begin{equation}\label{conditional_alpha_i_relation}
    (d^3 d_i e^2 \iota) \cdot (u^2 \Upsilon) \cdot (w_i \theta) = \alpha_i (\gamma_i\delta_i)^3,
\end{equation}
where the three terms on the left are pairwise coprime. From $\mu^2(u) = \mu^2(e) = 1$, and $(e, d_1d_2)=1$, \eqref{conditional_alpha_i_relation} implies $u \mid \Upsilon$ and $e \mid \iota$. Thus for the terms containing some $\mathcal{P}_i$ we can write $\Upsilon = u\Upsilon'$ for $\Upsilon' \mid u^\infty$ and $\iota = e \iota'$ for $\iota' \mid d^\infty$. 

Using the remarks above for $i=2$, the contribution of the cross term $|\mathcal{I}_1||\mathcal{P}_2|$ to \eqref{l_outside_bound} is then
\begin{align}
    & \ll \frac{X^\varepsilon}{L^2 M_1^{1/2} M_2^{1/6}} \mathop{\sum\sum}_{\substack{w_1, w_2 \in \Z[\omega] \\ w_1, w_2 \equiv 1 \pmod{3} \\ (w_1 w_2, d) = 1 \\ N(w_i) \ll M_i}} \mathop{\sum\sum\sum\sum}_{\substack{u, \Upsilon', \theta, \iota' \in \Z[\omega] \\ u, \Upsilon', \theta, \iota' \equiv 1 \pmod 3 \\ \theta^\infty = (w_1 w_2)^\infty \\ \iota' \mid d^\infty,\ \Upsilon' \mid u^\infty }} \frac{|\widetilde{g}_3(\theta, w_1)\widetilde{g}_3(\theta, w_2)|}{N(w_1)^{1/2}N(w_2)^{5/6}}\frac{|\widetilde{g}_3(\iota', d_1) \widetilde{g}_3(\iota', d_2)|}{N(d_1d_2)} \nonumber \\
    & \times  \mathop{\sum\sum\sum\sum}_{\substack{\xi_1, \xi'_1, \rho_1, \rho'_1 \in \Z[\omega] \\ \xi_1, \xi'_1, \rho_1, \rho'_1 \equiv 1\pmod{3} \\ \xi_1\mid \frac{d_1 u^2 \theta \iota'}{h_1},\ \xi'_1 \mid d u \theta \\ (\rho'_1, d\theta) = 1}} \int_{-X^\varepsilon}^{X^\varepsilon}\frac{|\psi(\eta_1 \xi_1\xi'_1 \rho_1 \rho'_1, 1+\varepsilon + it_1+iy) |}{N(\xi'_1)^{1/2}} dy  \mathop{\sum\sum\sum}_{\substack{\alpha, \gamma, l \in \Z[\omega] \\ \alpha, \gamma, l \equiv 1\pmod 3 \\ N(u^2\Upsilon'\theta \iota' \alpha\gamma^3) \ll \frac{M_1M_2 L^2 N(d_1d_2)}{X^{1-\varepsilon}} \\ N(ul) \sim L,\ \rho_1\mid \alpha l \gamma}} \nonumber \\
    & \times \frac{1}{N(\alpha l)^{1/6}} \sum_{\substack{b_1 \in \Z[\omega] \\ b_1 \equiv 1 \pmod 3  \\ N(b_1) \sim B_1}} \mu^2(b_1) \sum_{\substack{f \in \Z[\omega] \\ f \equiv 1 \pmod {3} \\ N(f) \ll M_1+M_2 \\ (f, d) = 1,\ \rho'_1 \mid q_1}}  \frac{\mu^2(f)}{N(q_1)^{1/2}} \sum_{\substack{b_2 \in \Z[\omega] \\ b_2 \equiv 1 \pmod 3 \\ N(b_2) \sim B_2}} \frac{\mu^2(b_2)}{N(q_2)},\label{cross_term_initial_eq} 
\end{align}
where the condition $(\rho'_1, d\theta) = 1$ comes from $\rho'_1 \mid q_1$ and the presence of $\mathbf{1}_{d\theta}(q_1)$.

Now split the sums over dyadic ranges $N(u) \sim U, N(\Upsilon') \sim Y', N(\theta) \sim \Theta, N(\iota') \sim I', N(\alpha) \sim A, N(\gamma) \sim C$, where all of the ranges are $\gg 1$, and satisfy
\begin{equation}\label{cross_dyadic_ranges}
    U^2Y'\Theta I' A C^3 \ll \frac{M_1M_2L^2 N(d_1 d_2)}{X^{1-\varepsilon}}.
\end{equation}

Recalling that $f_i := [\frac{b_i}{(b_i, d)}, f ]$ and $(f, d)=1$, observe that
\begin{equation*}
    f_i = \frac{b_i f}{(b_i, d) (f, \frac{b_i}{(b_i, d)})} = \frac{b_i}{(b_i, d)} \cdot \frac{f}{(f, b_i)} = \frac{b_i f}{(b_i, df)}.
\end{equation*}
Since $b_i$ and $f$ are squarefree and $q_i := \frac{f_i}{(f_i, w_i)}$, 
\begin{equation}\label{q_i_split}
    q_i = \frac{b_i}{(b_i, d w_i)} \cdot \frac{f}{(f, b_i w_i)} = \frac{b_i}{(b_i, d f w_i)} \cdot \frac{f}{(f, w_i)}.
\end{equation}

We can evaluate the sums over $b_2$, $f$, and $b_1$ in \eqref{cross_term_initial_eq} using \cref{gcd_sum_lemma} and \eqref{q_i_split}, which give the bound $\ll X^\varepsilon N(\frac{(f, w_2)}{f} )$ for the sum over $b_2$. To evaluate the sum over $f$ we note that $\rho'_1 \mid q_1 \mid b_1f \implies \widetilde{\rho}_1 := \frac{\rho'_1}{(b_1, \rho'_1)} \mid f$, so write $f = \widetilde{\rho}_1 \widetilde{f}$. Observe that $(\widetilde{\rho}_1, b_1 w_1 w_2) = 1$, since we have the conditions $\theta^\infty = (w_1w_2)^\infty$ and $(\rho'_1, \theta) = 1$, and $q_1$ is squarefree (hence so is $\rho'_1$). Therefore, the sum over $f$ (including the contribution from $b_2$) is
\begin{align}
    &\ll X^\varepsilon \sum_{\substack{\widetilde{f} \in \Z[\omega] \\ \widetilde{f} \equiv 1 \pmod 3  \\ N(\widetilde{f}) \ll \frac{M_1+M_2}{N(\widetilde{\rho}_1)}}} N\Big(\frac{(b_1, dw_1)}{b_1}\frac{(\widetilde{\rho}_1 \widetilde{f}, b_1w_1)}{\widetilde{\rho}_1 \widetilde{f}}\Big)^{1/2} N\Big(\frac{(\widetilde{\rho}_1 \widetilde{f}, w_2)}{\widetilde{\rho}_1 \widetilde{f}}\Big) \nonumber \\
    &= \frac{X^\varepsilon}{N(\widetilde{\rho}_1)^{3/2}} N\Big(\frac{(b_1, dw_1)}{b_1}\Big)^{1/2} \sum_{\substack{\widetilde{f} \in \Z[\omega] \\ \widetilde{f} \equiv 1 \pmod 3  \\ N(\widetilde{f}) \ll \frac{M_1+M_2}{N(\widetilde{\rho}_1)}}}  N\Big(\frac{(\widetilde{f}, b_1w_1)}{\widetilde{f}}\Big)^{1/2} N\Big(\frac{(\widetilde{f}, w_2)}{\widetilde{f}}\Big) \nonumber \\
    &\ll \frac{X^\varepsilon}{N(\widetilde{\rho}_1)} N\Big(\frac{(b_1, dw_1)}{b_1}\Big)^{1/2} \cdot \bbone_{N(\widetilde{\rho}_1) \ll M_1+M_2}, \label{f_sum_gcd_ineq}
\end{align}
where we once again used \cref{gcd_sum_lemma} and \eqref{q_i_split}, and discarded $N(\widetilde{\rho}_1)^{-1/2}$ in the last step.

Finally, since the condition $(\rho'_1, d \theta)=1$ is present we have $(b_1, \rho'_1)(b_1, dw_1) = (b_1, \rho'_1 d w_1)$, so the sum over $b_1$ (including the contributions of $b_2$ and $f$) in \eqref{cross_term_initial_eq} is
\begin{align*}
    \ll \frac{X^\varepsilon \cdot \bbone_{N(\rho'_1) \ll B_1(M_1+M_2)}}{N(\rho'_1)} \sum_{\substack{b_1 \in \Z[\omega] \\ b_1 \equiv 1 \pmod 3  \\ N(b_1) \sim B_1}} \frac{N((b_1, \rho'_1 d w_1))}{N(b_1)^{1/2}} \ll \frac{X^\varepsilon B_1^{1/2}}{N(\rho'_1)} \cdot \bbone_{N(\rho'_1) \ll B_1(M_1+M_2)}.
\end{align*}

The sums over $\alpha, \gamma, l$ in \eqref{cross_term_initial_eq}, restricted to our dyadic ranges, contribute 
\begin{equation*}
    \mathop{\sum\sum\sum}_{\substack{\alpha, \gamma, l \in \Z[\omega] \\ \alpha, \gamma, l \equiv 1\pmod 3 \\ N(\alpha) \sim A,\ N(\gamma) \sim C,\ N(ul) \sim L \\ \rho_1\mid \alpha l \gamma}} \frac{1}{N(\alpha l)^{1/6}}\ll \frac{X^\varepsilon (AL)^{5/6}C}{N(\rho_1) U^{5/6}} \cdot \bbone_{N(\rho_1) \ll \frac{ALC}{U}}.
\end{equation*}

We can now evaluate the sums over $\xi_1, \xi'_1, \rho_1, \rho'_1$. Apply Cauchy--Schwarz to all four sums, and clump $\rho_1\rho'_1$ into a single variable $\rho$. Since a divisor bound shows there are $\ll X^\varepsilon$ options for $\xi_1$ and $\xi'_1$, and furthermore $N(\xi_1) \ll \frac{N(d_1) U^2\Theta I'}{N(h_1)}$, we conclude that the second and third lines of \eqref{cross_term_initial_eq} are
\begin{align*}
    &\ll X^\varepsilon \Big(\frac{AL}{U}\Big)^{5/6} C B_1^{1/2}  \sup_{\substack{w \in \R \\ |w| \ll X^\varepsilon}} \Bigg(\mathop{\sum\sum\sum}_{\substack{\xi_1, \xi'_1, \rho \in \Z[\omega] \\ \xi_1, \xi'_1, \rho \equiv 1\pmod{3} \\ \xi_1\mid \frac{d_1 u^2 \theta \iota'}{h_1},\ \xi'_1 \mid d u \theta,\ N(\rho) \ll X^{1000}}} \frac{|\psi(\eta_1 \xi_1\xi'_1 \rho, 1+\varepsilon + iw) |^2}{N(\xi'_1 \rho)}\Bigg)^{1/2} \\
    & \qquad \times \Bigg(\mathop{\sum\sum\sum}_{\substack{\xi_1, \xi'_1, \rho \in \Z[\omega] \\ \xi_1, \xi'_1, \rho \equiv 1\pmod{3} \\ \xi_1\mid \frac{d_1 u^2 \theta \iota'}{h_1},\ \xi'_1 \mid d u \theta,\ N(\rho) \ll X^{1000}}} \frac{1}{N(\rho)}\Bigg)^{1/2} \ll X^\varepsilon \Big(\frac{AL}{U}\Big)^{5/6} C B_1^{1/2} \Big(\frac{N(d_1) U^2\Theta I'}{N(h_1)}\Big)^{1/4},
\end{align*}
where we applied partial summation and \cref{second_moment_lindelof_lemma} (with $h = \eta_1 \xi_1 \xi^{\prime}_1$) to the sum over $\rho$ in the first bracketed sum above.

Inserting this back into \eqref{cross_term_initial_eq}, the sums over $u$ and $\Upsilon'$ can be evaluated since the summand is independent of them, and are easily seen to contribute a factor $\ll X^\varepsilon U$. Removing the normalization from the Gauss sums for convenience, we conclude that \eqref{cross_term_initial_eq} is
\begin{align}
    &\ll \frac{X^\varepsilon U^{2/3} \Theta^{1/4} I'^{1/4} A^{5/6}C B_1^{1/2}}{L^{7/6} M_1^{1/2} M_2^{1/6}} \Bigg( \sum_{\substack{\iota' \in \Z[\omega] \\ \iota' \equiv 1 \pmod 3 \\ \iota' \mid d^\infty,\ N(\iota') \sim I'}} \frac{|g_3(\iota', d_1) g_3(\iota', d_2)|}{N(d_1)^{5/4}N(d_2)^{3/2}} \Bigg) \nonumber \\
    &\times \Bigg(\mathop{\sum\sum\sum\sum\sum\sum}_{\substack{h_1, h_2, k_1, k_2, o_1, o_2 \in \Z[\omega] \\ h_1, h_2, k_1, k_2, o_1, o_2 \equiv 1 \pmod{3} \\ \mu^2(h_1k_1) = \mu^2(h_2k_2)=1 \\ o_1 \mid k_1^\infty,\ o_2 \mid k_2^\infty,\ (h_ik_i, d) = 1 \\ N(h_i^2 k_i^3 o_i) \ll M_i}} \sum_{\substack{\theta \in \Z[\omega] \\ \theta \equiv 1 \pmod 3 \\ \theta^\infty = (h_1 h_2 k_1 k_2)^\infty \\ N(\theta) \sim \Theta}} \frac{|g_3(\theta, h_1^2 k_1^3 o_1)g_3(\theta, h_2^2 k_2^3 o_2)|}{N(h_1)^{9/4}N(k_1^3 o_1) N(h_2^2 k_2^3 o_2)^{4/3}}\Bigg). \label{cross_term_intermediate_eq} 
\end{align}

Since $\iota' \mid d^\infty = (d_1d_2e)^\infty$, recalling that $d_1, d_2, e$ are pairwise coprime we can write $\iota' = \iota'_1 \iota'_2 \iota''$ where $\iota'_i \mid d_i^\infty$ and $\iota'' \mid e^\infty$. Then use the general bound 
\begin{equation}\label{iota_Gauss_sum_bound}
    |g_3(\iota', d_i)| = |g_3(\iota'_i, d_i)| \leq N(\iota'_i d_i)^{1/2},
\end{equation}
which follows from \cref{cubic-GS-lemma1}, \cref{localcomp}, and \eqref{sqrootcancel}, to conclude that the sum over $N(\iota') \sim I'$ in \eqref{cross_term_intermediate_eq} is
\begin{equation*}
    \ll X^\varepsilon \frac{N(\iota'_1 \iota'_2)^{1/2}}{N(d_1)^{3/4}N(d_2)} \ll X^\varepsilon \frac{I'^{1/2}}{N(d_1)^{3/4}N(d_2)}.
\end{equation*}

Since $h_i$ is squarefree and $(h_i, k_io_i) = 1$, observe that
\begin{equation}\label{h_k_sums_Gauss_sum bound}
    |g_3(\theta, h_i^2 k_i^3 o_i)| \leq N(h_i)^{3/2} N(k_i^3 o_i).
\end{equation}
We also have $h_i k_i^2 o_i \mid \theta$ by \eqref{theta_divisibility}, hence $N(h_i k_i^2 o_i) \ll \Theta$. From $\theta^\infty = (w_1 w_2)^\infty = (h_1 h_2 k_1 k_2)^\infty$ and $N(\theta) \sim \Theta$, we coarsely infer that there are $\ll X^\varepsilon$ options for $\theta$. Therefore \eqref{cross_term_intermediate_eq} is 
\begin{align}
    \ll \frac{X^\varepsilon U^{2/3} \Theta^{1/4}I'^{3/4} A^{5/6}C B_1^{1/2}}{L^{7/6} M_1^{1/2} M_2^{1/6} N(d_1)^{3/4} N(d_2)}  & \mathop{\sum\sum\sum}_{\substack{h_1, k_1, o_1 \in \Z[\omega] \\ h_1, k_1, o_1 \equiv 1 \pmod{3} \\ o_1 \mid k_1^\infty,\ N(o_1) \ll \Theta \\ N(h_1 k_1^2) \ll \Theta}} \frac{1}{N(h_1)^{3/4}} \mathop{\sum\sum\sum}_{\substack{h_2, k_2, o_2 \in \Z[\omega] \\ h_2, k_2, o_2 \equiv 1 \pmod{3} \\ o_2 \mid k_2^\infty,\ N(o_2) \ll \Theta \\ N(h_2 k_2^2) \ll \Theta}} \frac{1}{N(h_2)^{7/6} N(k_2^3 o_2)^{1/3}}. \nonumber
\end{align}
The sums over $o_1$ and $o_2$ contribute $\ll X^{\varepsilon}$ by Rankin's trick, and can be removed. The sums over $h_2$ and $k_2$ contribute $\ll X^\varepsilon$, and can also be removed. If we localize in dyadic ranges $N(h_1) \sim H_1$ and $N(k_1) \sim K_1$, so that both are $\gg 1$ and satisfy $H_1 K_1^2 \ll \Theta$, the corresponding contribution of the sums over $h_1$ and $k_1$ is
\begin{equation*}
    \ll X^\varepsilon H_1^{1/4} K_1 \ll X^\varepsilon (H_1K_1^2)^{1/2} \ll X^\varepsilon \Theta^{1/2}.
\end{equation*}

Thus \eqref{cross_term_initial_eq} is
\begin{align}
    &\ll \frac{X^\varepsilon U^{2/3} \Theta^{3/4}I'^{3/4} A^{5/6}C B_1^{1/2}}{L^{7/6} M_1^{1/2} M_2^{1/6} N(d_1)^{3/4} N(d_2)} \ll \frac{X^\varepsilon \Big(U^2 Y' \Theta I' A C^3 \Big)^{5/6} B_1^{1/2}}{L^{7/6} M_1^{1/2} M_2^{1/6} N(d_1)^{3/4} N(d_2)} \nonumber \\
    & \ll \frac{X^\varepsilon B_1^{1/2}}{L^{7/6} M_1^{1/2} M_2^{1/6} N(d_1)^{3/4}N(d_2)} \Big(\frac{M_1M_2L^2 N(d_1 d_2)}{X^{1-\varepsilon}}\Big)^{5/6} \nonumber \\
    &\ll \frac{X^\varepsilon L^{1/2} M_2^{1/3} B_1^{5/6} B_2^{1/3}}{X^{1/2}}\Big(\frac{M_1M_2}{X B_1 B_2}\Big)^{1/3} \frac{N(d_1)^{1/12}}{N(d_2)^{1/6}}, \label{cross_term_advanced_eq}
\end{align}
where we used the restriction \eqref{cross_dyadic_ranges} on the ranges. Inserting \eqref{cross_term_advanced_eq} into \eqref{R_after_abs_values}, where we recall that the ranges $L, M_1, M_2$ satisfy
\begin{equation}\label{M_1_M_2_bound}
    1\leq L \leq Y, \qquad \qquad M_1, M_2 \gg 1, \qquad \qquad M_1 M_2 \ll X^{1+\varepsilon} B_1 B_2,
\end{equation} 
we see that the contribution of the cross term $|\mathcal{I}_1||\mathcal{P}_2|$ to $\mathfrak{S}^L_{M_1, M_2}(B_1, B_2)$ is 
\begin{align}
    &\ll_F X^{1/2+\varepsilon} L^{1/2} M_2^{1/3} B_1^{4/3} B_2^{5/6} \Big(\frac{M_1M_2}{X B_1 B_2}\Big)^{1/3} \sum_{\substack{d\in \Z[\omega] \\ d\equiv 1 \pmod{3}}} \frac{1}{N(d)} \mathop{\sum\sum}_{\substack{d_1, d_2 \in \Z[\omega] \\ d_1, d_2 \equiv 1 \pmod{3} \\ (d_1, d_2)=1,\ d_1d_2 \mid d^\infty \\ N(d_1 d_2 d^2) \ll \frac{X^{1+\varepsilon} B_1 B_2}{M_1 M_2}}} \frac{N(d_1)^{1/12}}{N(d_2)^{1/6}} \nonumber \\
    &\ll X^{1/2+\varepsilon} L^{1/2} M_2^{1/3} B_1^{4/3} B_2^{5/6} \sum_{\substack{d\in \Z[\omega] \\ d\equiv 1 \pmod{3} \\ N(d^2) \ll \frac{X^{1+\varepsilon} B_1 B_2}{M_1M_2}}} \frac{1}{N(d)^{5/3}} \ll X^{1/2+\varepsilon} L^{1/2} M_2^{1/3} B_1^{4/3} B_2^{5/6}. \label{first_cross_term_result}
\end{align}


\subsubsection{The pure integral term \texorpdfstring{$|\mathcal{I}_1||\mathcal{I}_2|$}{}}
\label{pure_int_sec}

Now consider the contribution of the pure integral term $|\mathcal{I}_1||\mathcal{I}_2|$ to $\mathfrak{S}^L_{M_1, M_2}(B_1, B_2)$. We once again start from the corresponding term in \eqref{l_outside_bound}. Then split the sums present there over dyadic ranges $N(u) \sim U, N(\Upsilon) \sim Y, N(\theta) \sim \Theta, N(\iota) \sim I, N(\alpha) \sim A, N(\beta) \sim B, N(\gamma) \sim C$, where all of the ranges are $\gg 1$, and satisfy
\begin{equation}\label{integral_term_dyadic_ranges}
    U Y \Theta I A B^2 C^3 \ll \frac{M_1 M_2 L^2 N(d_1 d_2 e)}{X^{1-\varepsilon}}.
\end{equation}

Apply Cauchy--Schwarz to all of the sums on the second line of \eqref{l_outside_bound}, so the contribution of the term $|\mathcal{I}_1||\mathcal{I}_2|$ to that display (in the ranges described above) is
\begin{align}
    \ll \frac{X^\varepsilon}{L^2} \mathop{\sum\sum}_{\substack{w_1, w_2 \in \Z[\omega] \\ w_1, w_2 \equiv 1 \pmod{3} \\ (w_1 w_2, d) = 1 \\ N(w_i) \ll M_i}} & \mathop{\sum\sum\sum\sum}_{\substack{u, \Upsilon, \theta, \iota \in \Z[\omega] \\ u, \Upsilon, \theta, \iota \equiv 1 \pmod 3 \\ \theta^\infty = (w_1 w_2)^\infty \\ \iota \mid d^\infty,\ \Upsilon \mid u^\infty \\ N(u) \sim U,\ N(\Upsilon) \sim Y \\ N(\theta)\sim \Theta,\ N(\iota) \sim I}} \frac{|\widetilde{g}_3(\theta, w_1)\widetilde{g}_3(\theta, w_2)|}{N(w_1 w_2)}  \frac{|\widetilde{g}_3(\iota, d_1) \widetilde{g}_3(\iota, d_2)|}{ N(d_1 d_2)}\frac{N((\iota, e))}{N(e)} \mathcal{S}_1^{\frac{1}{2}} \mathcal{S}_2^{\frac{1}{2}}, \label{integral_term_initial_eq}
\end{align}
where we dropped several conditions by positivity and
\begin{align*}
    \mathcal{S}_i := \mathop{\sum\sum\sum\sum}_{\substack{\alpha, \beta, \gamma, l \in \Z[\omega] \\ \alpha, \beta, \gamma, l \equiv 1\pmod 3 \\ N(\alpha) \sim A,\ N(\beta) \sim B \\ N(\gamma)\sim C,\ N(ul) \sim L}} \sum_{\substack{f \in \Z[\omega] \\ f \equiv 1 \pmod {3} \\ N(f) \ll M_1+M_2 \\ (f, d) = 1,\ \mu^2(f) = 1}} \mathop{\sum\sum}_{\substack{b_1, b_2 \in \Z[\omega] \\ b_1, b_2 \equiv 1 \pmod 3  \\ N(b_1) \sim B_1,\ N(b_2) \sim B_2 \\ \mu^2(b_1) = \mu^2(b_2) = 1}} \frac{\mathbf{1}_{d u \theta \alpha \beta \gamma l}(q_1q_2) \cdot |\mathcal{I}_i|^2}{N(q_1q_2)}.
\end{align*}

For simplicity let us consider $\mathcal{S}_1$, with the treatment of $\mathcal{S}_2$ being analogous by symmetry. Applying \eqref{I_i_integral_bound}, Cauchy--Schwarz to the corresponding sum over divisors, and a divisor bound, 
\begin{align}
    \mathcal{S}_1 &\ll \frac{X^\varepsilon N(w_1)}{M_1} \mathop{\sum\sum\sum\sum}_{\substack{\xi_1, \xi'_1, \rho_1, \rho'_1 \in \Z[\omega] \\ \xi_1, \xi'_1, \rho_1, \rho'_1 \equiv 1\pmod{3} \\ \xi_1\mid \frac{d_1 e^2 u^2 \theta \iota}{h_1 (e, \iota)^3},\ \xi'_1 \mid d u \theta \\ (\rho'_1, d\theta) = 1}} \int_{-X^\varepsilon}^{X^\varepsilon}\frac{|\psi(\eta_1 \xi_1\xi'_1 \rho_1 \rho'_1, 1+\varepsilon + it_1+iy) |^2}{N(\xi'_1)} dy  \nonumber \\
    & \times \mathop{\sum\sum\sum\sum}_{\substack{\alpha, \beta, \gamma, l \in \Z[\omega] \\ \alpha, \beta, \gamma, l \equiv 1\pmod 3 \\ N(\alpha) \sim A,\ N(\beta) \sim B \\ N(\gamma)\sim C,\ N(ul) \sim L \\ \rho_1\mid \alpha l \beta^2 \gamma}} \sum_{\substack{b_1 \in \Z[\omega] \\ b_1 \equiv 1 \pmod 3  \\ N(b_1) \sim B_1}} \mu^2(b_1) \sum_{\substack{f \in \Z[\omega] \\ f \equiv 1 \pmod {3} \\ N(f) \ll M_1+M_2 \\ (f, d) = 1,\ \rho'_1 \mid q_1}} \mu^2(f) \sum_{\substack{b_2 \in \Z[\omega] \\ b_2 \equiv 1 \pmod 3 \\ N(b_2) \sim B_2}} \frac{\mu^2(b_2)}{N(q_2)}.\label{integral_term_second_line} 
\end{align}
As before, the condition $(\rho'_1, d\theta) = 1$ was added due to the presence of the factor $\mathbf{1}_{d\theta}(q_1)$, which was subsequently dropped by positivity.

Arguing as in \eqref{f_sum_gcd_ineq}, we see that the sums over $b_2$, $f$, and $b_1$ are
\begin{equation*}
    \ll \frac{X^\varepsilon B_1}{N(\rho'_1)} \cdot \bbone_{N(\rho'_1) \ll B_1(M_1+M_2)},
\end{equation*}
and by a divisor bound the sums over $\alpha, \beta, \gamma, l$ contribute an extra factor
\begin{equation*}
    \ll \frac{X^\varepsilon ALB^2C}{N(\rho_1) U} \cdot \bbone_{N(\rho_1) \ll \frac{ALB^2C}{U}}.
\end{equation*}

Observe that $N(\xi_1) \ll \frac{N(d_1 e^2) U^2\Theta I}{N(h_1 (e, \iota)^3)}$ and there are $\ll X^\varepsilon$ options for $\xi_1$ and $\xi'_1$. Using \cref{second_moment_lindelof_lemma} and partial summation for the sums over $\rho_1$ and $\rho'_1$ in \eqref{integral_term_second_line}, we obtain
\begin{equation*}
    \mathcal{S}_1 \ll \frac{X^\varepsilon N(w_1)}{M_1} \frac{ALB^2C B_1}{U} \Big(\frac{N(d_1 e^2) U^2\Theta I}{N(h_1 (e, \iota)^3)}\Big)^{1/2}.
\end{equation*}
Therefore
\begin{align*}
    \mathcal{S}_1^{1/2} \mathcal{S}_2^{1/2} \ll \frac{X^\varepsilon \Theta^{1/2} I^{1/2} AB^2C (B_1B_2)^{1/2} L}{(M_2 M_2)^{1/2}} \frac{N(e) N(w_1w_2)^{1/2}}{N((e, \iota))^{3/2}} \Big(\frac{N(d_1 d_2)}{N(h_1 h_2)}\Big)^{1/4}.
\end{align*}

Inserting this back into \eqref{integral_term_initial_eq}, the sums over $u$ and $\Upsilon$ can be evaluated and contribute $\ll X^\varepsilon U$. Removing the normalization of Gauss sums, that display is
\begin{align}
    &\ll \frac{X^\varepsilon U \Theta^{1/2} I^{1/2} A B^2 C (B_1B_2)^{1/2}}{L (M_1 M_2)^{1/2}} \sum_{\substack{\iota \in \Z[\omega] \\ \iota \equiv 1 \pmod 3 \\ \iota \mid d^\infty,\ N(\iota) \sim I}} \frac{|g_3(\iota, d_1) g_3(\iota, d_2)|}{N(d_1d_2)^{5/4}} \nonumber \\
    &\times \mathop{\sum\sum\sum\sum\sum\sum}_{\substack{h_1, h_2, k_1, k_2, o_1, o_2 \in \Z[\omega] \\ h_1, h_2, k_1, k_2, o_1, o_2 \equiv 1 \pmod{3} \\ \mu^2(h_1k_1) = \mu^2(h_2k_2)=1 \\ o_1 \mid k_1^\infty,\ o_2 \mid k_2^\infty,\ (h_ik_i, d) = 1 \\ N(h_i^2 k_i^3 o_i) \ll M_i}} \sum_{\substack{\theta \in \Z[\omega] \\ \theta \equiv 1 \pmod 3 \\ \theta^\infty = (h_1 h_2 k_1 k_2)^\infty \\ N(\theta) \sim \Theta}} \frac{|g_3(\theta, h_1^2 k_1^3 o_1)g_3(\theta, h_2^2 k_2^3 o_2)|}{N(h_1h_2)^{9/4}N(k_1 k_2)^3 N(o_1 o_2)}. \label{integral_term_intermediate}  
\end{align}

By \eqref{theta_divisibility} we have $h_ik_i^2 o_i \mid \theta$, so $N(h_i k_i^2 o_i) \ll \Theta$. Write $\widetilde{k} := (k_1, k_2)$ and $k_i = \widetilde{k} k'_i$, so $\mu^2(\widetilde{k} k'_1 k'_2) =1$ since $k_1$ and $k_2$ are squarefree. Thus $(\widetilde{k} k'_1 k'_2)^2 = [k_1^2, k_2^2]\mid \theta$. Note that for any prime $\pi \mid h_i$ we have $\nu_\pi(h_i^2k_i^3o_i) = 2$. Since $g_3(\theta, h_i^2k_i^3o_i)\neq 0$, \cref{localcomp} implies $\nu_\pi(\theta) = 1$, so $\pi \nmid \widetilde{k} k'_1 k'_2$ (otherwise we would have $\pi^2 \mid (\widetilde{k} k'_1 k'_2)^2 \mid \theta$). Therefore, $h_i (\widetilde{k} k'_1 k'_2)^2 \mid \theta$, and we conclude that $N(h_i(\widetilde{k} k'_1 k'_2)^2) \ll \Theta$. Using \eqref{h_k_sums_Gauss_sum bound}, the sum over the $h_i, k_i, o_i$ in \eqref{integral_term_intermediate} is 
\begin{align}\label{h_k_integral_term_sum}
    \ll \mathop{\sum\sum\sum}_{\substack{\widetilde{k}, k'_1, k'_2 \in \Z[\omega] \\ \widetilde{k}, k'_1, k'_2 \equiv 1 \pmod 3 \\ \mu^2(\widetilde{k} k'_1 k'_2) = 1 \\ N(\widetilde{k} k'_1 k'_2) \ll \Theta^{1/2}}} \mathop{\sum\sum\sum\sum}_{\substack{h_1, h_2, o_1, o_2 \in \Z[\omega] \\ h_1, h_2, o_1, o_2 \equiv 1 \pmod{3} \\ o_i \mid (\widetilde{k}k'_i)^\infty,\ N(o_i) \ll \Theta \\ N(h_i) \ll \frac{\Theta}{N(\widetilde{k} k'_1 k'_2)^2}}} \frac{1}{N(h_1h_2)^{3/4}}.
\end{align}

The sums over $o_1$ and $o_2$ contribute $\ll X^\varepsilon$, and splitting into dyadic intervals $N(h_i) \sim H_i$, $N(k'_i) \sim K'_i$, $N(\widetilde{k}) \sim \widetilde{K}$, with all ranges $\gg 1$ and satisfying
\begin{equation*}
    H_i \ll \frac{\Theta}{(\widetilde{K} K'_1 K'_2)^2} \qquad \text{and} \qquad \widetilde{K} K'_1 K'_2 \ll \Theta^{1/2},
\end{equation*}
we see that \eqref{h_k_integral_term_sum} for those ranges is
\begin{align*}
    \ll X^\varepsilon \widetilde{K} K'_1 K'_2 (H_1 H_2)^{1/4} \ll X^\varepsilon \widetilde{K} K'_1 K'_2 \frac{\Theta^{1/2}}{\widetilde{K} K'_1 K'_2} \ll X^\varepsilon \Theta^{1/2},
\end{align*}
so the same bound holds over all ranges.

Finally, the remaining sum over $\iota$ in \eqref{integral_term_intermediate} can be bounded by the argument used in \eqref{iota_Gauss_sum_bound} (but for $\iota$ in place of $\iota'$) and is readily seen to be
\begin{equation*}
    \ll \frac{X^\varepsilon I^{1/2}}{N(d_1d_2)^{3/4}},
\end{equation*}
so inserting our work so far into \eqref{integral_term_intermediate}, the resulting bound for \eqref{integral_term_initial_eq} in the given dyadic ranges is 
\begin{align*}
    &\ll \frac{X^\varepsilon U \Theta I A B^2 C (B_1B_2)^{1/2}}{L (M_1 M_2)^{1/2} N(d_1d_2)^{3/4}}.
\end{align*}

Using \eqref{integral_term_dyadic_ranges} and plugging this into \eqref{R_after_abs_values}, we conclude that the contribution of $|\mathcal{I}_1||\mathcal{I}_2|$ to $\mathfrak{S}^L_{M_1, M_2}(B_1, B_2)$ is 
\begin{align}
    & \ll_F X^{1/2+\varepsilon} L (B_1 B_2)^{3/2} \Big(\frac{M_1 M_2}{X B_1 B_2}\Big)^{1/2} \mathop{\sum\sum\sum}_{\substack{d, d_1, d_2 \in \Z[\omega] \\ d, d_1, d_2 \equiv 1 \pmod{3} \\ (d_1, d_2)=1,\ d_1d_2 \mid d^\infty \\ N(d_1 d_2 d^2) \ll \frac{X^{1+\varepsilon} B_1 B_2}{M_1 M_2}}} \frac{N(e)}{N(d)} N(d_1d_2)^{1/4} \nonumber \\ 
    & \ll X^{1/2+\varepsilon} L (B_1 B_2)^{3/2} \sum_{\substack{d \in \Z[\omega] \\ d \equiv 1 \pmod{3} \\ N(d^2) \ll \frac{X^{1+\varepsilon} B_1 B_2}{M_1 M_2}}} \frac{1}{N(d)} \ll X^{1/2+\varepsilon} L (B_1 B_2)^{3/2}. \label{integral_term_final_bound}
\end{align}
In the displays above we used Rankin's trick and the inequality
\begin{equation*}
N(e) N(d_1 d_2)^{1/4} \ll N(d) N(d_1 d_2)^{1/4} \ll N(d) N(d_1 d_2)^{1/2} \ll \Big( \frac{X^{1+\varepsilon} B_1 B_2}{M_1 M_2} \Big)^{1/2}.
\end{equation*}


\subsection{The error term \texorpdfstring{$\mathcal{R}(b_1, b_2)$}{}: first bound for \texorpdfstring{$\mathcal{P}_1$}{} contribution}
\label{P1_cont_secA}

We are now left with bounding the contribution to $\mathfrak{S}^L_{M_1, M_2}(B_1, B_2)$ of the polar term $\mathcal{P}_1$ which arises from evaluation of the sum over $n_1$ in \eqref{ready_for_poles_eq}. By the discussion around \eqref{conditional_alpha_i_relation} for $i=1$, in this case we have $\beta = \beta_1 = 1$ and therefore $u\mid \Upsilon$ and $e \mid \iota$, so we can write $\Upsilon = u \Upsilon'$ for $\Upsilon' \mid u^\infty$ and $\iota = e \iota'$ for $\iota' \mid d^\infty$. 

Using the definition \eqref{P_i_def} for $\mathcal{P}_1$, observing that
\begin{equation*}
    \overline{\widetilde{g}_3(\eta_1, q_1 \alpha_1\alpha l)} = \overline{\widetilde{g}_3(\eta_1, q_1\alpha_1\alpha)} \chi_{q_1 \alpha_1 \alpha}(l) \overline{\widetilde{g}_3(\eta_1, l)}
\end{equation*}
by coprimality, and removing the sum over $b_1$ from inside the absolute value in \eqref{ready_for_poles_eq} via the triangle inequality, we conclude that the contribution to \eqref{ready_for_poles_eq} of the term corresponding to $\mathcal{P}_1$ in the sum over $n_1$ is
\begin{align}
    & \ll \frac{X^\varepsilon}{M_1^{1/6}} \mathop{\sum\sum}_{\substack{w_1, w_2 \in \Z[\omega] \\ w_1, w_2 \equiv 1 \pmod{3} \\ (w_1 w_2, d) = 1 \\ N(w_i) \ll M_i}} \mathop{\sum\sum \sum\sum\sum\sum}_{\substack{u, \Upsilon', \theta, \iota', \alpha, \gamma \in \Z[\omega] \\ u, \Upsilon', \theta, \iota', \alpha, \gamma \equiv 1 \pmod 3 \\ \Upsilon' \mid u^\infty,\ \theta^\infty = (w_1 w_2)^\infty,\ \iota' \mid d^\infty \\ (u, \Delta_1\Delta_2) = (\alpha \gamma, u\Delta_1\Delta_2) = 1 \\ \mu^2(u) = \mu^2(\alpha \gamma) = 1 \\ N(u^2\Upsilon'\theta\iota'\alpha\gamma^3) \ll \frac{M_1 M_2 L^2 N(d_1 d_2)}{X^{1-\varepsilon}}}} \frac{|\widetilde{g}_3(\theta, w_1)\widetilde{g}_3(\theta, w_2)|}{N(w_1)^{5/6} N(w_2)} \frac{|\widetilde{g}_3(\iota', d_1) \widetilde{g}_3(\iota', d_2)|}{N(d_1d_2)}
     \nonumber \\
    & \times  \frac{\bbone_{\beta_1=1}}{N(u)^2 N(\alpha)^{1/6}} \sum_{\substack{b_1 \in \Z[\omega] \\ b_1 \equiv 1 \pmod 3 \\ N(b_1) \sim B_1}} \mu^2(b_1) \sum_{\substack{f \in \Z[\omega] \\ f \equiv 1 \pmod{3} \\ N(f) \ll M_1+M_2 \\ (f, d) = 1}} \frac{\mu^2(f)}{N(q_1)} \Bigg|  \sum_{\substack{l \in \Z[\omega] \\ lu \equiv c \pmod 9 \\ N(u l) \sim L \\ N(ul) \leq Y \\ (u\alpha\gamma, l) = 1 }} \frac{\mu(l) \overline{\psi_{\Delta_1, \Delta_2}(l)}}{N(l)^{2-\varepsilon + it}}  \nonumber \\
    & \times \frac{\mathbf{1}_{q_1}(l)\chi_{\alpha_1 \alpha}(l) \overline{\widetilde{g}_3(\eta_1, l)}}{N(l)^{1/6}} \frac{\Delta_l(1)}{\Delta_l(4/3)} \sum_{\substack{b_2 \in \Z[\omega] \\ b_2 \equiv 1 \pmod 3 \\ N(b_2) \sim B_2}} \mu^2(b_2) c(b_1, b_2) \frac{\chi_{q_2}(\eta_2\Delta_2 u^3 \Upsilon' \theta e \iota' \alpha \gamma^3 l) \overline{\widetilde{g}_3(q_2)}}{N(q_2)^{1-it_2}} \nonumber \\
    & \qquad \times \Bigg( \sum_{\substack{n_2 \in \Z[\omega] \\ n_2 \equiv 1  \pmod{3}}} \frac{\chi_{n_2}(\eta_2 q_2 d^3 \Delta_2 u^3 \Upsilon' \theta e \iota' \alpha \gamma^3 l) \overline{\widetilde{g}_3(n_2)}}{N(n_2)^{1-it_2}} H\Big(\frac{N(w_2 q_2 n_2)}{M_2}\Big) \Bigg) \Bigg|. \label{k_sum_polar_term}
\end{align}

We have two distinct strategies for bounding \eqref{k_sum_polar_term}, depending on the relative sizes of the parameters $M_1, M_2, B_1, B_2, L$. The first strategy is to simply evaluate the sum over $n_2$ exactly as we did for $n_1$, that is via \eqref{decomposition_Gauss_sums_eq}. In this case we once again apply a triangle inequality on the sums over $l$ and $b_2$, to obtain a second cross term $|\mathcal{P}_1||\mathcal{I}_2|$ and a pure polar term $|\mathcal{P}_1||\mathcal{P}_2|$.


\subsubsection{The cross term \texorpdfstring{$|\mathcal{P}_1||\mathcal{I}_2|$}{}}

The contribution to $\mathfrak{S}^L_{M_1, M_2}(B_1, B_2)$ of the cross term $|\mathcal{P}_1||\mathcal{I}_2|$ is
\begin{align}
    \ll_F X^{1/2+\varepsilon} L^{1/2} M_1^{1/3} B_1^{5/6} B_2^{4/3} \label{second_cross_term_result}
\end{align}
by the same argument which led to \eqref{first_cross_term_result}.


\subsubsection{The pure polar term \texorpdfstring{$|\mathcal{P}_1||\mathcal{P}_2|$}{}}

By \eqref{P_i_def} and dropping several conditions by positivity, the contribution of the pure polar term $|\mathcal{P}_1||\mathcal{P}_2|$ to \eqref{k_sum_polar_term}, and therefore to \eqref{ready_for_poles_eq}, is
\begin{align}
    & \ll \frac{X^\varepsilon}{L^2 (M_1 M_2)^{1/6}} \mathop{\sum\sum}_{\substack{w_1, w_2 \in \Z[\omega] \\ w_1, w_2 \equiv 1 \pmod{3} \\ (w_1 w_2, d) = 1 \\ N(w_i) \ll M_i}} \Bigg(\mathop{\sum\sum\sum}_{\substack{f, b_1, b_2 \in \Z[\omega] \\ f, b_1, b_2 \equiv 1 \pmod 3  \\ N(b_1) \sim B_1,\ N(b_2) \sim B_2 \\ N(f) \ll M_1+M_2,\ (f, d)=1}} \frac{\mu^2(f) \mu^2(b_1) \mu^2(b_2)}{N(q_1q_2)}\Bigg) \mathop{\sum\sum \sum}_{\substack{u, \Upsilon', \theta \in \Z[\omega] \\ u, \Upsilon', \theta \equiv 1 \pmod 3 \\ \Upsilon' \mid u^\infty,\ \theta^\infty = (w_1 w_2)^\infty}} 
     \nonumber \\
    & \times \frac{|\widetilde{g}_3(\theta, w_1)\widetilde{g}_3(\theta, w_2)|}{N(w_1 w_2)^{5/6}} \sum_{\substack{\iota' \in \Z[\omega] \\ \iota' \equiv 1 \pmod 3 \\ \iota' \mid d^\infty}} \frac{|\widetilde{g}_3(\iota', d_1) \widetilde{g}_3(\iota', d_2)|}{N(d_1d_2)} \mathop{\sum\sum\sum}_{\substack{\alpha, \gamma, l \in \Z[\omega] \\ \alpha, \gamma, l \equiv 1 \pmod 3 \\ N(u^2\Upsilon'\theta\iota'\alpha\gamma^3) \ll \frac{M_1M_2 L^2 N(d_1d_2)}{X^{1-\varepsilon}} \\ N(ul)\sim L}} \frac{1}{N(\alpha l)^{1/3}} . \label{pure_polar_initial}
\end{align}

Localize in dyadic ranges $N(u) \sim U, N(\Upsilon') \sim Y', N(\theta) \sim \Theta, N(\iota') \sim I', N(\alpha) \sim A, N(\gamma) \sim C$, where all of the ranges are $\gg 1$, and satisfy
\begin{equation}\label{polar_dyadic_ranges}
    U^2Y'\Theta I' A C^3 \ll \frac{M_1M_2L^2 N(d_1 d_2)}{X^{1-\varepsilon}}.
\end{equation}

We can evaluate the sums over $l$ to get $\ll \frac{L^{2/3}}{U^{2/3}}$, over $\alpha$ and $\gamma$ to get $A^{2/3}C$, over $\iota'$ -- using \eqref{iota_Gauss_sum_bound} and observing the difference in normalization -- to get $\ll \frac{X^\varepsilon I'^{1/2}}{N(d_1d_2)}$, 
and over $u$ and $\Upsilon'$ to get $\ll X^\varepsilon U$. The sums over $f, b_1, b_2$ are easily seen to be $\ll X^\varepsilon$ using \eqref{q_i_split} and three applications of \cref{gcd_sum_lemma}. Thus, noting the different normalization of Gauss sums, \eqref{pure_polar_initial} is 
\begin{align}
    & \ll \frac{X^\varepsilon U^{1/3}I'^{1/2}A^{2/3}C}{L^{4/3} (M_1 M_2)^{1/6} N(d_1d_2)} \mathop{\sum\sum\sum\sum\sum\sum}_{\substack{h_1, h_2, k_1, k_2, o_1, o_2 \in \Z[\omega] \\ h_1, h_2, k_1, k_2, o_1, o_2 \equiv 1 \pmod{3} \\ \mu^2(h_1k_1) = \mu^2(h_2k_2)=1 \\ o_1 \mid k_1^\infty,\ o_2 \mid k_2^\infty,\ N(h_i k_i^2 o_i) \ll \Theta}} \sum_{\substack{\theta \in \Z[\omega] \\ \theta \equiv 1 \pmod 3 \\ \theta^\infty = (h_1 h_2 k_1 k_2)^\infty \\ N(\theta) \sim \Theta}} \frac{|g_3(\theta, h_1^2 k_1^3 o_1)g_3(\theta, h_2^2 k_2^3 o_2)|}{N(h_1^2 h_2^2 k_1^3 k_2^3 o_1 o_2)^{4/3}}, \nonumber
\end{align}
where the bound $N(h_i k_i^2 o_i) \ll \Theta$ comes as before from the fact that $h_i k_i^2 o_i \mid \theta$ by \eqref{theta_divisibility}. We can use \eqref{h_k_sums_Gauss_sum bound} to bound all of the remaining sums above by $\ll X^\varepsilon$, and obtain a final estimate for \eqref{pure_polar_initial} in the chosen dyadic ranges of the form
\begin{align*}
    \ll \frac{X^\varepsilon U^{1/3}I'^{1/2}A^{2/3}C}{L^{4/3} (M_1 M_2)^{1/6} N(d_1d_2)} \ll \frac{X^\varepsilon}{L^{4/3} (M_1 M_2)^{1/6} N(d_1d_2)} \Big(\frac{M_1 M_2 L^2 N(d_1d_2)}{X^{1-\varepsilon}}\Big)^{2/3},
\end{align*}
where we used \eqref{polar_dyadic_ranges}. Thus this holds over all ranges. Inserting it into \eqref{R_after_abs_values}, we conclude that the contribution of the pure polar term $|\mathcal{P}_1||\mathcal{P}_2|$ to $\mathfrak{S}^L_{M_1, M_2}(B_1, B_2)$ is
\begin{align}
    & \ll_F X^{5/6+\varepsilon} B_1 B_2 \Big(\frac{M_1 M_2}{X B_1 B_2}\Big)^{1/2} \mathop{\sum\sum\sum}_{\substack{d, d_1, d_2 \in \Z[\omega] \\ d, d_1, d_2 \equiv 1 \pmod{3} \\ (d_1, d_2)=1,\ d_1d_2 \mid d^\infty \\ N(d_1 d_2 d^2) \ll \frac{X^{1+\varepsilon} B_1 B_2}{M_1 M_2}}} \frac{1}{N(d)}  \frac{1}{N(d_1 d_2)^{1/3}} \nonumber \\ 
    & \ll X^{5/6+\varepsilon} B_1 B_2 \sum_{\substack{d \in \Z[\omega] \\ d \equiv 1 \pmod{3} \\ N(d^2) \ll \frac{X^{1+\varepsilon} B_1 B_2}{M_1 M_2}}} \frac{1}{N(d)^2} \ll X^{5/6+\varepsilon} B_1 B_2. \label{pure_polar_final_bound}
\end{align}
In the displays above we used $\big( \tfrac{M_1 M_2}{X B_1 B_2} \big)^{1/2} \gg \big( \tfrac{1}{N(d_1 d_2 d^2)} \big)^{1/2}$
and Rankin's trick.


\subsection{The error term \texorpdfstring{$\mathcal{R}(b_1, b_2)$}{}: second bound for \texorpdfstring{$\mathcal{P}_1$}{} contribution}
\label{P1_cont_secB}

We give an alternative treatment of \eqref{k_sum_polar_term}, which will lead to a better bound for some ranges of the parameters $M_1, M_2, B_1, B_2, L$. The second strategy is to observe that the only term which depends on $\alpha$ in the sums over $l, b_2, n_2$ in \eqref{k_sum_polar_term} is $\chi_{\alpha}(l q_2 n_2)$, so we can apply the cubic large sieve by grouping these three variables against $\alpha$.

Observe that the quantity inside absolute values in \eqref{k_sum_polar_term} is equal to
\begin{equation*}
    \sum_{\substack{l, q, n_2 \in \Z[\omega] \\ l, q, n_2 \equiv 1 \pmod 3 \\ N(l q n_2) \ll \frac{L M_2}{N(u w_2)}}} \mu^2(l q n_2) \cdot c_{l, q, n_2} \cdot \chi_\alpha(l q n_2),
\end{equation*}
where
\begin{align}
    c_{l, q, n_2} := &\  \bbone_{\substack{lu \equiv c \pmod 9 \\ N(u l) \sim L \\ N(ul) \leq Y \\ (q_1 u \gamma, l) = 1 }} \frac{\mu(l) \overline{\psi_{\Delta_1, \Delta_2}(l)}}{N(l)^{2-\varepsilon + it}} \frac{\chi_{\alpha_1}(l) \overline{\widetilde{g}_3(\eta_1, l)}}{N(l)^{1/6}} \frac{\Delta_l(1)}{\Delta_l(4/3)} \sum_{\substack{b_2 \in \Z[\omega] \\ b_2 \equiv 1 \pmod 3 \\ N(b_2) \sim B_2}} \mu^2(b_2) c(b_1, b_2) \cdot \bbone_{q_2 = q} \nonumber \\
    & \times \frac{\chi_{q n_2}(\eta_2 d^3 \Delta_2 u^3 \Upsilon' \theta e \iota' \gamma^3 l) \overline{\widetilde{g}_3(q n_2)}}{N(q n_2)^{1-it_2}} H\Big(\frac{N(w_2 q n_2)}{M_2}\Big). \nonumber
\end{align}
We dropped the condition $(\alpha, l)=1$ since it is automatically enforced by $\chi_\alpha(l)$, and used the fact that we automatically have $(q_2, d)=1$, as discussed below \eqref{q_i_def} (since $b_2$ is squarefree and $(f, d)=1$), to add a factor of $\chi_{q}(d^3)$. The coefficients $c_{l, q, n_2}$ depend on the variables $d, d_1, d_2, w_1, w_2, u, \Upsilon', \theta, \iota', \gamma, b_1, f, l, q, n_2$, but not on $\alpha$. 

By \eqref{q_i_split}, for any given $d$, $f$, $w_2$, and $q$, the number of squarefree $b_2 \equiv 1 \pmod 3$ for which $q_2 = q$ is $\leq d(dfw_2)\ll X^\varepsilon$, where $d(\cdot)$ denotes the divisor function. Evaluating the sums over $l$ and $n_2$ and applying this observation gives
\begin{align}
    \sum_{\substack{l, q, n_2 \in \Z[\omega]}} \mu^2(l q n_2) \cdot |c_{l, q, n_2}|^2 & \ll X^\varepsilon \Big(\frac{N(u)}{L}\Big)^{10/3} \frac{N(w_2)}{M_2} \sum_{0 \neq q \in \Z[\omega]} \frac{1}{N(q)}\Bigg(\sum_{\substack{b_2 \in \Z[\omega] \\ b_2 \equiv 1 \pmod 3 \\ N(b_2) \sim B_2}} \mu^2(b_2) \cdot \bbone_{q_2 = q} \Bigg)^2 \nonumber \\
    & \ll X^\varepsilon \Big(\frac{N(u)}{L}\Big)^{10/3} \frac{N(w_2)}{M_2} \sum_{\substack{0 \neq q \in \Z[\omega] \\ N(q) \ll B_2 N(f)}} \frac{1}{N(q)} \nonumber \\
    & \ll X^\varepsilon \Big(\frac{N(u)}{L}\Big)^{10/3} \frac{N(w_2)}{M_2}. \label{c_l^2_norm_bound}
\end{align}

As before, let us localize \eqref{k_sum_polar_term} in dyadic ranges $N(u) \sim U, N(\Upsilon') \sim Y', N(\theta) \sim \Theta, N(\iota') \sim I', N(\alpha) \sim A, N(\gamma) \sim C$, where all of the ranges are $\gg 1$, and satisfy \eqref{polar_dyadic_ranges}. Thus \eqref{k_sum_polar_term} in these dyadic ranges is, after dropping some conditions by positivity,
\begin{align}
    & \ll \frac{X^\varepsilon}{M_1^{1/6} U^2 A^{1/6}} \mathop{\sum\sum}_{\substack{w_1, w_2 \in \Z[\omega] \\ w_1, w_2 \equiv 1 \pmod{3} \\ (w_1 w_2, d) = 1 \\ N(w_i) \ll M_i}} \mathop{\sum\sum\sum\sum\sum}_{\substack{u, \Upsilon', \theta, \iota', \gamma \in \Z[\omega] \\ u, \Upsilon', \theta, \iota', \gamma \equiv 1 \pmod 3 \\ \Upsilon' \mid u^\infty,\ \theta^\infty = (w_1 w_2)^\infty,\ \iota' \mid d^\infty \\ N(u) \sim U,\ N(\Upsilon') \sim Y',\ N(\theta) \sim \Theta \\ N(\iota') \sim I',\ N(\gamma) \sim C}} \frac{|\widetilde{g}_3(\theta, w_1)\widetilde{g}_3(\theta, w_2)|}{N(w_1)^{5/6} N(w_2)} \frac{|\widetilde{g}_3(\iota', d_1) \widetilde{g}_3(\iota', d_2)|}{N(d_1d_2)}
     \nonumber \\
    & \times \sum_{\substack{f, b_1 \in \Z[\omega] \\ f, b_1 \equiv 1 \pmod{3} \\ N(f) \ll M_1+M_2 \\ (f, d) = 1,\ N(b_1) \sim B_1}} \frac{\mu^2(f)\mu^2(b_1)}{N(q_1)} \sum_{\substack{\alpha \in \Z[\omega] \\ \alpha \equiv 1 \pmod 3 \\ N(\alpha) \sim A}} \mu^2(\alpha) \Bigg| \sum_{\substack{l, q, n_2 \in \Z[\omega] \\ l, q, n_2 \equiv 1 \pmod 3 \\ N(l q n_2) \ll \frac{L M_2}{U N(w_2)}}} \mu^2(l q n_2) \cdot c_{l, q, n_2} \cdot \chi_\alpha(l q n_2) \Bigg|. \label{alpha_cubic_large_sieve}
\end{align}

Applying Cauchy--Schwarz and the cubic large sieve as in \cref{cubic-large-sieve}, from \eqref{c_l^2_norm_bound} and a divisor bound we see that
\begin{align}
    &\sum_{\substack{\alpha \in \Z[\omega] \\ \alpha \equiv 1 \pmod 3 \\ N(\alpha) \sim A}} \mu^2(\alpha) \Bigg| \sum_{\substack{l, q, n_2 \in \Z[\omega] \\ l, q, n_2 \equiv 1 \pmod 3 \\ N(l q n_2)\ll \frac{L M_2}{U N(w_2)}}} \mu^2(l q n_2) \cdot c_{l, q, n_2} \cdot \chi_\alpha(l q n_2) \Bigg| \nonumber \\
    & \ll X^\varepsilon A^{1/2} \Big(A^{1/2} + \Big(\frac{L M_2}{U N(w_2)}\Big)^{1/2} + \Big(\frac{A L M_2}{U N(w_2)}\Big)^{1/3}\Big) \Big(\frac{U}{L}\Big)^{5/3} \Big(\frac{N(w_2)}{M_2}\Big)^{1/2}  \nonumber \\
    &\ll X^\varepsilon \Big(\frac{U^{5/3} A}{L^{5/3} M_2^{1/2}} \cdot N(w_2)^{1/2} + \frac{U^{7/6} A^{1/2}}{L^{7/6}} + \frac{A^{5/6} U^{4/3}}{L^{4/3} M_2^{1/6}} \cdot N(w_2)^{1/6} \Big). \nonumber
\end{align}
Inserting this into \eqref{alpha_cubic_large_sieve}, we can use \eqref{q_i_split} and \cref{gcd_sum_lemma} as before to see that the sums over $b_1$ and $f$ are $\ll X^\varepsilon$. Also \eqref{iota_Gauss_sum_bound} or alternatively a trivial bound show that 
\begin{equation*}
    \sum_{\substack{\iota' \in \Z[\omega] \\ \iota' \equiv 1 \pmod 3 \\ \iota' \mid d^\infty,\ N(\iota') \sim I'}} \frac{|g_3(\iota', d_1) g_3(\iota', d_2)|}{N(d_1d_2)^{3/2}} \ll \min\Big\{\frac{X^\varepsilon I'^{1/2}}{N(d_1d_2)}, \frac{X^\varepsilon}{N(d_1d_2)^{1/2}} \Big\}.
\end{equation*}
The sum over $\gamma$ contributes $\ll C$, and the sums over $u$ and $\Upsilon'$ are $\ll X^\varepsilon U$. Finally, \eqref{theta_divisibility} and the bound \eqref{h_k_sums_Gauss_sum bound} give (note that here the Gauss sums are normalized)
\begin{align*}
    \mathop{\sum\sum}_{\substack{w_1, w_2 \in \Z[\omega] \\ w_1, w_2 \equiv 1 \pmod{3} \\ (w_1 w_2, d) = 1 \\ N(w_i) \ll M_i}} \sum_{\substack{\theta \in \Z[\omega] \\ \theta \equiv 1 \pmod 3 \\ \theta^\infty = (w_1 w_2)^\infty\\ N(\theta) \sim \Theta}} \frac{|\widetilde{g}_3(\theta, w_1)\widetilde{g}_3(\theta, w_2)|}{N(w_1)^{5/6} N(w_2)^{1/2}} \ll X^\varepsilon \mathop{\sum\sum\sum}_{\substack{h_2, k_2, o_2 \in \Z[\omega] \\ h_2, k_2, o_2 \equiv 1 \pmod{3} \\ o_2 \mid k_2^\infty,\ N(o_2) \ll \Theta \\ N(h_2 k_2^2) \ll \Theta}} \frac{1}{N(h_2)^{1/2}} \ll X^\varepsilon \Theta^{1/2}.
\end{align*}
Similarly, if we replace $N(w_2)^{1/2}$ with $N(w_2)^{\delta}$ for $\delta \geq \frac{5}{6}$, the bound above becomes $\ll X^\varepsilon$. Combining all of these bounds, \eqref{alpha_cubic_large_sieve} is
\begin{align}
    & \ll X^\varepsilon \Big(\frac{U^{2/3} I'^{1/2} \Theta^{1/2} A^{5/6} C}{L^{5/3} M_1^{1/6} M_2^{1/2} N(d_1d_2)} + \frac{U^{1/6}  A^{1/3} C}{L^{7/6}  M_1^{1/6} N(d_1d_2)^{1/2}} + \frac{U^{1/3} I'^{1/2} A^{2/3} C}{L^{4/3}  (M_1 M_2)^{1/6} N(d_1d_2)} \Big) \nonumber \\
    & \ll X^\varepsilon \Big(\frac{Z^{5/6}}{L^{5/3} M_1^{1/6} M_2^{1/2} N(d_1d_2)} + \frac{Z^{1/3}}{L^{7/6} M_1^{1/6} N(d_1d_2)^{1/2}} + \frac{Z^{2/3}}{L^{4/3} (M_1 M_2)^{1/6} N(d_1d_2)} \Big) \label{polar_cubic_sieve_intermediate}
\end{align}
for $Z := \frac{M_1M_2 L^2 N(d_1d_2)}{X^{1-\varepsilon}}$, where we used the restriction \eqref{polar_dyadic_ranges} on the ranges. Thus the same estimate holds over all ranges.

Plugging this bound into \eqref{R_after_abs_values}, the first term of \eqref{polar_cubic_sieve_intermediate} contributes
\begin{align*}
    &\ll_F \frac{X^{5/6+\varepsilon} (B_1 B_2)^{7/6}}{M_2^{1/3}} \Big(\frac{M_1 M_2}{X B_1 B_2}\Big)^{2/3} \mathop{\sum\sum\sum}_{\substack{d, d_1, d_2 \in \Z[\omega] \\ d, d_1, d_2 \equiv 1 \pmod{3} \\ (d_1, d_2)=1,\ d_1d_2 \mid d^\infty \\ N(d_1 d_2 d^2) \ll \frac{X^{1+\varepsilon} B_1 B_2}{M_1 M_2} \\ N(d) \gg \frac{B_i}{M_i}}} \frac{1}{N(d)}  \frac{1}{N(d_1 d_2)^{1/6}} \\
    &\ll \frac{X^{5/6+\varepsilon} (B_1 B_2)^{7/6}}{M_2^{1/3}}  \sum_{\substack{d \in \Z[\omega] \\ d \equiv 1 \pmod{3} \\ \frac{B_2}{M_2} \ll N(d) \ll \big(\frac{X^{1+\varepsilon} B_1 B_2}{M_1 M_2}\big)^{1/2}}} \frac{1}{N(d)^{7/3}} \ll \frac{X^{5/6+\varepsilon} (B_1 B_2)^{7/6}}{M_2^{1/3}} \min\Big\{1, \Big(\frac{M_2}{B_2}\Big)^{4/3}\Big\} \\
    &\ll X^{5/6+\varepsilon} B_1^{7/6} B_2^{5/6}.
\end{align*}
Here we used $\big( \tfrac{M_1 M_2}{X B_1 B_2} \big)^{2/3} \gg \big( \tfrac{1}{N(d_1 d_2 d^2)} \big)^{2/3}$. Similarly, the second term of \eqref{polar_cubic_sieve_intermediate} contributes
\begin{align*}
    &\ll_F \frac{X^{5/6+\varepsilon} M_2^{1/6} (B_1 B_2)^{2/3}}{L^{1/2}} \Big(\frac{M_1 M_2}{X B_1 B_2}\Big)^{1/6} \mathop{\sum\sum\sum}_{\substack{d, d_1, d_2 \in \Z[\omega] \\ d, d_1, d_2 \equiv 1 \pmod{3} \\ (d_1, d_2)=1,\ d_1d_2 \mid d^\infty \\ N(d_1 d_2 d^2) \ll \frac{X^{1+\varepsilon} B_1 B_2}{M_1 M_2}}} \frac{1}{N(d)}  \frac{1}{N(d_1 d_2)^{1/6}} \\
    &\ll \frac{X^{5/6+\varepsilon} M_2^{1/6} (B_1 B_2)^{2/3}}{L^{1/2}}.
\end{align*}
This time we used $\big( \tfrac{M_1 M_2}{X B_1 B_2} \big)^{1/6} \gg \big( \tfrac{1}{N(d_1 d_2 d^2)} \big)^{1/6}$. The third term of \eqref{polar_cubic_sieve_intermediate} contributes
\begin{align*}
    \ll_F X^{5/6+\varepsilon} B_1 B_2 \Big(\frac{M_1 M_2}{X B_1 B_2}\Big)^{1/2} \mathop{\sum\sum\sum}_{\substack{d, d_1, d_2 \in \Z[\omega] \\ d, d_1, d_2 \equiv 1 \pmod{3} \\ (d_1, d_2)=1,\ d_1d_2 \mid d^\infty \\ N(d_1 d_2 d^2) \ll \frac{X^{1+\varepsilon} B_1 B_2}{M_1 M_2}}} \frac{1}{N(d)}  \frac{1}{N(d_1 d_2)^{1/3}} \ll X^{5/6+\varepsilon} B_1 B_2.
\end{align*}
Once again we used Rankin's trick and the inequality $\big( \tfrac{M_1 M_2}{X B_1 B_2} \big)^{1/2} \gg \big( \tfrac{1}{N(d_1 d_2 d^2)} \big)^{2/3}$.

Putting those three displays together, the contribution to $\mathfrak{S}^L_{M_1, M_2}(B_1, B_2)$ of the polar term $|\mathcal{P}_1(\mathcal{P}_2 + \mathcal{I}_2)|$ is
\begin{align}
    \ll_F X^{5/6+\varepsilon} \Big(B_1^{7/6} B_2^{5/6} + \frac{M_2^{1/6} (B_1 B_2)^{2/3}}{L^{1/2}} + B_1 B_2\Big). \label{polar_cubic_final_bound}
\end{align} 


\subsection{The error term \texorpdfstring{$\mathcal{R}(b_1, b_2)$}{}: final optimization}
\label{opt_sec}

Combining \eqref{polar_cubic_final_bound} with our previous bounds given in \eqref{second_cross_term_result} and \eqref{pure_polar_final_bound}, we conclude that for $B := \max\{B_1, B_2\}$, the contribution to $\mathfrak{S}^L_{M_1, M_2}(B_1, B_2)$ of the polar terms $|\mathcal{P}_1(\mathcal{P}_2 + \mathcal{I}_2)|$ is
\begin{align}
    &\ll_F X^{5/6+\varepsilon} \min\Big\{\frac{L^{1/2} M_1^{1/3} B_1^{5/6} B_2^{4/3}}{X^{1/3}} + B_1 B_2, B_1^{7/6} B_2^{5/6} + \frac{M_2^{1/6} (B_1B_2)^{1/3}}{L^{1/2}} + B_1B_2 \Big\} \nonumber \\
    &\ll X^{5/6+\varepsilon}\Big[(B_1B_2)^{5/6}B^{1/3} +  (B_1B_2)^{1/3} \min\Big\{\frac{L^{1/2} M_1^{1/3} B_1^{1/2}B_2}{X^{1/3}}, \frac{M_2^{1/6}}{L^{1/2}}\Big\} \Big] \nonumber \\
    &\ll X^{5/6+\varepsilon}(B_1B_2)^{1/3} \Big[(B_1B_2)^{1/2}B^{1/3} +  \Big(\frac{L^{1/2} M_1^{1/3} B_1^{1/2}B_2}{X^{1/3}}\Big)^{1/3}\Big(\frac{M_2^{1/6}}{L^{1/2}}\Big)^{2/3} \Big] \nonumber \\
    &\ll X^{5/6+\varepsilon}(B_1B_2)^{1/2} \Big[(B_1B_2B)^{1/3} + \frac{(M_1M_2)^{1/9} B_2^{1/6}}{X^{1/9} L^{1/6}}\Big] \nonumber\\
    &\ll X^{5/6+\varepsilon}(B_1B_2)^{1/2} \Big[(B_1B_2B)^{1/3} +  \frac{(B_1B_2)^{1/9} B_2^{1/6}}{L^{1/6}}\Big] \ll X^{5/6+\varepsilon} (B_1B_2)^{1/2} B, \label{polar_term_final_bound}
\end{align}
where we used the restriction $M_1M_2 \ll X^{1+\varepsilon}B_1B_2$ from \eqref{M_1_M_2_bound}.

Combining the bounds \eqref{first_cross_term_result} for the cross term $|\mathcal{I}_1||\mathcal{P}_2|$, \eqref{integral_term_final_bound} for the integral term $|\mathcal{I}_1||\mathcal{I}_2|$, and \eqref{polar_term_final_bound} for the polar terms $|\mathcal{P}_1(\mathcal{P}_2 + \mathcal{I}_2)|$, we conclude that
\begin{align}
    \mathfrak{S}_{M_1, M_2}^{L}(B_1, B_2) &\ll_F X^{1/2+\varepsilon} L^{1/2} M_2^{1/3} B_1^{4/3} B_2^{5/6} + X^{1/2+\varepsilon} L (B_1 B_2)^{3/2} + X^{5/6+\varepsilon} (B_1B_2)^{1/2} B \nonumber \\
    &\ll X^{1/2+\varepsilon} (B_1B_2)^{1/2}\Big( L^{1/2} M_2^{1/3} B^{7/6} + L B^2 + X^{1/3} B\Big).\label{synthesis_temporary}
\end{align}
By the symmetry of our initial bound \eqref{R_after_abs_values}, we may obtain \eqref{synthesis_temporary} with $M_2$ replaced by $M_1$, so without loss of generality we may assume $M_2 \leq M_1$, in which case
\begin{equation*}
    M_2 \leq (M_1M_2)^{1/2} \ll X^{1/2+\varepsilon} (B_1B_2)^{1/2} \leq X^{1/2+\varepsilon}B.
\end{equation*}
Inserting this into \eqref{synthesis_temporary}, we finally obtain
\begin{align}
    \mathfrak{S}_{M_1, M_2}^{L}(B_1, B_2) &\ll_F X^{1/2+\varepsilon} (B_1B_2)^{1/2} \Big(X^{1/6} L^{1/2} B^{3/2} + L B^2 + X^{1/3}B\Big).\nonumber
\end{align}
Summing over the dyadic ranges in \eqref{B_1_B_2_sum_decomposition} then gives the desired result \eqref{B_1B_2_final_bound}.

\end{proof}


\section{First moment asymptotics: proof of \texorpdfstring{\cref{SMestimate1}}{}} \label{first_moment_asymptotics_section}

We can now take advantage of the machinery developed in \cref{second_main_sum_section} to quickly deal with the main sum for the twisted first moment.

\begin{proof}[Proof of \cref{SMestimate1}]
By \eqref{SMdef}, 
\begin{align*}
&\mathcal{S}_M \Big( \chi_{q}(\mathfrak{b}) \Big[A_1(q) + \widetilde{g}_3(q) \cdot \overline{A_1(q)}\Big]; F \Big) \\
& = \sum_{\substack{ q \in \mathbb{Z}[\omega] \\ q \equiv 1 \pmod{9}}}  M_Y(q)  \chi_{q}(\mathfrak{b}) \Big[A_1(q) + \widetilde{g}_3(q) \cdot \overline{A_1(q)}\Big] F \Big( \frac{N(q)}{X} \Big).
\end{align*}
Express $A_1(q)$ using \eqref{A1def} to conclude that the expression above is equal to
\begin{equation} \label{SMopen_first_mom}
 \sum_{\substack{0 \neq \mathfrak{n} \unlhd \Z[\omega]}} 
N(\mathfrak{n})^{-1/2}  \Big[ \mathcal{S}_M \big( \chi_q(\mathfrak{b} \mathfrak{n}); F_{\mathfrak{n}} \big) + \mathcal{S}_M \big( \widetilde{g}_3(q) \overline{\chi_q(\mathfrak{b}^2 \mathfrak{n})}; F_{\mathfrak{n}} \big) \Big],
\end{equation}
where we used the fact that the function $\Phi_1$ given in \eqref{def-Phi} is real-valued and set
\begin{equation*}
F_{\mathfrak{n} }(t):= F(t) \Phi_1 \Big( \frac{N(\mathfrak{n})}{\sqrt{3 X t}} \Big).
\end{equation*}
Opening up the two terms of \eqref{SMopen_first_mom} and using \eqref{MYRYdef}, we have that
\begin{align*}
    \mathcal{S}_M \big( \chi_q(\mathfrak{b} \mathfrak{n}); F_{\mathfrak{n}} \big) =\sum_{\substack{\ell \in \Z[\omega] \\ \ell \equiv 1 \pmod 3 \\ N(\ell) \leq Y}} \mu(\ell) \sum_{\substack{ m \in \Z[\omega]  \\ \ell^2 m \equiv 1 \pmod{9}}}  \chi_{\ell^2 m}(\mathfrak{b} \mathfrak{n}) F_{\mathfrak{n}} \Big( \frac{N(\ell^2 m)}{X} \Big)
\end{align*}
and
\begin{align*}
    \mathcal{S}_M \big( \widetilde{g}_3(q) \overline{\chi_q(\mathfrak{b}^2 \mathfrak{n})}; F_{\mathfrak{n}} \big) = \sum_{\substack{ m \in \Z[\omega]  \\ m \equiv 1 \pmod{9}}}  \widetilde{g}_3(m) \overline{\chi_m(\mathfrak{b}^2 \mathfrak{n})} F_{\mathfrak{n}} \Big( \frac{N(m)}{X} \Big),
\end{align*}
where for the latter we used the fact that $\widetilde{g}_3(\ell^2 m) = 0$ unless $\ell = 1$.

Write $\mathfrak{b} = b \mathbb{Z}[\omega]$ and $\mathfrak{n} =\lambda^{g} n \mathbb{Z}[\omega]$ for some $g \in \mathbb{Z}_{\geq 0}$ and $b, n \equiv 1 \pmod{3}$, where $\mu^2(b)=1$ since we are assuming $\mathfrak{b}$ is squarefree. By \eqref{cubesupp} and cubic reciprocity,
\begin{equation*}
\chi_{\ell^2 m}(\mathfrak{b} \mathfrak{n}) = \chi_{\ell^2 m}(b \lambda^{g} n)=\chi_{\ell^2 m}(b n)=\chi_{b n}(\ell^2 m) = \overline{\chi_{b n}(\ell)} \chi_{b n}(m),
\end{equation*}
and similarly
\begin{equation*}
    \overline{\chi_{m}(\mathfrak{b}^2 \mathfrak{n})} = \overline{\chi_{m}(b^2 \lambda^{g} n)} = \overline{\chi_{m}(b^2 n)}.
\end{equation*} 
Thus 
\begin{align}\label{first_mom_sum}
    & \mathcal{S}_M \big( \chi_q(b \lambda^g n); F_{\lambda^g n} \big) = \sum_{\substack{\ell \in \Z[\omega] \\ \ell \equiv 1 \pmod 3 \\ N(\ell) \leq Y}} \mu(\ell) \overline{\chi_{b n}(\ell)} \sum_{\substack{ m \in \Z[\omega] \\  \ell^2 m \equiv 1 \pmod{9}}} \chi_{b n}(m) F_{\lambda^{g} n} \Big( \frac{N(\ell^2 m)}{X}  \Big)
\end{align}
and 
\begin{equation}\label{first_mom_dual}
    \mathcal{S}_M \big( \widetilde{g}_3(q) \overline{\chi_q(b^2 \lambda^g n)}; F_{\lambda^g n} \big) = \sum_{\substack{ m \in \Z[\omega]  \\ m \equiv 1 \pmod{9}}}  \widetilde{g}_3(m) \overline{\chi_m(b^2 n)} F_{\lambda^g n} \Big( \frac{N(m)}{X} \Big).
\end{equation}

To deal with \eqref{first_mom_sum}, we first apply Poisson summation (\cref{radialpois}) to observe that the sum over $m$ is equal to
\begin{align*}
    \frac{4 \pi X}{3^{9/2} N(bn\ell^2)} \sum_{\substack{k \in \Z[\omega]}} \ddot{\chi}_{bn}(k) \check{e}\Big({ -\frac{k \ell b^2 n^2}{9\lambda}} \Big) \ddot{F}_{\lambda^g n} \Big( \frac{k \sqrt{X}}{bn \ell^2} \Big),
\end{align*}
where we used $\overline{\ell^2} \equiv \ell \pmod 9$ for $\ell \equiv 1 \pmod 3$. Since $9\lambda = \lambda^5$, we have
\begin{align*}
    \ddot{\chi}_{bn}(k) := \sum_{a \pmod{bn}} \chi_{bn}(9 \lambda a) \check{e} \Big({-\frac{ka}{bn}}\Big) = \overline{\chi_{bn}(\lambda)} g_3(- k, bn) = \overline{\chi_{bn}(\lambda)}  g_3(k, bn).
\end{align*}

Denote the expression corresponding to the first term in \eqref{SMopen_first_mom} by $\mathcal{S}(b)$, and that corresponding to the second term by $\widetilde{\mathcal{S}}(b)$. We conclude from replacing the above in \eqref{first_mom_sum} that
\begin{align}
    \mathcal{S}(b) &= \frac{4 \pi X}{3^{9/2} N(b)} \sum_{g=0}^\infty \frac{1}{3^{g/2}} \sum_{\substack{c \pmod 9 \\ c \equiv 1 \pmod 3}} \overline{\chi_{bc}(\lambda)} \sum_{k\in \Z[\omega]} \sum_{\substack{\ell \in \Z[\omega] \\ \ell \equiv 1 \pmod 3 \\ N(\ell) \leq Y}} \frac{\mu(\ell) \overline{\chi_{b}(\ell)}}{N(\ell)^2} \check{e}\Big({ -\frac{k \ell b^2 c^2}{9\lambda}} \Big) \nonumber \\
    & \qquad \qquad \qquad \times \sum_{\substack{n \in \Z[\omega] \\ n\equiv c \pmod 9}} \frac{\overline{\chi_{n}(\ell)} g_3(k, bn)}{N(n)^{3/2}} \ddot{F}_{\lambda^g n} \Big( \frac{k \sqrt{X}}{bn \ell^2} \Big). \label{S_first_mom_exp}
\end{align}
Let $\mathcal{M}(b)$ denote the terms corresponding to $k=0$ in \eqref{S_first_mom_exp}, and let $\mathcal{S}'(b)$ denote the rest of the terms. Finally, denote $\mathcal{R}(b) := \mathcal{S}'(b) + \widetilde{\mathcal{S}}(b)$.


\subsection{The main term $\mathcal{M}(b)$}

We have
\begin{align}
    \mathcal{M}(b) = \frac{4 \pi X}{3^{9/2} N(b)} \sum_{g=0}^\infty \frac{1}{3^{g/2}} \sum_{\substack{\ell \in \Z[\omega] \\ \ell \equiv 1 \pmod 3 \\ N(\ell) \leq Y}} \frac{\mu(\ell) \overline{\chi_{b}(\lambda \ell)}}{N(\ell)^2} \sum_{\substack{n \in \Z[\omega] \\ n\equiv 1 \pmod 3}} \frac{\overline{\chi_{n}(\lambda \ell)} g_3(0, bn)}{N(n)^{3/2}} \ddot{F}_{\lambda^g n} (0). \nonumber
\end{align}
Observe that $g_3(0, bn) = 0$ unless $bn$ is a cube, which (since $b \equiv 1 \pmod 3$ is squarefree) is equivalent to $n = b^2 m^3$ for $m\equiv 1 \pmod{3}$. In that case, $g_3(0, bn) = g_3(0, b^3m^3) = \varphi(b^3m^3) = \varphi(bm) N(bm)^2$. Further (uniquely) decomposing $m = b'c$ for $b'\mid b^\infty$ and $(b, c)=1$, we obtain
\begin{align}
    \mathcal{M}(b) & = \frac{4 \pi X}{3^{9/2} N(b)^2} \sum_{g=0}^\infty \frac{1}{3^{g/2}} \sum_{\substack{m \in \Z[\omega] \\ m\equiv 1 \pmod 3}} \frac{\varphi(b m)}{ N(m)^{5/2}} \ddot{F}_{\lambda^g b^2 m^3} (0) \sum_{\substack{\ell \in \Z[\omega] \\ \ell \equiv 1 \pmod 3 \\ N(\ell) \leq Y}} \frac{\mu(\ell) \mathbf{1}_{bm}(\ell)}{N(\ell)^2} \nonumber \\
    & = \frac{4 \pi X}{3^{9/2} N(b)^2} \sum_{g=0}^\infty \frac{1}{3^{g/2}} \sumtwo_{\substack{b', c \in \Z[\omega] \\ b', c \equiv 1 \pmod 3 \\ b' \mid b^\infty,\ (b, c) = 1}} \frac{\varphi(b) \varphi(c) \ddot{F}_{\lambda^g b^2 b'^3 c^3} (0)}{N(b')^{3/2} N(c)^{5/2}} \sum_{\substack{\ell \in \Z[\omega] \\ \ell \equiv 1 \pmod 3 \\ N(\ell) \leq Y}} \frac{\mu(\ell) \mathbf{1}_{bc}(\ell)}{N(\ell)^2}. \label{main_term_first_mom}
\end{align}

Recall that $F_{\mathfrak{n}}(r) = F(r) \Phi_1 \big(\frac{N(\mathfrak{n})}{\sqrt{3Xr}} \big)$, where $\Phi_1$ is defined in \eqref{def-Phi} and  $F$ has support in $(1, 2)$. Since $J_0(0)=1$, replacing in \eqref{Vddot}, we have 
\begin{align}
    \ddot{F}_{\mathfrak{n}}(0)  = \int_1^{\sqrt{2}}  r F(r^2) \Phi_1 \Big(\frac{N(\mathfrak{n})}{\sqrt{3X} r} \Big) dr =  \frac{1}{2\pi i} \int_{2 -i\infty}^{2 + i\infty} \Big(\frac{2 \pi N(\mathfrak{n})}{\sqrt{3X}} \Big)^{-w} \frac{\Gamma (\frac{1}{2}+w )}{\Gamma (\frac{1}{2})} \check{F}\Big(\frac{w}{2}\Big) \frac{dw}{2w}, \label{F_dot_dot_zero_first_mom}
\end{align}
for $\check{F}(w) := \int_0^\infty t^w F(t) dt = 2 \int_1^{\sqrt{2}} r^{2w+1} F(r^2) dr$. In particular, by the rapid decay of $\Phi_1$ as in \eqref{Phi_bound}, we have (using $0 \leq F(t) \leq 1$ for the uniformity of the implied constant) the coarse bound
\begin{align}
    \ddot{F}_{\mathfrak{n}}(0) \ll_{A} \Big(1 +  \frac{N(\mathfrak{n})}{\sqrt{X}}\Big)^{-A}. \label{F_dot_dot_zero_bound_first_mom}
\end{align}

We apply \eqref{ell_sum_eval} to evaluate the sum over $\ell$. By \eqref{F_dot_dot_zero_first_mom}, the contribution to \eqref{main_term_first_mom} of the error term $O(\frac{1}{Y})$ is
\begin{align*}
    \ll \frac{X}{Y N(b)} \sum_{\substack{m \in \Z[\omega] \\ m\equiv 1 \pmod 3}} \frac{1}{ N(m)^{3/2}} \ll \frac{X}{Y N(b)}.
\end{align*}
Inserting \eqref{F_dot_dot_zero_first_mom} into \eqref{main_term_first_mom} then gives
\begin{align} 
    \mathcal{M}(b) = \frac{2 \pi X}{3^{9/2} \zeta_\lambda(2)} \frac{1}{2\pi i} \int_{2 - i \infty}^{2 + i \infty} & \Big(\frac{2 \pi}{\sqrt{3X}} \Big)^{-w} \frac{\Gamma (\frac{1}{2}+w )}{\Gamma (\frac{1}{2} )} \check{F}\Big(\frac{w}{2}\Big) \mathcal{G}_{b}(w) \frac{dw}{w} + O\Big(\frac{X}{Y N(b)}\Big), \label{after_mellin_first_mom}
\end{align}
where
\begin{align}
    & \mathcal{G}_{b}(w) := \frac{\varphi(b)}{N(b)^{2+2w}} \prod_{\substack{\pi \text{ prime} \\ \pi \equiv 1 \pmod 3 \\ \pi \mid b}} \Big(1 - \frac{1}{N(\pi)^2}\Big)^{-1} \Bigg( \sum_{g \in \mathbb{Z}_{\geq 0}} \frac{1}{3^{g(1/2+w)}} \Bigg) \nonumber \\
    & \times \Bigg( \sum_{\substack{b' \in \Z[\omega] \\ b' \equiv 1 \pmod 3 \\ b' \mid b^\infty}} \frac{1}{N(b')^{3/2+3w}} \Bigg) \sum_{\substack{c \in \Z[\omega] \\ c \equiv 1 \pmod 3 \\ (b, c) = 1}} \frac{\varphi(c)}{N(c)^{5/2+3w}} \prod_{\substack{\pi \text{ prime} \\ \pi \equiv 1 \pmod 3 \\ \pi \mid c}} \Big(1 - \frac{1}{N(\pi)^2}\Big)^{-1}. \label{generating-series_first_mom}
\end{align}

We evaluate the sum over $c$ to get
\begin{align*}
    & \sum_{\substack{c \in \Z[\omega] \\ c \equiv 1 \pmod 3 \\ (b, c) = 1}} \frac{\varphi(c)}{N(c)^{5/2+3w}} \prod_{\substack{\pi \text{ prime} \\ \pi \equiv 1 \pmod 3 \\ \pi \mid c}} \Big(1 - \frac{1}{N(\pi)^2}\Big)^{-1}  \\
    = &\ \prod_{\substack{\pi \text{ prime} \\ \pi \equiv 1 \pmod 3 \\ \pi \nmid b}} K(\pi, w) = \mathcal{K}(w) \prod_{\substack{\pi \text{ prime} \\ \pi \equiv 1 \pmod 3 \\ \pi \mid b}} K(\pi, w)^{-1}
\end{align*}
for
\begin{align}
    K(\pi, w) := &\ 1 + \Big(1 - \frac{1}{N(\pi)^2}\Big)^{-1} \sum_{k=1}^\infty \frac{\varphi(\pi^{k})}{N(\pi^{k})} \frac{1}{N(\pi)^{k(3/2 + 3w)}} = 1 + \frac{\big(1 + \frac{1}{N(\pi)}\big)^{-1}}{N(\pi)^{3/2 + 3w}-1} \nonumber \\
    = &\ \Big( 1 - \frac{1}{N(\pi)^{3/2+3w}} \Big)^{-1} \Big(1 - \frac{1}{N(\pi)^{3/2+3w}(N(\pi)+1)}\Big) \label{K_prod}
\end{align}
and
\begin{equation}
    \mathcal{K}(w) := \prod_{\substack{\pi \text{ prime} \\ \pi \equiv 1 \pmod 3}} K(\pi, w) =: \zeta_\lambda(3/2+3w) \cdot \mathcal{Q}(w). \label{mathcal_K_def}
\end{equation}

Further evaluating the sums over $g$ and $b'$ in \eqref{generating-series_first_mom}, we conclude that
\begin{equation}
    \mathcal{G}_b(w) = \Big(\frac{3^{1/2+w}}{3^{1/2+w}-1}\Big) \cdot \zeta_\lambda(3/2+3w) \cdot \mathcal{Q}(w) \cdot \mathcal{H}_b(w), \label{G_first_mom_def}
\end{equation}
where recalling that $b$ is squarefree and assuming from now on that $\Re(w) \geq -\frac{1}{2}+\varepsilon$, we have
\begin{align}
    \mathcal{H}_b(w) := &\ \frac{1}{N(b)^{1+2w}}\frac{\varphi(b)}{N(b)} \prod_{\substack{\pi \text{ prime} \\ \pi \equiv 1 \pmod 3 \\ \pi \mid b}} K(\pi, w)^{-1} \Big(1-\frac{1}{N(\pi)^2}\Big)^{-1} \frac{N(\pi)^{3/2+3w}}{N(\pi)^{3/2+3w}-1} \nonumber \\
    = &\ N(b)^{1/2+w} \prod_{\substack{\pi \text{ prime} \\ \pi \equiv 1 \pmod 3 \\ \pi \mid b}} \Big(1+\frac{1}{N(\pi)} + \frac{1}{N(\pi)^{3/2+3w}-1} \Big)^{-1} \frac{1}{N(\pi)^{3/2+3w}-1} \label{H_Euler_prod_exp_first_mom} \\
    \ll &\ N(b)^{-1-2 \Re(w)+\varepsilon}. \label{H_uniform_bound_first_mom}
\end{align}

Note by \eqref{K_prod} and \eqref{mathcal_K_def} that $\mathcal{Q}(w)$ is holomorphic and uniformly bounded if $\Re(w) \geq -\frac{1}{2}+\varepsilon$. Thus $\mathcal{G}_b$ is meromorphic in the same region, with a simple pole at $w=-\frac{1}{6}$ (since one can check that $\mathcal{Q}(-1/6)\neq 0$), and no other poles.

We shift the line of integration in \eqref{after_mellin_first_mom} to $\Re(w) = -\frac{1}{6} + \varepsilon$, picking up the simple pole of the integrand at $w=0$, and conclude -- using the convexity bound for $\zeta_{\lambda}(3/2+3w)$, the absolute convergence of $\mathcal{Q}(w)$, and \eqref{H_uniform_bound_first_mom} -- that the remaining integral is
\begin{equation*}
    \ll \frac{X^{11/12+\varepsilon}}{N(b)^{2/3}} \int_{-1/6+\varepsilon -i\infty}^{-1/6 + \varepsilon +i\infty} |\Gamma(1/2 + w)| \Big|\check{F}\Big(\frac{w}{2}\Big)\Big| |w|^{100}  |dw| \ll \frac{X^{11/12+\varepsilon}}{N(b)^{2/3}}.
\end{equation*}

\begin{remark}
    We could shift all the way to $\Re(w) = -\frac{1}{2}+\varepsilon$, collecting the simple pole of the integrand at $w=-\frac{1}{6}$ to show that the remaining integral is equal to
    \begin{equation*}
        C_b X^{11/12} + O_{b, \varepsilon}(X^{3/4+\varepsilon})
    \end{equation*}
    for some explicit constant $C_{b}$. We also foreshadow that our error term estimates below will contain a term of size $O_{b, \varepsilon}(X^{11/12+\varepsilon})$, and this a feature of using a balanced approximate functional equation.
 Using an unbalanced approximate functional equation,  Hamdar's work \cite{Ham} shows this particular second order main term is illusory. A similar cancellation was observed in \cite{DFL2}, and can be used to compute the first moment of the thin family $\mathcal{F}'_3$ at $s=1/3$ (see \cite{DM23} over function fields).
 Improving the error term in the first moment does not directly improve our final results about non-vanishing, so we refrain from doing so for simplicity.
   \end{remark}

Thus \eqref{after_mellin_first_mom} implies
\begin{align} 
    \mathcal{M}(b) = \frac{2 \pi X}{3^{9/2} \zeta_\lambda(2)} \check{F}(0) \mathcal{G}_{b}(0) + O\Big( \frac{X}{Y N(b)} \Big) + O_\varepsilon \Big(\frac{X^{11/12+\varepsilon}}{N(b)^{2/3}} \Big). \nonumber
\end{align}
Observe by \eqref{K_prod}, \eqref{mathcal_K_def}, \eqref{G_first_mom_def}, and \eqref{H_Euler_prod_exp_first_mom} that
\begin{equation*}
    \mathcal{G}_{b}(0) = \frac{\sqrt{3}}{\sqrt{3} - 1} \zeta_{\lambda}(3/2) \mathcal{Q}(0) \frac{r(b)}{N(b)},
\end{equation*}
where 
\begin{align*}
    \zeta_{\lambda}(3/2) \mathcal{Q}(0) = \prod_{\substack{\pi \text{ prime} \\ \pi \equiv 1 \pmod 3 \\ q := N(\pi)}} \Big( 1 + \frac{q}{(q+1) (q^{3/2}-1)} \Big) > 0
\end{align*}
and $r$ is the multiplicative function given, for $\pi$ prime and $k\in \Z_{\geq 1}$ (where we still denote $q := N(\pi)$) by 
\begin{equation*}
    r(\pi^k) := \frac{q^{5/2}}{q^{5/2} + q^{3/2} - 1} = 1 + O\Big(\frac{1}{N(\pi)}\Big).
\end{equation*}

Therefore we obtain the main term claimed in \cref{SMestimate1}.


\subsection{The error term $\mathcal{R}(b)$}

It remains to show the bound given in \cref{SMestimate1} for the error terms, which will follow from
\begin{align}\label{real_R_bound_first_mom}
    \mathcal{R}(b) \ll_{F, \nu, \varepsilon} X^\varepsilon \Big( X^{3/4} N(b)^{1/2} + \frac{X^{5/6}}{N(b)^{1/2}} + \frac{X^{11/12}}{N(b)^{2/3}} \Big)
\end{align}
for squarefree $b \equiv 1 \pmod 3$ satisfying $N(b) Y^2 \leq X^{1/2-\nu}$ for a fixed $\nu > 0$. Our aim for the rest of this section is to prove \eqref{real_R_bound_first_mom}.

We observe for future reference that the proof of \cref{F_dot_dot_bound_lemma} applies directly to give, for any $\mathfrak{n} \unlhd \Z[\omega]$, $u\in \C$, and $A \in \Z_{\geq 0}$,
\begin{equation}\label{f_dot_dot_first_mom_bound}
    \ddot{F}_{\mathfrak{n}}(u) \ll_{F, A} \Big(1 + |u| + \frac{N(\mathfrak{n})}{\sqrt{X}}\Big)^{-A}.
\end{equation}
Furthermore, for $u\neq 0$, the exact argument via the Mellin--Barnes integral leading to \eqref{F_transform_bulk} also gives
\begin{equation}
    \ddot{F}_{\mathfrak{n}}(u) = \int_{\varepsilon -iX^\varepsilon}^{\varepsilon + iX^\varepsilon}\int_1^{\sqrt{2}} \int_{-\varepsilon -iX^{\varepsilon}}^{-\varepsilon + iX^{\varepsilon}} \mathcal{G}_1(w, r, s)  \frac{|u|^{2s}}{N(\mathfrak{n})^{w}} ds\: dr\: dw + O_{F, \varepsilon}((1 + |u|^{-2\varepsilon})X^{-2000}), \label{F_dot_dot_first_mom_integral}
\end{equation}
where this time we set $G_w(y) := y^{w/2} F(y)$. Note we still have the uniform bound
\begin{equation*}
    G_w^{(j)}(y) \ll_{F, j} (1+|w|)^j,
\end{equation*}
and for $\Re(w) = \varepsilon$  and $\Re(s) = -\varepsilon$ we have
\begin{align}
    \mathcal{G}_1(w, r, s) :=&\ \frac{(-1)^{j_0}}{(2\pi i)^2} \Big(\frac{4\pi^2}{3X}\Big)^{-w/2} \frac{\Gamma (\frac{1}{2}+w )}{\Gamma (\frac{1}{2} )} \frac{G_w^{({j_0})}(r^2) r^{2{j_0}+1}}{w} \frac{\Gamma(-s)}{\Gamma({j_0}+s+1)} \Big(\frac{2\pi r}{9\sqrt{3}}\Big)^{2s} \nonumber \\
    \ll_{F, {j_0}} &\ \frac{X^\varepsilon \Gamma(\frac{1}{2}+w) (1+|w|)^{j_0} \Gamma(-s)}{({j_0}+s)({j_0}-1+s) \cdots (1+s) \Gamma(1+s)} \ll_{j_0} \frac{X^\varepsilon}{(1+ |\Im(s)|)^{j_0}},\label{G_first_mom_bound}
\end{align}
for a certain ${j_0}$ which is chosen to be sufficiently large in terms of $\varepsilon$ (but fixed).


\subsubsection{The standard sum $|\mathcal{S}'(b)|$}

We now use the assumption $N(b) Y^2 \leq X^{1/2-\nu}$ to quickly verify that $|\mathcal{S}'(b)|$ is negligible. Using cubic characters as in \eqref{congruence_with_chars} to detect the congruence condition on the sum over $n$ in \eqref{S_first_mom_exp}, we obtain
\begin{align}
    \mathcal{S}'(b) \ll \frac{X}{N(b)} \sum_{g=0}^\infty \frac{1}{3^{g/2}} \sum_{\substack{\eta \mid 3}} \sum_{0 \neq k\in \Z[\omega]} \sum_{\substack{\ell \in \Z[\omega] \\ \ell \equiv 1 \pmod 3 \\ N(\ell) \leq Y}} \frac{\mu^2(\ell)}{N(\ell)^2}  \Bigg| \sum_{\substack{n \in \Z[\omega] \\ n\equiv 1 \pmod 3}} \frac{\overline{\chi_{n}(\eta \ell)} g_3(k, bn)}{N(n)^{3/2}} \ddot{F}_{\lambda^g n} \Big( \frac{k \sqrt{X}}{bn \ell^2} \Big) \Bigg|.\label{S_prime_bound}
\end{align}

We add a partition of unity $H\big(\frac{N(n)}{R}\big)$ as in \eqref{partition_of_unity} to the sum over $n$ in \eqref{S_prime_bound}, and by \eqref{f_dot_dot_first_mom_bound} we can restrict to dyadic $R\ll X^{1/2+\varepsilon}$ up to a negligible error term $O_{F, \varepsilon}(X^{-1000})$. Denoting by $\mathcal{S}'^L_R(b)$ the right side of \eqref{S_prime_bound} with $N(n)$ localized around $R$ (using $H$) and $N(\ell)$ localized around $L \leq Y$ (using a dyadic decomposition), we conclude that
\begin{equation*}
    \mathcal{S}'(b) \ll \sum_{\substack{i \in \Z \\ R = 2^{i} \gg 1 \\ R \ll X^{1/2+\varepsilon}}} \sum_{\substack{j \in \Z \\ L = 2^{j} \gg 1 \\ L \leq Y}} \mathcal{S}_{R}^{\prime L}(b) + O_{F, \varepsilon}(X^{-1000}).
\end{equation*}

By \eqref{f_dot_dot_first_mom_bound} we can also truncate the sum over $k$ in $\mathcal{S}_{R}^{\prime L}(b)$ at $N(k) \ll \frac{N(b) R L^2}{X^{1-\varepsilon}}$, up to error $O_{F, \varepsilon}(X^{-1000})$. Then opening up $\ddot{F}_{\lambda^g n}$ using \eqref{F_dot_dot_first_mom_integral} and applying the triangle inequality combined with the bound \eqref{G_first_mom_bound} yields
\begin{align*}
    \mathcal{S}_{R}^{\prime L}(b) \ll_F &\ \frac{X^{1+\varepsilon}}{N(b) L^2} \sup_{\substack{t \in \R \\ |t| \ll X^\varepsilon}} \sum_{\substack{\eta \mid 3}} \sum_{\substack{0 \neq k\in \Z[\omega] \\ N(k) \ll \frac{N(b) R L^2}{X^{1-\varepsilon}}}} \sum_{\substack{\ell \in \Z[\omega] \\ \ell \equiv 1 \pmod 3 \\ N(\ell) \sim L}} \mu^2(\ell) \nonumber \\
    & \times  \Bigg| \sum_{\substack{n \in \Z[\omega] \\ n\equiv 1 \pmod 3}} \frac{\overline{\chi_{n}(\eta \ell)} g_3(k, bn)}{N(n)^{3/2 + it}} H \Big( \frac{N(n)}{R} \Big) \Bigg| + X^{-1000}.
\end{align*}

Since we are assuming $N(b) Y^2 \leq X^{1/2-\nu}$ for a fixed $\nu > 0$, from $R \ll X^{1/2+\varepsilon}$ we obtain $\frac{N(b) RL^2}{X^{1-\varepsilon}} \ll \frac{N(b) Y^2}{X^{1/2-2\varepsilon}} \leq X^{-\nu + 2\varepsilon}$, hence the sum over $k \neq 0$ is empty (for $X$ sufficiently large in terms of $\nu$). We conclude that $\mathcal{S}_{R}^{\prime L}(b) \ll_{F, \nu} X^{-1000}$, and therefore
\begin{equation}
    \mathcal{S}^{\prime}(b) \ll_{F, \nu} X^{-500}. \label{S_prime_final_bound}
\end{equation}


\subsubsection{The dual sum $|\widetilde{\mathcal{S}}(b)|$}

Using cubic characters as in \eqref{congruence_with_chars} to detect the congruence condition on the sum over $m$ in \eqref{first_mom_dual}, and adding a partition of unity $H\big(\frac{N(n)}{R}\big)$ to the sum over $n$ via \eqref{partition_of_unity}, we obtain 
\begin{align}
    \widetilde{\mathcal{S}}(b) \ll &\ \sum_{\substack{i \in \Z \\ R = 2^i \gg 1}} \sum_{g=0}^\infty \frac{1}{3^{g/2}} \sum_{\eta \mid 3} \Bigg| \sum_{\substack{n \in \Z[\omega] \\ n\equiv 1 \pmod 3}} \frac{H \big(\frac{N(n)}{R}\big)}{N(n)^{1/2}}\sum_{\substack{ m \in \Z[\omega]  \\ m \equiv 1 \pmod{3}}}  \widetilde{g}_3(m) \overline{\chi_m(\eta b^2 n)} F_{\lambda^g n} \Big( \frac{N(m)}{X} \Big) \Bigg|. \label{dual_sum_dyadic_decomp}
\end{align}
For each $R = 2^i$, denote the corresponding summand (with $N(n)$ localized around $R$) in \eqref{dual_sum_dyadic_decomp} by $\widetilde{\mathcal{S}}_R(b)$. Recalling that $F_{\lambda^g n} \big( \frac{N(m)}{X} \big) = F \big( \frac{N(m)}{X} \big) \Phi_1 \Big( \frac{3^g N(n)}{\sqrt{3 N(m)}} \Big)$, it follows from \eqref{Phi_bound} that the contribution of $R \gg X^{1/2+\varepsilon}$ to \eqref{dual_sum_dyadic_decomp} is $O(X^{-1000})$. Thus
\begin{equation*}
    \widetilde{\mathcal{S}}(b) \ll \sum_{\substack{i \in \Z \\ R = 2^{i} \gg 1 \\ R \ll X^{1/2+\varepsilon}}} \widetilde{\mathcal{S}}_{R}(b)+ O_{F, \varepsilon}(X^{-1000}).
\end{equation*}

For the remaining dyadic ranges $R \ll X^{1/2+\varepsilon}$, we expand $\Phi_1$ using \eqref{def-Phi} and then shift the line of integration to $\Re(w) = \varepsilon$ to conclude that the corresponding sum over $n$ in \eqref{dual_sum_dyadic_decomp} is
\begin{align}
    & = \frac{1}{2\pi i} \int_{\varepsilon - i \infty}^{\varepsilon + i \infty}  \sum_{\substack{n \in \Z[\omega] \\ n\equiv 1 \pmod 3}} \frac{H \big(\frac{N(n)}{R}\big)}{N(n)^{1/2+w}} \sum_{\substack{ m \in \Z[\omega]  \\ m \equiv 1 \pmod{3}}}  \frac{\widetilde{g}_3(m) \overline{\chi_m(\eta b^2 n)}}{N(m)^{-w/2}} F \Big( \frac{N(m)}{X} \Big) \nonumber \\
    &  \qquad \qquad \qquad \qquad \qquad \qquad \qquad \qquad \qquad \qquad \qquad \qquad \times (2\pi \cdot 3^{g-1/2})^{-w}   \frac{\Gamma(1/2+w)}{\Gamma(1/2)}  \frac{dw}{w} \nonumber \\
    & \ll \int_{-X^\varepsilon}^{X^\varepsilon} \Bigg| \sum_{\substack{n \in \Z[\omega] \\ n\equiv 1 \pmod 3}} \frac{H \big(\frac{N(n)}{R}\big)}{N(n)^{1/2+\varepsilon + it}} \sum_{\substack{ m \in \Z[\omega]  \\ m \equiv 1 \pmod{3}}}  \frac{g_3(m) \overline{\chi_m(\eta b^2 n)}}{N(m)^{1/2-\varepsilon/2 - it/2}} F \Big( \frac{N(m)}{X} \Big) \Bigg|  |dt| + X^{-1000}, \label{dual_ready_for_eval}
\end{align}
where the truncation in the second step is justified by Stirling's formula. Writing (uniquely) 
\begin{equation}\label{b_n_decomp}
    b^2 n = \alpha \beta^2 \gamma^3 \delta^3
\end{equation}
with $\alpha, \beta, \gamma, \delta \equiv 1 \pmod 3$, $\mu^2(\alpha \beta \gamma)=1$, and $\delta \mid (\alpha \beta \gamma)^\infty$, we can apply \cref{truncated_Gauss_sums_lemma} to obtain
\begin{align}
    \sum_{\substack{ m \in \Z[\omega]  \\ m \equiv 1 \pmod{3}}}  \frac{g_3(m) \overline{\chi_m(\eta b^2 n)}}{N(m)^{1/2-\varepsilon/2 - it/2}} F \Big( \frac{N(m)}{X} \Big) & = \sum_{\substack{ m \in \Z[\omega]  \\ m \equiv 1 \pmod{3}}}  \frac{g_3(m) \overline{\chi_m(\eta \alpha \beta^2 \gamma^3)}}{N(m)^{1/2-\varepsilon/2 - it/2}} F \Big( \frac{N(m)}{X} \Big) \nonumber \\
    &= \mathcal{P}_{\text{prim}} + O(\mathcal{I}_{\text{prim}}),  \nonumber
\end{align}
where 
\begin{align}
    \mathcal{P}_{\text{prim}} := \bbone_{\beta = 1}\cdot C_\eta \cdot \widetilde{F}(5/6 + \varepsilon/2 + it/2) X^{5/6 + \varepsilon/2 + it/2} \frac{\overline{\widetilde{g}_3(\eta, \alpha)}}{N(\alpha)^{1/6}} \frac{\Delta_{\alpha\gamma}(1)}{\Delta_{\alpha\gamma}(4/3)} \label{P_prim_def}
\end{align}
for a constant $C_\eta$ depending only on $\eta$, and for any $A \in \Z_{\geq 0}$ we have
\begin{align}
    \mathcal{I}_{\text{prim}} &\ll_{A, F, \varepsilon} X^{1/2 + \varepsilon} \mathop{\sum \sum}_{\substack{d, e \in \Z[\omega] \\ d, e\equiv 1\pmod{3} \\ d\mid \alpha ,\ e\mid \gamma}} \frac{1}{N(de)^{1/2+\varepsilon}} \int_{-\infty}^\infty \frac{|\psi(\frac{\eta \alpha \beta^2 e}{d}, 1+\varepsilon -it/2 + iy)|}{(1+|y|)^A} dy \nonumber \\
    & \ll_A X^{1/2 + \varepsilon} \sum_{\substack{f \in \Z[\omega] \\ f \equiv 1\pmod{3} \\ f \mid b^2 n}} \int_{-\infty}^\infty \frac{|\psi(\eta f, 1+\varepsilon + iy)|}{(1+|y|)^A} dy, \label{I_prim_bound}
\end{align}
since $|t| \leq X^\varepsilon$. 


\subsubsection{The primary integral term $\mathcal{I}_{\text{prim}}$}

Inserting the bound \eqref{I_prim_bound} into \eqref{dual_ready_for_eval} and \eqref{dual_sum_dyadic_decomp}, the contribution of the integral term $\mathcal{I}_{\text{prim}}$ to $\widetilde{\mathcal{S}}_R(b)$ is
\begin{align}
    & \ll_{F} X^{1/2+\varepsilon} \sup_{\eta\mid 3} \sup_{\substack{y \in \R}} \sum_{\substack{n \in \Z[\omega] \\ n\equiv 1 \pmod 3}} \frac{H \big(\frac{N(n)}{R}\big)}{N(n)^{1/2+\varepsilon}} \sum_{\substack{f \in \Z[\omega] \\ f \equiv 1\pmod{3} \\ f \mid b^2 n}} \frac{|\psi(\eta f, 1+\varepsilon + iy)|}{(1+|y|)^{100}} \nonumber \\
    & \ll X^{1/2+\varepsilon} \sup_{\eta\mid 3} \sup_{\substack{y \in \R}} \sum_{\substack{f \in \Z[\omega] \\ f \equiv 1\pmod{3}}} \frac{|\psi(\eta f, 1+\varepsilon + iy)|}{(1+|y|)^{100}} \cdot \frac{R^{1/2}}{N\big(\frac{f}{(f, b^2)}\big)} \cdot \bbone_{N(\frac{f}{(f, b^2)}) \ll R}. \nonumber
\end{align}
Denote $r := (f, b^2)$ and $h := \frac{f}{(f, b^2)}$, so $r \mid b^2$ and $N(h) \ll R$. By positivity and  Cauchy--Schwarz, the sum over $f$ in the display above is
\begin{align*}
    & \ll R^{1/2}  \sum_{\substack{r \in \Z[\omega] \\ r \equiv 1 \pmod 3 \\ r \mid b^2}} \sum_{\substack{h \in \Z[\omega] \\ h \equiv 1\pmod{3} \\ N(h) \ll R}} \frac{|\psi(\eta r h, 1+\varepsilon + iy)|}{N(h) (1+|y|)^{100}} \\
    &\ll R^{1/2} \sum_{\substack{r \in \Z[\omega] \\ r \equiv 1 \pmod 3 \\ r \mid b^2}} \Bigg(\sum_{\substack{h \in \Z[\omega] \\ h \equiv 1\pmod{3} \\ N(h) \ll R}} \frac{|\psi(\eta r h, 1+\varepsilon + iy)|^2}{N(h) (1+|y|)^{200}} \Bigg)^{1/2} \Bigg( \sum_{\substack{h \in \Z[\omega] \\ h \equiv 1\pmod{3} \\ N(h) \ll R}} \frac{1}{N(h)} \Bigg)^{1/2} \\
    &\ll R^{1/2+\varepsilon} \sum_{\substack{r \in \Z[\omega] \\ r \equiv 1 \pmod 3 \\ r \mid b^2}} \Big(\frac{N(r)^{1/2} R^\varepsilon}{(1+|y|)^{100}}\Big)^{1/2} \ll \frac{X^\varepsilon R^{1/2} N(b)^{1/2}}{(1+|y|)^{50}},
\end{align*}
where we applied \cref{second_moment_lindelof_lemma}, summation by parts, and a divisor bound in the last two steps. Thus the contribution of $\mathcal{I}_{\text{prim}}$ to $\widetilde{\mathcal{S}}(b)$ is
\begin{equation}
    \ll \sum_{\substack{i \in \Z \\ 1 \ll R = 2^i \ll X^{1/2+\varepsilon}}} X^{1/2+\varepsilon} R^{1/2} N(b)^{1/2} \ll X^{3/4+\varepsilon} N(b)^{1/2}. \label{I_prim_final_bound}
\end{equation}


\subsubsection{The primary polar term $\mathcal{P}_{\text{prim}}$}

We now evaluate the contribution of $\mathcal{P}_{\text{prim}}$, given in \eqref{P_prim_def}, to $\widetilde{\mathcal{S}}(b)$. From \eqref{b_n_decomp} and the fact that $b\equiv 1 \pmod 3$ is squarefree, we see that the condition $\beta = 1$ is equivalent to $n = b \cdot ac^3$ for some $a, c\equiv 1 \pmod 3$ with $\mu^2(a)=1$. Since such a decomposition is unique, by the triangle inequality and rapid decay of the Mellin transform $\widetilde{F}$, the contribution of $\mathcal{P}_{\text{prim}}$ to $\widetilde{\mathcal{S}}_R(b)$ is
\begin{align}
    & \ll_{F} X^{5/6+\varepsilon} \sup_{\eta\mid 3} \sup_{\substack{t \in \R \\ |t|\leq X^\varepsilon}} \Bigg| \sumtwo_{\substack{a, c \in \Z[\omega] \\ a, c \equiv 1 \pmod 3}} \frac{\mu^2(a) H \big(\frac{N(abc^3)}{R}\big)}{N(abc^3)^{1/2+\varepsilon + it}} \frac{\overline{\widetilde{g}_3(\eta, \alpha)}}{N(\alpha)^{1/6}} \frac{\Delta_{\alpha\gamma}(1)}{\Delta_{\alpha\gamma}(4/3)} \Bigg| \label{P_prim_initial} 
\end{align}
plus a negligible term $O(X^{-1000})$. Observe that $\alpha = a$, $\gamma \delta = bc$, and since $\delta \mid (\alpha \gamma)^\infty$,
\begin{align*}
    \Delta_{\alpha \gamma}(s) := \prod_{\substack{\pi \text{ prime} \\ \pi\equiv 1 \pmod{3} \\ \pi \mid \alpha \gamma}} (1 - N(\pi)^{2 - 3s} ) = \Delta_{\alpha \gamma \delta}(s) = \Delta_{a b c}(s).
\end{align*}

We now express $\frac{\Delta_{abc}(1)}{\Delta_{abc}(4/3)}$ via M{\"o}bius inversion. For $v \equiv 1 \pmod 3$, denote 
\begin{align*}
    \Omega(v) :=  \frac{\Delta_{v}(1)}{\Delta_{v}(4/3)} = \prod_{\substack{\pi \text{ prime} \\ \pi\equiv 1 \pmod{3} \\ \pi \mid v}} (1 + N(\pi)^{-1} )^{-1},
\end{align*}
so $\Omega$ is a multiplicative function satisfying $0 \leq \Omega(v) \leq 1$. Consider the multiplicative function $\Theta$ supported on $u \equiv 1 \pmod 3$ and given by
\begin{equation*}
    \Theta(u) := \sum_{\substack{v \in \Z[\omega] \\ v\equiv 1 \pmod 3 \\ v \mid u}} \mu\Big(\frac{u}{v}\Big) \Omega(v).
\end{equation*}
For $\pi\equiv 1 \pmod 3$ prime and $k \in \Z_{\geq 1}$, we have
\begin{align*}
    \Theta(\pi^k) = \Omega(\pi^k) - \Omega(\pi^{k-1}) = 
    \begin{cases}
        0 &\text{ if } k\geq 2, \\
        \frac{-1}{N(\pi)+1} &\text{ if } k=1,
    \end{cases}
\end{align*}
so $\Theta$ is supported on squarefree $u \equiv 1 \pmod 3$ and satisfies $|\Theta(u)| \leq \frac{1}{N(u)}$. By M{\"o}bius inversion, for any $v\equiv 1 \pmod{3}$ we have
\begin{equation*}
    \Omega(v) = \sum_{\substack{u \in \Z[\omega] \\ u\equiv 1 \pmod 3 \\ u \mid v}} \Theta(u).
\end{equation*}
Inserting this into \eqref{P_prim_initial} and applying the triangle inequality to the sums over $c$ and $u$ we conclude that the quantity inside absolute values in that display is 
\begin{align}
    \ll \frac{X^\varepsilon}{N(b)^{1/2}}\sum_{\substack{c \in \Z[\omega] \\ c \equiv 1 \pmod 3}} \frac{1}{N(c)^{3/2}} \sum_{\substack{u \in \Z[\omega] \\ u\equiv 1 \pmod 3}} \frac{\mu^2(u)}{N(u)} \Bigg| \sum_{\substack{a \in \Z[\omega] \\ a \equiv 1 \pmod 3 \\ u \mid abc}} \frac{\mu^2(a) H \big(\frac{N(abc^3)}{R}\big)}{N(a)^{1/2+\varepsilon - it}} \frac{\overline{\chi_{a}(\eta)} g_3(a)}{N(a)^{2/3}} \Bigg|, \label{p_prim_after_mobius}
\end{align}
where we applied complex conjugation to the sum over $a$ and removed the Gauss sum normalization. 

Write $u = d r$ for $d := (u, bc)$ and $r := \frac{u}{(u, bc)}$, so that $d \mid bc$, $\mu^2(r)=1$, and the condition $u \mid abc$ is equivalent to $r \mid a$. By positivity, we can complete the sums to conclude that \eqref{p_prim_after_mobius} is
\begin{align}
    & \ll \frac{X^\varepsilon}{N(b)^{1/2}}\sum_{\substack{c \in \Z[\omega] \\ c \equiv 1 \pmod 3}} \frac{1}{N(c)^{3/2}} \sumtwo_{\substack{d, r \in \Z[\omega] \\ d, r\equiv 1 \pmod 3 \\ d \mid bc}} \frac{\mu^2(r)}{N(r)} \Bigg| \sum_{\substack{a \in \Z[\omega] \\ a \equiv 1 \pmod 3 \\ r \mid a}} \frac{\mu^2(a) H \big(\frac{N(abc^3)}{R}\big)}{N(a)^{1/2+\varepsilon - it}} \frac{\overline{\chi_{a}(\eta)} g_3(a)}{N(a)^{2/3}} \Bigg| \nonumber\\
    & \ll \frac{X^\varepsilon}{N(b)^{1/2}}\sum_{\substack{c \in \Z[\omega] \\ c \equiv 1 \pmod 3 \\ N(c) \ll R^{1/3}}} \frac{1}{N(c)^{3/2}} \sum_{\substack{r \in \Z[\omega] \\ r\equiv 1 \pmod 3 \\ N(r) \ll \frac{R}{N(bc^3)}}} \frac{\mu^2(r)}{N(r)^{5/3}} \Bigg| \sum_{\substack{h \in \Z[\omega] \\ h \equiv 1 \pmod 3}} \frac{\overline{\chi_{h}(\eta r)} g_3(h)}{N(h)^{7/6 + \varepsilon - it}} H \Big(\frac{N(r h b c^3)}{R}\Big) \Bigg|, \label{P_prim_ready_for_Pat}
\end{align}
where we wrote $a = rh$ with $\mu^2(rh)=1$, and used the relations $g_3(rh) = \overline{\chi_{h}(r)} g_3(r) g_3(h)$ and $|g_3(r)| = \mu^2(r) N(r)^{1/2}$. 

Applying \cref{truncated_Gauss_sums_lemma}, since $\mu^2(r) = 1$ we obtain
\begin{align*}
     \sum_{\substack{h \in \Z[\omega] \\ h \equiv 1 \pmod 3}} \frac{\overline{\chi_{h}(\eta r)} g_3(h)}{N(h)^{7/6 + \varepsilon - it}} H \Big(\frac{N(r h b c^3)}{R}\Big) = \mathcal{P}_{\text{sec}} + O(\mathcal{I}_{\text{sec}}),
\end{align*}
where as before 
\begin{align}
    \mathcal{P}_{\text{sec}} := C_\eta \cdot \widetilde{H}(1/6 - \varepsilon + it) \Big(\frac{R}{N(r b c^3)}\Big)^{1/6 - \varepsilon + it} \frac{\overline{\widetilde{g}_3(\eta, r)}}{N(r)^{1/6}} \frac{\Delta_{r}(1)}{\Delta_{r}(4/3)} \ll_\eta \frac{X^\varepsilon R^{1/6}}{N(b)^{1/6} N(r)^{1/3} N(c)^{1/2}} \label{P_sec_bound}
\end{align}
and 
\begin{align}
    \mathcal{I}_{\text{sec}} \ll_{F, \varepsilon} X^\varepsilon \Big(\frac{R}{N(b r c^3)}\Big)^{-1/6} \sum_{\substack{f \in \Z[\omega] \\ f \equiv 1 \pmod{3} \\ f \mid r}} \int_{-\infty}^\infty \frac{|\psi(\eta f, 1+\varepsilon + iy)|}{(1+|y|)^{100}} dy, \label{I_sec_bound}
\end{align}
since $|t| \leq X^\varepsilon$ and $N(r) \ll \frac{R}{N(bc^3)}$. 


\subsubsection{The secondary integral term $\mathcal{I}_{\text{sec}}$}

By \eqref{I_sec_bound}, the contribution of $\mathcal{I}_{\text{sec}}$ to \eqref{P_prim_ready_for_Pat} is 
\begin{align*}
    &\ll_F \frac{X^\varepsilon}{N(b)^{1/3} R^{1/6}} \sup_{y\in\R} \sum_{\substack{c \in \Z[\omega] \\ c \equiv 1 \pmod 3 \\ N(c) \ll R^{1/3}}} \frac{1}{N(c)} \sum_{\substack{r \in \Z[\omega] \\ r\equiv 1 \pmod 3 \\ N(r) \ll \frac{R}{N(bc^3)}}} \frac{1}{N(r)^{3/2}} \sum_{\substack{f \in \Z[\omega] \\ f \equiv 1\pmod{3} \\ f \mid r}} \frac{|\psi(\eta f, 1+\varepsilon + iy)|}{(1+|y|)^{100}} \\
    & \ll \frac{X^\varepsilon}{N(b)^{1/3} R^{1/6}} \sup_{y\in\R} \sum_{\substack{c \in \Z[\omega] \\ c \equiv 1 \pmod 3 \\ N(c) \ll R^{1/3}}} \frac{1}{N(c)} \sum_{\substack{f \in \Z[\omega] \\ f \equiv 1\pmod{3} \\ N(f) \ll \frac{R}{N(bc^3)}}} \frac{|\psi(\eta f, 1+\varepsilon + iy)|}{N(f)^{3/2} (1+|y|)^{100}} \\
    & \ll \frac{X^\varepsilon}{N(b)^{1/3} R^{1/6}} \sup_{y\in\R} \sum_{\substack{c \in \Z[\omega] \\ c \equiv 1 \pmod 3 \\ N(c) \ll R^{1/3}}} \frac{1}{N(c)} \Bigg(\sum_{\substack{f \in \Z[\omega] \\ f \equiv 1\pmod{3} \\ N(f) \ll \frac{R}{N(bc^3)}}} \frac{|\psi(\eta f, 1+\varepsilon + iy)|^2}{N(f)^{2} (1+|y|)^{200}} \Bigg)^{1/2} \ll \frac{X^\varepsilon \cdot \bbone_{R \gg N(b)}}{N(b)^{1/3} R^{1/6}},
\end{align*}
where we applied Cauchy--Schwarz, \cref{second_moment_lindelof_lemma}, and partial summation. Plugging this back into \eqref{P_prim_initial} and then \eqref{dual_sum_dyadic_decomp}, we conclude that the contribution of $\mathcal{I}_{\text{sec}}$ to $\widetilde{\mathcal{S}}(b)$ is
\begin{align}
    \ll_F \sum_{\substack{i \in \Z \\ 1 \ll R = 2^i \ll X^{1/2+\varepsilon}}} X^{5/6 + \varepsilon}  \frac{X^\varepsilon \cdot \bbone_{R \gg N(b)}}{N(b)^{1/3} R^{1/6}} \ll \frac{X^{5/6+\varepsilon}}{N(b)^{1/2}}. \label{I_sec_final_bound}
\end{align}


\subsubsection{The secondary polar term $\mathcal{P}_{\text{sec}}$}

Using \eqref{P_sec_bound}, observe that the contribution of $\mathcal{P}_{\text{sec}}$ to \eqref{P_prim_ready_for_Pat} is 
\begin{align*}
    &\ll_{F, \eta} \frac{X^\varepsilon R^{1/6}}{N(b)^{2/3}} \sum_{\substack{c \in \Z[\omega] \\ c \equiv 1 \pmod 3}} \frac{1}{N(c)^2} \sum_{\substack{r \in \Z[\omega] \\ r\equiv 1 \pmod 3 \\ N(r) \ll \frac{R}{N(bc^3)}}} \frac{1}{N(r)^{2}} \ll \frac{X^\varepsilon R^{1/6}}{N(b)^{2/3}}.
\end{align*}
Inserting this into \eqref{P_prim_initial} and \eqref{dual_sum_dyadic_decomp}, we conclude that the contribution of $\mathcal{P}_{\text{sec}}$ to $\widetilde{\mathcal{S}}(b)$ is
\begin{align}
    \ll_F \sum_{\substack{i \in \Z \\ 1 \ll R = 2^i \ll X^{1/2+\varepsilon}}} X^{5/6 + \varepsilon} \frac{X^\varepsilon R^{1/6}}{N(b)^{2/3}} \ll \frac{X^{11/12 + \varepsilon}}{N(b)^{2/3}}. \label{P_sec_final_bound}
\end{align}


\subsubsection{Final bound for the dual sum $\widetilde{\mathcal{S}}(b)$}

Combining \eqref{I_prim_final_bound}, \eqref{I_sec_final_bound}, and \eqref{P_sec_final_bound}, we conclude that
\begin{equation*}
    \widetilde{\mathcal{S}}(b) \ll_{F, \varepsilon} X^\varepsilon \Big( X^{3/4} N(b)^{1/2} + \frac{X^{5/6}}{N(b)^{1/2}} + \frac{X^{11/12}}{N(b)^{2/3}} \Big).
\end{equation*}
Together with \eqref{S_prime_final_bound}, this implies the bound \eqref{real_R_bound_first_mom}, as desired.
 
\end{proof}


\section{Choosing the mollifier} \label{mollifier_section}

In this section we choose the mollifier \eqref{mollifier} to maximize our rate of non-vanishing and prove \cref{thm:main}. 

We suppose throughout that $\lambda(\mathfrak{b})$ is real-valued, supported on squarefree $0 \neq \mathfrak{b} \unlhd \Z[\omega]$ coprime with $3$ such that $N(\mathfrak{b}) \leq M$, and satisfies $\lambda(\mathfrak{b}) \ll_\varepsilon N(\mathfrak{b})^{-1+\varepsilon}$. We fix a Schwartz function $F$ supported in $(1, 2)$ and satisfying $0 \leq F(t) \leq 1$ for every $t \in \R$ (so implied constants may now depend on $F$). Finally, we choose $Y = M X^\delta$ and $M = X^{1/6 - 3 \delta}$ for a small fixed $\delta > 0$. 

It will be convenient to make the change of variables 
\begin{equation*}
    \xi(\mathfrak{l}) := \sum_{\substack{0 \neq \mathfrak{a} \unlhd \Z[\omega]}} \lambda(\mathfrak{l a}) h(\mathfrak{a}),
\end{equation*}
where $h$ is the multiplicative function defined in \eqref{h_prime_def}. It is possible to recover $\lambda$ directly from $\xi$ by the formula
\begin{equation}\label{lambda_chi_mobius_inv}
    \lambda(\mathfrak{l}) = \sum_{\substack{0 \neq \mathfrak{a} \unlhd \Z[\omega]}} \mu(\mathfrak{a}) h(\mathfrak{a}) \xi(\mathfrak{l a}).
\end{equation}
This also shows that $\lambda$ being supported on squarefree $\mathfrak{l}$ coprime with $3$ and such that $N(\mathfrak{l}) \leq M$ is equivalent to the same property for $\xi$, which we assume from now on.

We will choose $\xi$ satisfying the bound
\begin{align}\label{xi_bound}
    |\xi(\mathfrak{d})| \ll \frac{1}{N(\mathfrak{d}) \log{M}} \prod_{\substack{\mathfrak{p} \text{ prime} \\ \mathfrak{p} \mid \mathfrak{d}}} \Big(1 + O\Big(\frac{1}{N(\mathfrak{p})}\Big) \Big),
\end{align}
which by \eqref{lambda_chi_mobius_inv} and the bound $h(\mathfrak{p}) = 1 + O(N(\mathfrak{p})^{-1/2})$ directly implies $\lambda(\mathfrak{d}) \ll_{\varepsilon} N(\mathfrak{d})^{-1+\varepsilon}$.


\subsection{The first mollified moment}

By \cref{afecenter}, \eqref{nonnegproperty}, \cref{SRestimate1}, \eqref{SMexpand}, and \cref{SMestimate1} (where observe for the latter that our choices satisfy $M Y^2 = X^{1/2 - \delta}$), the first mollified moment is
\begin{align}
    \mathcal{S}(L(1/2, \chi_q) \mathcal{M}(q); F) = C X \check{F}(0) Q_1(M) + O_{\varepsilon}(X^{1-\varepsilon}) \label{first_moment_Q}
\end{align}
for 
\begin{align}
    Q_1(M) := \sum_{\substack{0 \neq \mathfrak{b} \unlhd \Z[\omega] \\ N(\mathfrak{b}) \leq M}} \frac{\lambda(\mathfrak{b}) r(\mathfrak{b})}{\sqrt{N(\mathfrak{b})}} = \sum_{\substack{0 \neq \mathfrak{b} \unlhd \Z[\omega] \\ N(\mathfrak{b}) \leq M}} \frac{r(\mathfrak{b})}{\sqrt{N(\mathfrak{b})}} \sum_{\substack{0 \neq \mathfrak{a} \unlhd \Z[\omega]}} \mu(\mathfrak{a}) h(\mathfrak{a}) \xi(\mathfrak{a b}) =  \sum_{\substack{0 \neq \mathfrak{d} \unlhd \Z[\omega] \\ N(\mathfrak{d}) \leq M}} \xi(\mathfrak{d}) G(\mathfrak{d}), \label{Q_1_formula}
\end{align}
where $G$ is the multiplicative function given on primes $\mathfrak{p}$ by
\begin{equation}
    G(\mathfrak{p}) := \frac{r(\mathfrak{p})}{\sqrt{N(\mathfrak{p})}} - h(\mathfrak{p}) = -1 + O\Big(\frac{1}{N(\mathfrak{p})}\Big). \label{G_primes}
\end{equation}
This follows from the exact definitions of $r$ and $h$ in \eqref{r_prime_def} and \eqref{h_prime_def}, respectively.


\subsection{The second mollified moment}

We write $\mathfrak{b} = (\mathfrak{b}_1,\mathfrak{b}_2)$ and $\mathfrak{b}_1\mathfrak{b}_2 = \mathfrak{a} \mathfrak{b}^2$. The coefficients $\lambda$ will be chosen to be real, so by \cref{afecenter}, \eqref{nonnegproperty}, \cref{SRestimate2}, \eqref{SMexpand2}, and \cref{SMestimate2}, the second mollified moment is
\begin{align*}
    &\mathcal{S}(|L(1/2,\chi_q) \mathcal{M}(q)|^2; F) =  2 \sumtwo_{\substack{0 \neq \mathfrak{b}_1, \mathfrak{b}_2 \unlhd \Z[\omega] \\ N(\mathfrak{b}_1), N(\mathfrak{b}_2)\leq M}} \lambda(\mathfrak{b}_1) \lambda(\mathfrak{b}_2) \sqrt{N(\mathfrak{b}_1\mathfrak{b}_2)} \\
    &\times \Big( D \check{F}(0) X \frac{h(\mathfrak{a})g(\mathfrak{b})}{\sqrt{N(\mathfrak{a})}} \Big[\log \Big( \frac{X}{N(\mathfrak{a})} \Big) +  \mathcal{O}(\mathfrak{b}_1,\mathfrak{b}_2)\Big] \nonumber \\
    & + O_\varepsilon\Big(\frac{X^{1+\varepsilon}}{Y} + \frac{X^{5/6+\varepsilon}}{N(\mathfrak{a})^{1/3}}\Big) + \mathcal{R}(\mathfrak{b}_1,\mathfrak{b}_2)\Big) 
    + O_\varepsilon \Big( \frac{X^{1+\varepsilon} M^{2/3}}{Y^{2/3}} + X^{5/6 + \varepsilon} M \Big). 
\end{align*}

Since $\lambda(\mathfrak{b}) \ll_\varepsilon N(\mathfrak{b})^{-1+\varepsilon}$, the error terms $O_\varepsilon\big(\frac{X^{1+\varepsilon}}{Y} + \frac{X^{5/6+\varepsilon}}{N(\mathfrak{a})^{1/3}}\big)$ contribute
\begin{equation*}
    \ll \frac{X^{1+\varepsilon} M^{1+\varepsilon}}{Y} + X^{5/6+\varepsilon} M^{1/3+\varepsilon},
\end{equation*}
as
\begin{equation*}
    \sumtwo_{\substack{0 \neq \mathfrak{b}_1, \mathfrak{b}_2 \unlhd \Z[\omega] \\ N(\mathfrak{b}_1), N(\mathfrak{b}_2)\leq M}} \frac{1}{N(\mathfrak{b}_1\mathfrak{b}_2)^{1/2} N(\mathfrak{a})^{1/3}} =  \sum_{\substack{ 0\neq \mathfrak{b} \unlhd \Z[\omega] \\ N(\mathfrak{b}) \leq M}} \frac{1}{N(\mathfrak{b})} \sumtwo_{\substack{ 0\neq \mathfrak{b}'_1, \mathfrak{b}'_2 \unlhd \Z[\omega] \\ N(\mathfrak{b}'_1),N(\mathfrak{b}'_2)\leq \frac{M}{N(\mathfrak{b})} \\ (\mathfrak{b}'_1,\mathfrak{b}'_2)=1}} \frac{1}{N(\mathfrak{b}'_1\mathfrak{b}'_2)^{5/6}} \ll M^{1/3}.
\end{equation*}
The support of $\lambda$ is squarefree and coprime with $3$, so the contribution of $\mathcal{R}(\mathfrak{b}_1, \mathfrak{b}_2)$ is
\begin{align*}
    & \ll M^\varepsilon \sumtwo_{\substack{0 \neq \mathfrak{b}_1, \mathfrak{b}_2 \unlhd \Z[\omega] \\ N(\mathfrak{b}_1),N(\mathfrak{b}_2)\leq M\\ (\mathfrak{b}_1\mathfrak{b}_2,3)=1}} \frac{\mu^2(\mathfrak{b}_1)\mu^2(\mathfrak{b}_2)\left|\mathcal{R}(\mathfrak{b}_1,\mathfrak{b}_2)\right|}{\sqrt{N(\mathfrak{b}_1\mathfrak{b}_2)}} \\
    &\ll M^\varepsilon \max_{B_1,B_2\ll M} \frac{1}{\sqrt{B_1B_2}} \sumtwo_{\substack{0 \neq \mathfrak{b}_1, \mathfrak{b}_2 \unlhd \Z[\omega] \\ N(\mathfrak{b}_1)\sim B_1,\ N(\mathfrak{b}_2)\sim B_2\\ (\mathfrak{b}_1\mathfrak{b}_2,3)=1}} \mu^2(\mathfrak{b}_1)\mu^2(\mathfrak{b}_2)\left|\mathcal{R}(\mathfrak{b}_1,\mathfrak{b}_2)\right| \\
    & \ll X^{1/2+\varepsilon} \left( X^{1/6}Y^{1/2}M^{3/2} + YM^2 + X^{1/3}M \right)
\end{align*}
upon using the estimate \eqref{R_average_bound}. Thus, with our choices of $Y$ and $M$,
\begin{align}
    &\ \mathcal{S}(|L(1/2,\chi_q) \mathcal{M}(q)|^2; F) \nonumber \\
    = &\ 2 D \check{F}(0) X Q_2(M) + O_\varepsilon\Big(X^\varepsilon \Big(\frac{XM}{Y} + X^{2/3}Y^{1/2}M^{3/2}+X^{1/2}YM^2 + X^{5/6} M\Big)\Big) \nonumber \\
    = &\ 2 D \check{F}(0) X Q_2(M) + O_\varepsilon(X^{1-\varepsilon}), \label{second_moment_Q}
\end{align}
where 
\begin{equation*}
    Q_2(M) := \sumtwo_{\substack{0 \neq \mathfrak{b}_1, \mathfrak{b}_2 \unlhd \Z[\omega] \\ N(\mathfrak{b}_1), N(\mathfrak{b}_2)\leq M}} \lambda(\mathfrak{b}_1) \lambda(\mathfrak{b}_2) \sqrt{N(\mathfrak{b}_1\mathfrak{b}_2)}  \frac{h(\mathfrak{a})g(\mathfrak{b})}{\sqrt{N(\mathfrak{a})}} \left[\log\pfrac{X}{N(\mathfrak{a})} + \mathcal{O}(\mathfrak{b}_1,\mathfrak{b}_2)\right]
\end{equation*}
and we recall that
\begin{align}
    \mathcal{O}(\mathfrak{b}_1,\mathfrak{b}_2) = C_0 + \sum_{\substack{\mathfrak{p} \text{ prime} \\  \mathfrak{p} \mid (\mathfrak{b}_1, \mathfrak{b}_2)}} D_{1}(\mathfrak{p}) \frac{\log{N(\mathfrak{p})}}{N(\mathfrak{p})} + \sum_{\substack{\mathfrak{p} \text{ prime} \\ \mathfrak{p} \mid \frac{\mathfrak{b}_1 \mathfrak{b}_2}{(\mathfrak{b}_1, \mathfrak{b}_2)^2}}} D_{2}(\mathfrak{p}) \frac{\log{N(\mathfrak{p})}}{\sqrt{N(\mathfrak{p})}} \label{O_sum_def}
\end{align}
for certain $D_i(\mathfrak{p}) \ll 1$ and $C_0$ depending only on $F$ (hence fixed for our purposes).

Let us further evaluate $Q_2(M)$. We split the sum depending on $\mathfrak{b} = (\mathfrak{b}_1,\mathfrak{b}_2)$, making the change of variable $\mathfrak{b}_i \mapsto \mathfrak{b}_i \mathfrak{b}$, to obtain
\begin{align*}
    Q_2(M) = &\ \sum_{\substack{ 0\neq \mathfrak{b} \unlhd \Z[\omega] \\ N(\mathfrak{b}) \leq M}} g(\mathfrak{b})N(\mathfrak{b}) \sumtwo_{\substack{0 \neq \mathfrak{b}_1, \mathfrak{b}_2 \unlhd \Z[\omega] \\ N(\mathfrak{b}_1),N(\mathfrak{b}_2)\leq \frac{M}{N(\mathfrak{b})}\\ (\mathfrak{b}_1,\mathfrak{b}_2)=1}} \lambda(\mathfrak{b}_1\mathfrak{b}) \lambda(\mathfrak{b}_2\mathfrak{b}) \\
    & \times h(\mathfrak{b}_1)h(\mathfrak{b}_2)\left[\log\pfrac{X}{N(\mathfrak{b}_1\mathfrak{b}_2)} + \mathcal{O}(\mathfrak{b}_1\mathfrak{b},\mathfrak{b}_2\mathfrak{b})\right].
\end{align*}
We now remove the condition $(\mathfrak{b}_1,\mathfrak{b}_2)=1$ via M\"{o}bius inversion, setting $\mathfrak{b}_i \mapsto \mathfrak{b}_i \mathfrak{c}$, so that
\begin{align}
    Q_2(M) &= \sum_{\substack{ 0\neq \mathfrak{b} \unlhd \Z[\omega] \\ N(\mathfrak{b}) \leq M}} g(\mathfrak{b})N(\mathfrak{b}) \sum_{\substack{ 0\neq \mathfrak{c} \unlhd \Z[\omega] \\ N(\mathfrak{c}) \leq \frac{M}{N(\mathfrak{b})} }} \mu(\mathfrak{c}) \sumtwo_{\substack{0 \neq \mathfrak{b}_1, \mathfrak{b}_2 \unlhd \Z[\omega] \\ N(\mathfrak{b}_1), N(\mathfrak{b}_2)\leq \frac{M}{N(\mathfrak{bc})}}} \lambda(\mathfrak{b}_1\mathfrak{b}\mathfrak{c}) \lambda(\mathfrak{b}_2\mathfrak{b}\mathfrak{c}) h(\mathfrak{b}_1\mathfrak{c}) h(\mathfrak{b}_2\mathfrak{c}) \nonumber \\
    &  \qquad \qquad \qquad \qquad \qquad \qquad \times  \left(\log\pfrac{X}{N(\mathfrak{b}_1\mathfrak{b}_2\mathfrak{c}^2)} + \mathcal{O}(\mathfrak{b}_1\mathfrak{b}\mathfrak{c},\mathfrak{b}_2\mathfrak{b} \mathfrak{c}) \right) \nonumber \\
    & = \sum_{\substack{ 0\neq \mathfrak{b} \unlhd \Z[\omega] \\ N(\mathfrak{b}) \leq M}} g(\mathfrak{b})N(\mathfrak{b}) \sum_{\substack{ 0\neq \mathfrak{c} \unlhd \Z[\omega] \\ N(\mathfrak{c}) \leq \frac{M}{N(\mathfrak{b})} }} \mu(\mathfrak{c}) h(\mathfrak{c})^2 \sumtwo_{\substack{0 \neq \mathfrak{b}_1, \mathfrak{b}_2 \unlhd \Z[\omega] \\ N(\mathfrak{b}_1), N(\mathfrak{b}_2)\leq \frac{M}{N(\mathfrak{bc})}}} \lambda(\mathfrak{b}_1\mathfrak{b}\mathfrak{c}) \lambda(\mathfrak{b}_2\mathfrak{b}\mathfrak{c}) h(\mathfrak{b}_1) h(\mathfrak{b}_2) \nonumber \\
    &  \qquad \qquad \qquad \qquad \qquad \qquad \times \left(\log\pfrac{X}{N(\mathfrak{b}_1\mathfrak{b}_2\mathfrak{c}^2)} + \mathcal{O}(\mathfrak{b}_1\mathfrak{b}\mathfrak{c},\mathfrak{b}_2\mathfrak{b} \mathfrak{c}) \right) \nonumber \\
    & = \sum_{\substack{ 0\neq \mathfrak{d} \unlhd \Z[\omega] \\ N(\mathfrak{d}) \leq M}} N(\mathfrak{d}) H(\mathfrak{d}) \sumtwo_{\substack{0 \neq \mathfrak{b}_1, \mathfrak{b}_2 \unlhd \Z[\omega] \\ N(\mathfrak{b}_1),N(\mathfrak{b}_2)\leq \frac{M}{N(\mathfrak{d})}}} \lambda(\mathfrak{b}_1\mathfrak{d}) \lambda(\mathfrak{b}_2\mathfrak{d}) h(\mathfrak{b}_1) h(\mathfrak{b}_2) \nonumber \\
    &  \qquad \qquad \qquad \qquad \qquad \qquad \times \left(\log\pfrac{X}{N(\mathfrak{b}_1\mathfrak{b}_2)} + 2\eta(\mathfrak{d}) + \mathcal{O}(\mathfrak{b}_1\mathfrak{d},\mathfrak{b}_2\mathfrak{d}) \right), \label{Q_2_with_error_terms}
\end{align}
where for squarefree $\mathfrak{d}$ we define
\begin{equation} \label{Hd}
    H(\mathfrak{d}) := \sumtwo_{\substack{0 \neq \mathfrak{b}, \mathfrak{c} \unlhd \Z[\omega] \\ \mathfrak{b}\mathfrak{c} = \mathfrak{d}}} g(\mathfrak{b}) \frac{\mu(\mathfrak{c})h(\mathfrak{c})^2}{N(\mathfrak{c})} = \prod_{\substack{\mathfrak{p} \text{ prime} \\ \mathfrak{p} \mid \mathfrak{d}}} \Big(g(\mathfrak{p}) - \frac{h(\mathfrak{p})^2}{N(\mathfrak{p})}\Big), 
\end{equation}
and
\begin{equation*}
    \eta(\mathfrak{d}) := \frac{-1}{H(\mathfrak{d})} \sumtwo_{\substack{0 \neq \mathfrak{b}, \mathfrak{c} \unlhd \Z[\omega] \\ \mathfrak{b}\mathfrak{c} = \mathfrak{d}}} g(\mathfrak{b}) \frac{\mu(\mathfrak{c})h(\mathfrak{c})^2}{N(\mathfrak{c})} \log N(\mathfrak{c}) = \sum_{\substack{\mathfrak{p} \text{ prime} \\ \mathfrak{p} \mid \mathfrak{d}}} \frac{h(\mathfrak{p})^2 \log N(\mathfrak{p})}{N(\mathfrak{p}) H(\mathfrak{p})}
\end{equation*}
by a standard calculation. For $\mathfrak{p}$ prime we have
\begin{equation}
    0 < H(\mathfrak{p}) = 1 + O\Big(\frac{1}{N(\mathfrak{p})}\Big) \label{H_primes}
\end{equation}
from the definitions of $g$ and $h$ (where the first inequality requires some checking for small primes). Comparing with the corresponding sums for $N(\mathfrak{p}) \ll \log{N(\mathfrak{d})}$, observe (for $N(\mathfrak{d})$ large) that
\begin{align*}
    \sum_{\substack{\mathfrak{p} \text{ prime} \\ \mathfrak{p} \mid \mathfrak{d} }} \frac{1}{N(\mathfrak{p})} \leq \log\log\log{N(\mathfrak{d})} + O(1) \quad \text{ and } \quad \sum_{\substack{\mathfrak{p} \text{ prime} \\ \mathfrak{p} \mid \mathfrak{d} }} \frac{\log{N(\mathfrak{p})}}{N(\mathfrak{p})} \leq \log\log{N(\mathfrak{d})} + O(1),
\end{align*}
so \eqref{H_primes} implies
\begin{equation*}
    H(\mathfrak{d}) \ll (\log\log{N(\mathfrak{d})})^{O(1)} \qquad \text{ and } \qquad \eta(\mathfrak{d}) \ll \log\log N(\mathfrak{d}).
\end{equation*}

From \eqref{xi_bound} and the restriction $N(\mathfrak{d})\leq M$ on the support of $\xi(\mathfrak{d})$, we also have the crude bound
\begin{equation*}
    |\xi(\mathfrak{d})| \ll \frac{(\log \log{M})^{O(1)}}{N(\mathfrak{d}) \log{M}}. 
\end{equation*}
Combining these bounds with \eqref{xi_bound}, the contribution of the term $2 \eta(\mathfrak{d})$ to \eqref{Q_2_with_error_terms} is
\begin{align*}
    = 2 \sum_{\substack{ 0\neq \mathfrak{d} \unlhd \Z[\omega] \\ N(\mathfrak{d}) \leq M}} N(\mathfrak{d}) H(\mathfrak{d}) \eta(\mathfrak{d}) \xi(\mathfrak{d})^2 \ll \sum_{\substack{ 0\neq \mathfrak{d} \unlhd \Z[\omega] \\ N(\mathfrak{d}) \leq M}} \frac{(\log\log{N(\mathfrak{d})})^{O(1)}}{(\log{M})^2 N(\mathfrak{d})} \ll \frac{(\log\log{X})^{O(1)}}{\log{X}} = o(1).
\end{align*}

Similarly, the contribution of the term $\mathcal{O}(\mathfrak{b}_1\mathfrak{d},\mathfrak{b}_2\mathfrak{d})$ to \eqref{Q_2_with_error_terms} is
\begin{align}
    = \sum_{\substack{ 0\neq \mathfrak{d} \unlhd \Z[\omega] \\ N(\mathfrak{d}) \leq M}} N(\mathfrak{d}) H(\mathfrak{d}) \Big( C_0 \: \xi(\mathfrak{d})^2 & + \sum_{\substack{\mathfrak{p} \text{ prime} \\  \mathfrak{p} \mid \mathfrak{d}}} D_{1}(\mathfrak{p}) \frac{\log{N(\mathfrak{p})}}{N(\mathfrak{p})} \xi(\mathfrak{d})^2 + \sum_{\substack{\mathfrak{p} \text{ prime} \\  \mathfrak{p} \nmid \mathfrak{d}}} D_{1}(\mathfrak{p}) \frac{\log{N(\mathfrak{p})}}{N(\mathfrak{p})} \xi(\mathfrak{pd})^2 \nonumber \\
    &  + 2 \sum_{\substack{\mathfrak{p} \text{ prime}}} D_{2}(\mathfrak{p}) \frac{\log{N(\mathfrak{p})}}{\sqrt{N(\mathfrak{p})}} \big(\xi(\mathfrak{pd}) \xi(\mathfrak{d}) - \xi(\mathfrak{pd})^2\big)\Big), \nonumber
\end{align}
which we readily bound by
\begin{align}
    &\ll \sum_{\substack{ 0\neq \mathfrak{d} \unlhd \Z[\omega] \\ N(\mathfrak{d}) \leq M}} \frac{(\log \log{M})^{O(1)}}{N(\mathfrak{d}) (\log{M})^2} \Big( 1 + \sum_{\substack{\mathfrak{p} \text{ prime} \\  \mathfrak{p} \mid \mathfrak{d}}} \frac{\log{N(\mathfrak{p})}}{N(\mathfrak{p})} + \sum_{\substack{\mathfrak{p} \text{ prime} \\  N(\mathfrak{p}) \leq M}} \frac{\log{N(\mathfrak{p})}}{N(\mathfrak{p})^3} + \sum_{\substack{\mathfrak{p} \text{ prime} \\ N(\mathfrak{p}) \leq M}} \frac{\log{N(\mathfrak{p})}}{N(\mathfrak{p})^{3/2}} \Big) \nonumber \\
    & \ll \frac{(\log \log{X})^{O(1)}}{\log{X}} = o(1). \nonumber
\end{align}

Therefore, we conclude that
\begin{align}
    Q_2(M) = \sum_{\substack{ 0\neq \mathfrak{d} \unlhd \Z[\omega] \\ N(\mathfrak{d}) \leq M}} N(\mathfrak{d}) H(\mathfrak{d}) \sumtwo_{\substack{0 \neq \mathfrak{b}_1, \mathfrak{b}_2 \unlhd \Z[\omega] \\ N(\mathfrak{b}_1),N(\mathfrak{b}_2)\leq \frac{M}{N(\mathfrak{d})}}} \lambda(\mathfrak{b}_1\mathfrak{d}) \lambda(\mathfrak{b}_2\mathfrak{d}) h(\mathfrak{b}_1) h(\mathfrak{b}_2) \log\pfrac{X}{N(\mathfrak{b}_1\mathfrak{b}_2)} + o(1). \label{Q_2_formula}
\end{align}


\subsection{The optimal mollifier}

Observe that $Q_2(M)$ is essentially equal to the diagonal quadratic form
\begin{align*}
    \log{X} \sum_{\substack{ 0\neq \mathfrak{d} \unlhd \Z[\omega] \\ N(\mathfrak{d}) \leq M}} N(\mathfrak{d}) H(\mathfrak{d}) \xi(\mathfrak{d})^2.
\end{align*}
We wish to minimize it, while maintaining the linear constraint \eqref{Q_1_formula} corresponding to $Q_1(M)$ constant. This is achieved (by Cauchy--Schwarz or Lagrange multipliers) for $\xi(\mathfrak{d})$ proportional to $\frac{G(\mathfrak{d})}{H(\mathfrak{d}) N(\mathfrak{d})}$. More precisely, for squarefree $\mathfrak{d}$ coprime with $3$ and satisfying $N(\mathfrak{d}) \leq M$ we choose 
\begin{align}
    \xi(\mathfrak{d}) = \frac{C}{D \log{M}} \cdot \frac{G(\mathfrak{d})}{N(\mathfrak{d}) H(\mathfrak{d})}, \label{xi_choice}
\end{align}
which by \eqref{G_primes} and \eqref{H_primes} satisfies the constraint \eqref{xi_bound}.


\subsubsection{Endgame for first moment}

Inserting this into \eqref{Q_1_formula} and \eqref{first_moment_Q}, we obtain
\begin{align} \label{firstset}
    \mathcal{S}(L(1/2, \chi_q) \mathcal{M}(q); F) = \check{F}(0) X \frac{C^2}{D \log{M}} \sum_{\substack{0 \neq \mathfrak{d} \unlhd \Z[\omega] \\  N(\mathfrak{d}) \leq M \\ (\mathfrak{d}, 3) = 1}} \mu(\mathfrak{d})^2 \frac{G(\mathfrak{d})^2}{N(\mathfrak{d}) H(\mathfrak{d})} + o(X).
\end{align}
Observe, since $\frac{G(\mathfrak{p})^2}{H(\mathfrak{p})} = 1 + O\big(\frac{1}{N(\mathfrak{p})}\big)$, that a standard argument (e.g.\ via Perron's formula) and \eqref{zeta_residue} imply
\begin{align}
    \sum_{\substack{0 \neq \mathfrak{d} \unlhd \Z[\omega] \\  N(\mathfrak{d}) \leq M \\ (\mathfrak{d}, 3) = 1}} \mu(\mathfrak{d})^2 \frac{G(\mathfrak{d})^2}{N(\mathfrak{d}) H(\mathfrak{d})} & = \mathop{\mathrm{Res}}_{s=0} \zeta_\lambda(1+s) \prod_{\substack{\mathfrak{p} \text{ prime} \\ (\mathfrak{p}, 3) = 1}} \Big(1 - \frac{1}{N(\mathfrak{p})} \Big) \Big(1 + \frac{G(\mathfrak{p})^2}{N(\mathfrak{p}) H(\mathfrak{p})} \Big) (\log{M} + O(1)) \nonumber \\
    & =\frac{2 \pi}{9 \sqrt{3}} \log{M} \prod_{\substack{\mathfrak{p} \text{ prime} \\ (\mathfrak{p}, 3) = 1}} \Big(1 - \frac{1}{N(\mathfrak{p})} \Big) \Big(1 + \frac{G(\mathfrak{p})^2}{N(\mathfrak{p}) H(\mathfrak{p})} \Big) + O(1) \nonumber\\
    &= \frac{2 \pi}{9 \sqrt{3}} \mathcal{P} \log{M} +O(1) \label{first_mom_asymptotic_comp}
\end{align}
for 
\begin{equation*}
    \mathcal{P} := \prod_{\substack{\mathfrak{p} \text{ prime} \\ (\mathfrak{p}, 3) = 1 \\ q\; :=\; N(\mathfrak{p})}} \frac{(q-1) (q+1) (q^4 + 2 q^3 + q^2 - 2q^{3/2} + 1)}{q (q^{5/2} + q^{3/2} - 1)^2},
\end{equation*}
where the expression for $\mathcal{P}$ follows from a direct computation using \eqref{G_primes} and \eqref{Hd}, after recalling the definitions of $r$, $g$, and $h$ in \eqref{r_prime_def}, \eqref{g_prime_def}, and \eqref{h_prime_def}, respectively. From the definitions of $C$ and $D$ in \eqref{Cdef} and \eqref{Ddef}, we conclude that the first mollified moment is equal to $\mathcal{C}_1 \mathcal{P}_1 \check{F}(0) X + o(X)$, where
\begin{align*}
    \mathcal{C}_1 := \frac{2 \pi}{9 \sqrt{3}} \Big(\frac{\pi}{36(\sqrt{3}-1) \cdot \zeta_{\Q(\omega)}(2)} \Big)^2 \Big( \frac{\pi^2}{648(2-\sqrt{3}) \cdot \zeta_{\Q(\omega)}(2)} \Big)^{-1} = \frac{\pi \sqrt{3}}{54 \cdot \zeta_{\Q(\omega)}(2)}
\end{align*}
and we have the remarkable identity
\begin{align*}
     \mathcal{P}_1 := \mathcal{P} \cdot \prod_{\substack{\mathfrak{p} \text{ prime} \\ (\mathfrak{p}, 3) = 1 \\ q\; :=\; N(\mathfrak{p})}}  \Big( 1 + \frac{q}{(q+1)(q^{3/2}-1)} \Big)^2 \Big( 1 - \frac{1}{q(q+1)} + \frac{2q}{(q+1) (q^{3/2}-1)} \Big)^{-1} = 1.
\end{align*}

Thus 
\begin{equation}
    \mathcal{S}(L(1/2, \chi_q) \mathcal{M}(q); F) = \frac{\pi \sqrt{3}}{54 \cdot \zeta_{\Q(\omega)}(2)} \check{F}(0) X + o(X). \label{first_mom_asymp}
\end{equation}


\subsubsection{Endgame for second moment}

By \eqref{Q_2_formula}, 
\begin{align}
    Q_2(M) = \sum_{\substack{ 0\neq \mathfrak{d} \unlhd \Z[\omega] \\ N(\mathfrak{d}) \leq M}} N(\mathfrak{d}) H(\mathfrak{d}) \Big( \xi(\mathfrak{d})^2 \log{X} - 2 \xi(\mathfrak{d}) \sum_{\substack{0 \neq \mathfrak{b} \unlhd \Z[\omega] \\ N(\mathfrak{b}) \leq \frac{M}{N(\mathfrak{d})}}} \lambda(\mathfrak{b} \mathfrak{d}) h(\mathfrak{b}) \log{N(\mathfrak{b})} \Big) + o(1). \nonumber
\end{align}
From our choice of $\xi$ in \eqref{xi_choice} and properties of the relevant multiplicative functions in \eqref{G_primes} and \eqref{H_primes} combined with $h(\mathfrak{p}) = 1 + O(N(\mathfrak{p})^{-1/2})$, we compute
\begin{align*}
    \sum_{\substack{0 \neq \mathfrak{b} \unlhd \Z[\omega] \\ N(\mathfrak{b}) \leq \frac{M}{N(\mathfrak{d})}}} \lambda(\mathfrak{b} \mathfrak{d}) h(\mathfrak{b}) \log{N(\mathfrak{b})} &= \sum_{\substack{\mathfrak{p} \text{ prime} \\ N(\mathfrak{p}) \leq \frac{M}{N(\mathfrak{d})}}} \log{N(\mathfrak{p})} h(\mathfrak{p}) \xi(\mathfrak{pd}) \\
    &= \xi(\mathfrak{d})\sum_{\substack{\mathfrak{p} \text{ prime} \\ N(\mathfrak{p}) \leq \frac{M}{N(\mathfrak{d})} \\ \mathfrak{p} \nmid 3 \mathfrak{d}}} \log{N(\mathfrak{p})} h(\mathfrak{p}) \frac{G(\mathfrak{p})}{N(\mathfrak{p}) H(\mathfrak{p})} \\
    & =  - \xi(\mathfrak{d}) \sum_{\substack{\mathfrak{p} \text{ prime} \\ N(\mathfrak{p}) \leq \frac{M}{N(\mathfrak{d})} \\ \mathfrak{p} \nmid 3\mathfrak{d}}} \frac{\log{N(\mathfrak{p})}}{N(\mathfrak{p})} \Big(1 + O\Big(\frac{1}{N(\mathfrak{p})^{1/2}}\Big)\Big).
\end{align*}
Observe that a standard computation gives
\begin{align*}
    \sum_{\substack{\mathfrak{p} \text{ prime} \\ N(\mathfrak{p}) \leq \frac{M}{N(\mathfrak{d})} \\ \mathfrak{p} \nmid 3 \mathfrak{d}}} \frac{\log{N(\mathfrak{p})}}{N(\mathfrak{p})} \Big(1 + O\Big(\frac{1}{N(\mathfrak{p})^{1/2}}\Big)\Big) & = \sum_{\substack{\mathfrak{p} \text{ prime} \\ N(\mathfrak{p}) \leq \frac{M}{N(\mathfrak{d})}}} \frac{\log{N(\mathfrak{p})}}{N(\mathfrak{p})}  + O\big(1 + \log\log{N(\mathfrak{d})}\big) \\
    & = \log{\Big( \frac{M}{N(\mathfrak{d})} \Big)} + O(\log\log{X}).
\end{align*}
Therefore,
\begin{align}
    Q_2(M) & = \sum_{\substack{ 0\neq \mathfrak{d} \unlhd \Z[\omega] \\ N(\mathfrak{d}) \leq M}} N(\mathfrak{d}) H(\mathfrak{d}) \xi(\mathfrak{d})^2 \big(\log{(X M^2)} - 2 \log{N(\mathfrak{d})} + O(\log\log{X}) \big) + o(1). \nonumber
\end{align}

By \eqref{second_moment_Q} we conclude that up to $o(X)$, $\mathcal{S}(|L(1/2,\chi_q) \mathcal{M}(q)|^2; F)$ is equal to
\begin{align}
    \check{F}(0) X \frac{2 C^2}{D (\log{M})^2} \sum_{\substack{ 0\neq \mathfrak{d} \unlhd \Z[\omega] \\ N(\mathfrak{d}) \leq M \\ (\mathfrak{d}, 3) = 1}} \mu(\mathfrak{d})^2 \frac{G(\mathfrak{d})^2}{H(\mathfrak{d}) N(\mathfrak{d})} \big(\log{(X M^2)} - 2 \log{N(\mathfrak{d})} + O(\log\log{X}) \big). \label{second_mom_prelim_asymp}
\end{align}
The first term in \eqref{second_mom_prelim_asymp} is, apart from the logarithmic factors, identical to twice our expression for the first moment in
\eqref{firstset}. Hence we see that the first term in \eqref{second_mom_prelim_asymp} is equal to
\begin{align*}
    \frac{\log{(XM^2)}}{\log{M}}  \frac{\pi \sqrt{3}}{27 \cdot \zeta_{\Q(\omega)}(2)} \check{F}(0) X  + o(X).
\end{align*}
Since $H(\mathfrak{d})$ is non-negative, the same argument shows that the third term in \eqref{second_mom_prelim_asymp} is $\ll X \frac{\log\log{X}}{\log{X}} = o(X)$, which is subsumed to the error term. Finally, \eqref{first_mom_asymptotic_comp} and partial summation imply
\begin{align*}
    \sum_{\substack{ 0\neq \mathfrak{d} \unlhd \Z[\omega] \\ N(\mathfrak{d}) \leq M \\ (\mathfrak{d}, 3) = 1}} \mu(\mathfrak{d})^2 \frac{G(\mathfrak{d})^2}{H(\mathfrak{d}) N(\mathfrak{d})} \log{N(\mathfrak{d})} = \frac{2 \pi}{9 \sqrt{3}} \mathcal{P} \cdot \frac{(\log{M})^2}{2} + O(\log{M}),
\end{align*}
so the second term in \eqref{second_mom_prelim_asymp} is equal to
\begin{align*}
   - \frac{\pi \sqrt{3}}{27 \cdot \zeta_{\Q(\omega)}(2)} \check{F}(0) X  + o(X).
\end{align*}
Thus denoting $M = X^\theta$ (where we chose $\theta = \frac{1}{6} - 3 \delta$), we have shown that
\begin{align}
    \mathcal{S}(|L(1/2,\chi_q) \mathcal{M}(q)|^2; F) = \frac{\pi \sqrt{3}}{27 \cdot \zeta_{\Q(\omega)}(2)} \Big(1 + \frac{1}{\theta} \Big) \check{F}(0) X  + o(X). \label{second_mom_asymp}
\end{align}


\subsection{Final density computation}

We are now ready to prove our main result.

\begin{proof}[Proof of \cref{thm:main}]

We recall that $F$ is supported in $(1, 2)$ and satisfies $0 \leq F(t) \leq 1$ for all $t \in \R$. By Cauchy--Schwarz and the asymptotic expressions \eqref{first_mom_asymp} and \eqref{second_mom_asymp},
\begin{align*}
    \sum_{\substack{q \in \mathbb{Z}[\omega] \\ q \equiv 1 \pmod{9} \\ X < N(q) \leq 2X  \\ L(1/2,\chi_q) \neq 0}} \mu^2(q) & \geq \sum_{\substack{q \in \mathbb{Z}[\omega] \\ q \equiv 1 \pmod{9} \\ L(1/2,\chi_q) \neq 0}} \mu^2(q) F \Big( \frac{N(q)}{X} \Big) \\
    & \geq \frac{|\mathcal{S}(L(1/2,\chi_q) \mathcal{M}(q); F)|^2}{\mathcal{S}(|L(1/2,\chi_q) \mathcal{M}(q)|^2; F)} = \frac{\pi \sqrt{3}}{108 \cdot \zeta_{\Q(\omega)}(2)} \frac{\theta}{\theta + 1} \check{F}(0) X  + o(X).
\end{align*}

We may choose $\theta$ arbitrarily close to $\frac{1}{6}$, and $F$ arbitrarily close to the indicator function of the interval $(1, 2)$, so that $\check{F}(0)$ approaches $1$. A
a short computation using Perron's formula and the standard zero-free region for Hecke $L$-functions gives that
\begin{align*}
    \sum_{\substack{q \in \mathbb{Z}[\omega] \\ q \equiv 1 \pmod{9} \\ X < N(q) \leq 2X}} \mu^2(q) = \frac{1}{9} \sum_{\substack{q \in \mathbb{Z}[\omega] \\ q \equiv 1 \pmod{3} \\ X < N(q) \leq 2X}} \mu^2(q) + o(X) = \frac{X}{9} \mathop{\mathrm{Res}}_{s=1} \frac{\zeta_\lambda(s)}{\zeta_\lambda(2s)} + o(X).
\end{align*}
From \eqref{zeta_lambda_identity} and \eqref{zeta_residue} we compute 
\begin{align*}
    \frac{X}{9} \mathop{\mathrm{Res}}_{s=1} \frac{\zeta_\lambda(s)}{\zeta_\lambda(2s)} = \frac{X \cdot \mathop{\mathrm{Res}}_{s=1} \zeta_\lambda(s)}{8 \cdot \zeta_{\Q(\omega)}(2)} = \frac{\pi \sqrt{3}}{108 \cdot \zeta_{\Q(\omega)}(2)}.
\end{align*}

Therefore
\begin{equation*}
    \sum_{\substack{q \in \mathbb{Z}[\omega] \\ q \equiv 1 \pmod{9} \\ X < N(q) \leq 2X  \\ L(1/2,\chi_q) \neq 0}} \mu^2(q) \geq \Big(\frac{\theta}{\theta + 1} - \varepsilon \Big) \sum_{\substack{q \in \mathbb{Z}[\omega] \\ q \equiv 1 \pmod{9} \\ X < N(q) \leq 2X}} \mu^2(q) \geq \Big(\frac{1}{7} - 2\varepsilon\Big) \sum_{\substack{q \in \mathbb{Z}[\omega] \\ q \equiv 1 \pmod{9} \\ X < N(q) \leq 2X}} \mu^2(q)
\end{equation*}
for all $X$ sufficiently large in terms of $\varepsilon$. Summing over dyadic $X$ finishes the proof of \cref{thm:main}.
    
\end{proof}


\bibliography{weylcubic} 

\end{document}